\documentclass{article}
\usepackage[margin = 1in,margin=2.5cm]{geometry}
\geometry{letterpaper}                   
\usepackage[all]{xy}
\usepackage{tikz}
\usepackage{tikz-cd}
\usetikzlibrary{3d,calc}
\usepackage{amsmath}
\usetikzlibrary{positioning, shapes, calc}
\usepackage{graphicx}
\usepackage{tikz-cd}
\usetikzlibrary{cd, arrows.meta, positioning}
\usepackage[margin=2.5cm]{geometry}
\usetikzlibrary{patterns}

\usepackage{graphicx}
\usepackage{amssymb}
\usepackage{amsmath}
\usepackage{epstopdf}
\usepackage{amsthm}
\usepackage{hyperref}
\usepackage{amscd}
\usepackage{mathrsfs}
\usepackage{mathabx}
\usepackage{tikz}
\DeclareGraphicsRule{.tif}{png}{.png}{`convert #1 `dirname #1`/`basename #1 .tif`.png}
\usepackage{amssymb}
\usepackage{xcolor}
\usepackage{textcomp}

\definecolor{ForestGreen}{RGB}{34,139,34}
\newcommand{\greencheck}{\textcolor{ForestGreen}{\textbf{\checkmark}}}
\newcommand{\notcheckmark}{\textcolor{ForestGreen}{\textbf{\checkmark\llap{\textcolor{blue}{\raisebox{0.15ex}{\textbf{\textbackslash}}}}}}}
\newcommand{\redcross}{\textcolor{red}{\textbf{\texttimes}}}
\hypersetup{
	colorlinks=true,
	linkcolor=black,
	filecolor=black,      
	urlcolor=black,
	citecolor=blue,
}

\usepackage{pict2e}

\makeatletter
\newcommand{\getfontdimen}[3]{
  \fontdimen#1
  \ifx#2\displaystyle\textfont\else
  \ifx#2\textstyle\textfont\else
  \ifx#2\scriptstyle\scriptfont\else
  \scriptscriptfont\fi\fi\fi #3%
}
\NewDocumentCommand{\barred}{O{}m}{#1{\mathpalette\barred@{#2}}}
\newcommand{\barred@}[2]{%
  \begingroup
  \sbox\z@{$\m@th#1#2$}
  \setlength{\dimen@}{\dimeval{\ht\z@+\dp\z@+0.4\getfontdimen{5}{#1}{2}}}
  \sbox\tw@{%
    \raisebox{\dimeval{-\dp\z@-0.2\getfontdimen{5}{#1}{2}}}{%
      \begin{picture}(0,\dimen@)
      \roundcap
      \linethickness{1\getfontdimen{8}{#1}{3}}
      \Line(0,0)(0,\dimen@)
      \end{picture}%
    }%
  }%
  \vphantom{\copy\tw@}
  \ooalign{%
    \box\z@\cr 
    \hidewidth\box\tw@\hidewidth\cr 
  }%
  \endgroup
}

\newcommand{\bfa}{\mathbf{a}}

\newcommand{\bfe}{\mathbf{e}}
\newcommand{\bfv}{\mathbf{v}}

\newcommand{\Garsia}{\mathscr{G}}

\newcommand{\ZZ}{\mathbb{Z}}
\newcommand{\NN}{\mathbb{N}}
\newcommand{\QQ}{\mathbb{Q}}
\newcommand{\kk}{\Bbbk}

\newcommand{\Aut}{\operatorname{Aut}}
\newcommand{\Sd}{\operatorname{Sd}}
\newcommand{\rk}{\operatorname{rk}}
\newcommand{\Part}{\mathscr{P}}
\newcommand{\shape}{\operatorname{shape}}

\newcommand{\Rep}{\operatorname{Rep}}
\newcommand{\Hilb}{\operatorname{Hilb}}
\newcommand{\lb}{\operatorname{lb}}

\theoremstyle{plain}
\newtheorem{theorem}{Theorem}[section]
\newtheorem{prop}[theorem]{Proposition}
\newtheorem{lemma}[theorem]{Lemma}
\newtheorem{cor}[theorem]{Corollary}
\newtheorem{question}[theorem]{Question}
\newtheorem{observation}[theorem]{Observation}

\theoremstyle{definition}
\newtheorem{definition}[theorem]{Definition}
\newtheorem{example}[theorem]{Example}
\newtheorem*{remark}{Remark}
\newtheorem{setup}[theorem]{Setup}
\newtheorem{notation}[theorem]{Notation}
\newtheorem{convention}[theorem]{Convention}
\newtheorem{algorithm}[theorem]{Algorithm}

\newcommand{\Char}{\operatorname{char}}

\makeatletter
\newcommand{\subjclass}[2][1991]{
  \let\@oldtitle\@title
  \gdef\@title{\@oldtitle\footnotetext{#1 \emph{Mathematics Subject Classification.} #2}}
}
\newcommand{\keywords}[1]{%
  \let\@@oldtitle\@title%
  \gdef\@title{\@@oldtitle\footnotetext{\emph{Key words and phrases.} #1.}}%
}
\makeatother

\title{Comparing the face rings of a boolean complex and its barycentric subdivision}
\author{Ben Blum-Smith and Sophie Marques}
\date{} 

\subjclass[2020]{Primary 13F55, 05E18, 05E40, 13C14 Secondary 13A50, 13C70, 05E45}
\keywords{Simplicial complex, boolean complex, barycentric subdivision, Stanley--Reisner ring, face ring, Cohen--Macaulay, equivariant}

\begin{document}

\maketitle

\begin{center}
    {\em Dedicated to the memory of Adriano Garsia.}
\end{center}

\begin{abstract}
We consider the relationship between the Stanley--Reisner ring (a.k.a. face ring) of a simplicial or boolean complex $\Delta$ and that of its barycentric subdivision. These rings share a distinguished parameter subring. S. Murai asked if they are isomorphic, equivariantly with respect to the automorphism group $\Aut(\Delta)$, as modules over this parameter subring. We show that, in general, the answer is no, but for Cohen--Macaulay complexes in characteristic coprime to $|\Aut(\Delta)|$, it is yes, and we give an explicit construction of an isomorphism. To give this construction, we adapt a pair of tools introduced by A. Garsia in 1980. The first one transfers bases from a Stanley--Reisner ring to closely related rings of which it is a Gr\"obner degeneration, and the second identifies bases to transfer.
\end{abstract}

\tableofcontents

\section{Introduction}

The algebraic structure of the  Stanley--Reisner ring (or {\em face ring}) $\kk[\Delta]$ of a simplicial complex $\Delta$  reflects the topology of $\Delta$; for example, the Cohen--Macaulay and Gorenstein properties of $\kk[\Delta]$ are detectable in the (reduced and relative) homology of the geometric realization of $\Delta$. On the other hand, $\kk[\Delta]$ also reflects combinatorial information about $\Delta$ not visible in the topology: two nonisomorphic simplicial complexes will in general  have nonisomorphic Stanley--Reisner rings, even if their geometric realizations are homeomorphic. A key example is the barycentric subdivision $\Sd\Delta$ of $\Delta$: $\kk[\Delta]$ and $\kk[\Sd\Delta]$ are not isomorphic as rings, although $\Delta$ and $\Sd\Delta$ have homeomorphic geometric realizations. Nonetheless, these rings are closely related, and a natural question is: {\em how close is the relationship?}

We consider this question at the generality of boolean complexes, a generalization of simplicial complexes (see Section~\ref{sec:background} for  definitions and notation). If $\Delta$ is a boolean complex, then $\kk[\Delta]$ is an {\em algebra with straightening law (ASL)} \cite{eisenbud1980introduction, hodge-algebras}, and $\kk[\Sd\Delta]$  is the associated {\em discrete ASL}. R. Stanley \cite{stanley1991f} observed that this implies that the depth of $\kk[\Delta]$ is at least that of $\kk[\Sd\Delta]$. Then, A. Duval \cite{duval1997free} showed that in fact the depths are {\em equal}. Much more recently, A. Conca and M. Varbaro demonstrated that this is an example of a general phenomenon tying the rings together closely: the discrete ASL associated to any ASL is a squarefree Gr\"obner degeneration of it, and Conca and Varbaro's spectacular result \cite{conca2020square} then implies they have all the same extremal Betti numbers when resolved over the polynomial ring whose indeterminates index the ASL generators (and therefore, they have the same depth).

In the special case of $\kk[\Delta]$ and $\kk[\Sd\Delta]$, recent work of A. Adams and V. Reiner \cite{adams-reiner} conjectured a further close connection. These rings share a common parameter subring $\kk[\Theta]$ (see Section~\ref{sec:background} for notation). Adams and Reiner conjectured that, when resolved over $\kk[\Theta]$, {\em all} the Betti numbers of $\kk[\Delta]$ and $\kk[\Sd\Delta]$  are equal. In fact, they conjectured something stronger \cite[Conjecture~6.1]{adams-reiner}. If a group $G$ of automorphisms acts on $\Delta$ (and therefore also on $\Sd \Delta$), the parameter subring $\kk[\Theta]$ is pointwise-fixed, and Adams and Reiner's conjecture states that the equivariant Betti numbers, which are refinements of the Betti numbers taking values in the Grothendieck ring of $G$ over $\kk$, are then equal (see Section~\ref{sec:nonconstructive} for more detail on the Grothendieck ring).  These conjectures carry no hypothesis on the characteristic of $\kk$, or on the boolean complex $\Delta$ (beyond finiteness).

After a version of Adams and Reiner's preprint appeared on the arXiv, S. Murai posed the following question, upgraded to a conjecture by Adams \cite[Conjecture~3.3.4]{adams2023further}, about a further strengthening of this conjecture.

\begin{question}[Murai]\label{q:murai}
Are the Stanley--Reisner rings $\kk[\Delta]$ and $\kk[\Sd\Delta]$ isomorphic as modules over $\kk[\Theta]$? Are they $G$-equivariantly isomorphic?
\end{question}

We study the existence of an equivariant isomorphism. We give both a negative and a positive result. In arbitrary characteristic, we show there may fail to be an equivariant isomorphism. On the other hand, we prove that in the Cohen--Macaulay, coprime characteristic case, an equivariant isomorphism does exist, and we give an explicit construction of such an isomorphism.

\begin{theorem}[Negative result]\label{thm:no-equivariant-iso}
Let $d\geq 2$, let $\Delta_d$ be a $d$-simplex, and let $G$ be its automorphism group. Let $\kk$ be a field of characteristic 2. Then there is no $G$-equivariant $\kk[\Theta]$-module isomorphism $ \kk[\Sd\Delta_d] \rightarrow \kk[\Delta_d]$.
\end{theorem}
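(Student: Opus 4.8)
The plan is to assume a $G$-equivariant $\kk[\Theta]$-module isomorphism $\phi\colon\kk[\Sd\Delta_d]\to\kk[\Delta_d]$ exists ($G=S_{d+1}$ acting by permuting vertices) and to contradict it with a $\kk[\Theta]G$-module invariant sensitive to \emph{how} each ring lies over $\kk[\Theta]$. Recall from the background section that we may take $\kk[\Delta_d]=\kk[x_0,\dots,x_d]$ with $\kk[\Theta]=\kk[e_1,\dots,e_{d+1}]$ its subring of symmetric polynomials, $\theta_j$ being the $j$-th elementary symmetric function, while $\kk[\Sd\Delta_d]$ is the Stanley--Reisner ring of the order complex of the nonempty subsets of $[d+1]$, with $\theta_j=\sum_{|F|=j}x_F$. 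Both rings are finitely generated free $\kk[\Theta]$-modules (the $d$-simplex and its subdivision are Cohen--Macaulay) with $\kk[\Theta]$-linear $G$-action, and both are permutation modules over $\kk G$, the standard monomials forming a $G$-stable basis. For any finitely generated $\kk[\Theta]$-module $M$ with $\kk[\Theta]$-linear $G$-action the function $\mathfrak p\mapsto\dim_{\kappa(\mathfrak p)}(M\otimes_{\kk[\Theta]}\kappa(\mathfrak p))^{G}$ is upper semicontinuous (it is the corank of a matrix over $\kk[\Theta]$, and $M$ is $\kk[\Theta]$-free), so its jump locus
\[
Z(M):=\bigl\{\mathfrak p\in\operatorname{Spec}\kk[\Theta]\ :\ \dim_{\kappa(\mathfrak p)}(M\otimes_{\kk[\Theta]}\kappa(\mathfrak p))^{G}\ge 2\bigr\}
\]
is a closed set, manifestly invariant under $\kk[\Theta]G$-module isomorphism. (Equivalently one can use the support of $H^{1}(G,M)$ or of $\widehat H^{0}(G,M)$; Shapiro's lemma reduces these to the cohomology of the point stabilizers of the permutation actions, and all three loci record where the fibers of $M$ cease to be free $\kk G$-modules.) The strategy is to compute $Z$ on both sides and observe that the number of irreducible components differs once $d\ge 2$.

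First I would treat $M=\kk[\Delta_d]$. The fiber over $\mathfrak p$ is the coordinate ring of the scheme of orderings of the root multiset of $T^{d+1}-e_1(\mathfrak p)T^{d}+\cdots$; $G$ permutes the orderings freely — and the fiber is then a free $\kk G$-module, so $\dim(\text{fiber})^{G}=1$ — exactly when the roots are pairwise distinct. Over a point with a repeated root the fiber is non-reduced, and the transposition exchanging the two equal roots acts on the corresponding square-zero tangent direction by $u\mapsto -u=u$ — here the hypothesis $\operatorname{char}\kk=2$ enters — i.e.\ trivially; a short local computation with the equations $e_i=e_i(\mathfrak p)$ then identifies the fiber, up to the $G$-action, as $\operatorname{Ind}^{S_{d+1}}_{S_{\mu}}$ of a length-two local ring on which $S_\mu$ fixes the nilpotent generator, so that $\dim(\text{fiber})^{G}=2$ over the generic point of the repeated-root locus (Frobenius reciprocity). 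Hence $Z(\kk[\Delta_d])$ equals the repeated-root hypersurface, which is \emph{irreducible}, being the image of the irreducible parameter space $\{(a,g)\}$ under $(a,g)\mapsto\text{coefficients of }(T-a)^{2}g(T)$.

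Next, for $M=\kk[\Sd\Delta_d]$, I would use that $\operatorname{Spec}\kk[\Sd\Delta_d]$ is a union of $(d+1)!$ coordinate patches $\operatorname{Spec}\kk[x_F:F\in\sigma]$, one for each maximal flag $\sigma\colon F_1\subsetneq\cdots\subsetneq F_{d+1}=[d+1]$, and that each patch maps \emph{isomorphically} onto $\operatorname{Spec}\kk[\Theta]$ via $\theta_j\mapsto x_{F_j}$. Two patches $\sigma,\sigma'$ meet over the coordinate subspace $\{\theta_j=0:\text{no }j\text{-set lies in }\sigma\cap\sigma'\}$. For each $1\le j\le d$ there are flags $\sigma,\sigma'$ differing only in their $j$-element member (the two $j$-subsets between $F_{j-1}$ and $F_{j+1}$); these patches collide over all of $\{\theta_j=0\}$, are interchanged by a transposition acting trivially on the transverse direction (again since $\operatorname{char}\kk=2$), and the same local computation as above gives $\dim(\text{fiber})^{G}\ge 2$ along that hyperplane. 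Therefore $Z(\kk[\Sd\Delta_d])\supseteq\{\theta_1=0\}\cup\{\theta_2=0\}$, so $Z(\kk[\Sd\Delta_d])$ is \emph{reducible} for $d\ge 2$. Since $Z(-)$ is a $\kk[\Theta]G$-module invariant and an irreducible closed set cannot equal one with at least two components, no such $\phi$ can exist. (Consistently, for $d=1$ both loci are the single hyperplane $\{\theta_1=0\}$ — the repeated-root locus of a quadratic being $\{e_1=0\}$ in characteristic $2$ — and indeed an isomorphism does exist there; this is why the statement requires $d\ge 2$.)

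The main obstacle is the local analysis of the fibers over the bad loci: verifying that $\dim(M\otimes\kappa(\mathfrak p))^{G}$ really jumps to $2$ over the generic point of the repeated-root locus, respectively of each hyperplane $\{\theta_j=0\}$ with $1\le j\le d$, and carrying the bookkeeping that lets one read $Z(M)$ off the fibers (legitimate since $M$ is $\kk[\Theta]$-free, which is also what makes $\widehat H^{0}$ and $H^{1}$ commute with the relevant localizations). In both cases this reduces to recognizing the fiber, up to its permutation action, as an induced module $\operatorname{Ind}^{S_{d+1}}_{S_\mu}$ of a square-zero extension on which $S_\mu$ acts through a quotient that fixes the nilpotent generator — a genuinely characteristic-$2$ phenomenon — and then applying Frobenius reciprocity; the rest is formal.
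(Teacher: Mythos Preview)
Your strategy is correct, and it takes a route genuinely different from the paper's. The paper passes to the $\mathfrak{A}_n$-invariant subrings---each free of rank two over $\kk[\Theta]$ with explicit bases $\{1,D\}$ and $\{1,\widehat D\}$---writes the induced $C_2\cong\mathfrak{S}_n/\mathfrak{A}_n$-equivariant map on these bases, and by computing $\phi(\widehat D+\tau\widehat D)$ in two ways forces $D+\tau D$ to divide $\theta_1\cdots\theta_d$ in $\kk[\Theta]$; this is then contradicted by exhibiting a single monomial with coefficient $3$ in $\theta_1\cdots\theta_d$ that is absent from $D+\tau D$. Your approach replaces this explicit basis calculation with a geometric invariant: the identifications $Z(\kk[\Delta_d])=\{\text{repeated root}\}$ (irreducible, as the image of an irreducible parameter space) and $Z(\kk[\Sd\Delta_d])\supseteq\bigcup_{j=1}^{d}\{\theta_j=0\}$ are correct, the local fibre analyses you sketch go through (the key point in both cases being that in characteristic~$2$ the relevant transposition fixes the square-zero tangent direction, so the induced module has two-dimensional invariants by Frobenius reciprocity), and since the generic fibre of $\kk[\Sd\Delta_d]$ is the regular representation, the jump locus is proper and hence reducible once $d\ge 2$. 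The paper's argument is shorter and entirely elementary; yours is coordinate-free, explains the obstruction as a mismatch of ramification loci over $\operatorname{Spec}\kk[\Theta]$, and would adapt more readily to other balanced complexes. A slight simplification of your argument: it already suffices to observe that the generic point of $\{\theta_1=0\}$ lies in $Z(\kk[\Sd\Delta_d])$ but not in the discriminant hypersurface, so the irreducibility/reducibility comparison can be bypassed entirely.
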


\begin{theorem}[Positive result]\label{thm:yes-CM-in-coprime}
    Let $\Delta$ be a finite boolean complex that is Cohen--Macaulay over a field $\kk$, and suppose $G$ is a group of automorphisms of $\Delta$ whose order is a unit in $\kk$. Then there exists a graded $G$-equivariant $\kk[\Theta]$-module isomorphism $\kk[\Sd\Delta]\rightarrow\kk[\Delta]$, and an algorithm to compute it explicitly.
\end{theorem}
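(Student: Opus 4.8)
The plan is to realize $\kk[\Sd\Delta]$ as the associated graded of $\kk[\Delta]$ for a $G$-stable, $\kk[\Theta]$-compatible filtration, and then to split this degeneration by a single $G$-equivariant $\kk[\Theta]$-linear map, the coprime-characteristic hypothesis entering only --- but essentially --- at the splitting step. The structural input is ASL theory: $\kk[\Delta]$ is an algebra with straightening law on the face poset $P$ of $\Delta$, with one generator $x_F$ per nonempty face $F$, and $\kk[\Sd\Delta]$ is the associated discrete ASL, i.e.\ the Stanley--Reisner ring of the order complex of $P\setminus\{\emptyset\}$. Present both as quotients of $S = \kk[x_F : F \in P,\ F \neq \emptyset]$ and weight $S$ by $w(x_F) = \varphi(\rk F)$ with $\varphi$ strictly concave. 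Since rank is modular on the boolean intervals of $P$, strict concavity forces each straightening relation $x_F x_G - x_{F \wedge G}\bigl(\textstyle\sum_H x_H\bigr)$ (over the incomparable pairs $F,G$) to have strictly $w$-dominant term $x_F x_G$, so by the ASL basis theorem $\operatorname{gr}_w \kk[\Delta] \cong \kk[\Sd\Delta]$. The point of taking $w$ to be a function of rank is that $w$ is then $G$-invariant, so the $w$-filtration of $\kk[\Delta]$ is $G$-stable; moreover each $\Theta_i = \sum_{\rk F = i} x_F$ is $w$-homogeneous and $G$-fixed, so this filtration is compatible with the $\kk[\Theta][G]$-module structure and $\operatorname{gr}_w \kk[\Delta] \cong \kk[\Sd\Delta]$ as graded $\kk[\Theta][G]$-modules. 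Finally, $\kk[\Delta]$ is Cohen--Macaulay with $\Theta$ a linear system of parameters, hence $\kk[\Theta]$-free, and by Duval's theorem so is $\kk[\Sd\Delta]$.

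The first of Garsia's two tools, generalized to track the $\kk[\Theta]$-module structure, is a transfer lemma: if $\mathcal B \subseteq S$ is a set of standard monomials whose images form a $\kk[\Theta]$-basis of $\kk[\Sd\Delta]$, then the images of those same monomials in $\kk[\Delta]$ form a $\kk[\Theta]$-basis of $\kk[\Delta]$. Fix such a $\mathcal B$ and let $\Phi_0 \colon \kk[\Sd\Delta] \to \kk[\Delta]$ be the $\kk[\Theta]$-linear isomorphism carrying each $b \in \mathcal B$ to its namesake monomial in $\kk[\Delta]$; because any element of $\kk[\Sd\Delta]$ expands over $\mathcal B$ with $w$-homogeneous coefficients in $\kk[\Theta]$ and straightening in $\kk[\Delta]$ strictly lowers $w$, the map $\Phi_0$ respects the $w$-filtration and induces the identity on the associated graded $\kk[\Sd\Delta]$. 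Now set $\Phi := |G|^{-1} \sum_{g \in G} g \circ \Phi_0 \circ g^{-1}$: it is $\kk[\Theta]$-linear (every $g$ fixes $\Theta$) and $G$-equivariant, and since $w$ is $G$-invariant each $g$ preserves the $w$-filtration, so each conjugate $g\Phi_0 g^{-1}$ again respects it with associated graded $\operatorname{gr}_w(g)\circ\mathrm{id}\circ\operatorname{gr}_w(g)^{-1} = \mathrm{id}$; hence $\operatorname{gr}_w \Phi = \mathrm{id}$, and a filtration-respecting map with identity associated graded between modules finite-dimensional in each degree is an isomorphism. This $\Phi$ is the graded $G$-equivariant $\kk[\Theta]$-module isomorphism claimed.

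To make the construction effective, the second tool supplies $\mathcal B$ explicitly. Since $\Sd\Delta$ is a balanced Cohen--Macaulay complex --- canonically colored by dimension, the coloring preserved by $\Aut(\Delta)$ --- its Stanley--Reisner ring carries an explicit free basis over the colorful l.s.o.p.\ $\Theta$, produced by a shelling or a rank-selection argument, whose elements are standard monomials and hence $w$-homogeneous. With such a $\mathcal B$ and a fixed Gr\"obner basis of $I_\Delta$ in hand, every step of the construction above --- forming the degeneration, writing out $\Phi_0$, averaging over $G$ --- becomes a finite computation; that is the promised algorithm.

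I expect the real work to be in assembling these pieces rather than in any single idea: verifying that the straightening relations form a Gr\"obner basis for the rank-weight $w$ at the generality of boolean (not merely simplicial) complexes; proving the transfer lemma in its module-over-$\kk[\Theta]$ form, where Garsia's original deals with $\kk$-bases of Stanley--Reisner rings; and pinning down a usable explicit standard-monomial $\kk[\Theta]$-basis of $\Sd\Delta$. The conceptually subtlest point --- and the reason the coprime-characteristic hypothesis cannot be avoided --- is that basis-chasing alone can never produce a $G$-equivariant map: when $G$ acts nontrivially on a graded component of $\kk[\Sd\Delta]/\Theta\kk[\Sd\Delta]$ there is no $G$-stable set of standard monomials forming a $\kk[\Theta]$-basis, so the averaging step is forced, and it is exactly the $G$-invariance of the weight filtration --- the choice of $w$ as a function of rank alone --- that then keeps the averaged map invertible.
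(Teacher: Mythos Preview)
Your proposal is correct and follows essentially the same architecture as the paper's constructive proof: filter $\kk[\Delta]$ so that $\kk[\Sd\Delta]$ is the associated graded, transfer a standard-monomial $\kk[\Theta]$-basis via the Garsia map to build a non-equivariant $\kk[\Theta]$-isomorphism $\Phi_0$, then average over $G$ and use the $G$-invariance of the filtration to see the averaged map still has identity associated graded, hence is invertible.

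The one substantive difference is your choice of filtration. The paper filters by \emph{shape}, i.e., by partitions under dominance order (a partial order on $\Part_n$), and then has to argue separately (Propositions~6.5 and~6.6) that a shape-filtered map which agrees with $\Garsia$ ``in the top shape'' stays an isomorphism after averaging. You instead linearize dominance via a strictly concave $\varphi$, obtaining a scalar weight $w$; since strictly concave functions are exactly the linear functionals monotone for dominance, this is a total-order refinement of the paper's filtration. The payoff is that your endgame is cleaner---``filtered map with identity associated graded is an isomorphism'' is immediate---at the cost of the extra combinatorial information the paper's shape decomposition carries. Two minor remarks: producing $\mathcal B$ ``by a shelling'' is not guaranteed, since Cohen--Macaulayness of $\Sd\Delta$ does not imply shellability, but your hedge toward a rank-selection argument is exactly what the paper does (Section~6.3, via either Gr\"obner bases or Garsia's linear-algebraic method); and Garsia's original transfer already handles module bases over the parameter ring, not merely $\kk$-bases.
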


Cohen--Macaulayness implies that both $\kk[\Delta]$ and $\kk[\Sd\Delta]$ are module-free over $\kk[\Theta]$, thus they are certainly $\kk[\Theta]$-module isomorphic; general theory implies that the isomorphism can be taken to be graded. Therefore, the key points in Theorem~\ref{thm:yes-CM-in-coprime} are the existence of a {\em $G$-equivariant} isomorphism, and its explicit construction. And because the $d$-simplex is Cohen--Macaulay in any characteristic, $\kk[\Delta]$ and $\kk[\Sd\Delta]$ are non-equivariantly $\kk[\Theta]$-module isomorphic in the example in Theorem~\ref{thm:no-equivariant-iso}, and the key point is the impossibility of $G$-equivariance. It remains plausible that Adams and Reiner's original conjecture on the equivariant Betti numbers \cite[Conjecture~6.1]{adams-reiner} holds, and also that the weaker (non-equivariant) form of Murai's question / Adams' conjecture has a positive answer. 

Theorem~\ref{thm:no-equivariant-iso} is based on a hands-on analysis of what the existence of an equivariant isomorphism would force upon subrings. In particular, in the situation of the theorem, the automorphism group is $\mathfrak{S}_n$, the symmetric group on $n$ points with $n=d+1$, and a $G$-equivariant $\kk[\Theta]$-module isomorphism would also imply the existence of a $C_2 \cong \mathfrak{S}_n/\mathfrak{A}_n$-equivariant $\kk[\Theta]$-module isomorphism between the $\mathfrak{A}_n$-invariant subrings (where $\mathfrak{A}_n$ is the alternating subgroup). These have a simple description as free $\kk[\Theta]$-modules of rank two, and we find a contradiction by working explicitly with bases.

The existence part of Theorem~\ref{thm:yes-CM-in-coprime} is proven in two different ways. One is via the explicit construction of an isomorphism. The other is a nonconstructive proof that hews closely to ideas in \cite{adams-reiner}, and was developed in conversation with Victor Reiner.

The main work of this paper is the proof of Theorem~\ref{thm:yes-CM-in-coprime} via the explicit construction of a $G$-equivariant $\kk[\Theta]$-module isomorphism. It is based on methods developed by A. Garsia \cite{garsia}, which we adapt to the present context. When $\Delta$ is Cohen--Macaulay, Garsia's techniques allow to transfer a $\kk[\Theta]$-module basis for $\kk[\Sd\Delta]$ to a $\kk[\Theta]$-module basis for $\kk[\Delta]$, from which can be constructed a non-equivariant isomorphism $\Phi$. Further, the same ideas used to prove Garsia's basis transfer theorem also allow us to show that, in the coprime characteristic situation, the equivariant map obtained by averaging $\Phi$ over the group $G$ remains an isomorphism. Finally, a different circle of ideas from \cite{garsia} allows to construct a $\kk[\Theta]$-module basis for $\kk[\Sd\Delta]$ in the first place.

Beyond the proofs of Theorems~\ref{thm:no-equivariant-iso} and \ref{thm:yes-CM-in-coprime}, an important contribution of the present work is the reformulation and re-presentation of the ideas from \cite{garsia} that we use. In the context of adapting them, we do a significant reorganization of these ideas to draw out and foreground what we view as the underlying conceptual picture of $\kk[\Sd\Delta]$ and $\kk[\Delta]$ that they provide. Specifically:
\begin{itemize}
    \item The method for transferring bases presented in \cite[Section~6]{garsia} in the context of partition rings (and later generalized in various directions in \cite{baclawski1981combinatorial, baclawski1981rings, garsia-stanton}), is formulated here (for an arbitrary boolean complex $\Delta$) as resulting from the underlying fact that $\kk[\Delta]$ is  {\em filtered over the poset of partitions with respect to dominance order}, and $\kk[\Sd\Delta]$ is the associated graded algebra; see Section~\ref{sec:garsia} (and especially Section~\ref{sec:filtering-garsia} and \ref{sec:garsia-transfer}) below.
    \item The linear-algebraic tests of Cohen--Macaulayness of a ranked poset given in \cite[Section~3]{garsia}, are here formulated, at the generality of an arbitrary balanced boolean complex, as springing from a beautiful characterization of Cohen--Macaulayness in terms of a certain subspace arrangement in a single finite-dimensional vector space over $\kk$; see Section~\ref{sec:garsia-LA-CM} (and especially Theorem~\ref{thm:garsia-LA-CM}) below.
\end{itemize} 

The structure of the paper is as follows. In Section~\ref{sec:background}, we give background on $\kk[\Delta]$ and $\kk[\Sd\Delta]$, and fix the notation used throughout. Sections~\ref{sec:garsia} and \ref{sec:garsia-LA-CM} are explications and generalizations of tools from \cite{garsia}, as follows. Section~\ref{sec:garsia} concerns the grading of $\kk[\Sd\Delta]$ and filtering of $\kk[\Delta]$ by partitions ordered by dominance, and, using this, explicates Garsia's method for transferring bases. Section~\ref{sec:garsia-LA-CM} gives Garsia's linear-algebraic characterization of Cohen--Macaulayness in terms of a certain subspace arrangement. Section~\ref{sec:counterex} proves Theorem~\ref{thm:no-equivariant-iso} (the counterexample to Question~\ref{q:murai}). Section~\ref{sec:positive} proves Theorem~\ref{thm:yes-CM-in-coprime}, using the tools developed in Sections~\ref{sec:garsia} and \ref{sec:garsia-LA-CM}. 

\section{Setup and background}\label{sec:background}

\subsection{Boolean complexes, barycentric subdivisions, and Stanley--Reisner rings}

We assume the reader is familiar with the notion of a finite {\em simplicial complex} $\Delta$, its associated {\em Stanley--Reisner ring} or {\em face ring} $\kk[\Delta]$ over a given field $\kk$, and its {\em geometric realization} $|\Delta|$---which we view either as a bare topological space or, more richly, as a CW complex. We also assume familiarity with the {\em face poset} $P(\Delta)$, and the {\em barycentric subdivision} $\Sd\Delta$ (although they are described in passing below). References on the Stanley--Reisner ring include \cite{bruns-herzog, stanley1996combinatorics, miller2005combinatorial}. See \cite{wachs2006poset} for the face poset and barycentric subdivision.

A {\em boolean complex} $\Delta$  (name introduced in \cite{garsia-stanton}; also known as a {\em simplicial cell complex} \cite[Section~2.8]{buchstaber-panov} or a {\em generalized simplicial complex} \cite[Section~2.2]{kozlov}), is a finite regular CW complex in which every cell is a simplex with its standard regular CW structure, and the attaching maps are homeomorphisms sending cells homeomorphically to cells.\footnote{The definition is written out carefully in \cite[Definition~2.41]{kozlov} (under the name ``generalized simplicial complex"). We will only be viewing boolean complexes, and maps between them, as objects that are fully specified by combinatorial data. Thus, we view each cell as parametrized by the set of convex combinations of its vertices, and all maps between boolean complexes must send cells to cells and preserve convex combination. We also require this of the attaching maps involved in the construction of the CW complex in the first place.} It is a generalization of a simplicial complex in which a pair of faces can meet along an arbitrary subcomplex rather than necessarily a single face (for example, two faces may meet at all of their vertices without being identical). One natural way they arise is as quotients of the Coxeter complex of a reflection group by a subgroup of that reflection group---this was the motivation in \cite{garsia-stanton}---or, more generally, as  quotients of balanced simplicial complexes by groups of label-preserving automorphisms.

The usual language of simplicial complexes is readily imported into the context of boolean complexes. Cells are {\em faces}. The $0$-cells are {\em vertices}. The $1$-cells are {\em edges}. One defines the {\em face poset} as for any CW complex: the elements are the cells, and for cells $\alpha,\beta$, $\alpha \preceq \beta$ means that $\alpha$ is contained in $\beta$'s closure, and we say in this case that {\em $\alpha$ is a face of $\beta$}, or, more briefly, {\em $\alpha$ belongs to $\beta$}. (One also says in this case that $\alpha$ and $\beta$ are {\em incident}, although this does not specify the direction of containment.) The maximal elements in the face poset are {\em facets}. If all facets have the same dimension, then the complex is {\em pure}.

A simple example of a boolean complex that is not a simplicial complex is a pair of vertices $v,w$ connected by a pair of distinct edges $\alpha,\beta$. See Figure~\ref{fig:running-ex}. We use this as a running example in the below.

\begin{figure}
\begin{center}
\includegraphics{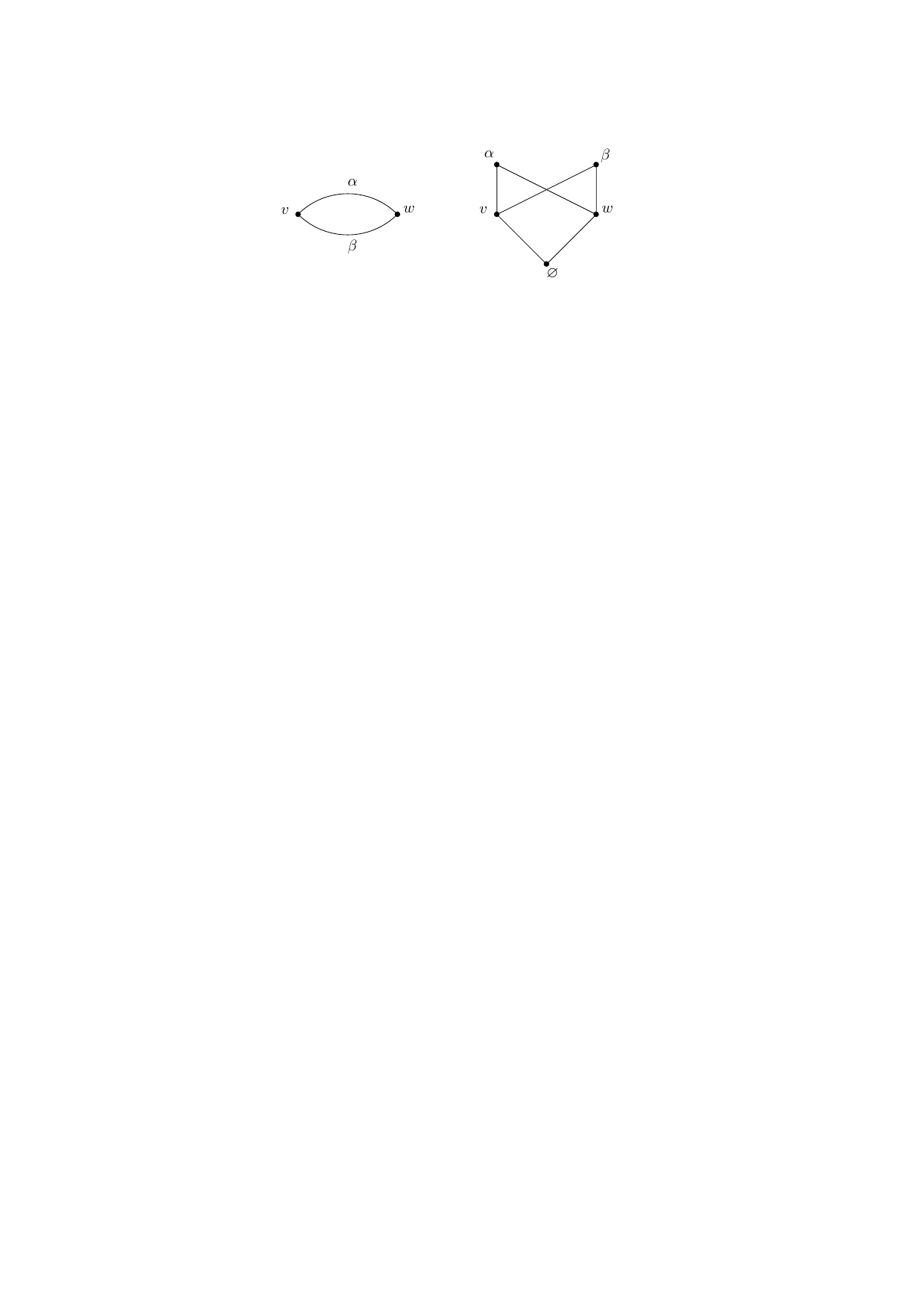}
\end{center}
\caption{Left: a boolean complex $\Delta$ that is not a simplicial complex. Right: its augmented face poset $\widehat P(\Delta)$ (including the minimal element $\varnothing$), per Definition~\ref{def:SR-ring-of-boolean}.}\label{fig:running-ex}
\end{figure}

The face poset of a boolean complex, with a minimal (``empty") face appended, is called a {\em simplicial poset} \cite{stanley1991f} (or a {\em poset of boolean type} \cite{bjorner1984posets}). Simplicial posets can be recognized by the fact that they have a unique minimal element and every lower interval is a finite boolean lattice (i.e., isomorphic to the poset of subsets of a finite set, ordered by inclusion).\footnote{The combinatorial literature on boolean complexes and simplicial posets tends, as a generality, to elide the distinction between the poset and the CW complex, since either can be reconstructed from the other. For example, \cite{garsia-stanton}, which introduced the term {\em boolean complex}, actually defined it as a kind of poset, in spite of the word ``complex". We make an effort to be careful to keep the two notions separate in our language, although we probably have not fully succeeded.}

The Stanley--Reisner ring of a boolean complex $\Delta$ (equivalently, of its associated simplicial poset) was defined in \cite{stanley1991f} and studied further in \cite{reiner, duval1997free}. In hindsight, the construction was already implicit in \cite{garsia-stanton}. We review the definition, and discuss pertinent properties. The discussion is formatted as a sequence of numbered paragraphs labeled ``Setup" for later cross-referencing.

\begin{definition}[Stanley--Reisner ring of a boolean complex]\label{def:SR-ring-of-boolean}
    Let $\Delta$ be a boolean complex. Let $P(\Delta)$ be its face poset, and let $\widehat P(\Delta)$ be its {\em augmented face poset}, constructed from $P(\Delta)$ by appending a minimal face $\varnothing$, i.e., 
    \[
    \widehat P(\Delta):= P(\Delta) \cup \{\varnothing\},
    \] with $\varnothing \preceq\alpha$ for all $\alpha\in P(\Delta)$. Let $\kk$ be a field, and let $S$ be a polynomial ring over $\kk$ with indeterminates $x_\alpha$ indexed by the elements $\alpha$ of $\widehat P(\Delta)$. The {\em Stanley--Reisner ideal} $I_\Delta$ of $\Delta$ is the ideal in $S$ generated by the following elements:
    \begin{enumerate}
        \item $x_\varnothing - 1$\label{item:empty-is-one}
        \item $x_\alpha x_\beta$ for every pair $\alpha,\beta \in \widehat P(\Delta)$ lacking a common upper bound in $\widehat P(\Delta)$\label{item:no-common-upper-bound}
        \item $x_\alpha x_\beta - x_{\alpha \wedge \beta}\sum_{\gamma\in \operatorname{lub}(\alpha,\beta)} x_\gamma$ for every pair $\alpha,\beta \in \widehat P(\Delta)$ possessing a common upper bound in $\widehat P(\Delta)$, where the sum is over the set $\operatorname{lub}(\alpha,\beta)$ of minimal common upper bounds for $\alpha, \beta$\label{item:common-upper-bound}
    \end{enumerate}
    In \ref{item:common-upper-bound}, the meet $\alpha\wedge \beta$ is well defined in $\widehat P(\Delta)$ because, having a common upper bound, $\alpha$ and $\beta$ are in a lower interval of $\widehat P(\Delta)$ together, and, $\widehat P(\Delta)$ being a simplicial poset, every lower interval is a boolean lattice (and therefore a lattice).

    This all established, the {\em Stanley--Reisner ring} (or {\em face ring}) of $\Delta$ is the ring
    \[
    \kk[\Delta]:= S/I_\Delta.
    \]
\end{definition}

\begin{example}\label{ex:x-ring-running-ex}
    In our running example from Figure~\ref{fig:running-ex}, $S$ is the polynomial ring generated by $x_\varnothing$, $x_u$, $x_v$, $x_\alpha$, and $x_\beta$, and the nontrivial generators of the Stanley-Reisner ideal $I_\Delta$ are
    \[
    x_\varnothing-1,\text{ }x_\alpha x_\beta,\text{ and }x_ux_v - x_\varnothing(x_\alpha+x_\beta),
    \]
    respectively of types~\ref{item:empty-is-one}, \ref{item:no-common-upper-bound}, and \ref{item:common-upper-bound} of Definition~\ref{def:SR-ring-of-boolean}. (These are the only nontrivial generators because all pairs of faces besides $u,v$ and $\alpha,\beta$ are comparable; then the corresponding generator of type~\ref{item:common-upper-bound} reduces to zero, as noted in Setup~\ref{set:asl} below.)
\end{example}

\begin{notation}
    In Definition~\ref{def:SR-ring-of-boolean} and going forward, we engage in mild abuse of notation by using the same symbols $x_\alpha$ to denote both the indeterminates of the parent polynomial ring $S$, and their images in the quotient $\kk[\Delta] = S/I_{\Delta}$. We will do the same with the generators $y_\alpha$ discussed below for the Stanley--Reisner ring $\kk[\Sd\Delta]$ of the barycentric subdivision.

    By the same token, the elements \ref{item:empty-is-one}, \ref{item:no-common-upper-bound} and \ref{item:common-upper-bound} above are, prima facie, elements of the ring $S$ that are contained in (and generate) the ideal $I_\Delta$, but in the ring $\kk[\Delta]$ they become {\em equations} $x_\varnothing = 1$,
    $x_\alpha x_\beta = 0$, and
        $x_\alpha x_\beta = x_{\alpha \wedge \beta}\sum_{\gamma\in \operatorname{lub}(\alpha,\beta)} x_\gamma$, and we will refer to them (especially \ref{item:no-common-upper-bound} and \ref{item:common-upper-bound}) whether we mean elements of $I_\Delta \subset S$ or equations in $\kk[\Delta]$.
\end{notation}

\begin{setup}[Simplicial complexes as boolean complexes; relation between the rings]\label{set:boolean-simplicial-identification}
    If $\Delta$ is an abstract simplicial complex on a vertex set $V(\Delta)$, its augmented face poset (including the minimal empty face) is a simplicial poset. (This is the etymology of ``simplicial poset".) So its geometric realization, including the CW structure, is a boolean complex. It is in this sense that boolean complexes generalize simplicial complexes.
    
    With respect to the boolean complex structure, the ring constructed in Definition~\ref{def:SR-ring-of-boolean} is an alternative description of the Stanley--Reisner ring $\kk[\Delta]$ of the simplicial complex $\Delta$, defined in the usual way (i.e., generated by indeterminates $x_v$ indexed by the vertices, mod the squarefree monomials corresponding to non-faces). It is for this reason that the ring of Definition~\ref{def:SR-ring-of-boolean} is reasonably called a Stanley--Reisner ring, and that the notation $\kk[\Delta]$ may be regarded as unambiguous whether $\Delta$ is viewed as a simplicial or boolean complex. The identification is given by beginning with the usual Stanley--Reisner ring $\kk[\{x_v\}_{v\in V(\Delta)}] / (\text{non-faces})$, where $V(\Delta)$ is the vertex set of $\Delta$, and then expanding the set of generators to include one for every face of $\Delta$: 
    \[
    x_\alpha := \prod_{v\in \alpha} x_v,
    \]
    where $\alpha \in \Delta$ is an arbitrary face (including possibly the empty face  $\varnothing$), viewed as a subset of the vertex set $V(\Delta)$, and $x_v$ is the standard generator associated with the vertex $v\in V(\Delta)$.
    
    The need for this identification explains the design of the ideal $I_\Delta$ in Definition~\ref{def:SR-ring-of-boolean}. For $\Delta$ a (true) simplicial complex, the relation \ref{item:empty-is-one} expresses that $x_\varnothing$ is sent to the empty product. The relation \ref{item:no-common-upper-bound} expresses that products of $x_v$'s are zero if they are not supported on a (common) face of $\Delta$. And because for $\Delta$ a (true) simplicial complex the set $\operatorname{lub}(\alpha,\beta)$ of least common upper bounds for faces $\alpha,\beta$ always has at most one element,  the relation \ref{item:common-upper-bound} reduces to the ``diamond relation"
    \[
    \prod_{v\in \alpha}x_v\prod_{v\in \beta}x_v = \prod_{v\in \alpha\wedge\beta}x_v\prod_{v\in \alpha\vee\beta}x_v
    \]
    when $\alpha$ and $\beta$ do belong to a common face of $\Delta$.
\end{setup}

\begin{setup}[ASL structure]\label{set:asl}  
    Note that, if $\alpha$ and $\beta$ are comparable in $\widehat P(\Delta)$, the corresponding generator \ref{item:common-upper-bound} of the Stanley--Reisner ideal reduces to zero. Hence pairwise products $x_\alpha x_\beta$ of the generators for $\kk[\Delta]$ only appear nontrivially as leading terms in the relations \ref{item:no-common-upper-bound}, \ref{item:common-upper-bound} when $\alpha, \beta$ are incomparable. In fact, $\kk[\Delta]$ is an {\em algebra with straightening law (ASL)} on the order dual of the face poset $P(\Delta)$ (with no minimal empty face attached, as $x_\varnothing$ has been identified with $1$ per the relation \ref{item:empty-is-one}). Furthermore, it is graded---see Setup~\ref{set:N-grading} below for the grading. It follows from general theory of graded ASLs that the monomials supported on chains (i.e., totally ordered subsets) in $ P(\Delta)$ actually form a $\kk$-basis, and the relations \ref{item:no-common-upper-bound}, \ref{item:common-upper-bound} allow to systematically write any monomial as a linear combination of monomials that are so supported. See \cite{eisenbud1980introduction, hodge-algebras} or \cite[Chapter~7]{bruns-herzog} for orientation to the theory of ASLs (also sometimes called {\em ordinal Hodge algebras}).

    A different theory of algebras with straightening law was developed by Kenneth Baclawski \cite{baclawski1981rings} at around the same time as De Concini, Eisenbud and Procesi's theory, and $\kk[\Delta]$ fits into Baclawski's framework as well (See Lemma~\ref{lem:filter-main-lemma} below). Although it is the De Concini--Eisenbud--Procesi definition that has been canonized, in some ways Baclawski's development more directly highlights the aspects of the situation that we will make use of. Throughout, when we need to refer to Baclawski's definition, to avoid ambiguity we will speak of a {\em ring with [lexicographic] straightening law in the sense of Baclawski}.
\end{setup}

\begin{definition}[Standard monomials]\label{def:standard-monomial}
    A monomial in the generators of an ASL that is supported on a chain of the underlying poset is called a {\em standard monomial}.  
\end{definition}

Thus, Setup~\ref{set:asl} can be summarized as saying that the standard monomials form a basis for $\kk[\Delta]$, and systematic application of the relations \ref{item:no-common-upper-bound}, \ref{item:common-upper-bound} allows any monomial in the $x_\alpha$'s, $\alpha \in P(\Delta)$, to be rewritten on this basis.

\begin{setup}[The barycentric subdivision; a $\kk$-linear isomorphism]\label{set:garsia}
    The barycentric subdivision $\Sd\Delta$ of a boolean complex $\Delta$ is a (true) simplicial complex, whose vertices are in bijection with the elements in the face poset $P(\Delta)$, and whose faces are in bijection with the chains in $P(\Delta)$. In other words, it is the order complex of the poset $P(\Delta)$. Thus the Stanley--Reisner ring $\kk[\Sd\Delta]$ is nothing but the Stanley--Reisner ring $\kk[P(\Delta)]$ of the poset $P(\Delta)$; it has generators $y_\alpha$, $\alpha\in P(\Delta)$, such that a monomial in the $y_\alpha$'s is nonzero if and only if it is supported on a chain in $P(\Delta)$. Thus $\kk[\Sd\Delta]$ has a $\kk$-basis consisting of monomials in the $y_\alpha$'s that are supported on chains (i.e., standard monomials). It follows, in view of Setup~\ref{set:asl}, that there is a $\kk$-linear isomorphism
    \[
    \Garsia:\kk[\Sd\Delta]\rightarrow \kk[\Delta]
    \]
    given by mapping
    \[
    y_\alpha \mapsto x_\alpha
    \]
    for each $\alpha \in P(\Delta)$, multiplicatively extending to standard monomials, and then linearly extending to all of $\kk[\Sd\Delta]$. We will have much more to say about this map in Section~\ref{sec:garsia}. We here note only that it is not a ring homomorphism: we have $y_\alpha y_\beta = 0$ in $\kk[\Sd\Delta]$ whenever $\alpha,\beta$ are incomparable in $P(\Delta)$ since in this case $y_\alpha y_\beta$ is not supported on a chain; but the corresponding $x_\alpha x_\beta \in \kk[\Delta]$ may be nonzero if $\alpha,\beta$ have a common upper bound, per relation \ref{item:common-upper-bound} of Definition~\ref{def:SR-ring-of-boolean}.
\end{setup}

\begin{example}\label{ex:y-ring-running-ex}
    In our running example from Figure~\ref{fig:running-ex}, we have
    \[
    \kk[\Sd\Delta]=\kk[y_u,y_v,y_\alpha,y_\beta]/(y_uy_v,y_\alpha y_\beta).
    \]
    Then the failure of $\Garsia$ to be a ring homomorphism is illustrated by the fact that $y_uy_v=0$ in $\kk[\Sd\Delta]$, so $\Garsia(y_uy_v)=0$, but on the other hand $\Garsia(y_u)\Garsia(y_v)=x_ux_v=x_\varnothing(x_\alpha+x_\beta) = x_\alpha+x_\beta \neq 0$ (see Example~\ref{ex:x-ring-running-ex}).
\end{example}

\begin{setup}[$G$-action]\label{set:garsia-is-equivariant}
    If $G\subseteq\Aut(\Delta)$ is a group of automorphisms of $\Delta$, then $G$ acts in the natural way on $P(\Delta)$, $\kk[\Delta]$, and $\kk[\Sd\Delta]$. We denote the action of $\sigma \in G$ on $f\in \kk[\Delta]$ or $\kk[\Sd\Delta]$, or $\alpha\in P(\Delta)$, by $\sigma\cdot f$, respectively $\sigma\cdot \alpha$. 
\end{setup}

\begin{setup}[The barycentric subdivision is the discrete ASL]\label{set:discrete-asl}
    Every ASL has a corresponding {\em discrete ASL}, in which the product of any pair of  generators corresponding to incomparable elements in the underlying poset is zero. (In other terms, the discrete ASL is the Stanley--Reisner ring of the [order complex of the] underlying poset.) Setup~\ref{set:garsia}, shows, in view of Setup~\ref{set:asl}, that $\kk[\Sd\Delta]$ is the discrete ASL associated with the ASL $\kk[\Delta]$.
\end{setup}

\begin{setup}[$\NN$-grading; ranked poset]\label{set:N-grading}
    The rings $\kk[\Delta]$ and $\kk[\Sd\Delta]$ carry natural $\NN$-gradings, with respect to which the $\kk$-linear isomorphism $\Garsia$ of Setup~\ref{set:garsia} is a graded map, as follows. 
    
    A poset is {\em ranked} if for each element $\alpha$, the lengths of all saturated chains connecting it to any minimal element are equal; the common length of these chains is denoted $\rk(\alpha)$. The poset $\widehat P(\Delta)$ is ranked: for any $\alpha\in \widehat P(\Delta)$, any saturated chain from $\varnothing$ to $\alpha$ in $\widehat P(\Delta)$ corresponds to a full flag in the closure of the cell $\alpha$ in the regular CW complex $\Delta$, so its length is determined by the dimension of $\alpha$; specifically, $\rk(\alpha) = \dim \alpha + 1$. Set
    \[
    \deg x_\alpha, \deg y_\alpha := \rk(\alpha).
    \]
    By convention, whenever we write $\rk(\alpha)$, the rank is computed in $\widehat P(\Delta)$ (not $P(\Delta)$).

    This assignment (together with the usual convention that the ground field lives in degree $0$) induces $\NN$-gradings on both $\kk[\Delta]$ and $\kk[\Sd\Delta]$. 
\end{setup}

\begin{example}
    In the running example from Figure~\ref{fig:running-ex}, we have 
    \[
    \deg x_v, \deg x_w, \deg y_v, \deg y_w = 1
    \]
    and
    \[
    \deg x_\alpha, \deg x_\beta, \deg y_\alpha, \deg y_\beta = 2.
    \]
    So, for example, 
    \[
    \deg x_w^2 x_\alpha^3 = \deg y_w^2 y_\alpha^3 = 2\cdot 1 + 3\cdot 2 = 8.
    \]
\end{example}

\begin{remark}
    The $\NN$-grading defined in Setup~\ref{set:N-grading} is the one that coincides with the standard grading in the case that $\Delta$ is a (true) simplicial complex (i.e., the grading assigning degree $1$ to each of the generators $x_v$, $v\in V(\Delta)$). However, if $\Delta$ is a boolean complex that is not isomorphic to a simplicial complex, it may not necessarily be a standard grading, i.e., $\kk[\Delta]$ may not necessarily be generated as a $\kk$-algebra by its degree-1 component. For instance, in our running example from Figure~\ref{fig:running-ex}, we have 
    $
    x_vx_w = x_\alpha + x_\beta 
    $
    as in Example~\ref{ex:y-ring-running-ex}. However, neither $x_\alpha$ nor $x_\beta$ is individually in the subalgebra $\kk[x_v,x_w]\subset \kk[\Delta]$ generated by the degree-1 elements.

    We also highlight that the $\NN$-grading on the barycentric subdivision ring $\kk[\Sd\Delta]$ given in Setup~\ref{set:N-grading} is different from the grading on $\kk[\Sd\Delta]$ obtained from the simplicial complex structure of $\Sd\Delta$ by assigning all the generators $y_\alpha$, $\alpha \in P(\Delta)=V(\Sd\Delta)$ the degree $1$. We will have no use for this latter grading in the present work.
\end{remark}

\begin{definition}[Balanced boolean complexes]\label{def:balanced}
    A simplicial complex, and more generally a boolean complex $\Delta$, is {\em balanced} if there is a labeling (aka coloring) of the vertex set by $\dim \Delta + 1$ labels (colors), so that all the vertices belonging to any one facet have distinct labels. A specific such labeling/coloring is a {\em balancing} of $\Delta$.
\end{definition}

\begin{remark}
    In the running example of Figure~\ref{fig:running-ex}, the dimension is $1$, so a balancing requires $2=\dim \Delta+1$ labels. It is achieved by labeling $v$ with one label and $w$ with another.
\end{remark}

\begin{setup}[Barycentric subdivision is balanced]\label{set:sd-balanced}
    For any boolean complex $\Delta$, the barycentric subdivision $\Sd\Delta$  is automatically balanced by the labeling that assigns to each vertex in $\Sd\Delta$ corresponding to the face $\alpha\in \Delta$ the label $\rk(\alpha) = \dim \alpha + 1$.
\end{setup}

\subsection{The parameter subring}\label{sec:parameter-subring}

Let $R$ be a $\kk$-algebra that is finitely generated, $\NN$-graded, and connected (i.e., $R_0=\kk$), and let  $\vartheta_1,\dots,\vartheta_n\in R$  be a homogeneous system of parameters. Then $\kk[\vartheta_1,\dots,\vartheta_n]$ is called a {\em parameter subring};  it is $\NN$-graded because the $\vartheta_j$'s are homogeneous, and with respect to it, $R$ is a graded module. Question~\ref{q:murai} is formulated with respect to a specific polynomial ring that occurs as a parameter subring in both $\kk[\Delta]$ and $\kk[\Sd\Delta]$. We introduce that ring here, following the notation in \cite{adams-reiner}. 

Let $d :=\dim \Delta$. Then the length of the poset $\widehat P(\Delta)$ is $n := d+1$. For $j=1,\dots,n$, define
\[
\theta_j := \sum_{\substack{\alpha\in P(\Delta) \\ \rk(\alpha)=j}} x_\alpha \in \kk[\Delta]
\]
and
\[
\gamma_j := \sum_{\substack{\alpha\in P(\Delta) \\ \rk(\alpha)=j}} y_\alpha \in \kk[\Sd\Delta].
\]
These are known as the {\em rank-row polynomials} \cite{garsia, garsia-stanton} or the {\em universal parameters} \cite{herzog2021systems}. The $\gamma$'s are also referred to in \cite{adams-reiner} as the {\em colorful parameters} because they are sums across the label classes (aka color classes) of the balancing of $\Sd\Delta$ described in Setup~\ref{set:sd-balanced}. As the names indicate, they form homogeneous (with respect to the gradings described in Setup~\ref{set:N-grading}) systems of parameters for $\kk[\Delta]$ and $\kk[\Sd\Delta]$ respectively \cite[Theorem~6.3]{hodge-algebras}. Therefore, the subrings
\[
\kk[\Theta]:=\kk[\theta_1,\dots,\theta_n]\subset \kk[\Delta]
\]
and
\[
\kk[\Gamma]:=\kk[\gamma_1,\dots,\gamma_n]\subset \kk[\Sd\Delta]
\]
are polynomial rings in the same number of indeterminates, thus isomorphic. 

\begin{notation}
Throughout the paper, we use $\Theta$ and $\Gamma$ as abbreviations for the sequences $\theta_1,\dots,\theta_n$ and $\gamma_1,\dots,\gamma_n$ respectively, in contexts where the latter are generating something. But we rely on context to communicate whether they are generating a $\kk$-algebra or an ideal. In Sections~\ref{sec:garsia-LA-CM} and \ref{sec:construct-a-basis}, we do the same with a ring $\kk[\Lambda]$ that generalizes $\kk[\Sd\Delta]$, and its homogeneous system of parameters $\Omega:=\omega_1,\dots,\omega_n$.
\end{notation}

We denote by $\Psi$ the graded $\kk$-algebra isomorphism
\[
\Psi: \kk[\Gamma]\rightarrow\kk[\Theta]
\]
that extends
\[
\gamma_j \mapsto \theta_j,\; j=1,\dots,n.
\]
Note that, while $\Psi$ coincides with the map $\Garsia$ of Setup~\ref{set:garsia} on $\gamma_1,\dots,\gamma_n$, it does not coincide with $\Garsia$ on all of $\kk[\Gamma]$; see Example~\ref{ex:parameters-under-garsia} below.

We use $\Psi$ (or rather, $\Psi^{-1}$) to view $\kk[\Sd\Delta]$ as a $\kk[\Theta]$-module for the sake of Question~\ref{q:murai} and Theorems~\ref{thm:no-equivariant-iso} and \ref{thm:yes-CM-in-coprime}. More precisely:

\begin{setup}[{$\kk[\Theta]$}-module structure]\label{set:theta-module}
    The $\kk[\Theta]$-module structure of $\kk[\Sd\Delta]$ is given by the composed map
\[
\kk[\Theta]\xrightarrow{\Psi^{-1}} \kk[\Gamma]\hookrightarrow \kk[\Sd\Delta].
\]
The $\kk[\Theta]$-module structure of $\kk[\Delta]$ is given by the canonical inclusion
\[
\kk[\Theta]\hookrightarrow \kk[\Delta].
\]
\end{setup}

\begin{remark}
We tend to think of $\kk[\Theta]$ and $\kk[\Gamma]$ as identified along $\Psi$, although we retain the notational distinction because the map $\Garsia$ will be important to us and is not equal to $\Psi$ when restricted to $\kk[\Gamma]$, as noted above.
\end{remark}

\subsection{Cohen--Macaulayness}\label{sec:Cohen--Macaulayness}

Theorem~\ref{thm:yes-CM-in-coprime} requires the hypothesis that the boolean complex $\Delta$ be Cohen--Macaulay over the field $\kk$, so we review some background information about Cohen--Macaulayness of rings and simplicial and boolean complexes. For a comprehensive treatment of the theory of Cohen--Macaulay rings, see \cite{bruns-herzog}. For more on Cohen--Macaulay complexes, see \cite[Chapter~4]{wachs2006poset}.

All the rings $R$ of relevance to this paper are finitely generated $\kk$-algebras that are $\NN$-graded and connected (i.e., $R_0=\kk$). For such rings, Cohen--Macaulayness takes a particularly clean form.

\begin{lemma}\label{lem:hironaka-decomposition}
Let $R$ be a finitely generated, $\NN$-graded, connected $\kk$-algebra. Then the following are equivalent:
\begin{enumerate}
\item $R$ is Cohen--Macaulay.\label{item:CM}
\item There exists a homogeneous system of parameters $\vartheta_1,\dots,\vartheta_n\in R$ such that $R$ is a free $\kk[\vartheta_1,\dots,\vartheta_n]$ module.\label{item:free-over-one-parameter-ring}
\item For any homogeneous system of parameters $\vartheta_1,\dots,\vartheta_n\in R$, $R$ is a free $\kk[\vartheta_1,\dots,\vartheta_n]$-module.\label{item:free-over-any-parameter-ring}
\item For any homogeneous system of parameters $\vartheta_1,\dots,\vartheta_n\in R$, any homogeneous $\kk$-vector space basis of the quotient $R/(\vartheta_1,\dots,\vartheta_n)R$ lifts to a $\kk[\vartheta_1,\dots,\vartheta_n]$-module basis of $R$.\label{item:any-basis-lifts}
\end{enumerate}
\end{lemma}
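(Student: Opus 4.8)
The plan is to prove the cycle of implications $(1)\Rightarrow(4)\Rightarrow(3)\Rightarrow(2)\Rightarrow(1)$; only the first link requires real work, the other three being formal. Throughout write $A := \kk[\vartheta_1,\dots,\vartheta_n]$ for the relevant parameter ring, $\mfm := R_+$ for the irrelevant maximal ideal, and note that $R$ is a finitely generated graded $A$-module whenever $\vartheta_1,\dots,\vartheta_n$ is a homogeneous system of parameters, so all Hilbert series in sight are well defined (every graded piece is finite-dimensional).

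\emph{The formal links.} For $(4)\Rightarrow(3)$: given a homogeneous system of parameters, the quotient $\bar R := R/(\vartheta_1,\dots,\vartheta_n)R$ has Krull dimension $0$, hence is finite-dimensional over $\kk$ and admits a homogeneous $\kk$-basis; lifting that basis and applying (4) exhibits $R$ as a free $A$-module. For $(3)\Rightarrow(2)$ one needs only that at least one homogeneous system of parameters exists, which is graded Noether normalization (e.g.\ \cite[Ch.~1]{bruns-herzog}). For $(2)\Rightarrow(1)$: writing $R=\bigoplus_i Ae_i$, multiplication by $\vartheta_1$ is injective on $R$ because it is injective on the domain $A$, and $R/\vartheta_1R\cong\bigoplus_i(A/\vartheta_1A)e_i$ is again free over the polynomial ring $A/\vartheta_1A\cong\kk[\vartheta_2,\dots,\vartheta_n]$; iterating shows $\vartheta_1,\dots,\vartheta_n$ is a regular sequence on $R$, of length $n=\dim R$ and lying in $\mfm$. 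A finitely generated connected $\NN$-graded $\kk$-algebra possessing a homogeneous regular sequence of length $\dim R$ in $\mfm$ is Cohen--Macaulay (localize at $\mfm$: $\dim R_\mfm=\dim R$ and the sequence remains regular, so $\operatorname{depth}R_\mfm=\dim R_\mfm$; cf.\ \cite[Ch.~2]{bruns-herzog}).

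\emph{The main link $(1)\Rightarrow(4)$.} Fix an arbitrary homogeneous system of parameters with $d_j:=\deg\vartheta_j$. The first step is that $\vartheta_1,\dots,\vartheta_n$ is a \emph{regular sequence} on $R$: $R_\mfm$ is a Cohen--Macaulay local ring of dimension $n$ in which the images of the $\vartheta_j$ form a system of parameters, hence a regular sequence; a homogeneous zerodivisor relation in $R$ would survive in $R_\mfm$ (the annihilator of a nonzero homogeneous element is a proper homogeneous ideal, hence contained in $\mfm$), so regularity of $\vartheta_1$ descends to $R$, and one passes to the graded connected ring $R/\vartheta_1R$ and induct. The second step is a Hilbert series count. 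The short exact sequences $0\to M(-d)\xrightarrow{\vartheta}M\to M/\vartheta M\to 0$ for a degree-$d$ nonzerodivisor $\vartheta$ give, by induction,
\[
H_R(t)=\frac{H_{\bar R}(t)}{\prod_{j=1}^n(1-t^{d_j})}.
\]
Now pick homogeneous lifts $f_1,\dots,f_m\in R$ of a homogeneous $\kk$-basis of $\bar R$, with $e_i:=\deg f_i$ so that $\sum_i t^{e_i}=H_{\bar R}(t)$, and form the graded $A$-module map $\varphi:\bigoplus_{i=1}^m A(-e_i)\to R$ sending the $i$-th generator to $f_i$. Graded Nakayama makes $\varphi$ surjective, since the $f_i$ span $R/\mfm_A R=\bar R$ over $\kk$. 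The source has Hilbert series $\bigl(\sum_i t^{e_i}\bigr)/\prod_j(1-t^{d_j})=H_R(t)$, matching the target; a degreewise-surjective map of graded $\kk$-vector spaces with equal finite degreewise dimensions is a degreewise isomorphism, hence an isomorphism. Thus $\{f_1,\dots,f_m\}$ is an $A$-module basis of $R$, which is statement (4).

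\emph{Main obstacle.} The only non-formal point is the first step of $(1)\Rightarrow(4)$: that a homogeneous system of parameters in a Cohen--Macaulay graded ring is a regular sequence, together with the bookkeeping transferring this fact between $R$ and its localization $R_\mfm$. Everything downstream is the standard Hilbert-series argument underlying the Hironaka decomposition, and everything upstream is formal; in fact the entire lemma is essentially \cite[Ch.~2, \S5.1]{bruns-herzog} recast, and could be cited rather than reproved.
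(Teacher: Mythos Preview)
Your argument is correct and takes a genuinely different route from the paper. The paper cites \cite[Theorem~4.3.5]{benson} for the equivalence of (1), (2), (3) outright, and then handles (3)$\Leftrightarrow$(4) separately: (4)$\Rightarrow$(3) is as you have it, while for (3)$\Rightarrow$(4) the paper uses graded Nakayama to get surjectivity of $\bigoplus_i A(-e_i)\to R$ and then invokes Vasconcelos' theorem (a surjection between isomorphic finitely generated modules over a commutative ring is an isomorphism) to conclude. By contrast, you prove the cycle directly: your (1)$\Rightarrow$(4) first establishes that the homogeneous system of parameters is a regular sequence (the one substantive commutative-algebra step, handled by passing to $R_\mfm$), and then matches Hilbert series to see that the surjection is a degreewise isomorphism. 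Your approach is more self-contained and makes the rank equality transparent without appealing to Vasconcelos; the paper's is terser and offloads the regular-sequence fact entirely to the Benson reference. Both are standard, and as you note, the lemma is essentially textbook material.
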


\begin{remark}
    A given expression of a Cohen--Macaulay $\NN$-graded $\kk$-algebra as a direct sum
    \[
    R = \bigoplus_{i=1}^m \eta_i \kk[\vartheta_1,\dots,\vartheta_n],
    \]
    where $\vartheta_1,\dots,\vartheta_n$ is a homogeneous system of parameters, and $\eta_1,\dots,\eta_m$ is a homogeneous $\kk[\vartheta_1,\dots,\vartheta_n]$-module basis of $R$, is called a {\em Hironaka decomposition} of $R$.
\end{remark}

\begin{proof}[Proof of Lemma~\ref{lem:hironaka-decomposition}]
The equivalence of conditions \ref{item:CM}, \ref{item:free-over-one-parameter-ring}, and \ref{item:free-over-any-parameter-ring} is a special case of \cite[Theorem~4.3.5]{benson}.  Condition~\ref{item:any-basis-lifts} implies condition~\ref{item:free-over-any-parameter-ring} because any graded $\kk$-vector space has a homogeneous basis, and in particular, $R/(\vartheta_1,\dots,\vartheta_n)R$ has one. Condition~\ref{item:free-over-any-parameter-ring} implies condition~\ref{item:any-basis-lifts} by the following standard argument. A homogeneous $\kk$-basis for $R/(\vartheta_1,\dots,\vartheta_n)R$ lifts to a $\kk[\vartheta_1,\dots,\vartheta_n]$-module generating set for $R/(\vartheta_1,\dots,\vartheta_n)R$ by the graded Nakayama lemma. Meanwhile, because $R$ is assumed free as a $\kk[\vartheta_1,\dots,\vartheta_n]$-module (by condition~\ref{item:free-over-any-parameter-ring}), and is of finite rank, say rank $r$ (since $\vartheta_1,\dots,\vartheta_n$ is a system of parameters for $R$), it follows that $R/(\vartheta_1,\dots,\vartheta_n)R$ has dimension $r$ as a $\kk$-vector space. Thus the $\kk[\vartheta_1,\dots,\vartheta_n]$-module map
\[
\kk[\vartheta_1,\dots,\vartheta_n]^r \rightarrow R
\]
that sends a basis for $\kk[\vartheta_1,\dots,\vartheta_n]^r$ to the lifts in $R$ of the given homogeneous $\kk$-basis for $R/(\vartheta_1,\dots,\vartheta_n)R$ is a surjection between isomorphic finitely generated modules. By Vasconcelos' Theorem \cite[Proposition~1.2]{vasconcelos1969finitely}, it is an isomorphism. Thus the lifts in fact form a $\kk[\vartheta_1,\dots,\vartheta_n]$-basis for $R$, confirming condition~\ref{item:any-basis-lifts}.
\end{proof}

For later use, we record a (well-known) relaxation of condition~\ref{item:any-basis-lifts} of Lemma~\ref{lem:hironaka-decomposition} that holds even when $R$ is not Cohen--Macaulay:

\begin{lemma}\label{lem:minimal-module-generating-set}
    Let $R$ be a finitely generated, $\NN$-graded, and connected, but not necessarily Cohen--Macaulay, $\kk$-algebra. Let $\vartheta_1,\dots,\vartheta_n$ be a homogeneous system of parameters. Then a set of homogeneous elements $\eta_1,\dots,\eta_m\in R$ forms a minimal $\kk[\vartheta_1,\dots,\vartheta_n]$-module generating set if and only if the images of $\eta_1,\dots,\eta_m$ in $R/(\vartheta_1,\dots,\vartheta_n)R$ constitute a $\kk$-basis.
\end{lemma}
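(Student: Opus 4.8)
The plan is to prove both implications using the graded Nakayama lemma as the central tool, mimicking the relevant portion of the proof of Lemma~\ref{lem:hironaka-decomposition} but stripping away everything that needed Cohen--Macaulayness. Write $A := \kk[\vartheta_1,\dots,\vartheta_n]$ and $\mfm := \Theta A$, the irrelevant maximal ideal of $A$ generated by the parameters (using $\Theta$ loosely for the sequence). Since $\vartheta_1,\dots,\vartheta_n$ is a homogeneous system of parameters, $R$ is a finitely generated $\NN$-graded $A$-module, and $R/(\vartheta_1,\dots,\vartheta_n)R = R \otimes_A A/\mfm$ is a finite-dimensional graded $\kk$-vector space. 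The whole statement is then an instance of the standard fact that, for a finitely generated graded module over a graded ring with $A_0 = \kk$, a homogeneous subset is a minimal generating set if and only if its image in the quotient by the irrelevant ideal is a $\kk$-basis.

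For the forward direction, suppose $\eta_1,\dots,\eta_m$ is a minimal homogeneous $A$-module generating set of $R$. First I would argue their images $\bar\eta_1,\dots,\bar\eta_m$ span $R/(\vartheta_1,\dots,\vartheta_n)R$ over $\kk$: this is immediate, since the $\eta_i$ generate $R$ over $A$, so they certainly generate $R/\mfm R$ over $A/\mfm = \kk$. Then I would argue the images are linearly independent over $\kk$. If not, some nontrivial $\kk$-linear relation among the $\bar\eta_i$ holds; because the $\eta_i$ and the grading are homogeneous we may take the relation homogeneous, so there is an index, say $m$, and a homogeneous expression $\eta_m = \sum_{i<m} c_i \eta_i + (\text{element of }\mfm R)$ with $c_i \in \kk$; expanding the $\mfm R$ term using the generators $\eta_i$ again shows $\eta_m \in \sum_{i \neq m} A \eta_i$, contradicting minimality. (One must be slightly careful that the $\mfm R$ term, when rewritten via the generators, only contributes terms in $A\eta_i$ with $A$-coefficients of positive degree, which is fine.) Hence $\bar\eta_1,\dots,\bar\eta_m$ form a $\kk$-basis.

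For the reverse direction, suppose the images $\bar\eta_1,\dots,\bar\eta_m$ form a $\kk$-basis of $R/(\vartheta_1,\dots,\vartheta_n)R$. By the graded Nakayama lemma (applicable since $R$ is a finitely generated graded $A$-module and $A$ is graded with $A_0$ a field), the $\eta_i$ generate $R$ as an $A$-module. For minimality: if some $\eta_j$ were redundant, $\eta_1,\dots,\widehat{\eta_j},\dots,\eta_m$ would still generate $R$ over $A$, hence their images would still span $R/\mfm R$ over $\kk$ by the argument in the previous paragraph --- but that image set has only $m-1$ elements, contradicting that $\dim_\kk R/(\vartheta_1,\dots,\vartheta_n)R = m$. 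So no $\eta_j$ is redundant, i.e., the generating set is minimal.

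I do not expect a serious obstacle here; the result is folklore and the proof is a routine application of graded Nakayama. The only point requiring a little care is the bookkeeping in the forward direction --- ensuring that a $\kk$-linear dependence among the $\bar\eta_i$ genuinely upgrades to an $A$-linear dependence witnessing non-minimality, which is where homogeneity of the $\eta_i$ (so that we can choose a homogeneous dependence and match degrees) is used. I would state the graded Nakayama lemma precisely (or cite it, e.g., \cite{bruns-herzog}) and otherwise keep the argument to a few lines.
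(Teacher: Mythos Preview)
Your proposal is correct and follows essentially the same approach as the paper, with graded Nakayama as the engine. The paper's argument is slightly more streamlined: it proves once that $\eta_1,\dots,\eta_m$ generate $R$ over $\kk[\vartheta_1,\dots,\vartheta_n]$ if and only if their images span $R/(\vartheta_1,\dots,\vartheta_n)R$ over $\kk$ (one direction by right-exactness of tensoring with $\kk$, the other by graded Nakayama), and then observes that equivalence of ``generating'' immediately gives equivalence of ``minimally generating.'' Your hands-on treatment of the forward direction (lifting a $\kk$-dependence to an $A$-redundancy) works, but note that the cleaner route is the one you already use in the reverse direction: if the $\bar\eta_i$ are dependent, drop one so the remaining $m-1$ images still span, apply Nakayama to conclude the remaining $m-1$ elements already generate $R$, contradicting minimality. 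This sidesteps the bookkeeping you flag in your parenthetical.
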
 

\begin{proof}
    Viewing $\kk$ as the $\kk[\vartheta_1,\dots,\vartheta_n]$-module $\kk[\vartheta_1,\dots,\vartheta_n]/(\vartheta_1,\dots,\vartheta_n)$, right-exactness of the tensor product by $\kk$ over $\kk[\vartheta_1,\dots,\vartheta_n]$ implies that if $\eta_1,\dots,\eta_m$ generate $R$ over $\kk[\vartheta_1,\dots,\vartheta_n]$, then their images in $R/(\vartheta_1,\dots,\vartheta_n)R$ span it over $\kk$. The converse statement is supplied by the graded Nakayama lemma (as in Lemma~\ref{lem:hironaka-decomposition}). Thus for $\eta_1,\dots,\eta_m$, generation of $R$ over $\kk[\vartheta_1,\dots,\vartheta_n]$ is equivalent to generation of $R/(\theta_1,\dots,\theta_n)R$ over $\kk$. It follows that minimality with respect to generation is also equivalent.
\end{proof}

A simplicial or boolean complex $\Delta$ is said to be Cohen--Macaulay over a field $\kk$ if its Stanley--Reisner ring $\kk[\Delta]$ is Cohen--Macaulay. Foundational work of G. A. Reisner \cite{reisner}, sharpened by J. Munkres \cite{munkres} and generalized to boolean complexes by A. Duval \cite{duval1997free}, shows that Cohen--Macaulayness of $\Delta$ depends only on the homeomorphism class of the total space $|\Delta|$, and in fact, there is a beautiful characterization in terms of the homology of $|\Delta|$, see  \cite[Corollary~5.4.6]{bruns-herzog}.

In particular, if $\Delta$ is Cohen--Macaulay over a given field $\kk$, then so is $\Sd \Delta$ (and vice versa). An $\NN$-graded free module over an $\NN$-graded ring has a basis homogeneous with respect to this grading, so in this situation, both $\kk[\Delta]$ and $\kk[\Sd\Delta]$ have homogeneous bases over $\kk[\Theta]$. But actually, more is true. In Section~\ref{sec:grading-by-shape}, we will show that $\kk[\Sd\Delta]$ has a grading over the monoid $\Part_n$ of partitions with at most $n$ parts, where $n=\dim \Delta + 1$, that refines the $\NN$-grading. This will be called the {\em shape grading}. It follows from general theory (but also from the explicit construction in Section~\ref{sec:construct-a-basis}) that when $\Delta$ is Cohen--Macaulay, $\kk[\Sd\Delta]$ has a module basis over $\kk[\Theta]$ that is homogeneous with respect to the shape grading. In Section~\ref{sec:filtering-garsia}, it will be shown that $\kk[\Delta]$ is (not graded but) filtered over $\Part_n$, in an appropriate sense, and then Theorem~\ref{thm:transfer-bases} will show that shape-homogeneous bases for $\kk[\Sd\Delta]$ over $\kk[\Theta]$ can be used to build bases for $\kk[\Delta]$ over $\kk[\Theta]$. These will then be used in Section~\ref{sec:construction-mod-basis} to construct a non-equivariant $\kk[\Theta]$-module isomorphism between $\kk[\Sd\Delta]$ and $\kk[\Delta]$ that can be deformed into an equivariant isomorphism.

\section{Grading and filtering by shape; the Garsia transfer map}\label{sec:garsia}

In this section, we adapt to the present setting a tool originally introduced by Garsia \cite{garsia} for transferring bases from Stanley--Reisner rings to {\em partition rings}, which are certain combinatorially-defined subrings of polynomial rings. It was later adapted in various directions by Baclawski, Garsia, and Stanton \cite{baclawski1981rings, baclawski1981combinatorial, garsia-stanton}. The present adaptation fits into the framework developed by Baclawski \cite{baclawski1981rings}, and where possible we take advantage of that paper's results to expedite our exposition. However, in order to draw out what we view as the underlying conceptual picture that the method is exploiting---namely, that $\kk[\Delta]$ is {\em filtered by shape}, to be explained below, and $\kk[\Sd \Delta]$ is the associated graded ring---our presentation is structured differently than in these sources, and is mostly self-contained.

We use this tool in Section~\ref{sec:positive} to prove that an appropriately chosen $\kk[\Theta]$-module isomorphism $\kk[\Sd\Delta]\rightarrow \kk[\Delta]$, if it exists, can be made equivariant by averaging. In the Cohen--Macaulay case, it is also used to construct this non-equivariant isomorphism in the first place.

In the context that the ring playing the role of $\kk[\Delta]$ in the theory is an invariant ring of a permutation group acting on a polynomial ring (the setting of \cite{garsia-stanton}), the Garsia transfer method has been used previously in \cite{reiner, hersh} to find bases for certain rings of polynomial permutation invariants, and in \cite{pevzner2024symmetric} to study the module structure of the {\em fixed quotients} of the action of the symmetric group $\mathfrak{S}_n$ on $\kk[\Delta_d]$ and $\kk[\Sd\Delta_d]$ (where $\Delta_d$ is the $d$-simplex, and $n=d+1$).\footnote{Following \cite{pevzner2024symmetric}, given a ring $R$ with an action by a group $G$, the {\em fixed quotient} $R_G$ is the $R^G$-module universal with respect to receiving a $G$-invariant map from $R$. It can be constructed as the quotient of $R$ by the $R^G$-module generated by elements $gr - r$ with $g\in G$ and $r\in R$. It is sometimes called the {\em module of coinvariants}, but the name in \cite{pevzner2024symmetric} avoids  confusion, in the main case when $R$ is a graded, connected algebra and the action by $G$ is graded, with the {\em coinvariant algebra} $R/R_+^GR$; here $R_+^GR$ is the {\em Hilbert ideal}, i.e., the ideal in $R$ generated by the homogeneous invariants of positive degree. Unfortunately, the coinvariant algebra is also often denoted $R_G$.\label{note:fixed-quotient-coinvariant}} It was also used in \cite{huang20130} to analogize the actions of the $0$-Hecke algebra of $\mathfrak{S}_n$ on $\kk[\Delta_d]$ and on $\kk[\Sd\Delta_d]$, and in \cite{thiery2000algebraic} to speed up computation of certain rings of permutation invariants associated to the study of graphs. The method is exposited carefully in \cite[Section~2.5.3]{blum-smith} and \cite[Sections~5.2 and 5.3]{pevzner2024symmetric}.

Garsia's method may be viewed from a certain point of view as a way of formalizing and generalizing the insight behind the classical lexicographic proof of the Fundamental Theorem on Symmetric Polynomials, due to C. F. Gauss \cite[Paragraphs~3--5]{gauss}. This proof was generalized in \cite{gobel} and \cite{reiner1995gobel} to study invariant rings, respectively, of permutation groups acting on  polynomial algebras, and subgroups of Weyl groups acting on the group algebras of weight lattices. The latter generalization was inspired explicitly by \cite{garsia-stanton}. The connection between the Gauss proof and the method of Garsia is made explicit in \cite[Section~2.5.3.3]{blum-smith}.

Essentially the same method (in particular, the underlying filtration of $\kk[\Delta_d]$ drawn out below in Section~\ref{sec:filtering-garsia}) was used in \cite[Section~3]{adin2005descent} to realize the {\em descent representations} (see \cite{solomon1968decomposition}) of the symmetric group $\mathfrak{S}_n$ as subquotients of the coinvariant algebra of $\mathfrak{S}_n$'s canonical action on $\kk[\Delta_d]$. (The descent representations were also realized in \cite[Section~2]{garsia-stanton}, as homogeneous components of the coinvariant algebra of the $\mathfrak{S}_n$ action on $\kk[\Sd\Delta_d]$. The definition of the coinvariant algebra is recalled in  footnote~\ref{note:fixed-quotient-coinvariant}.) 

\subsection{Grading $\kk[\Sd\Delta]$ by shape}\label{sec:grading-by-shape}
The $\NN$-grading on $\kk[\Sd\Delta]$ described in Setup~\ref{set:N-grading} can be refined into an $\NN^n$-grading, where $n=\dim \Delta +1$ (as in Section~\ref{sec:parameter-subring} above), by assigning
\[
\deg_\mathrm{md} y_\alpha := \bfe_{\rk(\alpha)},
\]
where $\alpha \in P(\Delta)$, the rank $\rk(\alpha)$ is calculated in $\widehat P(\Delta)$ as in Setup~\ref{set:N-grading}, and $\bfe_1,\dots,\bfe_n$ form the standard basis for $\NN^n$. The degree of a monomial with respect to this $\NN^n$-grading is called a {\em multidegree}, to distinguish it from the  $\NN$-grading defined in Setup~\ref{set:N-grading}, and the subscript $\mathrm{md}$ (for {\em multidegree}) is used accordingly.

The fact that this assignment induces a grading on $\kk[\Sd\Delta]$ follows, as in Setup~\ref{set:N-grading}, from the fact that $\kk[\Sd\Delta]$ is the quotient of a polynomial ring in the $y_\alpha$'s by a monomial ideal. One recovers the $\NN$-grading of Setup~\ref{set:N-grading} from the present $\NN^n$-grading via the monoid map
\[
\NN^n\rightarrow \NN
\]
extended from
\[
\bfe_j \mapsto j
\]
(for $j=1,\dots,n$). Note that throughout, whenever we refer to $\NN^n$ as a monoid, we mean with respect to the addition structure.

\begin{example}\label{ex:y-ring-multidegree}
    In our running example from Figure~\ref{fig:running-ex}, we have
    \[
    \deg_\mathrm{md} y_v, \deg_\mathrm{md} y_w = \bfe_1
    \]
    and
    \[
    \deg_\mathrm{md} y_\alpha, \deg_\mathrm{md} y_\beta = \bfe_2.
    \]
    So
    \[
    \deg_\mathrm{md} y_w^2y_\alpha^3 = 2\bfe_1 + 3\bfe_2.
    \]
\end{example}

We now change points of view on the grading monoid $\NN^n$. Let  $\Part_n$ be the set of partitions with at most $n$ parts. We make $\Part_n$ into a monoid by adding partitions part-by-part, zero-padding the shorter one if the numbers of parts are different; for example, $(3,2,1) + (5,5,5,3) = (8,7,6,3)$. Then, we  assign to each multidegree $\bfa\in \NN^n$ a partition $\lambda \in \Part_n$, its {\em shape}. The assignment will yield an isomorphism of monoids; our goal is to view $\kk[\Sd\Delta]$ as graded by $\Part_n$. Algebraically, this changes nothing, but it will make considerations of order structure, which become important in the following section on filtering $\kk[\Delta]$, more transparent.

\begin{notation}
    If a partition contains a part more than once, this can be indicated with an exponent. Thus, $(5,5,5,3)=(5^3,3)$, for example.
\end{notation}

\begin{lemma}\label{lem:monoid-iso}
    The monoid map
    \[
    \operatorname{sh}:\NN^n \rightarrow \Part_n
    \]
    given on the free commuting generators $\bfe_1,\dots,\bfe_n$ by
    \[
    \bfe_j \mapsto (1^j)
    \]
    is an isomorphism of monoids.
\end{lemma}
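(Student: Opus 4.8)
The plan is to verify that $\operatorname{sh}$ is a well-defined monoid homomorphism and then exhibit an explicit two-sided inverse. Since $\NN^n$ is the free commutative monoid on $\bfe_1,\dots,\bfe_n$, the assignment $\bfe_j \mapsto (1^j)$ extends uniquely to a monoid homomorphism $\operatorname{sh}\colon \NN^n \to \Part_n$ (using that $\Part_n$, with part-by-part addition, is a commutative monoid with identity the empty partition). Concretely, for $\bfa = (a_1,\dots,a_n) \in \NN^n$, we have
\[
\operatorname{sh}(\bfa) = \sum_{j=1}^n a_j (1^j),
\]
which is the partition whose $i$-th largest part equals $a_i + a_{i+1} + \dots + a_n$ (the "staircase sum"). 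In particular $\operatorname{sh}(\bfa)$ is indeed weakly decreasing and has at most $n$ parts, so it lies in $\Part_n$.

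Next I would construct the inverse. Given a partition $\lambda = (\lambda_1 \geq \lambda_2 \geq \dots \geq \lambda_n \geq 0) \in \Part_n$ (zero-padded to exactly $n$ entries), define
\[
\tau(\lambda) := \big(\lambda_1 - \lambda_2,\ \lambda_2 - \lambda_3,\ \dots,\ \lambda_{n-1} - \lambda_n,\ \lambda_n\big) \in \NN^n.
\]
Each entry is nonnegative precisely because $\lambda$ is weakly decreasing, so $\tau$ maps into $\NN^n$. One checks $\tau$ is a monoid homomorphism: padding to length $n$ and taking successive differences is linear in $\lambda$, and it sends the empty partition to the zero vector. Then $\operatorname{sh} \circ \tau = \mathrm{id}_{\Part_n}$ because the $i$-th largest part of $\operatorname{sh}(\tau(\lambda))$ is $\sum_{j \geq i} (\lambda_j - \lambda_{j+1})$ (with the convention $\lambda_{n+1} = 0$), which telescopes to $\lambda_i$; and $\tau \circ \operatorname{sh} = \mathrm{id}_{\NN^n}$ because, writing $\mu = \operatorname{sh}(\bfa)$ so that $\mu_i = \sum_{j \geq i} a_j$, the successive difference $\mu_i - \mu_{i+1}$ recovers $a_i$ (and $\mu_n = a_n$). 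Having a monoid homomorphism with a two-sided inverse that is also a monoid homomorphism establishes that $\operatorname{sh}$ is an isomorphism of monoids.

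There is no real obstacle here; the statement is essentially the observation that the change of variables between "exponent vectors" and "partition shapes" is the familiar invertible integer transformation between a sequence and its partial sums (read from the tail). The only points requiring a word of care are: (i) checking that $\operatorname{sh}(\bfa)$ is genuinely weakly decreasing, which is immediate from $\mu_i - \mu_{i+1} = a_i \geq 0$; (ii) checking that $\tau(\lambda)$ has all nonnegative entries, which is the definition of $\lambda$ being a partition; and (iii) being careful about zero-padding conventions so that both maps are literally defined on all of $\NN^n$ and all of $\Part_n$. Once these bookkeeping points are dispatched, the telescoping identities give the two inverse relations directly.
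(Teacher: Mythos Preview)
Your proof is correct and takes essentially the same approach as the paper: both exhibit the inverse map via successive differences $(\lambda_1,\dots,\lambda_n)\mapsto(\lambda_1-\lambda_2,\dots,\lambda_{n-1}-\lambda_n,\lambda_n)$. Your version is more detailed in verifying well-definedness and both inverse compositions, whereas the paper simply writes down the inverse and leaves the routine checks implicit.
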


\begin{proof}
The inverse map is
\[
(\lambda_1,\dots,\lambda_n) \mapsto (\lambda_1 - \lambda_2)\bfe_1 + \dots + (\lambda_{n-1}-\lambda_n)\bfe_{n-1} + \lambda_n \bfe_n,
\]
where $(\lambda_1,\dots,\lambda_n)$ is an arbitrary partition with at most $n$ parts (allowing some of the $\lambda_j$'s to be zero).
\end{proof}

\begin{definition}[Shape in {$\kk[\Sd\Delta]$}]\label{def:shape}
    For a monomial $m\in \kk[\Sd\Delta]$, define
    \[
    \shape(m) = \operatorname{sh}(\deg_\mathrm{md} m).
    \]
    Since $\operatorname{sh}$ is a monoid isomorphism by Lemma~\ref{lem:monoid-iso}, and $\deg_\mathrm{md}$ is an $\NN^n$-grading as discussed at the beginning of the section, this assignment gives $\kk[\Sd\Delta]$ the structure of a $\Part_n$-graded $\kk$-algebra. Given a partition $\lambda \in \Part_n$, we denote by $\kk[\Sd\Delta]_\lambda$ the $\kk$-subspace of $\kk[\Sd\Delta]$ spanned by monomials of shape $\lambda$.
\end{definition}

Note that the shape $\shape(m)$ of a monomial is a partition of its degree $\deg(m)$ as defined in Setup~\ref{set:N-grading}. Also, because shape is defined in terms of ranks in $\widehat{P}(\Delta)$, and automorphisms of $\Delta$ preserve these ranks, they also preserve shape. I.e., if $\sigma \in \Aut(\Delta)$ is an automorphism, and $m\in \kk[\Sd\Delta]$ is a standard monomial, then
\begin{equation}\label{eq:autos-preserve-shape}
\shape(m) = \shape(\sigma\cdot m).
\end{equation}

\begin{example}
    In the situation of Example~\ref{ex:y-ring-multidegree}, the monomial $y_w^2y_\alpha^3$ has shape $(5,3) = 2\cdot(1) + 3\cdot(1,1)$.
\end{example}

\begin{remark}
    In \cite{garsia, garsia-stanton, pevzner2024symmetric}, it is the conjugate partition to the one given in Definition~\ref{def:shape} that is called the shape. The present convention follows \cite{blum-smith} (and, implicitly, \cite{reiner2}) and is motivated by the fact that when one uses the corresponding definition of shape in $\kk[\Delta]$ (as we will below), and $\Delta$ is a simplex, so that $\kk[\Delta]$ may be viewed as a standard-graded polynomial ring, then the shape as defined here coincides with the usual notion of the shape of a monomial in a standard-graded polynomial ring, i.e., its exponent vector taken in nonincreasing order. This will be illustrated below in Example~\ref{ex:shape-motivation}.
\end{remark}

We have
\[
\kk[\Sd\Delta] = \bigoplus_{\lambda \in \Part_n} \kk[\Sd\Delta]_\lambda,
\]
the decomposition of $\kk[\Sd\Delta]$ into the homogeneous components of the grading of Definition~\ref{def:shape}. Because the parameters $\gamma_1,\dots,\gamma_n$ are themselves homogeneous, the subring $\kk[\Gamma]$ is similarly $\Part_n$-graded. Furthermore, monomials in $\gamma_1,\dots,\gamma_n$ have a particularly nice description in terms of this grading:

\begin{prop}\label{prop:parameter-monom-every-term}
    For any natural numbers $a_1,\dots,a_n$, the expansion of the product
    \[
    \gamma_1^{a_1}\cdots \gamma_n^{a_n}
    \]
    on the basis of standard monomials for $\kk[\Sd\Delta]$ consists precisely of the sum of all standard monomials of shape
    \[
    a_1(1^1) + \dots + a_n(1^n) \in \Part_n.
    \]
\end{prop}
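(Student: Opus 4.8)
The plan is to expand the product $\gamma_1^{a_1}\cdots\gamma_n^{a_n}$ directly, using only the defining relations of $\kk[\Sd\Delta]$ together with the fact that $\widehat P(\Delta)$ is ranked.

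First I would record the one structural input: in a ranked poset, two distinct elements of the same rank are never comparable (extending a saturated chain through the lower one would force the upper one to have strictly larger rank). Consequently, in $\kk[\Sd\Delta]$, whenever $\rk(\alpha)=\rk(\alpha')=j$ with $\alpha\neq\alpha'$, the monomial $y_\alpha y_{\alpha'}$ is not supported on a chain of $P(\Delta)$ and therefore vanishes. Expanding $\gamma_j^{a_j}=\bigl(\sum_{\rk(\alpha)=j} y_\alpha\bigr)^{a_j}$ by the multinomial theorem, every term mixing two distinct rank-$j$ generators dies, and each surviving term $y_\alpha^{a_j}$ comes with multinomial coefficient $1$; hence for $a_j\geq 1$ we obtain the identity $\gamma_j^{a_j}=\sum_{\rk(\alpha)=j} y_\alpha^{a_j}$ in $\kk[\Sd\Delta]$ (while $\gamma_j^0=1$).

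Next I would multiply these identities. Writing $J=\{\,j:a_j\geq 1\,\}$, this gives
\[
\gamma_1^{a_1}\cdots\gamma_n^{a_n}\;=\;\sum \prod_{j\in J} y_{\alpha_j}^{a_j},
\]
the sum running over all tuples $(\alpha_j)_{j\in J}$ with $\rk(\alpha_j)=j$. Such a term is $0$ unless $\{\alpha_j:j\in J\}$ is a chain of $P(\Delta)$; when it is a chain, the term is a standard monomial, the $\alpha_j$ have pairwise distinct ranks, so $y_{\alpha_j}$ occurs in it with exponent exactly $a_j$, its multidegree is $\sum_{j\in J} a_j\bfe_j$, and thus (applying $\operatorname{sh}$) its shape is $\sum_j a_j(1^j)$, matching the claim. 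It then remains to show the nonzero terms are exactly the standard monomials of this shape $\lambda$, each appearing once, which I would do by a bijection: given a standard monomial $m$ of shape $\lambda$, its support is a chain $C$, and since $\operatorname{sh}^{-1}(\lambda)=\sum_{j\in J} a_j\bfe_j$, for each $j\in J$ the rank-$j$ generators in $m$ carry total exponent $a_j\geq 1$; but $C$ meets rank $j$ in at most one element, so there is a unique $\alpha_j\in C$ of rank $j$, occurring with exponent $a_j$, and no rank-$j$ generator occurs for $j\notin J$. Hence $m=\prod_{j\in J} y_{\alpha_j}^{a_j}$ is precisely the term indexed by the chain-forming tuple $(\alpha_j)_{j\in J}$, and distinct chain-forming tuples plainly give distinct monomials (read the tuple off the support by ranks), so the expansion is the sum of all standard monomials of shape $\lambda$ with all coefficients $1$.

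As for obstacles, there is no genuinely hard step here. The only points needing a little care are the bookkeeping around the indices $j$ with $a_j=0$ and the verification that tuple $\mapsto$ monomial is a bijection onto the standard monomials of the prescribed shape; both reduce to the single fact that a chain in a ranked poset meets each rank level at most once.
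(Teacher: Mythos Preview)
Your proof is correct and follows essentially the same approach as the paper's: expand the product, observe that cross-terms vanish because distinct elements of equal rank are incomparable, and biject the surviving terms with standard monomials of the prescribed multidegree (equivalently, shape). Your version is more explicit about first simplifying each $\gamma_j^{a_j}$ and then handling the bijection and the $a_j=0$ bookkeeping, but the underlying argument is the same.
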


\begin{proof}
    Because of the definition of multiplication in $\kk[\Sd\Delta]$, every term in the expansion of $\gamma_1^{a_1}\cdots \gamma_n^{a_n}$  either is supported on a chain or computes to zero.  Because each $\gamma_j$ is the sum of $y_\alpha$ for {\em every} $\alpha\in P(\Delta)$ of a fixed rank, the nonzero terms in the expansion of $\gamma_1^{a_1}\cdots \gamma_n^{a_n}$ biject with the set of all multichains (i.e., multisets supported on chains) in $P(\Delta)$ in which the multiplicity of the element of rank $j$ (with rank computed in $\widehat P(\Delta)$) is $a_j$. Thus the nonzero terms are exactly the set of standard monomials of multidegree $a_1\bfe_1 + \dots + a_n\bfe_n$. By Lemma~\ref{lem:monoid-iso} and Definition~\ref{def:shape}, this is equivalently the set of standard monomials of shape
    \[
    a_1(1^1) + \dots + a_n(1^n),
    \]
    as claimed.
\end{proof}

\begin{example}
    To illustrate, we compute $\gamma_1^2\gamma_2$ for our running example from Figure~\ref{fig:running-ex}:
    \begin{align*}
        \gamma_1^2\gamma_2 &= (y_v+y_w)^2(y_\alpha + y_\beta)\\
        &=(y_v^2 + y_w^2)(y_\alpha + y_\beta)\\
        &=y_v^2y_\alpha + y_v^2y_\beta + y_w^2y_\alpha + y_w^2y_\beta,
    \end{align*}
    the sum of all standard monomials of shape $2(1)+(1,1)=(3,1)$.
\end{example}

\subsection{Filtering {$\kk[\Delta]$} by shape}\label{sec:filtering-garsia}

The attempt to copy Section~\ref{sec:grading-by-shape} for $\kk[\Delta]$, hoping to induce a multigrading via the assignment
\[
\deg_\mathrm{md} x_\alpha := \bfe_{\rk(\alpha)},
\]
does not succeed. The defining relations for $\kk[\Delta]$ given in Definition~\ref{def:SR-ring-of-boolean} are not homogeneous with respect to this assignment, as illustrated by the following calculation.

\begin{example}
    We return to our running example from Figure~\ref{fig:running-ex}. It was computed above that
    \[
    x_vx_w = x_\alpha + x_\beta
    \]
    in $\kk[\Delta]$. But the assignment above gives $\deg_\mathrm{md} x_v, \deg_\mathrm{md} x_w = \bfe_1$, and $\deg_\mathrm{md} (x_\alpha + x_\beta) = \bfe_2$, while 
    \[
    \bfe_1 + \bfe_1 \neq \bfe_2
    \]
    in $\NN^n$.
\end{example}

On the other hand, this same calculation illustrates the virtue of thinking of the grading monoid in the previous section as $\Part_n$ rather than $\NN^n$, as we now illustrate (and as this section will explain thoroughly). The shapes (obtained via the isomorphism in Lemma~\ref{lem:monoid-iso}) are
\[
\shape(x_v) = \shape(x_w) = (1)
\]
and
\[
\shape(x_\alpha + x_\beta) = (1,1).
\]
We have $(1) + (1) = (2)$ in $\Part_n$. This is unequal to $(1,1)$ of course. However, $(2)$ is above $(1,1)$ in the {\em dominance order} on partitions (whose definition is recalled below), and this manifests a general phenomenon which will allow us to view $\kk[\Delta]$ as {\em filtered} by shape. To state the precise result, we begin by defining shape on the polynomial ring over $P(\Delta)$ that surjects onto $\kk[\Delta]$, where it unproblematically gives a grading. We recall the notations $S$, $I_\Delta$ of Definition~\ref{def:SR-ring-of-boolean}, and work in the polynomial ring 
\[
\overline S := S/(x_\varnothing - 1) \cong \kk[\{x_\alpha\}_{\alpha\in  P(\Delta)}],
\]
with polynomial generators corresponding to the ASL generators of $\kk[\Delta]$. Let $\overline I_\Delta$ be the image of $I_\Delta$ in $\overline S$; it is the ideal generated by the (images in $\overline S$ of the) elements (of $I_\Delta$) of the form \ref{item:no-common-upper-bound} and \ref{item:common-upper-bound} of Definition~\ref{def:SR-ring-of-boolean}. Note that 
\[
\kk[\Delta] = \overline S / \overline I_\Delta.
\]
We continue to abuse notation by using the same symbols $x_\alpha$ for the generators of all three of the rings $S$, $\overline S$, and $\kk[\Delta]$, and we indicate to the reader via the context which ring is meant.

Because $\overline S$ is a polynomial ring, we can grade it over any commutative monoid by specifying degrees for the generators; we do so in parallel with Definition~\ref{def:shape}.

\begin{definition}[Shape in $\overline S$]\label{def:x-shape-upstairs}
    For $\alpha \in P(\Delta)$ and $x_\alpha \in \overline S$, define
    \[
    \shape(x_\alpha) := (1^{\rk(\alpha)})\in \Part_n,
    \]
    where $\rk(\alpha)$ is calculated in $\widehat P(\Delta)$ as in Setup~\ref{set:N-grading}. This determines a grading of $\overline S$ over $\Part_n$. 
\end{definition}

\begin{remark}
It is convenient to state Definition~\ref{def:x-shape-upstairs} in terms of $\overline S$ because we want to work in the polynomial ring whose generators correspond to the ASL generators of $\kk[\Delta]$. However, the same definition also works with $S$ in place of $\overline S$ and $\widehat P(\Delta)$ in place of $P(\Delta)$, if (as is natural) we interpret $(1^0)$ as the empty partition, i.e., the identity of the monoid $\Part_n$. Indeed, the ideal $(x_\varnothing - 1)$ by which we pass from $S$ to $\overline S$ is then homogeneous with respect to this grading. Thus, even with respect to the $\Part_n$ grading, we can think of $\overline S$ as a ring in which $x_\varnothing$ is another name for $1$. This is useful in the proof of Lemma~\ref{lem:filter-main-lemma} below.
\end{remark}

\begin{definition}[Lifting to $\overline S$; shape in {$\kk[\Delta]$}]\label{def:x-shape}
As discussed in Setup~\ref{set:asl}, $\kk[\Delta]$ has a $\kk$-basis consisting of standard monomials (Definition~\ref{def:standard-monomial}). A standard monomial
\[
m = \prod_{\alpha\in C} x_\alpha^{c_\alpha},
\]
where $C\subset P(\Delta)$ is a chain, can be viewed as an element either of $\overline S$ or of $\kk[\Delta]$---we refer to the former interpretation as the {\em lift} of the latter interpretation. Although the map $\overline S\rightarrow \kk[\Delta]$ is not bijective, the lift of a standard monomial in $\kk[\Delta]$ {\em is} its unique preimage in $\overline S$ that is a standard monomial; thus our use of the definite article in this definition is justified.

Now, define the {\em shape} on the standard monomials $m \in \kk[\Delta]$, written $\shape(m)$, by applying Definition~\ref{def:x-shape-upstairs} to their lifts in $\overline S$. For $\lambda \in \Part_n$, we denote by $\kk[\Delta]_\lambda$ the $\kk$-subspace of $\kk[\Delta]$ spanned by standard monomials of shape $\lambda$. Elements of $\kk[\Delta]_\lambda$ are {\em homogeneous of shape} $\lambda$.
\end{definition}

With these definitions, we have
\[
\kk[\Delta] = \bigoplus_{\lambda\in \Part_n} \kk[\Delta]_\lambda,
\]
as $\kk$-vector spaces. Also, as with Definition~\ref{def:shape}, if $m \in \kk[\Delta]$ is a standard monomial, then $\shape(m)$ is a partition of $\deg(m)$, and if furthermore $\sigma\in \Aut(\Delta)$ is an automorphism, then we have
\[
\shape(m) = \shape(\sigma\cdot m).
\]

The following example illustrates the motivation for the definition of shape.

\begin{example}\label{ex:shape-motivation}
Let $\Delta:= \Delta_2$ be the $2$-simplex on the vertex set $\{0,1,2\}$ (and $n=2+1=3$). According to Definition~\ref{def:SR-ring-of-boolean}, $\kk[\Delta]$ has  generators $x_\varnothing, x_0,x_1,x_2,x_{12},x_{02},x_{01},x_{012}$, with relations such as $x_\varnothing = 1$, $x_1x_{02} = x_{012}$, $x_1x_2=x_{12}$, $x_{01}x_{02}=x_0x_{012}$, etc. Thus it is actually just the polynomial ring on $x_0,x_1,x_2$, illustrating the identification described in Setup~\ref{set:boolean-simplicial-identification}. Let us refer to the generators $x_\alpha$, $\alpha \in P(\Delta)$ (note that this excludes $x_\varnothing$) as the {\em ASL generators}, and the subset $x_0,x_1, x_2$ as the {\em polynomial generators}. Then, as a graded ASL, $\kk[\Delta]$ has a $\kk$-basis consisting of the standard monomials in the ASL generators; while as a polynomial ring, it has a $\kk$-basis consisting of {\em all} monomials in the polynomial generators. But in fact, these are the same basis. For example, the standard monomial
\[
x_1^2x_{12}^3 x_{012}
\]
supported on the chain
\[
\{1\} \subset\{1,2\}\subset \{0,1,2\}
\]
is equal to the monomial
\[
x_0 x_1^6x_2^4
\]
in the polynomial generators, by routine application of the defining relations to the latter.\footnote{P. Mantero \cite{mantero2020structure} refers to the representation of a monomial in the polynomial generators as a standard monomial in the ASL generators, as the {\em normal form} of the monomial, and generalizes this notion to monomials in an arbitrary set of linear forms, see \cite[Section~3]{mantero2020structure}.} Now, applying Definition~\ref{def:x-shape}, we get
\begin{align*}
\shape(x_1^2x_{12}^3x_{012}) &= 2\cdot(1) + 3\cdot(1,1) + (1,1,1)\\
&= (6,4,1).
\end{align*}
Note that $(6,4,1)$ is the shape (in the usual sense of the exponent vector taken in nonincreasing order) of this monomial when written in terms of the polynomial generators, i.e., as $x_0x_1^6x_2^4$.
\end{example}

We name the condition under which the product of two standard monomials is itself already a standard monomial without needing to apply any straightening relations. This definition makes sense in any ASL, although we have in mind $\kk[\Delta]$ and $\kk[\Sd\Delta]$  (which are  graded ASLs over $P(\Delta)$).

\begin{definition}[Stacking up]\label{def:stack-up}
    Following \cite{blum-smith}, if two standard monomials $m_1,m_2$ in  an ASL are supported on the same chain in the underlying poset (i.e., there exists a chain supporting both of them), we say that they {\em stack up}.
\end{definition}

\begin{observation}\label{obs:stack-up-nonzero}
    In $\kk[\Sd\Delta]$, or any discrete ASL, two standard monomials $m_1$, $m_2$ stack up if and only if their product $m_1m_2$ is nonzero.
\end{observation}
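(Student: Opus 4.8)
The plan is to reduce everything to the basic fact that a monomial in a Stanley--Reisner ring is nonzero precisely when its support is a face of the complex. Recall from Setup~\ref{set:discrete-asl} that the discrete ASL on a poset $P$ is the Stanley--Reisner ring of the order complex of $P$, whose faces are exactly the chains of $P$; in the case of $\kk[\Sd\Delta]$ this is the content of Setup~\ref{set:garsia}, with $P = P(\Delta)$. So the first step is simply to observe that for a monomial $m$ in the ASL generators (the $y_\alpha$'s, in the case of $\kk[\Sd\Delta]$), one has $m \neq 0$ if and only if the support of $m$ --- the set of poset elements $\alpha$ with $y_\alpha$ dividing $m$ --- is a chain in $P$.

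Next, given standard monomials $m_1, m_2$ whose supports are the chains $C_1, C_2 \subseteq P$, the support of the product $m_1 m_2$ is the union $C_1 \cup C_2$. Thus, by the first step, the claim to be proven becomes: $C_1 \cup C_2$ is a chain of $P$ if and only if $m_1$ and $m_2$ stack up, i.e.\ (Definition~\ref{def:stack-up}) if and only if there is a single chain $C$ of $P$ containing both $C_1$ and $C_2$. The reverse implication is immediate: if $C_1, C_2 \subseteq C$ for a chain $C$, then $C_1 \cup C_2 \subseteq C$, and a subset of a chain is again a chain, so $C_1 \cup C_2$ is a chain. For the forward implication, if $C_1 \cup C_2$ is itself a chain, then it serves as the required common chain $C$. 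This establishes the equivalence and hence the observation.

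I do not expect a genuine obstacle here: the statement is essentially an unwinding of the definitions of ``standard monomial,'' ``stack up,'' and ``discrete ASL,'' once the Stanley--Reisner description of the discrete ASL from Setup~\ref{set:discrete-asl} is in hand. The only point meriting a moment's attention is the (purely bookkeeping) observation that the two natural readings of ``stack up'' coincide --- the existence of \emph{a} chain supporting both monomials versus the union of their supports being \emph{itself} totally ordered --- which is exactly what the two implications in the previous paragraph record, since when the union of the supports is a chain it is the minimal common support.
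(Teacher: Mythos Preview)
Your proposal is correct and matches the paper's treatment: the paper states this as an Observation without proof, treating it as an immediate unwinding of the definitions, which is exactly what you have carried out.
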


Let $\lambda = (\lambda_1,\lambda_2,\dots,\lambda_k)$ and $\mu = (\mu_1,\mu_2,\dots,\mu_\ell)$ be two partitions of the same natural number $d$. It is said that $\lambda$ {\em dominates} $\mu$, written $\lambda \trianglerighteq \mu$ or $\mu \trianglelefteq \lambda$, if for each $j=1,\dots,\min(k,\ell)$, we have
\[
\lambda_1 + \dots + \lambda_j \geq \mu_1 + \dots + \mu_j.
\]
Dominance order is a partial order on partitions.  
For strict dominance (i.e. dominance between unequal partitions), we write $\mu\triangleleft \lambda$.

\begin{observation}\label{obs:dominance-monoid-compatible}
The dominance partial order on $\Part_n$ is compatible with the monoid structure; i.e., if $\lambda,\mu,\nu\in\Part_n$ and $\mu \trianglelefteq \lambda$, then also $\mu + \nu \trianglelefteq \lambda+\nu$.
\end{observation}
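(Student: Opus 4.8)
The plan is to unwind the definitions and reduce the statement to a termwise comparison of partial sums. First I would check that $\lambda+\nu$ and $\mu+\nu$ are genuinely elements of $\Part_n$: a part-by-part sum of two weakly decreasing sequences of length at most $n$ (zero-padding the shorter) is again weakly decreasing of length at most $n$, so the monoid operation on $\Part_n$ does land in $\Part_n$. Next I would verify that $\lambda+\nu$ and $\mu+\nu$ are partitions of the \emph{same} integer, which is needed even to speak of dominance between them: writing $|\cdot|$ for the sum of the parts, one has $|\lambda|=|\mu|$ by hypothesis (dominance is defined only between partitions of a common number $d$), hence $|\lambda+\nu| = |\lambda|+|\nu| = |\mu|+|\nu| = |\mu+\nu|$.

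For the core step I would pad $\lambda$, $\mu$, and $\nu$ with trailing zeros so that each is a sequence of exactly $n$ entries $\lambda_1\ge\cdots\ge\lambda_n\ge 0$, and similarly for $\mu$ and $\nu$. This padding is harmless for the dominance condition: because $|\lambda|=|\mu|$, the inequalities $\sum_{i\le j}\lambda_i\ge\sum_{i\le j}\mu_i$ for $j\le\min(k,\ell)$ in the stated definition are equivalent to the same inequalities for every $j\in\{1,\dots,n\}$ in the padded form, since once $j$ exceeds the number of (nonzero) parts of $\lambda$ the left-hand side already equals $|\lambda|=|\mu|\ge\sum_{i\le j}\mu_i$. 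With all three sequences of length $n$, for each $j\in\{1,\dots,n\}$ the definition of the monoid addition together with the hypothesis $\mu\trianglelefteq\lambda$ gives
\[
\sum_{i=1}^{j}(\mu+\nu)_i \;=\; \sum_{i=1}^{j}\mu_i + \sum_{i=1}^{j}\nu_i \;\le\; \sum_{i=1}^{j}\lambda_i + \sum_{i=1}^{j}\nu_i \;=\; \sum_{i=1}^{j}(\lambda+\nu)_i ,
\]
and, as noted above, $|\mu+\nu|=|\lambda+\nu|$. By the definition of dominance this says exactly $\mu+\nu\trianglelefteq\lambda+\nu$.

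There is essentially no obstacle here: the only points deserving a word of care are that the part-by-part sum of two partitions is again a partition (so both the monoid structure and the statement make sense) and the harmless reconciliation of the $\min(k,\ell)$-form of the definition of dominance with the zero-padded form implicit in the description of $\Part_n$. The argument uses $n$ only to fix a uniform length to pad to, so it is really just the standard fact that dominance order is invariant under adding a fixed partition.
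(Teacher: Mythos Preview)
Your proof is correct. The paper does not give a proof of this observation at all; it is stated as evident and left to the reader, so your argument supplies exactly the routine verification that the paper omits. The only point worth noting is that your discussion of why padding is harmless explicitly treats the case where $j$ exceeds the number of parts of $\lambda$; the remaining case (where $\mu$ has fewer nonzero parts than $\lambda$) is in fact vacuous, since $\mu\trianglelefteq\lambda$ with $|\mu|=|\lambda|$ forces $\lambda$ to have no more nonzero parts than $\mu$, but you might mention this for completeness.
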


\begin{observation}\label{obs:dominance-induction}
    For a fixed natural number $d$, the poset of partitions of $d$ with respect to dominance order is finite. Thus, in particular, it satisfies the ascending and descending chain conditions, so any nonempty subset has maximal elements, and also one can do induction over it.
\end{observation}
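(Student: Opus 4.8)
The plan is to reduce everything to the single elementary fact that, for a fixed $d$, there are only finitely many partitions of $d$; all the other assertions are then standard consequences of finiteness. It is worth noting that this is precisely why the restriction to a fixed $d$ matters: the full monoid $\Part_n$ of partitions with at most $n$ parts is infinite, but its ``degree-$d$ slice'', namely the partitions of $d$ (equivalently, the partitions with at most $n$ parts summing to $d$), is not.

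First I would establish finiteness directly: a partition of $d$ is a weakly decreasing sequence of positive integers summing to $d$, so it has at most $d$ parts and each part is at most $d$; padding with zeros embeds the set of such sequences into $\{0,1,\dots,d\}^{d}$, which is finite. Hence the set of partitions of $d$, partially ordered by dominance $\trianglelefteq$ (already noted above to be a partial order), is a finite poset.

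From here the remaining claims are routine facts about finite posets, which I would state in a line each. A finite poset contains no infinite chain whatsoever, so the ascending and descending chain conditions hold vacuously; and every nonempty subset of a finite poset has maximal (and minimal) elements, again immediately. Finally, the descending chain condition is exactly well-foundedness of $\triangleleft$, which licenses proof by induction over dominance: if a property $P$ failed for some partition of $d$, the nonempty set of counterexamples would have a $\triangleleft$-minimal element $\lambda$, whence $P(\mu)$ holds for all $\mu \triangleleft \lambda$, so an inductive step of the form ``$P(\lambda)$ holds whenever $P$ holds strictly below $\lambda$'' would force $P(\lambda)$, a contradiction; the dual argument gives induction downward from maximal elements. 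I do not anticipate any genuine obstacle here. The only point worth flagging is that, when this observation is invoked later, one should be explicit about which direction the induction runs --- but since both directions are equally available, this is purely cosmetic.
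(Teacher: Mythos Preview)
Your proposal is correct and entirely standard. The paper offers no proof of this observation at all---it is stated as self-evident---so your argument is more than the paper provides, and the finiteness-implies-everything reasoning you give is exactly the intended content.
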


Taken together, Observations~\ref{obs:dominance-induction} and \ref{obs:dominance-monoid-compatible} amount to the statement that dominance order is an {\em admissible order} in the sense of Baclawski's theory of straightening laws \cite[p.~192]{baclawski1981rings} (indeed, it is one of the examples motivating this definition in \cite{baclawski1981rings}). With respect to this order, $\kk[\Delta]$ becomes a {\em ring with lexicographic straightening law} in the sense of Baclawski:

\begin{lemma}[Key lemma for filtering {$\kk[\Delta]$}]\label{lem:filter-main-lemma}
Let $\Delta$ be a boolean complex. Let $m_1,m_2$ be standard monomials in $\kk[\Delta]$. Represent the product $m_1m_2\in \kk[\Delta]$ on the basis of standard monomials. Then:
\begin{enumerate}
    \item the shape of each standard monomial in this representation is dominated by $\shape(m_1)+\shape(m_2)$; and\label{item:shape-dominated}
    \item this domination is strict unless $m_1,m_2$ stack up, in which case the representation consists of a single monomial whose shape is equal to $\shape(m_1)+\shape(m_2)$.
\end{enumerate}
In other words, the straightening law on $\kk[\Delta]$ is a {\em lexicographic straightening law based on $P(\Delta)$} in the sense of Baclawski \cite[Definition~4.2]{baclawski1981rings}.
\end{lemma}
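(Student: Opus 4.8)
\emph{Proof plan.} The plan is to deduce both assertions from a single statement about straightening an \emph{arbitrary} monomial, proved by descending induction on shape in dominance order. Work in the polynomial ring $\overline{S}$ of Definition~\ref{def:x-shape-upstairs}, graded over $\Part_n$ by $\shape(x_\alpha) = (1^{\rk(\alpha)})$, and write $\overline{P}$ for the image in $\kk[\Delta]$ of a monomial $P \in \overline{S}$. Let $P \in \overline{S}$ be the product of the lifts of $m_1$ and $m_2$ (Definition~\ref{def:x-shape}); then $P$ is a monomial with $\shape(P) = \shape(m_1) + \shape(m_2) =: \lambda$, its image $\overline{P}$ equals the product $m_1 m_2 \in \kk[\Delta]$, and $\operatorname{supp}(P) = \operatorname{supp}(m_1) \cup \operatorname{supp}(m_2)$, so $P$ is a standard monomial precisely when $m_1$ and $m_2$ stack up. Thus both assertions of the lemma follow from the claim: \emph{for every monomial $P \in \overline{S}$, the standard-monomial expansion of $\overline{P}$ involves only standard monomials of shape $\trianglelefteq \shape(P)$; moreover, if $P$ is not itself a standard monomial, then every such shape is $\triangleleft \shape(P)$.} Indeed, if $m_1, m_2$ stack up then $P$ is standard, $\overline{P} = P$ is its own expansion, and $\shape(\overline{P}) = \lambda$; if they do not stack up then $P$ is not standard and the claim gives strict domination.

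The heart of the proof is the two-generator case. Let $\alpha, \beta \in P(\Delta)$ be incomparable with a common upper bound, and let $\gamma \in \operatorname{lub}(\alpha,\beta)$. Since $\alpha, \beta \le \gamma$ and $\gamma$ is a \emph{minimal} upper bound, $\gamma$ must equal the join of $\alpha$ and $\beta$ computed inside the boolean lattice $[\varnothing, \gamma] \subseteq \widehat{P}(\Delta)$; the rank identity valid in a boolean lattice (used already in Setup~\ref{set:N-grading}) then gives
\[
\rk(\gamma) = \rk(\alpha) + \rk(\beta) - \rk(\alpha \wedge \beta),
\]
independently of the choice of $\gamma$. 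Put $a = \rk(\alpha)$, $b = \rk(\beta)$, $c = \rk(\alpha \wedge \beta)$, with (say) $a \le b$; since $\alpha \wedge \beta$ is strictly below $\alpha$ and the poset is ranked, $c < a$. A direct comparison of partial sums then yields
\[
(1^{c}) + (1^{\rk(\gamma)}) \;=\; (2^{c}, 1^{\,a+b-2c}) \;\triangleleft\; (2^{a}, 1^{\,b-a}) \;=\; (1^{a}) + (1^{b}),
\]
the strictness coming precisely from $c < a$ (here $(1^{0})$ is read as the empty partition, matching $x_\varnothing = 1$). Note also that $\rk(\gamma) \le n$, so all partitions occurring here lie in $\Part_n$.

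Now prove the claim by strong descending induction on $\shape(P)$ in dominance order, which is legitimate because, for fixed total degree $d = \deg(P)$, there are only finitely many partitions of $d$ (Observation~\ref{obs:dominance-induction}). If $P$ is a standard monomial its expansion is $P$ itself. Otherwise pick incomparable $\alpha, \beta \in \operatorname{supp}(P)$ and factor $P = x_\alpha x_\beta Q$ with $Q$ a monomial. If $\alpha, \beta$ have no common upper bound then $\overline{P} = 0$ by relation~\ref{item:no-common-upper-bound}, and the claim holds trivially. Otherwise relation~\ref{item:common-upper-bound} gives $\overline{P} = \sum_{\gamma \in \operatorname{lub}(\alpha,\beta)} \overline{R_\gamma}$, where $R_\gamma := x_{\alpha \wedge \beta}\, x_\gamma\, Q \in \overline{S}$; by the two-generator computation together with Observation~\ref{obs:dominance-monoid-compatible} (and its strict version, which follows from cancellativity of $\Part_n \cong \NN^n$, Lemma~\ref{lem:monoid-iso}),
\[
\shape(R_\gamma) \;=\; (1^{c}) + (1^{\rk(\gamma)}) + \shape(Q) \;\triangleleft\; (1^{a}) + (1^{b}) + \shape(Q) \;=\; \shape(P).
\]
Applying the induction hypothesis to each $R_\gamma$ expresses $\overline{R_\gamma}$ in terms of standard monomials of shape $\trianglelefteq \shape(R_\gamma) \triangleleft \shape(P)$, hence of shape $\triangleleft \shape(P)$; summing over $\gamma$ establishes the claim for $P$. (Every rewriting step is an honest equality in $\kk[\Delta]$ and the standard-monomial expansion of a given element is unique, so the choice of incomparable pair at each stage is immaterial, and the strict drop in shape guarantees termination.)

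The only genuinely nonroutine ingredient — and the step I expect to be the main obstacle — is the two-generator dominance estimate: pinning down $\rk(\gamma)$ for $\gamma \in \operatorname{lub}(\alpha,\beta)$ via the boolean-lattice structure of $[\varnothing,\gamma]$, and verifying the partition inequality with its strictness tied exactly to incomparability of $\alpha,\beta$. The rest is bookkeeping, though one must take care that every shape that arises remains inside $\Part_n$ and that the strict form of monoid-compatibility of dominance is available.
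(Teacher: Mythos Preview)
Your proof is correct and follows essentially the same approach as the paper: both reduce to the two-generator straightening step, verify via the boolean-lattice rank identity that $\shape(x_{\alpha\wedge\beta}x_\gamma) = (2^c,1^{a+b-2c}) \triangleleft (2^a,1^{b-a}) = \shape(x_\alpha x_\beta)$ with strictness coming from $c = \rk(\alpha\wedge\beta) < \min(\rk\alpha,\rk\beta)$, and then propagate this through the full straightening process. The only cosmetic difference is that the paper appeals to the general ASL machinery to know the straightening process terminates and then argues each move lowers shape, whereas you package the same argument as a direct well-founded induction on $\shape(P)$ over all monomials of $\overline{S}$; your formulation is arguably cleaner and avoids the external citation, but the mathematical content is identical.
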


It is possible that $m_1m_2 = 0$. Note that in this case, the conclusion of the lemma holds vacuously.

\begin{proof}[Proof of Lemma~\ref{lem:filter-main-lemma}]
If $m_1$ and $m_2$ stack up, then $m_1,m_2\in \kk[\Delta]$ are supported on the same chain of $P(\Delta)$. So the product $m_1m_2 \in \kk[\Delta]$ can be computed by taking lifts of $m_1$ and $m_2$ in $\overline S$, computing the product there, and then interpreting it as an element of $\kk[\Delta]$ (because it is standard). In this case, $m_1m_2$ is represented by this single monomial, and we have
\[
\shape(m_1m_2) = \shape(m_1) + \shape(m_2)
\]
because $\overline S$ is graded by shape.

If $m_1$, $m_2$ do not stack up, then $m_1,m_2\in\kk[\Delta]$ are not supported on the same chain. If we lift them to $\overline S$, the product $m_1m_2$ is not standard. Our goal is to show that replacing $m_1m_2$ with the linear combination of standard monomials that represents the same element of $\kk[\Delta]$  strictly lowers the shape with respect to dominance order. This will be done inductively, showing that each application of one of the straightening laws strictly lowers the shape.

By the general theory of graded ASLs (specifically \cite[Theorem~3.4]{eisenbud1980introduction},  \cite[Proposition~1.1]{hodge-algebras}), the representation of $m_1m_2\in \kk[\Delta]$ in terms of standard monomials can be obtained by a number of applications of the straightening laws \ref{item:no-common-upper-bound}, \ref{item:common-upper-bound} of Definition~\ref{def:SR-ring-of-boolean}. In other words, lifting $m_1,m_2$ to $\overline S$, the product $m_1m_2\in \overline S$ can be replaced with a linear combination of standard monomials belonging to the same coset of $\overline I_\Delta$ via a sequence of moves, each of which consists of replacing a product $x_\alpha x_\beta$ (with $\alpha,\beta$ incomparable in $P(\Delta)$) that appears in $m_1m_2$ with $0$ if $\alpha,\beta\in P(\Delta)$ lack a common upper bound in $P(\Delta)$, per straightening law~\ref{item:no-common-upper-bound}, or with $x_{\alpha\wedge \beta}\sum_{\gamma\in\operatorname{lub}(\alpha,\beta)} x_\gamma$  if they do have a common upper bound, per straightening law~\ref{item:common-upper-bound}. 

(In the latter formula, $x_{\alpha\wedge \beta}$ should be interpreted as $1$ if $\alpha$ and $\beta$ have no common lower bound in $P(\Delta)$. This results from relation \ref{item:empty-is-one} in Definition~\ref{def:SR-ring-of-boolean}, which is modded out in $\overline S$. This does not create a special case in the below argument because of the remark following Definition~\ref{def:x-shape-upstairs}: the shape of $x_{\alpha\wedge\beta}$ is $(1^0)=0\in \Part_n$, and everything works.) 

What we have to show is that each move of this type strictly lowers the shape in $\overline S$ (Definition~\ref{def:x-shape-upstairs}) with respect to dominance order. Because statement~\ref{item:shape-dominated} holds vacuously for any product that becomes zero, we can focus on the nontrivial straightening law \ref{item:common-upper-bound}.

Because dominance is compatible with addition in $\Part_n$ (Observation~\ref{obs:dominance-monoid-compatible}) and $\overline S$ is graded by shape (Definition~\ref{def:x-shape-upstairs}), it follows that dominance on shapes is preserved by multiplication by the monomial $m_1m_2/(x_\alpha x_\beta) \in \overline S$. Thus, it is enough to show that every monomial appearing in 
\[
x_{\alpha\wedge \beta}\sum_{\gamma\in\operatorname{lub}(\alpha,\beta)} x_\gamma
\]
has shape strictly dominated by that of $x_\alpha x_\beta$. We see this as follows. Without loss of generality, assume $\rk(\alpha)\geq \rk(\beta)$, where, as usual, ranks are calculated in $\widehat P(\Delta)$. We have
\begin{align*}
\shape(x_\alpha x_\beta) &= \left(1^{\rk(\alpha)}\right) + \left(1^{\rk(\beta)}\right)\\
&= \left(2^{\rk(\beta)},1^{\rk(\alpha)-\rk(\beta)}\right).
\end{align*}
Meanwhile, for each $\gamma\in\operatorname{lub}(\alpha,\beta)$, we have
\begin{equation}\label{eq:order-of-a-b}
\alpha\wedge\beta \prec\alpha \prec\gamma\text{ and }\alpha\wedge\beta \prec\beta \prec\gamma
\end{equation}
in $\widehat P(\Delta)$, and it follows first---because \eqref{eq:order-of-a-b} implies $\rk(\alpha\wedge\beta)< \rk(\gamma)$---that
\[
\shape(x_{\alpha\wedge\beta}x_\gamma) = \left(2^{\rk(\alpha\wedge\beta)},1^{\rk(\gamma)-\rk(\alpha\wedge\beta)}\right),
\]
and then---because \eqref{eq:order-of-a-b} implies $\rk(\alpha\wedge\beta) < \rk(\beta)$; the inequality is strict because $\alpha\wedge \beta$ and $\beta$ are comparable but distinct in $P(\Delta)$---that
\[
\shape(x_{\alpha\wedge\beta}x_\gamma) \triangleleft \shape(x_\alpha x_\beta),
\]
with strict dominance, as required.
\end{proof}

It follows immediately that $\kk[\Delta]$ is {\em filtered} over $\Part_n$ with respect to dominance order:

\begin{prop}[Filtration of {$\kk[\Delta]$} over {$\Part_n$}]\label{prop:x-filtered}
    Let $\lambda_1,\lambda_2\in \Part_n$, and let $f_1\in \kk[\Delta]_{\lambda_1}$ and $f_2\in \kk[\Delta]_{\lambda_2}$. Then
    \[
    f_1f_2\in \bigoplus_{\mu \trianglelefteq \lambda_1 + \lambda_2} \kk[\Delta]_{\mu}.
    \]
\end{prop}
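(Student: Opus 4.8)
The plan is to reduce Proposition~\ref{prop:x-filtered} to Lemma~\ref{lem:filter-main-lemma} by expanding $f_1$ and $f_2$ on the basis of standard monomials and using bilinearity of multiplication together with the fact that dominance order is compatible with addition in $\Part_n$ (Observation~\ref{obs:dominance-monoid-compatible}).

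First I would write $f_1 = \sum_i c_i m_i$ and $f_2 = \sum_j d_j n_j$, where the $m_i$ are standard monomials all of shape $\lambda_1$ (since $f_1$ is homogeneous of shape $\lambda_1$, by Definition~\ref{def:x-shape} its expansion on the standard-monomial basis involves only monomials of that shape), and similarly the $n_j$ are standard monomials of shape $\lambda_2$, with $c_i, d_j \in \kk$. Then $f_1 f_2 = \sum_{i,j} c_i d_j\, m_i n_j$. By Lemma~\ref{lem:filter-main-lemma}\ref{item:shape-dominated}, for each pair $(i,j)$ the representation of $m_i n_j \in \kk[\Delta]$ on the basis of standard monomials is a $\kk$-linear combination of standard monomials each of whose shapes is dominated by $\shape(m_i) + \shape(n_j) = \lambda_1 + \lambda_2$ (this covers the case $m_i n_j = 0$ vacuously, and the stacking-up case too, since equality of shapes is a special case of domination). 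Hence each $m_i n_j$ lies in $\bigoplus_{\mu \trianglelefteq \lambda_1 + \lambda_2} \kk[\Delta]_\mu$, and since this is a $\kk$-subspace, so does the finite $\kk$-linear combination $f_1 f_2$. That is exactly the claimed containment.

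There is essentially no obstacle here; the statement is an immediate formal consequence of the Key Lemma once one observes that a shape-homogeneous element is by definition a linear combination of standard monomials of a single fixed shape, and that the subspace $\bigoplus_{\mu \trianglelefteq \lambda_1+\lambda_2}\kk[\Delta]_\mu$ is closed under $\kk$-linear combinations. The only point requiring a word is that the Key Lemma is stated for products of individual standard monomials while the Proposition concerns products of arbitrary shape-homogeneous elements; bilinearity of multiplication in $\kk[\Delta]$ bridges this gap. If anything deserves emphasis it is merely recording that the bound $\lambda_1 + \lambda_2$ does not depend on which monomials $m_i, n_j$ were chosen---it depends only on their common shapes $\lambda_1, \lambda_2$---so a single downward-closed family of shapes works uniformly for every term in the double sum.
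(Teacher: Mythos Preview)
Your proof is correct and follows exactly the same approach as the paper: expand $f_1$ and $f_2$ on the basis of standard monomials, distribute, and apply Lemma~\ref{lem:filter-main-lemma} term by term. The paper's proof is more terse but identical in substance.
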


\begin{proof}
    After representing $f_1,f_2$ on the basis of standard monomials, expand $f_1f_2$ as a sum of products of standard monomials, and apply Lemma~\ref{lem:filter-main-lemma} to each of these products.
\end{proof}

\begin{remark}
    Proposition~\ref{prop:x-filtered} generalizes without change to any ring with lexicographic straightening law in the sense of Baclawski \cite{baclawski1981rings}.
\end{remark}

\begin{observation}\label{obs:associated-graded}
    The fact that Proposition~\ref{prop:x-filtered} gives a filtration of $\kk[\Delta]$ suggests to define an {\em associated graded} $\kk$-algebra for $\kk[\Delta]$, by setting
    \[
    \operatorname{Gr}(\kk[\Delta]):= \bigoplus_{\lambda\in \Part_n} \left(\oplus_{\mu \trianglelefteq \lambda} \kk[\Delta]_\mu /\oplus_{\mu \triangleleft \lambda} \kk[\Delta]_\mu\right),
    \]
    with the multiplication induced by the multiplication in $\kk[\Delta]$. But in fact, the ring $\operatorname{Gr}(\kk[\Delta])$ defined this way is straightforwardly identified with $\kk[\Sd\Delta]$, so we have obtained nothing new. This follows from Lemma~\ref{lem:homomorphism-in-top-shape} below.
\end{observation}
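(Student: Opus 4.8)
The plan is to realize the $\kk$-linear isomorphism $\Garsia\colon\kk[\Sd\Delta]\rightarrow\kk[\Delta]$ of Setup~\ref{set:garsia} as the underlying map of a $\Part_n$-graded ring isomorphism $\kk[\Sd\Delta]\xrightarrow{\sim}\operatorname{Gr}(\kk[\Delta])$. First I would identify $\operatorname{Gr}(\kk[\Delta])$, as a graded $\kk$-vector space, with $\kk[\Delta]$ itself: for each $\lambda\in\Part_n$ one has $\bigoplus_{\mu\trianglelefteq\lambda}\kk[\Delta]_\mu=\kk[\Delta]_\lambda\oplus\bigoplus_{\mu\triangleleft\lambda}\kk[\Delta]_\mu$, so the $\lambda$-th graded piece of $\operatorname{Gr}(\kk[\Delta])$ is canonically $\kk[\Delta]_\lambda$, and summing over $\lambda$ gives $\operatorname{Gr}(\kk[\Delta])=\bigoplus_\lambda\kk[\Delta]_\lambda=\kk[\Delta]$. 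Under this identification, the multiplication induced on $\operatorname{Gr}(\kk[\Delta])$ sends homogeneous $a$ of shape $\lambda$ and $b$ of shape $\mu$ to the shape-$(\lambda+\mu)$ component of the product $ab$ computed in $\kk[\Delta]$; this makes sense because Proposition~\ref{prop:x-filtered} puts $ab$ in $\bigoplus_{\nu\trianglelefteq\lambda+\mu}\kk[\Delta]_\nu$, and it is well defined because dominance is compatible with addition in $\Part_n$ and $\Part_n$ is cancellative, so that $\nu\triangleleft\lambda$ forces $\nu+\mu\triangleleft\lambda+\mu$ and lower-shape contributions are annihilated consistently. Since $\Garsia$ takes a standard monomial $\prod_{\alpha\in C}y_\alpha^{c_\alpha}$ to the standard monomial $\prod_{\alpha\in C}x_\alpha^{c_\alpha}$, which has the same shape by Definition~\ref{def:x-shape}, composing $\Garsia$ with the identification above gives a graded $\kk$-linear isomorphism $\kk[\Sd\Delta]\rightarrow\operatorname{Gr}(\kk[\Delta])$; it remains only to check it is multiplicative, and since standard monomials span it suffices to check this on a pair $m_1,m_2$ of standard monomials of shapes $\lambda_1,\lambda_2$.

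Here the entire content is Lemma~\ref{lem:filter-main-lemma}. If $m_1,m_2$ stack up, then by its second part $\Garsia(m_1)\Garsia(m_2)$ is, already in $\kk[\Delta]$, a single standard monomial of shape exactly $\lambda_1+\lambda_2$ (concretely, the one obtained by stacking the exponents on the common chain, since straightening relations among comparable generators are trivial), which is precisely $\Garsia(m_1m_2)$; as it lies in shape $\lambda_1+\lambda_2$, its class in $\operatorname{Gr}(\kk[\Delta])$ is again $\Garsia(m_1m_2)$. If $m_1,m_2$ do not stack up, then $m_1m_2=0$ in $\kk[\Sd\Delta]$ by Observation~\ref{obs:stack-up-nonzero}, while the second part of Lemma~\ref{lem:filter-main-lemma} says every standard monomial occurring in $\Garsia(m_1)\Garsia(m_2)\in\kk[\Delta]$ has shape strictly dominated by $\lambda_1+\lambda_2$, so $\Garsia(m_1)\Garsia(m_2)\in\bigoplus_{\nu\triangleleft\lambda_1+\lambda_2}\kk[\Delta]_\nu$ and its shape-$(\lambda_1+\lambda_2)$ component, i.e.\ its class in $\operatorname{Gr}(\kk[\Delta])$, is zero, which again equals $\Garsia(m_1m_2)$. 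In both cases $\Garsia(m_1)\Garsia(m_2)=\Garsia(m_1m_2)$ in $\operatorname{Gr}(\kk[\Delta])$, so $\Garsia$ is a ring homomorphism, hence a ring isomorphism.

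I do not expect a genuine obstacle: the one nontrivial ingredient — that failure to stack up forces a strict drop in shape — is exactly Lemma~\ref{lem:filter-main-lemma}, already in hand. The only points needing care are bookkeeping: verifying that the multiplication on $\operatorname{Gr}(\kk[\Delta])$ is well defined (which uses cancellativity of $\Part_n$, via Lemma~\ref{lem:monoid-iso}, to see that strict dominance survives adding a fixed partition), that each graded piece of $\operatorname{Gr}(\kk[\Delta])$ is canonically $\kk[\Delta]_\lambda$, and that ``stacking up'' is visibly the same combinatorial condition whether read for $m_1,m_2$ in $\kk[\Sd\Delta]$ or for $\Garsia(m_1),\Garsia(m_2)$ in $\kk[\Delta]$. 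It is probably cleanest to isolate the case analysis as the promised Lemma~\ref{lem:homomorphism-in-top-shape} — stating that for standard monomials $m_1,m_2$ the shape-$(\shape(m_1)+\shape(m_2))$ component of $\Garsia(m_1)\Garsia(m_2)$ equals $\Garsia(m_1m_2)$ — and then deduce the ring isomorphism of Observation~\ref{obs:associated-graded} formally from it, with associativity of $\operatorname{Gr}(\kk[\Delta])$ coming for free by transport along $\Garsia$.
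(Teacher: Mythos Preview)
Your proposal is correct and follows essentially the same approach as the paper: the paper defers the content to Lemma~\ref{lem:homomorphism-in-top-shape}, whose proof (like yours) reduces by bilinearity to a pair of standard monomials and then splits into the stack-up and non-stack-up cases, invoking Lemma~\ref{lem:filter-main-lemma} for the latter. You have in fact anticipated the paper's organization almost exactly, including the suggestion to isolate the case analysis as a separate lemma.
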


From Proposition~\ref{prop:parameter-monom-every-term}, in view of Proposition~\ref{prop:x-filtered}, we have some information about monomials in the parameters $\theta_1,\dots,\theta_n$ for $\kk[\Delta]$:

\begin{prop}\label{prop:param-monom-x-ring}
    For any natural numbers $a_1,\dots,a_n$, the expansion of the product
    \[
    \theta_1^{a_1}\cdots \theta_n^{a_n}
    \]
    on the basis of standard monomials for $\kk[\Delta]$ contains every standard monomial of shape
    \[
    a_1(1^1) + \dots + a_n(1^n),
    \]
    and all other monomials appearing in the expansion have shapes dominated by this.\qed
\end{prop}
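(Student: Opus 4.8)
The plan is to deduce Proposition~\ref{prop:param-monom-x-ring} directly from Proposition~\ref{prop:parameter-monom-every-term} by transporting the identity across the $\kk$-linear isomorphism $\Garsia$ of Setup~\ref{set:garsia} and then correcting for the fact that $\Garsia$ is not a ring homomorphism, using the filtration result Proposition~\ref{prop:x-filtered}. First I would recall that $\theta_j = \Garsia(\gamma_j)$ by definition, and that $\Garsia$ sends a standard monomial $y_\alpha^{c_\alpha}\cdots$ supported on a chain $C$ to the corresponding standard monomial $x_\alpha^{c_\alpha}\cdots$ supported on $C$, preserving shape (since shape is defined the same way on both sides via ranks in $\widehat P(\Delta)$, per Definition~\ref{def:shape} and Definition~\ref{def:x-shape}). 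So by Proposition~\ref{prop:parameter-monom-every-term}, the element $\Garsia(\gamma_1^{a_1}\cdots\gamma_n^{a_n}) \in \kk[\Delta]$ is exactly the sum of all standard monomials of shape $\lambda := a_1(1^1)+\dots+a_n(1^n)$; in particular every such standard monomial appears in it, each with coefficient $1$.

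The point is then to compare $\Garsia(\gamma_1^{a_1}\cdots\gamma_n^{a_n})$ with the actual product $\theta_1^{a_1}\cdots\theta_n^{a_n}$ computed in $\kk[\Delta]$. Here is where the filtration enters. Each $\theta_j$ lies in $\kk[\Delta]_{(1^j)}$, so by repeated application of Proposition~\ref{prop:x-filtered}, the product $\theta_1^{a_1}\cdots\theta_n^{a_n}$ lies in $\bigoplus_{\mu \trianglelefteq \lambda}\kk[\Delta]_\mu$; thus when expanded on the standard monomial basis, every monomial that appears has shape dominated by $\lambda$, giving the ``all other monomials'' clause immediately. For the claim that \emph{every} standard monomial of shape exactly $\lambda$ appears, I would argue that the shape-$\lambda$ component of $\theta_1^{a_1}\cdots\theta_n^{a_n}$ equals $\Garsia(\gamma_1^{a_1}\cdots\gamma_n^{a_n})$. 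Indeed, using the finer statement of Lemma~\ref{lem:filter-main-lemma}\ref{item:shape-dominated}--(2): when we expand the product $\theta_1^{a_1}\cdots\theta_n^{a_n}$ as a sum over choices of one standard generator $x_{\alpha}$ from each of the $a_1+\dots+a_n$ factors, a given term contributes to shape exactly $\lambda$ only when those chosen generators pairwise stack up (otherwise each straightening move strictly lowers the shape below $\lambda$), and in that case the term equals the corresponding monomial obtained multiplying the lifts in $\overline S$. These are precisely the terms surviving in the computation of $\Garsia(\gamma_1^{a_1}\cdots\gamma_n^{a_n})$ in Proposition~\ref{prop:parameter-monom-every-term}. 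Hence the shape-$\lambda$ parts agree, and since the latter contains every standard monomial of shape $\lambda$, so does the former.

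The main obstacle—more bookkeeping than conceptual difficulty—is making the last comparison airtight: one must check that the ``stacking-up'' terms in the naive expansion of $\theta_1^{a_1}\cdots\theta_n^{a_n}$ are in bijection, coefficient by coefficient, with the nonzero terms in the expansion of $\gamma_1^{a_1}\cdots\gamma_n^{a_n}$, i.e., that no cancellation or coalescence happens in characteristic $p$ (it does not, because each standard monomial of multidegree $a_1\bfe_1+\dots+a_n\bfe_n$ arises from a unique ordered choice once one fixes which factor supplies which vertex of the supporting chain, exactly as in the proof of Proposition~\ref{prop:parameter-monom-every-term}). Alternatively, one can sidestep the explicit term-chasing: take any standard monomial $m$ of shape $\lambda$; by Proposition~\ref{prop:parameter-monom-every-term} it appears in $\Garsia(\gamma_1^{a_1}\cdots\gamma_n^{a_n})$ with coefficient $1$, and since $\Garsia$ acts as the identity on standard-monomial coefficients while the difference $\theta_1^{a_1}\cdots\theta_n^{a_n} - \Garsia(\gamma_1^{a_1}\cdots\gamma_n^{a_n})$ lies in $\bigoplus_{\mu\triangleleft\lambda}\kk[\Delta]_\mu$ by the stacking-up analysis above, the coefficient of $m$ in $\theta_1^{a_1}\cdots\theta_n^{a_n}$ is also $1$. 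This proves the proposition.
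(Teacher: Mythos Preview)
Your proof is correct and takes essentially the same approach as the paper, which treats the proposition as an immediate consequence of Proposition~\ref{prop:parameter-monom-every-term} and Proposition~\ref{prop:x-filtered} (hence the \qed\ in the statement with no separate proof). The key step you flesh out---that the shape-$\lambda$ component of $\theta_1^{a_1}\cdots\theta_n^{a_n}$ equals $\Garsia(\gamma_1^{a_1}\cdots\gamma_n^{a_n})$ because only stacking-up terms survive in top shape---is exactly what the paper later codifies as Lemma~\ref{lem:homomorphism-in-top-shape} together with Observation~\ref{obs:any-number-of-factors}.
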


\subsection{The Garsia transfer}\label{sec:garsia-transfer}

It was mentioned above that the map $\Garsia:\kk[\Sd\Delta]\rightarrow \kk[\Delta]$ defined in Setup~\ref{set:garsia} plays an important role. We now pause to give it a name.
\begin{definition}[The Garsia transfer]\label{def:garsia}
The map $\Garsia:\kk[\Sd\Delta] \rightarrow \kk[\Delta]$, defined first by mapping
\[
y_\alpha \mapsto x_\alpha
\]
for $\alpha\in P(\Delta)$, then extending multiplicatively to standard monomials, and finally, extending $\kk$-linearly to the entirety of $\kk[\Sd\Delta]$, is the {\em Garsia transfer}, or just the {\em Garsia map}.
\end{definition}

\begin{example}\label{ex:parameters-under-garsia}
    For $j=1,\dots,n$, we have
    \[
    \Garsia(\gamma_j) = \theta_j.
    \]
    This will be used below. However, as mentioned in Section~\ref{sec:parameter-subring}, $\Garsia$'s restriction to $\kk[\Gamma]$ does not coincide with the ring isomorphism $\Psi:\kk[\Gamma]\rightarrow \kk[\Theta]$ defined there. For example, 
    \begin{equation}\label{eq:psi-version}
    \Psi(\gamma_1^2) = \Psi(\gamma_1)^2 = \theta_1^2 = \left(\sum_{\rk(\alpha)=1} x_\alpha\right)^2,
    \end{equation}
    while on the other hand,
    \begin{equation}\label{eq:garsia-version}
    \Garsia(\gamma_1^2) = \Garsia\left(\left(\sum_{\rk(\alpha)=1} y_\alpha\right)^2\right) = \Garsia\left(\sum_{\rk(\alpha)=1} y_\alpha^2\right) = \sum_{\rk(\alpha)=1} x_\alpha^2,
    \end{equation}
    where the middle equality in \eqref{eq:garsia-version} is because no two $\alpha$'s of the same height are supported on the same chain in $P(\Delta)$ (refer to  Setup~\ref{set:garsia}). 
    If $\Delta$ has dimension greater than zero, then there exist $\alpha,\beta\in P(\Delta)$ with $\rk(\alpha)=\rk(\beta)=1$ (i.e., $\alpha,\beta$ are vertices of $\Delta$)  but $\alpha,\beta$ have at least one common upper bound $\gamma$ in $P(\Delta)$, and then $x_\alpha x_\beta$ is nonzero (it is a sum containing $x_\gamma$). If also the characteristic of $\kk$ is different from $2$, then $2x_\alpha x_\beta$ is nonzero as well, thus \eqref{eq:psi-version} contains a cross-term missing from \eqref{eq:garsia-version}, and $\Psi(\gamma_1)^2 \neq \Garsia(\gamma_1^2)$.
\end{example}

\begin{remark}
The map $\Garsia$ (in the case that $\kk[\Delta]$ is a polynomial ring or various subrings) is called the {\em transfer} (with no modifier) in \cite{garsia, garsia-stanton, reiner, reiner2}; the name is explained by Theorem~\ref{thm:transfer-bases} below. This is also a common name for another important map, 
\[
f\mapsto \sum_{\sigma \in G} \sigma\cdot f,
\]
where $G$ is a group of automorphisms. We are following \cite{blum-smith, blum2018permutation, pevzner2024symmetric}, where it is called the {\em Garsia map} to avoid the name collision, and in honor of Garsia's introduction of it in \cite{garsia}; and we are also proposing {\em Garsia transfer} as a compromise.
\end{remark}

\begin{remark}
    All of the theory discussed in this section works without significant change at the generality of Baclawski's theory of rings with lexicographic straightening law \cite{baclawski1981rings}. The map $\Garsia$ is denoted $\phi$ in \cite{baclawski1981rings} (and $\varphi$ in \cite{baclawski1981combinatorial}). 
\end{remark}

We record some evident-but-important properties of the Garsia transfer:

\begin{observation}\label{obs:garsia-preserves-shape}
    Since the Garsia transfer maps standard monomials in $\kk[\Sd\Delta]$ to the corresponding standard monomials in $\kk[\Delta]$, by comparing Definition~\ref{def:shape} with Definitions~\ref{def:x-shape-upstairs} and \ref{def:x-shape} one sees that it preserves shape, i.e., if $m\in \kk[\Sd\Delta]$ is a standard monomial then
    \[
    \shape(\Garsia(m)) = \shape(m).
    \]
\end{observation}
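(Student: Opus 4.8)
The plan is to unwind the definitions on both sides and observe they produce the same partition. Write a standard monomial $m \in \kk[\Sd\Delta]$ as $m = \prod_{\alpha \in C} y_\alpha^{c_\alpha}$ for a chain $C \subseteq P(\Delta)$ and natural numbers $c_\alpha$. By the definition of the Garsia transfer (Definition~\ref{def:garsia}), $\Garsia(m) = \prod_{\alpha \in C} x_\alpha^{c_\alpha} \in \kk[\Delta]$, which is again a standard monomial, supported on the same chain $C$; its lift to $\overline S$ is the monomial $\prod_{\alpha \in C} x_\alpha^{c_\alpha} \in \overline S$.

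Next I would compute $\shape(m)$. By Definition~\ref{def:shape}, $\shape(m) = \operatorname{sh}(\deg_{\mathrm{md}} m)$, and since $\deg_{\mathrm{md}} y_\alpha = \bfe_{\rk(\alpha)}$ we get $\deg_{\mathrm{md}} m = \sum_{\alpha \in C} c_\alpha \bfe_{\rk(\alpha)}$. Because $\operatorname{sh}$ is a monoid homomorphism with $\operatorname{sh}(\bfe_j) = (1^j)$ (Lemma~\ref{lem:monoid-iso}), this yields
\[
\shape(m) = \sum_{\alpha \in C} c_\alpha (1^{\rk(\alpha)}).
\]
On the other side, by Definition~\ref{def:x-shape}, $\shape(\Garsia(m))$ is computed by applying Definition~\ref{def:x-shape-upstairs} to the lift $\prod_{\alpha \in C} x_\alpha^{c_\alpha} \in \overline S$. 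Since $\shape$ on $\overline S$ is a $\Part_n$-grading with $\shape(x_\alpha) = (1^{\rk(\alpha)})$, additivity over products gives $\shape(\Garsia(m)) = \sum_{\alpha \in C} c_\alpha (1^{\rk(\alpha)})$, which is exactly $\shape(m)$.

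There is no real obstacle here: the statement is a bookkeeping check that the two shape definitions are pinned down by the same data, namely the exponents $c_\alpha$ and the ranks $\rk(\alpha)$ (computed in $\widehat P(\Delta)$ in both cases). The only point needing a word of care is that $\Garsia(m)$ is genuinely a standard monomial of $\kk[\Delta]$ with lift equal to the evident monomial in $\overline S$, so that Definition~\ref{def:x-shape} applies directly with no straightening required; this is immediate from the definition of $\Garsia$ on standard monomials.
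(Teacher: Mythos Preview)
Your proof is correct and takes essentially the same approach as the paper, which treats this as an observation requiring only a comparison of Definitions~\ref{def:shape}, \ref{def:x-shape-upstairs}, and \ref{def:x-shape}. You have simply written out explicitly the bookkeeping that the paper leaves to the reader.
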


\begin{observation}\label{obs:garsia-equivariant}
    If $G\subseteq \Aut(\Delta)$ is a group of automorphisms of $\Delta$, with the induced actions on $P(\Delta)$, $\kk[\Delta]$ and $\kk[\Sd\Delta]$, then the Garsia transfer $\Garsia:\kk[\Sd\Delta]\rightarrow \kk[\Delta]$ is $G$-equivariant.
\end{observation}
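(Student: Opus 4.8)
The plan is to reduce everything to the standard monomials. Both $\Garsia$ (by Definition~\ref{def:garsia}) and the action of any $\sigma\in G$ (by Setup~\ref{set:garsia-is-equivariant}) are $\kk$-linear maps, and by Setup~\ref{set:garsia} the standard monomials form a $\kk$-basis of $\kk[\Sd\Delta]$. So it suffices to verify the identity $\Garsia(\sigma\cdot m)=\sigma\cdot\Garsia(m)$ for a single standard monomial $m\in\kk[\Sd\Delta]$ and then extend $\kk$-linearly.

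The one preliminary point to record is that $\sigma\in G\subseteq\Aut(\Delta)$ induces a poset automorphism of $P(\Delta)$, hence of $\widehat P(\Delta)$; in particular it preserves rank, $\rk(\sigma\cdot\alpha)=\rk(\alpha)$, and sends chains of $P(\Delta)$ to chains. Consequently the $G$-action permutes standard monomials on both sides: if $m=\prod_{\alpha\in C}y_\alpha^{c_\alpha}$ is supported on a chain $C\subseteq P(\Delta)$, then $\sigma\cdot m=\prod_{\alpha\in C}y_{\sigma\cdot\alpha}^{c_\alpha}$ is supported on the chain $\sigma\cdot C$ and so is again standard, and likewise $\sigma$ carries the standard monomial $\prod_{\alpha\in C}x_\alpha^{c_\alpha}\in\kk[\Delta]$ to the standard monomial $\prod_{\alpha\in C}x_{\sigma\cdot\alpha}^{c_\alpha}$.

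Granting this, the verification is a one-line computation. For a standard monomial $m=\prod_{\alpha\in C}y_\alpha^{c_\alpha}$,
\[
\Garsia(\sigma\cdot m)=\Garsia\Bigl(\prod_{\alpha\in C}y_{\sigma\cdot\alpha}^{c_\alpha}\Bigr)=\prod_{\alpha\in C}x_{\sigma\cdot\alpha}^{c_\alpha}=\sigma\cdot\Bigl(\prod_{\alpha\in C}x_\alpha^{c_\alpha}\Bigr)=\sigma\cdot\Garsia(m),
\]
where the second equality is the defining rule for $\Garsia$ on standard monomials ($y_\beta\mapsto x_\beta$, extended multiplicatively), applied legitimately because $\sigma\cdot m$ is itself standard. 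Extending $\kk$-linearly over the standard monomial basis yields $\Garsia\circ\sigma=\sigma\circ\Garsia$ on all of $\kk[\Sd\Delta]$, which is the claim.

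I expect no real obstacle here: the statement is essentially definitional. The only place warranting a sentence of care is the observation that automorphisms preserve rank and hence chains, so that the $G$-action restricts to a permutation of the standard monomial bases of $\kk[\Sd\Delta]$ and $\kk[\Delta]$; this is what makes the middle step of the displayed computation a direct application of the definition of $\Garsia$ rather than something requiring the straightening relations.
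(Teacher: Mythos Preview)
Your proof is correct and matches the paper's treatment: the paper states this as an Observation without proof, listing it among the ``evident-but-important properties'' of the Garsia transfer, so your reduction to standard monomials and verification that automorphisms permute them is exactly the routine check the paper leaves implicit.
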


\begin{observation}\label{obs:garsia-graded}
    With respect to the $\NN$-gradings on $\kk[\Delta]$ and $\kk[\Sd\Delta]$ defined in Setup~\ref{set:N-grading}, the Garsia transfer $\Garsia:\kk[\Sd\Delta]\rightarrow \kk[\Delta]$ is a graded map.
\end{observation}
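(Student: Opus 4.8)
The plan is to reduce to the behavior of $\Garsia$ on the standard-monomial basis. By Setup~\ref{set:garsia}, the standard monomials of a fixed $\NN$-degree $d$ form a $\kk$-basis of the degree-$d$ component of $\kk[\Sd\Delta]$, so, since $\Garsia$ is $\kk$-linear, it suffices to check that each such monomial is sent to a homogeneous element of degree $d$ in $\kk[\Delta]$. (Note that $\Garsia$ is only $\kk$-linear, not a ring map, so one does genuinely have to argue on the basis of standard monomials rather than on algebra generators; but this is the only point requiring care.)

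That check is immediate from the definition of the grading in Setup~\ref{set:N-grading}: one has $\deg y_\alpha = \deg x_\alpha = \rk(\alpha)$ for every $\alpha\in P(\Delta)$. So if $m = \prod_{\alpha\in C} y_\alpha^{c_\alpha}$ is a standard monomial supported on a chain $C\subseteq P(\Delta)$, then its image $\Garsia(m) = \prod_{\alpha\in C} x_\alpha^{c_\alpha}$ is again a standard monomial (in $\kk[\Delta]$) with $\deg \Garsia(m) = \sum_{\alpha\in C} c_\alpha\,\rk(\alpha) = \deg m$. Hence $\Garsia$ carries the degree-$d$ component of $\kk[\Sd\Delta]$ into the degree-$d$ component of $\kk[\Delta]$ for every $d$, which is precisely the assertion that $\Garsia$ is a graded map.

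Alternatively, and a little more conceptually, one may invoke Observation~\ref{obs:garsia-preserves-shape}: $\Garsia$ preserves the shape of a standard monomial. Since, as recorded after Definitions~\ref{def:shape} and \ref{def:x-shape}, the $\NN$-degree of a standard monomial is the size of the partition that is its shape (equivalently, the $\NN$-grading on either ring is the pushforward of the $\Part_n$-grading along the monoid map $\Part_n\to\NN$, $\lambda\mapsto|\lambda|$), preservation of shape immediately forces preservation of $\NN$-degree.
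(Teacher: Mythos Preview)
Your proof is correct and follows exactly the approach the paper has in mind: the paper records this as an ``evident-but-important'' observation without proof, having already noted in Setup~\ref{set:N-grading} that the gradings are set up precisely so that $\deg y_\alpha = \deg x_\alpha = \rk(\alpha)$, making $\Garsia$ graded. Your first argument spells out this implicit reasoning carefully on the standard-monomial basis, and your alternative via Observation~\ref{obs:garsia-preserves-shape} is a nice repackaging of the same idea.
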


As noted in Setup~\ref{set:garsia}, the Garsia transfer is not a ring map. Neither is it a $\kk[\Theta]$-module map. It is, however, a {\em coarse approximation to a ring homomorphism}. By Lemma~\ref{lem:filter-main-lemma}, $\kk[\Delta]$ is a ring with lexicographic straightening law based on $P(\Delta)$ in the sense of Baclawski \cite[Definition~4.2]{baclawski1981rings}, and, per Setup~\ref{set:garsia}, $\kk[\Sd\Delta]$ is exactly $\kk[P(\Delta)]$. So, as a special case of  \cite[Lemma~4.4]{baclawski1981rings}, we have:

\begin{lemma}\label{lem:homomorphism-in-top-shape}
Let $\lambda_1,\lambda_2\in \Part_n$, and let $f_1\in \kk[\Sd\Delta]_{\lambda_1}$, $f_2\in \kk[\Sd\Delta]_{\lambda_2}$. Then
\[
\Garsia(f_1)\Garsia(f_2) - \Garsia(f_1f_2)\in \bigoplus_{\mu\triangleleft\lambda_1 +\lambda_2} \kk[\Delta]_\mu.
\]
Note that the direct sum is over $\mu$ {\em strictly} dominated by $\lambda_1 + \lambda_2$.\qed
\end{lemma}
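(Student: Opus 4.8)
The plan is to reduce to the case where $f_1$ and $f_2$ are standard monomials, and then to leverage the Key Lemma (Lemma~\ref{lem:filter-main-lemma}) together with the definition of $\Garsia$. Both $\Garsia$ and the multiplication maps involved are $\kk$-bilinear in $(f_1, f_2)$, and $\kk[\Sd\Delta]_{\lambda_i}$ is spanned by standard monomials of shape $\lambda_i$, so it suffices to verify the containment when $f_1 = m_1$ and $f_2 = m_2$ are standard monomials with $\shape(m_1) = \lambda_1$, $\shape(m_2) = \lambda_2$.

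Now I would split into two cases according to whether $m_1$ and $m_2$ stack up. If they do stack up, then in $\kk[\Sd\Delta]$ the product $m_1 m_2$ is a single standard monomial (supported on the common chain), and $\Garsia(m_1 m_2)$ is the corresponding standard monomial in $\kk[\Delta]$. On the other hand $\Garsia(m_1)$ and $\Garsia(m_2)$ are the standard monomials in $\kk[\Delta]$ supported on that same chain, so their product in $\kk[\Delta]$ can be computed by lifting to $\overline S$, and it equals exactly the standard monomial $\Garsia(m_1 m_2)$ (this is the stacking-up case of Lemma~\ref{lem:filter-main-lemma}). Hence the difference $\Garsia(m_1)\Garsia(m_2) - \Garsia(m_1 m_2)$ is zero, which lies in the stated direct sum vacuously. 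If $m_1$ and $m_2$ do \emph{not} stack up, then by Observation~\ref{obs:stack-up-nonzero} we have $m_1 m_2 = 0$ in $\kk[\Sd\Delta]$, so $\Garsia(m_1 m_2) = 0$ and the difference is just $\Garsia(m_1)\Garsia(m_2)$, a product of two standard monomials in $\kk[\Delta]$ that do not stack up. By part~\ref{item:shape-dominated} (and the strictness clause) of Lemma~\ref{lem:filter-main-lemma}, every standard monomial appearing in the standard-monomial expansion of this product has shape \emph{strictly} dominated by $\shape(\Garsia(m_1)) + \shape(\Garsia(m_2))$. Since $\Garsia$ preserves shape (Observation~\ref{obs:garsia-preserves-shape}), this equals $\lambda_1 + \lambda_2$, so $\Garsia(m_1)\Garsia(m_2) \in \bigoplus_{\mu \triangleleft \lambda_1 + \lambda_2} \kk[\Delta]_\mu$, as required.

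The one point requiring a little care is the bilinearity reduction: one must check that the target $\bigoplus_{\mu \triangleleft \lambda_1 + \lambda_2} \kk[\Delta]_\mu$ behaves additively as $f_1, f_2$ range over their spanning sets. This is immediate because for fixed $\lambda_1, \lambda_2$ the subspace $\bigoplus_{\mu \triangleleft \lambda_1 + \lambda_2}\kk[\Delta]_\mu$ does not depend on which monomials of those shapes we chose, so summing the monomial-level identities $\Garsia(m_1)\Garsia(m_2) - \Garsia(m_1 m_2) \in \bigoplus_{\mu \triangleleft \lambda_1 + \lambda_2}\kk[\Delta]_\mu$ over the expansions $f_1 = \sum c_i m_1^{(i)}$, $f_2 = \sum d_j m_2^{(j)}$ and using $\kk$-bilinearity of both the product and $\Garsia$ on standard monomials gives the general statement. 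I do not anticipate a genuine obstacle here; the content is entirely contained in Lemma~\ref{lem:filter-main-lemma}, and this lemma is essentially a repackaging of it that says ``the Garsia transfer commutes with multiplication modulo strictly lower shape.''
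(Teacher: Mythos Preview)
Your proof is correct and follows essentially the same approach as the paper: both reduce by bilinearity to standard monomials, split into the stacking-up and non-stacking-up cases, and invoke Observation~\ref{obs:stack-up-nonzero} and Lemma~\ref{lem:filter-main-lemma} exactly as you do. The paper's version is slightly terser (it does not spell out the bilinearity justification or cite Observation~\ref{obs:garsia-preserves-shape} explicitly), but the content is identical.
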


\begin{remark}
    We have cited \cite[Lemma~4.4]{baclawski1981rings} for Lemma~\ref{lem:homomorphism-in-top-shape}, but for the reader who prefers a self-contained treatment we note that Lemma~\ref{lem:homomorphism-in-top-shape} follows essentially immediately from Lemma~\ref{lem:filter-main-lemma} by noting that the statement is bilinear in $f_1,f_2$ and breaking each into individual standard monomials, and then noting that $\Garsia$ preserves whether or not a pair of monomials stacks up.
\end{remark}

\begin{remark}
We think of Lemma~\ref{lem:homomorphism-in-top-shape} as asserting that the Garsia transfer is a ``homomorphism in the top shape". In the language of Observation~\ref{obs:associated-graded}, the Garsia transfer induces an isomorphism from $\kk[\Sd\Delta]$ to the associated graded algebra of $\kk[\Delta]$.
\end{remark}

\begin{observation}\label{obs:any-number-of-factors}
Lemma~\ref{lem:homomorphism-in-top-shape} immediately generalizes by induction to any number of factors.
\end{observation}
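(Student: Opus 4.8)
The plan is a straightforward induction on the number of factors, with the two-factor case being Lemma~\ref{lem:homomorphism-in-top-shape} itself. First I would make the statement precise: for shape-homogeneous elements $f_i\in\kk[\Sd\Delta]_{\lambda_i}$ with $\lambda_i\in\Part_n$, $i=1,\dots,k$, the claim is
\[
\Garsia(f_1)\cdots\Garsia(f_k)-\Garsia(f_1\cdots f_k)\in\bigoplus_{\mu\triangleleft\lambda_1+\cdots+\lambda_k}\kk[\Delta]_\mu.
\]
The case $k=1$ is vacuous (the difference is $0$) and $k=2$ is Lemma~\ref{lem:homomorphism-in-top-shape}.

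For the inductive step, assume the result for $k-1$ factors and set $\nu:=\lambda_1+\cdots+\lambda_{k-1}$. Since $\kk[\Sd\Delta]$ is $\Part_n$-graded (Definition~\ref{def:shape}), the element $g:=f_1\cdots f_{k-1}$ lies in $\kk[\Sd\Delta]_\nu$, and $gf_k=f_1\cdots f_k$. I would split the error term as
\[
\Garsia(f_1)\cdots\Garsia(f_k)-\Garsia(f_1\cdots f_k)=\bigl(\Garsia(f_1)\cdots\Garsia(f_{k-1})-\Garsia(g)\bigr)\Garsia(f_k)+\bigl(\Garsia(g)\Garsia(f_k)-\Garsia(gf_k)\bigr).
\]
The second summand lies in $\bigoplus_{\mu\triangleleft\nu+\lambda_k}\kk[\Delta]_\mu$ by Lemma~\ref{lem:homomorphism-in-top-shape} applied to $g$ and $f_k$. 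For the first summand, the induction hypothesis places $\Garsia(f_1)\cdots\Garsia(f_{k-1})-\Garsia(g)$ in $\bigoplus_{\mu\triangleleft\nu}\kk[\Delta]_\mu$, while Observation~\ref{obs:garsia-preserves-shape} gives $\Garsia(f_k)\in\kk[\Delta]_{\lambda_k}$; then Proposition~\ref{prop:x-filtered} shows that a product of a shape-$\mu$ element with a shape-$\lambda_k$ element lies in $\bigoplus_{\rho\trianglelefteq\mu+\lambda_k}\kk[\Delta]_\rho$. Since $\Part_n\cong\NN^n$ is cancellative (Lemma~\ref{lem:monoid-iso}), the relation $\mu\triangleleft\nu$ upgrades, via Observation~\ref{obs:dominance-monoid-compatible}, to $\mu+\lambda_k\triangleleft\nu+\lambda_k$, so each such $\rho$ satisfies $\rho\triangleleft\nu+\lambda_k$. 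Adding the two summands and using $\nu+\lambda_k=\lambda_1+\cdots+\lambda_k$ completes the induction.

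There is essentially no obstacle here: all the real content is packaged in the lemmas cited, and the induction merely re-wires them. The one point deserving a sentence of care is the upgrade from non-strict to strict dominance after adding $\lambda_k$, which uses cancellativity of the grading monoid; alternatively one could bypass it by reducing, as in the proof of Lemma~\ref{lem:homomorphism-in-top-shape}, to standard monomials and invoking Lemma~\ref{lem:filter-main-lemma} directly, but the inductive argument above is cleaner to present.
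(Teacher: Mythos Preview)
Your proposal is correct and is exactly the induction the paper is alluding to; the paper states the observation without proof, and your decomposition into the two summands together with Proposition~\ref{prop:x-filtered}, Observation~\ref{obs:dominance-monoid-compatible}, and cancellativity of $\Part_n$ fills in the details cleanly.
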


The following is the main original application of the Garsia transfer in \cite{garsia, baclawski1981combinatorial, baclawski1981rings, garsia-stanton}, adapted to our setting.

\begin{theorem}[Main theorem on the Garsia transfer]\label{thm:transfer-bases}
    Let $\Delta$ be a boolean complex. Let
    \[
    f_1,\dots, f_r \in \kk[\Sd\Delta]
    \]
    be homogeneous with respect to shape. Then:
    \begin{enumerate}
    \item If $f_1,\dots,f_r$ generate $\kk[\Sd\Delta]$ as a $\kk[\Gamma]$-module, then $\Garsia(f_1),\dots,\Garsia(f_r)$ generate $\kk[\Delta]$ as a $\kk[\Theta]$-module.\label{item:generate}
    \item If $f_1,\dots,f_r$ are $\kk[\Gamma]$-linearly independent, then $\Garsia(f_1),\dots,\Garsia(f_r)$ are $\kk[\Theta]$-linearly independent.\label{item:lin-indep}
    \end{enumerate}
\end{theorem}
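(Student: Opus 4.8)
The plan is to prove both parts by exploiting the shape filtration on $\kk[\Delta]$ (Proposition~\ref{prop:x-filtered}) together with the fact that the Garsia transfer is a ``homomorphism in the top shape'' (Lemma~\ref{lem:homomorphism-in-top-shape}), and the bridge lemma that, on shape-homogeneous elements of $\kk[\Gamma]$, the maps $\Garsia$ and $\Psi$ agree modulo strictly lower shapes. This last statement is immediate from comparing Propositions~\ref{prop:parameter-monom-every-term} and~\ref{prop:param-monom-x-ring}: if $q\in\kk[\Gamma]$ is homogeneous of shape $\mu$, then $\Garsia(q)$ is again shape-homogeneous of shape $\mu$, and $\Psi(q)-\Garsia(q)$ is a $\kk$-combination of standard monomials of $\kk[\Delta]$ of shape strictly dominated by $\mu$, i.e. $\Psi(q)-\Garsia(q)\in\bigoplus_{\rho\triangleleft\mu}\kk[\Delta]_\rho$. (Morally, one is making explicit the identification of $\kk[\Sd\Delta]$ with the associated graded of $\kk[\Delta]$ from Observation~\ref{obs:associated-graded}.) Throughout, we use that automorphisms of $\Delta$ preserve shape, so $\kk[\Sd\Delta]^G$ and $\kk[\Delta]^G$ inherit shape decompositions; that $\Garsia$ is $G$-equivariant and $\kk[\Theta]$ is pointwise $G$-fixed; and that $\Garsia$, the $\theta_j$ and the $\gamma_j$ are graded, which lets us reduce in each part to a situation with only finitely many shapes in play.

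\textbf{Part~\ref{item:generate} (generation).} Write $M:=\sum_i\kk[\Theta]\Garsia(f_i)$. By gradedness it suffices to show $h\in M$ for homogeneous $h\in\kk[\Delta]^G$, and decomposing by shape we may take $h\in\kk[\Delta]^G_\lambda$ for some $\lambda$; fix the degree $d=|\lambda|$ and induct on $\lambda$ in dominance order, which is legitimate by Observation~\ref{obs:dominance-induction}. Since $\Garsia$ is a shape-preserving, $G$-equivariant $\kk$-linear isomorphism, $g:=\Garsia^{-1}(h)$ lies in $\kk[\Sd\Delta]^G_\lambda$, so by hypothesis $g=\sum_i q_i f_i$ with $q_i\in\kk[\Gamma]$; extracting shape-$\lambda$ components we may take each $q_i$ homogeneous of shape $\lambda-\shape(f_i)$ (terms for which this is not a valid shape drop out). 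Applying $\Garsia$ and using Lemma~\ref{lem:homomorphism-in-top-shape} on each summand $\Garsia(q_i f_i)$, then the bridge lemma on each $\Garsia(q_i)$, and Proposition~\ref{prop:x-filtered} to absorb the errors into lower shapes, we get $h\equiv\sum_i\Psi(q_i)\Garsia(f_i)\pmod{\bigoplus_{\mu\triangleleft\lambda}\kk[\Delta]_\mu}$. The difference $h':=h-\sum_i\Psi(q_i)\Garsia(f_i)$ is $G$-invariant and lies in $\bigoplus_{\mu\triangleleft\lambda}\kk[\Delta]^G_\mu$, so each of its shape-homogeneous pieces is in $M$ by the inductive hypothesis, hence $h\in M$; the minimal-$\lambda$ case is just $h'=0$.

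\textbf{Part~\ref{item:lin-indep} (linear independence).} Suppose $\sum_i p_i\Garsia(f_i)=0$ with $p_i\in\kk[\Theta]$ not all zero; passing to an $\NN$-homogeneous component we may take the $p_i$ homogeneous, and we put $p_i=\Psi(q_i)$ with $q_i\in\kk[\Gamma]$ homogeneous, not all zero. We seek a contradiction with the $\kk[\Gamma]$-independence of the $f_i$. Apply $\Garsia^{-1}$; expanding $\Garsia^{-1}(\Psi(q_i)\Garsia(f_i))$ via the bridge lemma and Lemma~\ref{lem:homomorphism-in-top-shape} (applied to $\Garsia^{-1}$, which also preserves shape) yields $\Garsia^{-1}(\Psi(q_i)\Garsia(f_i))=q_i f_i+r_i$, where every shape appearing in $r_i$ is strictly dominated by some $\mu+\shape(f_i)$ with $\mu$ a shape of $q_i$. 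Summing over $i$ gives $g=-r$, where $g:=\sum_i q_i f_i$ and $r:=\sum_i r_i$, and every shape appearing in $r$ is strictly dominated by some element of the finite set $T:=\bigcup_i\{\mu+\shape(f_i):\mu\text{ a shape of }q_i\}$. The decisive observation is that $\kk[\Gamma]$-independence of the $f_i$ forces the shape-$\rho$ component of $g$ to be nonzero for \emph{every} $\rho\in T$, since that component is a nontrivial $\kk[\Gamma]$-linear combination of the $f_i$; thus the shape-support of $g$ equals $T$. But then a maximal element of $T$ belongs to the shape-support of $g=-r$ while being strictly dominated by no element of $T$, a contradiction. Hence no nontrivial relation exists.

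\textbf{Main obstacle.} The real work is the bookkeeping with shapes, driven by the fact that $\Garsia$ is not a ring map and $\Garsia(q)\neq\Psi(q)$ in general (Example~\ref{ex:parameters-under-garsia}): neither map may simply be substituted for the other, and everything hinges on their agreement modulo \emph{strictly} lower shapes, with Lemma~\ref{lem:homomorphism-in-top-shape} controlling the multiplicative error in exactly the same manner. In part~\ref{item:lin-indep} the delicate step is excluding cancellation among the shape-homogeneous components of $g$, which is precisely where the linear-independence hypothesis is used; without it a maximal ``potential shape'' in $T$ could fail to appear in $g$ and the leading-term argument would not close.
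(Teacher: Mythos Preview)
Your proof is correct and follows essentially the same approach as the paper: both parts are leading-shape arguments exploiting the $\Part_n$-filtration, the bridge between $\Psi$ and $\Garsia$ coming from Propositions~\ref{prop:parameter-monom-every-term} and~\ref{prop:param-monom-x-ring}, and Lemma~\ref{lem:homomorphism-in-top-shape}. Your Part~\ref{item:lin-indep} is packaged slightly differently (you pull the relation back to $\kk[\Sd\Delta]$ and argue by contradiction at a maximal element of your set $T$, whereas the paper stays in $\kk[\Delta]$, breaks the $p_j$ into $\theta$-monomials, and explicitly isolates the shape-$\lambda$ part to produce a nontrivial $\kk[\Gamma]$-relation), but the underlying maximal-shape mechanism is identical.
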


We will cite \cite{baclawski1981rings} for assertion~\ref{item:generate}, but the analog to assertion~\ref{item:lin-indep} in \cite{baclawski1981combinatorial} (and in \cite{garsia, baclawski1981combinatorial, garsia-stanton}) is proven only under the hypothesis of  (the analog to) assertion~\ref{item:generate}. Thus, for assertion~\ref{item:lin-indep} we include a self-contained proof to show that it is independent of this hypothesis. 

\begin{proof}[Proof of Theorem~\ref{thm:transfer-bases}]
    Since by Lemma~\ref{lem:filter-main-lemma} $\kk[\Delta]$ is a ring with lexicographic straightening law based on $P(\Delta)$ in the sense of Baclawski \cite{baclawski1981rings}, and $\kk[\Sd\Delta]=\kk[P(\Delta)]$, assertion~\ref{item:generate} is a specialization of assertion~1 of \cite[Theorem~4.3]{baclawski1981rings}. For the sake of self-containedness, we briefly indicate the idea of the proof. Namely, representations of elements of $\kk[\Delta]$ as $\kk[\Theta]$-linear combinations of $\Garsia(f_1),\dots,\Garsia(f_r)$ can be built by induction on shape from representations of elements of $\kk[\Sd\Delta]$ as $\kk[\Gamma]$-linear combinations of $f_1,\dots,f_r$. Lemma~\ref{lem:homomorphism-in-top-shape} and Observation~\ref{obs:any-number-of-factors} tell us that the Garsia image of a representation in $\kk[\Sd\Delta]$ emulates a representation in $\kk[\Delta]$ in the maximal shape components, and then it can be adjusted based on the lower-shape remainders until there is no remainder. The procedure is illustrated below in Example~\ref{ex:proof-illlustration}.

    To prove assertion~\ref{item:lin-indep}, we suppose the existence of a nontrivial $\kk[\Theta]$-linear relation among $\Garsia(f_1),\dots,\Garsia(f_r)$, and use it to find a nontrivial $\kk[\Gamma]$-linear relation among $f_1,\dots,f_r$. The main idea is that, due to  Lemmas~\ref{lem:filter-main-lemma} and \ref{lem:homomorphism-in-top-shape}, we can isolate the part of a $\kk[\Theta]$-linear relation between the $\Garsia(f_j)$'s that takes place in a maximal-shape component to produce a $\kk[\Gamma]$-linear relation between the $f_j$'s.
    
    To this end, suppose we have some elements $p_1,\dots,p_r\in \kk[\Theta]$, not all zero, so that
    $
    0=\sum_{j=1}^r p_j \Garsia(f_j).
    $
    Breaking each $p_j$ into monomials in the $\theta$'s and discarding zero terms, we can express this as a nontrivial (finite) linear combination 
    \begin{equation}\label{eq:lin-rel}
    0=\sum_{j,\bfa}c_{j,\bfa}\theta^\bfa \Garsia(f_j),
    \end{equation}
    where $\theta^\bfa:= \theta_1^{a_1}\dots \theta_n^{a_n}$ for a tuple of natural numbers $\bfa:=(a_1,\dots,a_n)$, and $c_{j,\bfa}\in \kk^\times$.

    Find a shape, $\lambda$, that is maximal with respect to dominance order among all of the shapes that occur in the expansions of any of the $c_{j,\bfa}\theta^\bfa\Garsia(f_j)$'s appearing in \eqref{eq:lin-rel} into standard monomials. Because $\kk[\Delta]$ is the direct sum of its shape-homogeneous components, \eqref{eq:lin-rel} implies that all terms of shape $\lambda$ appearing in the expansions of the $c_{j,\bfa}\theta^\bfa\Garsia(f_j)$'s must cancel out.
    
    Consider any $c_{j,\bfa}\theta^\bfa\Garsia(f_j)$ whose expansion contains a standard monomial of shape $\lambda$. One of two scenarios is possible. By Proposition~\ref{prop:x-filtered} and the remark following it, either
    \begin{equation}\label{eq:shape-equality}
    \lambda = a_1(1^1) + \dots + a_n(1^n) + \shape(f_j),
    \end{equation}
    or 
    \[
    \lambda \triangleleft a_1(1^1) + \dots + a_n(1^n) + \shape(f_j)
    \]
    (strict dominance).\footnote{Actually, this second scenario is not possible, due to Proposition~\ref{prop:param-monom-x-ring}, but we formulate the proof in the present way to support generalization to settings where an analogous statement to Proposition~\ref{prop:param-monom-x-ring} does not hold, such as that of \cite{baclawski1981rings}.\label{note:baclawski-generality}} In the latter case, the maximality of $\lambda$ implies that the component of $c_{j,\bfa}\theta^\bfa\Garsia(f_j)$ in shape $a_1(1^1) + \dots + a_n(1^n) + \shape(f_j)$ is zero. Then by Lemma~\ref{lem:homomorphism-in-top-shape} and Observation~\ref{obs:any-number-of-factors}, we have $c_{j,\bfa}\gamma^\bfa f_j=0$, and this is a nontrivial linear relation of the $f_j$'s over $\kk[\Gamma]$, so we are done.
    
    Thus we can assume going forward that each $c_{j,\bfa}\theta^\bfa\Garsia(f_j)$ that contributes a standard monomial of shape $\lambda$ to the relation \eqref{eq:lin-rel} satisfies the equality \eqref{eq:shape-equality}; write $j^\star, \bfa^\star$ for any $j,\bfa$ such that this happens. Then, again by Lemma~\ref{lem:homomorphism-in-top-shape} and Observation~\ref{obs:any-number-of-factors}, the terms of shape $\lambda$ occurring in such a $c_{j^\star,\bfa^\star}\theta^{\bfa^\star}\Garsia(f_{j^\star})$ are exactly the images of the terms of $c_{j^\star,\bfa^\star}\gamma^{\bfa^\star}f_{j^\star}\in \kk[\Sd\Delta]$ under the Garsia transfer. Since we know that all terms of shape $\lambda$ in \eqref{eq:lin-rel} must cancel out, and the Garsia transfer is a linear isomorphism, we have
    \[
    0=\sum_{j^\star,\bfa^\star}c_{j^\star,\bfa^\star}\gamma^{\bfa^\star}f_{j^\star},
    \]
    and this is a nontrivial linear relation of the $f_j$'s over $\kk[\Gamma]$, completing the proof.
\end{proof}

\begin{example}\label{ex:proof-illlustration}
We illustrate the proof of Theorem~\ref{thm:transfer-bases} using our running example from Figure~\ref{fig:running-ex}. 

The proof of assertion~\ref{item:generate} inputs a shape-homogeneous basis for $\kk[\Sd\Delta]$ as $\kk[\Gamma]$-module, and provides an algorithm to express any element  $f\in \kk[\Delta]$ in terms of the Garsia-image of this basis, assuming that one has an algorithm to express $\Garsia^{-1}(f)$ on the given basis for $\kk[\Sd\Delta]$. (Such an algorithm is provided in Section~\ref{sec:construct-a-basis}.)

To illustrate, we choose $f_1 = 1$, $f_2 = y_v$, $f_3 = y_\alpha$, and $f_4 = y_v y_\alpha$, which form a basis of $\kk[\Sd\Delta]$ over $\kk[\Gamma]$ (we will prove this in Section~\ref{sec:construct-a-basis} below). In this example, we demonstrate how the algorithm works by applying it to a specific element of $\kk[\Delta]$, namely $f:=x_w^2 x_\beta$.

The first step is to write $\Garsia^{-1}(x_w^2 x_\beta) = y_w^2 y_\beta$ in terms of the basis $f_1$, $f_2$, $f_3$, $f_4$ (we show how to do this in Section~\ref{sec:construct-a-basis}). We obtain:
    \[
     y_w^2y_\beta= \gamma_1^2\gamma_2 - \gamma_1\gamma_2 y_v - \gamma_1^2 y_\alpha + \gamma_1 y_vy_\alpha.
    \]
    The proof now shows how to find the original $f\in\kk[\Delta]$ in the $\kk[\Theta]$-span of $\Garsia(f_1)=1$, $\Garsia(f_2)=x_v$, $\Garsia(f_3)=x_\alpha$, and $\Garsia(f_4)=x_vx_\alpha$.  Note that it has shape $2\cdot(1) + (1,1) = (3,1)$. The idea is that applying the Garsia transfer to each factor of each term in the above representation, we get something that matches our target $f$ in shape $(3,1)$, and all terms of the difference must have shape strictly dominated by this. We have
    \[
     x_w^2x_\beta - (\theta_1^2\theta_2 - \theta_1\theta_2x_v - \theta_1^2 x_\alpha + \theta_1x_vx_\alpha) = -x_\beta^2,
    \]
    and the point is that the sole term of the remainder has shape $(2,2)$, which is strictly dominated by $(3,1)$ as was to be expected from Lemma~\ref{lem:homomorphism-in-top-shape} and Observation~\ref{obs:any-number-of-factors}, so we have made progress. Applying the same process to $\Garsia^{-1}(-x_\beta^2)$, we obtain:
    \[
    \Garsia^{-1}(-x_\beta^2)= -y_\beta^2 =   - \gamma_2^2+\gamma_2y_\alpha,
    \]
    so the remainder
    \[
    (-x_\beta^2) - (-\theta_2^2+\theta_2x_\alpha)
    \]
    has only terms of further dominated shape, again by Lemma~\ref{lem:homomorphism-in-top-shape} and Observation~\ref{obs:any-number-of-factors}. In fact, it already equals zero, so, putting things together, we have achieved the desired representation of $x_w^2 x_\beta$ as a $\kk[\Theta]$-linear combination of $1, x_v, x_\alpha, x_vx_\alpha$, namely
    \[
    x_w^2x_\beta =(\theta_1^2\theta_2 - \theta_2^2) - \theta_1\theta_2 x_v + (-\theta_1^2 + \theta_2)x_\alpha + \theta_1 x_vx_\alpha.
    \]

   To illustrate the proof of assertion~\ref{item:lin-indep}, we present an explicit example showing how a linear dependence relation in $\kk[\Delta]$ induces a linear dependence relation in $\kk[\Sd\Delta]$, by applying the inverse of the Garsia map to a maximal-shape component of the relation in $\kk[\Delta]$ (in the sense described in the proof of the theorem). We consider the following relation between $1=\Garsia(1)$, $x_v=\Garsia(y_v)$, and $x_v^3=\Garsia(y_v^3)$ in $\kk[\Delta]$:
    \[
    0= x_v^3 - (\theta_1^2 - \theta_2)x_v + \theta_1\theta_2.
    \]
    The terms that occur in the expansions of $\theta_1\theta_2$ and $\theta_2x_v$ are all of shape $(2,1)$, while $x_v^3$ is of shape $(3)$ and $\theta_1^2 x_v$'s expansion contains terms of shapes $(2,1)$ and $(3)$. The only dominance-maximal partition among these is $\lambda = (3)$, and the $\theta$-monomial summands in the linear relation that contribute terms of this shape are $x_v^3$ and $-\theta_1^2 x_v$. Thus, we have
    \[
    0=y_v^3 - \gamma_1^2 y_v,
    \]
    a linear relation over $\kk[\Gamma]$ between the corresponding $1$, $y_v$, and $y_v^3$ in $\kk[\Sd\Delta]$ that isolates the part of the linear relation $0=x_v^3 - (\theta_1^2 - \theta_2)x_v + \theta_1\theta_2$ taking place in shape $\lambda = (3)$.
\end{example}

\begin{remark}
    Let $R\subset \kk[\Sd\Delta]$ be any subring with the following three properties:
    \begin{itemize}
        \item $R\supset \kk[\Gamma]$.
        \item $R$ is homogeneous with respect to shape, i.e., $R=\bigoplus_{\lambda\in\Part_n} R_\lambda$, where $R_\lambda = R\cap \kk[\Sd\Delta]_\lambda$. 
        \item $\Garsia(R)$ is a subring of $\kk[\Delta]$.
    \end{itemize}
    For example, $R$ could be an invariant ring  for the action of a group of automorphisms of $\Delta$ on $\kk[\Sd\Delta]$---it follows from Observations~\ref{obs:garsia-preserves-shape} and \ref{obs:garsia-equivariant} (and the fact that the $\gamma$'s are invariant) that such a ring has these properties. Then Theorem~\ref{thm:transfer-bases} generalizes to the same statements with $R$ in the place of $\kk[\Sd\Delta]$ and $\Garsia(R)$ in the place of $\kk[\Delta]$. This generalization recovers \cite[Theorem~9.2]{garsia-stanton} as the special case where $\Delta$ is a simplex and $R$ is the invariant ring of a permutation group $G$ acting on its vertices.
\end{remark}

\begin{remark}
This is a historical remark on the provenance of Theorem~\ref{thm:transfer-bases}. For convenience, in the rest of this remark we refer to a ring playing the role of $\kk[\Delta]$ in a result of this type as ``the $x$ ring", and that playing the role of $\kk[\Sd\Delta]$ as ``the $y$ ring" (after the notation we have used throughout for their generators). The upshot of such a theorem is that bases can be transferred from the $y$ ring to the $x$ ring using a map analogous to $\Garsia$.

The basic model is Garsia's \cite[Theorem~6.1]{garsia}. That theorem was a parallel result in which the $x$ ring was a {\em partition ring}, which is a certain subring of a polynomial ring depending on a finite poset $Q$, and the $y$ ring was the Stanley--Reisner ring of the poset $P(Q)$ of order ideals of $Q$, which is a distributive lattice.  

With Baclawski, Garsia then gave the case of Theorem~\ref{thm:transfer-bases} where $\Delta$ is a simplicial complex \cite[Theorem~4.3]{baclawski1981combinatorial} (modulo the point mentioned immediately after the theorem statement about the independence of assertions~\ref{item:generate} and \ref{item:lin-indep}). Actually, that result also introduced some additional flexibility in the choice of the $\theta$'s and $\gamma$'s, and the ability to enforce certain constraints on the sense in which the $f$'s are to module-generate the $x$ and $y$ rings over them. Baclawski then developed his theory of algebras with lexicographic straightening law \cite{baclawski1981rings} as a common generalization of \cite{garsia} and \cite{baclawski1981combinatorial} (and various other important examples such as bracket rings and letterplace algebras). In Baclawski's theory, the $x$ ring is any $\kk$-algebra with a linear basis of ``standard monomials" on some poset $P$, as in Definition~\ref{def:standard-monomial}, and satisfying a statement like Lemma~\ref{lem:filter-main-lemma} (with respect to some partial order on the standard monomials satisfying properties like Observations~\ref{obs:dominance-monoid-compatible} and \ref{obs:dominance-induction}); then  the $y$ ring is the Stanley--Reisner ring $\kk[P]$ of the poset. In this context, \cite[Theorem~4.3]{baclawski1981rings} is the analog of Theorem~\ref{thm:transfer-bases} in that general setting (again, modulo the point about the independence of assertions~\ref{item:generate} and \ref{item:lin-indep}).

With Stanton, Garsia then gave  a version \cite[Theorem~9.2]{garsia-stanton} where the $x$ ring is the invariant ring of a permutation group $G\subset \mathfrak{S}_n$ acting on the polynomial ring $\kk[x_1,\dots,x_n] = \kk[\Delta_d]$ (with $d=n-1$), and the $y$ ring is the $G$-invariant subring of $\kk[\Sd\Delta_d]$. The passage to invariant rings removes the setting of \cite{garsia-stanton} from the immediate purview of Baclawski's general theory---but see the previous remark. In \cite[Theorem~9.4 and 9.5]{garsia-stanton}, \cite[Theorem~9.2]{garsia-stanton} was generalized in a different direction:  $\mathfrak{S}_n$ was replaced by any Weyl group  $W$, $G\subset \mathfrak{S}_n$ was replaced by arbitrary $H\subset W$, the $x$ ring was the ring of $H$-invariants in the Laurent polynomial ring $\kk[x_1,\dots,x_n,1/x_1,\dots,1/x_n]$, viewed as the group ring of $W$'s weight lattice, and the $y$ ring was the ring of $H$-invariants in the Stanley--Reisner ring of the Coxeter complex of $W$. This application of the technique was further studied in \cite{reiner1995gobel}.

The strategy in \cite{garsia, baclawski1981combinatorial, baclawski1981rings, garsia-stanton} was to prove an analog to assertion~\ref{item:generate} of Theorem~\ref{thm:transfer-bases} by induction on shape, as illustrated in Example~\ref{ex:proof-illlustration}, and then, assuming that the $f_j$'s generate the $x$ ring as a module over the parameter subring, to prove assertion~\ref{item:lin-indep} by dimension-counting because the $x$ and $y$ rings have the same Hilbert series. In \cite{blum-smith}, it was shown that in the setting of \cite{garsia-stanton} (except with the ground field $\QQ$ replaced by an arbitrary integral domain),  assertion~\ref{item:lin-indep} holds independent of the hypothesis of assertion~\ref{item:generate} (see \cite[Theorem~2.5.68 and Remark~2.5.69]{blum-smith}). 

Beyond the expository, the present contributions are:
\begin{itemize}
    \item the (straightforward) adaptation of the theorem from simplicial to arbitrary boolean complexes $\Delta$, the key piece of which was Lemma~\ref{lem:filter-main-lemma} (verifying that $\kk[\Delta]$ is a lexicographic straightening law based on $P(\Delta)$ in the sense of Baclawski), and
    \item the verification that assertion~\ref{item:lin-indep} is independent of the hypothesis of assertion~\ref{item:generate} at this generality. Our proof follows the idea of the proof in \cite{blum-smith}, but with a modification so that it would also work with little change at the generality of Baclawski \cite{baclawski1981rings} (provided that the $\theta$'s in \cite[Theorem~4.3]{baclawski1981rings} are assumed to be shape-homogeneous)---see note~\ref{note:baclawski-generality}.
\end{itemize} 
\end{remark}

\section{Garsia's linear algebra characterization of Cohen--Macaulayness}\label{sec:garsia-LA-CM}

In this section, we present a  theorem, Theorem~\ref{thm:garsia-LA-CM} below, essentially due to Garsia \cite{garsia}, that characterizes the Cohen--Macaulayness for a pure, balanced boolean complex $\Lambda$ in terms of a certain arrangement of linear subspaces  in a single finite-dimensional vector space over $\kk$.\footnote{We resist the urge to use $\Delta$ for this complex because our principal application will take $\Lambda = \Sd\Delta$ where $\Delta$ is as throughout; in particular, $\Lambda$ will {\em not} be specialized to $\Delta$. See Setup~\ref{set:boolean-simplicial-identification} for how to think of the simplicial complex $\Sd\Delta$ as a boolean complex.\label{note:don't-think-Lambda-is-Delta}} In the case that $\Lambda$ is Cohen--Macaulay, it also allows to construct a basis for the Stanley--Reisner ring $\kk[\Lambda]$ over a certain parameter subring $\kk[\Omega]$. In Section~\ref{sec:construct-a-basis}, this theorem is used to give an algorithm to construct a $\kk[\Omega]$-basis for $\kk[\Lambda]$, which is then applied with $\Lambda = \Sd \Delta$ and $\Omega = \Gamma$.

Theorem~\ref{thm:garsia-LA-CM} is not stated explicitly in \cite{garsia}, but is implicit in \cite[Section~3]{garsia}, especially \cite[Theorem~3.3]{garsia}, in the case that $\Lambda$ is the order complex of a ranked poset. We generalize the result to an arbitrary pure, balanced Boolean complex $\Lambda$. To expedite proofs of some of the lemmas in this more general setting, we use a lemma coming from a point of view in toric topology \cite{buchstaber-panov} that the Stanley--Reisner ring is a limit (in the category of commutative, graded $\kk$-algebras) over a diagram of polynomial rings indexed by the face poset. The precise statement is Lemma~\ref{lem:proj-to-beta} below.

We establish notation used throughout this section. Let $\Lambda$ be a pure boolean complex of dimension $d$. Let $n=d + 1$ (so that the facets of $\Lambda$ have $n$ vertices). The complex $\Lambda$ is {\em balanced} if there is a labeling of its vertex set $V_\Lambda$ by $n$ labels (aka {\em colors}) such that for every face $\alpha$ of $\Lambda$, the vertices belonging to $\alpha$ all have distinct labels. (It is sufficient, and sometimes useful, to check this condition on faces $\alpha$ just over  facets. If a pure complex is balanced, each facet is incident to exactly one vertex in each of the label classes.) Going forward, we assume $\Lambda$ is balanced, and equipped with a specific labeling satisfying this condition. We take $[n]:=\{1,\dots,n\}$ as our collection of labels, and for a vertex $v\in V_\Lambda$, we write $\lb(v)$ for its label. Then to each face $\alpha \in P(\Lambda)$ is given a {\em label set} $J_\alpha \subset [n]$ of cardinality $\rk(\alpha)$, consisting of the labels of the vertices belonging to $\alpha$. In symbols,
\[
J_\alpha :=\{ \lb(v): v\in V_\Lambda\text{ s.t. }v\preceq \alpha\}.
\]

To provide a shape-homogeneous basis of $\kk[\Sd\Delta]$ as $\kk[\Gamma]$-module in Section~\ref{sec:construct-a-basis}, $\Lambda$ will be specialized to  $\Sd\Delta$ in the situation of Theorem~\ref{thm:yes-CM-in-coprime}, so we here note why the latter fulfills the hypotheses on $\Lambda$. The barycentric subdivision $\Sd\Delta$ has vertices in bijection with the faces $\alpha\in P(\Delta)$ of the original boolean complex $\Delta$; a balancing is given by assigning the label $\rk(\alpha)$ to the vertex corresponding with $\alpha$ (which can be thought of as the barycenter of the face $\alpha$ in the original complex). It is pure because Theorem~\ref{thm:yes-CM-in-coprime} assumes that $\Delta$ and thus $\Sd\Delta$ is Cohen--Macaulay over $\kk$, and a Cohen--Macaulay simplicial complex is necessarily pure \cite[Corollary~5.1.5]{bruns-herzog}.\footnote{It also follows that, more generally, a Cohen--Macaulay boolean complex is pure. Both purity and Cohen--Macaulayness are unaffected by taking the barycentric subdivision, which reduces the question to the simplicial case.}

Since $\Lambda$ is a boolean complex, the Stanley--Reisner ring $\kk[\Lambda]$ is defined as in Definition~\ref{def:SR-ring-of-boolean}, but we  use $z_\alpha$ instead of $x_\alpha$ for the generators in order to avoid confusion between $\kk[\Lambda]$ and $\kk[\Delta]$ (see Note~\ref{note:don't-think-Lambda-is-Delta}). There is an $\NN^n$-(multi)grading on $\kk[\Lambda]$ given by assigning the degree 
\[
\deg_\mathrm{md}z_\alpha :=\sum_{j\in J_\alpha}\bfe_j
\]
to the generator $z_\alpha$, where $\bfe_1,\dots,\bfe_n$ are the standard basis of $\NN^n$. (As in Section~\ref{sec:grading-by-shape}, the subscript $\mathrm{md}$ stands for ``multidegree".) This gives a well-defined grading because the defining relations \ref{item:no-common-upper-bound} and \ref{item:common-upper-bound} of Definition~\ref{def:SR-ring-of-boolean} are homogeneous with respect to it: due to the balancing, this homogeneity reduces to the identity $\sum_{j\in J}\bfe_j + \sum_{j\in J'}\bfe_j = \sum_{j\in J\cap J'} \bfe_j + \sum_{j\in J\cup J'} \bfe_j$ for subsets $J,J'\subset [n]$ of the collection of labels. Note that if a standard monomial contains at least two $z_\alpha$'s, then its degree contains some $\bfe_j$ at least twice. Thus the generators $z_\alpha$ themselves can be recognized among the standard monomials by the fact that their $\NN^n$-degrees have the form $\sum_{j\in J} \bfe_j$ for $J\subset [n]$ a set.

When we apply this with $\Lambda = \Sd\Delta$, this will specialize to the grading defined in Section~\ref{sec:grading-by-shape} (and thus can also be viewed as a grading over $\Part_n$, although we will not use this here, as for the present purpose we will not have need to compare $\kk[\Lambda]$ to a ring filtered over $\Part_n$).

As in Setup~\ref{set:asl}, a $\kk$-basis for $\kk[\Lambda]$ is given by standard monomials, i.e., monomials in the $z_\alpha$'s that are supported on chains in the face poset $P(\Lambda)$. Any chain in $P(\Lambda)$ is upper-bounded by a facet $\epsilon$ of $\Lambda$.

\begin{definition}
    We will say that a standard monomial supported on a chain upper-bounded by a given facet $\epsilon$ {\em sits under} that facet, and the facet {\em sits over} the monomial. (Note that every standard monomial sits under some facet.) More generally, if a monomial is supported on a chain upper bounded by any face $\beta$, whether a facet or not, we say that this monomial sits under $\beta$, and $\beta$ sits over it.
\end{definition} 

We now pull in an idea from toric topology: the Stanley--Reisner ring $\kk[\Lambda]$ is the limit of a diagram of polynomial rings indexed by $P(\Lambda)$ \cite[Lemma~3.5.11]{buchstaber-panov}. In particular, we have the following lemma. Any face $\beta\in P(\Lambda)$ of $\Lambda$, being a simplex, can itself be viewed as a boolean (and even a simplicial) complex, with Stanley--Reisner ring $\kk[\beta]$ isomorphic to the polynomial ring on the vertex set of $\beta$, and there is a canonical graded algebra map from $\kk[\Lambda]$ to $\kk[\beta]$, which can be described explicitly in terms of the basis of standard monomials. We write
\[
\kk[\beta]:=\kk\left[\left\{z_v^\beta\right\}_{v\in V_\Lambda\text{ and }v\preceq \beta}\right],
\]
where the $z_v^\beta$'s are the indeterminates of the polynomial ring $\kk[\beta]$. This ring is naturally $\NN^n$-graded by assigning $\deg_\mathrm{md} z_v^\beta:= \bfe_{\lb(v)}$. Then:

\begin{lemma}\label{lem:proj-to-beta}
    For any $\beta\in P(\Lambda)$, there is a canonical $\NN^n$-graded $\kk$-algebra map $s_\beta:\kk[\Lambda]\rightarrow\kk[\beta]$ defined by sending
\[
z_\alpha \mapsto \begin{cases}
    \prod_{v\preceq \alpha}z_v^\beta&,\; \alpha\preceq \beta\\
    0&,\; \alpha\npreceq \beta.
\end{cases}
\]
The map $s_\beta$ restricts to a linear isomorphism on the span of the standard monomials sitting under $\beta$, sending such standard monomials to monomials of $\kk[\beta]$; and all other standard monomials lie in the kernel.\footnote{Note that the linear span of the standard monomials sitting under $\epsilon$ does not form a subalgebra, so the restriction to this span is not an algebra map.}
\end{lemma}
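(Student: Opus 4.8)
The plan is to construct $s_\beta$ by hand at the level of polynomial rings, check that it descends to $\kk[\Lambda]$ and is graded, and then track its effect on the standard-monomial basis. (A graded algebra map with this property could also be extracted from the limit description of $\kk[\Lambda]$ in \cite[Lemma~3.5.11]{buchstaber-panov}, but since $\Lambda$ is a boolean rather than a simplicial complex we spell it out.)

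First I would define $s_\beta$ on the polynomial ring $S$ of Definition~\ref{def:SR-ring-of-boolean} by the stated formula, reading the empty product as $1$ (so $z_\varnothing\mapsto 1$), and verify that each of the three families of generators of $I_\Lambda$ maps to $0$. Relation \ref{item:empty-is-one} is immediate. For a relation \ref{item:no-common-upper-bound} coming from $\alpha,\alpha'$ with no common upper bound, $\alpha$ and $\alpha'$ cannot both be $\preceq\beta$ (else $\beta$ would be a common upper bound), so one of $s_\beta(z_\alpha),s_\beta(z_{\alpha'})$ vanishes. The substantive case is relation \ref{item:common-upper-bound} for $\alpha,\alpha'$ with a common upper bound. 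If, say, $\alpha\not\preceq\beta$, the left-hand side maps to $0$; and every $\gamma\in\operatorname{lub}(\alpha,\alpha')$ has $\alpha\preceq\gamma$, so $\gamma\preceq\beta$ would force $\alpha\preceq\beta$ — hence no such $\gamma$ is $\preceq\beta$ and the right-hand side also maps to $0$. If instead both $\alpha,\alpha'\preceq\beta$, they lie together in the boolean interval $[\varnothing,\beta]$, so $\operatorname{lub}(\alpha,\alpha')=\{\alpha\vee\alpha'\}$ with $\alpha\vee\alpha',\alpha\wedge\alpha'\preceq\beta$; identifying the faces $\preceq\beta$ with subsets of the vertex set of $\beta$, the required identity $\prod_{v\preceq\alpha}z_v^\beta\prod_{v\preceq\alpha'}z_v^\beta=\prod_{v\preceq\alpha\wedge\alpha'}z_v^\beta\prod_{v\preceq\alpha\vee\alpha'}z_v^\beta$ is just $\mathbb{1}[v\in A]+\mathbb{1}[v\in B]=\mathbb{1}[v\in A\cap B]+\mathbb{1}[v\in A\cup B]$ read off exponent by exponent (the case $A\cap B=\varnothing$, where $z_{\alpha\wedge\alpha'}$ is $1$, being included). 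Thus $s_\beta$ is a well-defined $\kk$-algebra map, and it is $\NN^n$-graded because for $\alpha\preceq\beta$ the vertices of the face $\alpha$ carry distinct labels (balancedness), so $\deg_\mathrm{md}s_\beta(z_\alpha)=\sum_{v\preceq\alpha}\bfe_{\lb(v)}=\sum_{j\in J_\alpha}\bfe_j=\deg_\mathrm{md}z_\alpha$.

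Next I would analyze $s_\beta$ on standard monomials. If $m=\prod_{\alpha\in C}z_\alpha^{c_\alpha}$ is standard but does not sit under $\beta$, then some $\alpha\in C$ has $\alpha\not\preceq\beta$, so $s_\beta(z_\alpha)=0$ and $s_\beta(m)=0$; this disposes of every standard monomial other than those sitting under $\beta$. If $m$ does sit under $\beta$, then $C$ is a chain in the face poset $P(\beta)$ of the simplex $\beta$, and $s_\beta(m)=\prod_{\alpha\in C}(z_\alpha^\beta)^{c_\alpha}$, where $z_\alpha^\beta:=\prod_{v\preceq\alpha}z_v^\beta$ is the ASL generator of $\kk[\beta]$ attached to the face $\alpha$; thus $s_\beta(m)$ is precisely the standard monomial of $\kk[\beta]$, viewed as an ASL over $P(\beta)$, with the same support and exponents. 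Now the chains of $P(\Lambda)$ upper-bounded by $\beta$ are exactly the chains of $P(\beta)$, so standard monomials of $\kk[\Lambda]$ sitting under $\beta$ biject with (ASL-)standard monomials of $\kk[\beta]$; and by the identification recalled in Setup~\ref{set:boolean-simplicial-identification} (illustrated in Example~\ref{ex:shape-motivation}), the standard monomials of the simplex ring $\kk[\beta]$ are exactly the monomials of $\kk[\beta]$ as a polynomial ring in the $z_v^\beta$, each arising once. The composite of these two bijections is exactly $s_\beta$ restricted to the span of the standard monomials sitting under $\beta$, so that restriction is a linear isomorphism onto $\kk[\beta]$, which completes the proof.

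The proof has no deep obstacle; the step I would present most carefully is the collapse of relation \ref{item:common-upper-bound} when exactly one of $\alpha,\alpha'$ lies under $\beta$, which rests on the observation that every minimal common upper bound of $\alpha$ and $\alpha'$ lies above both of them, and hence cannot be $\preceq\beta$ unless both $\alpha$ and $\alpha'$ are. Everything else is bookkeeping with the boolean-lattice structure of lower intervals in $\widehat P(\Lambda)$ and with the standard-monomial-versus-monomial description of the Stanley--Reisner ring of a simplex.
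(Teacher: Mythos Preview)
Your approach is more self-contained than the paper's (which simply cites \cite[Proposition~3.5.5]{buchstaber-panov} to identify $s_\beta$ with the quotient map $\kk[\Lambda]\to\kk[\Lambda]/(\{z_\alpha\}_{\alpha\not\preceq\beta})$ and reads the claims off from the standard-monomial basis of the quotient), but one step in your verification of relation~\ref{item:common-upper-bound} is wrong as written.

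When both $\alpha,\alpha'\preceq\beta$, you assert that $\operatorname{lub}(\alpha,\alpha')=\{\alpha\vee\alpha'\}$ is a singleton. This is false in a general boolean complex: $\operatorname{lub}(\alpha,\alpha')$ is computed in all of $\widehat P(\Lambda)$, not in $[\varnothing,\beta]$. In the paper's own running example (Figure~\ref{fig:running-ex}), take $\beta$ to be the edge $\alpha$ and consider the vertices $v,w$: both lie under $\alpha$, yet $\operatorname{lub}(v,w)=\{\alpha,\beta\}$ has two elements. The fix is to show that \emph{exactly one} element of $\operatorname{lub}(\alpha,\alpha')$ lies under $\beta$, namely the join $\alpha\vee_\beta\alpha'$ taken in the boolean lattice $[\varnothing,\beta]$. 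It is a minimal common upper bound in $\widehat P(\Lambda)$ because anything below it and above both $\alpha,\alpha'$ already lives in $[\varnothing,\beta]$ and so equals it; and any $\gamma\in\operatorname{lub}(\alpha,\alpha')$ with $\gamma\preceq\beta$ sits above $\alpha\vee_\beta\alpha'$ by the lattice property, hence equals it by minimality. All other $\gamma\in\operatorname{lub}(\alpha,\alpha')$ map to $0$ under $s_\beta$, so the relation still collapses to the diamond identity you wrote down. With this correction your argument goes through.
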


\begin{proof}
    In \cite[Proposition~3.5.5]{buchstaber-panov}, the quotient 
    ring $\kk[\Lambda] / (\{z_\alpha\}_{\alpha \npreceq \beta})\kk[\Lambda]$ is identified with the polynomial ring $\kk[\beta]$. The ideal in the denominator contains all and only those standard monomials not sitting under $\beta$; meanwhile, the standard monomials sitting under $\beta$ are in natural bijection with the monomials of the polynomial ring $\kk[\beta]$. Thus, the canonical quotient map is identified with the map $s_\beta$ described in the lemma. It is $\NN^n$-graded because for any $\alpha\preceq \beta$, we have
    \[
    \deg_\mathrm{md}\left(\prod_{v\preceq \alpha}z_v^\beta\right) = \sum_{v\preceq \alpha}\bfe_{\lb(v)} = \deg_\mathrm{md} z_\alpha,
    \]
    so that the image of $z_\alpha\in \kk[\Lambda]$ has the same degree as $z_\alpha$ has, and for any $\alpha \npreceq \beta$, the image of $z_\alpha$ is zero.
\end{proof}

With this language, (pure) balanced complexes have the following interesting property, which generalizes the fact that in a multivariate polynomial ring, with the standard multigrading, all monomials have distinct degrees:

\begin{lemma}\label{lem:sits-under-lemma}
    If $f\in \kk[\Lambda]$ is homogeneous with respect to the $\NN^n$-grading described above, then no two distinct standard monomials of $f$ can sit under the same facet. In other words, a standard monomial of $\kk[\Lambda]$ is uniquely identified by the data of its $\NN^n$-degree and any facet it sits under.
\end{lemma}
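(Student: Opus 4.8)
The plan is to prove the ``in other words'' reformulation, which is plainly equivalent to the first assertion: for an $\NN^n$-homogeneous $f$, the terms are pairwise-distinct standard monomials sharing a common $\NN^n$-degree, so ``no two terms sit under a single facet'' says exactly that a standard monomial is determined by its $\NN^n$-degree together with any facet under which it sits. So I would fix standard monomials $m_1,m_2$ of $\kk[\Lambda]$ that both sit under a common facet $\epsilon$ and satisfy $\deg_{\mathrm{md}}m_1=\deg_{\mathrm{md}}m_2$, and aim to conclude $m_1=m_2$.

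The only real tool is the projection $s_\epsilon\colon\kk[\Lambda]\to\kk[\epsilon]$ of Lemma~\ref{lem:proj-to-beta}. First I would extract from that lemma its three relevant properties: $s_\epsilon$ is $\NN^n$-graded (hence degree-preserving), it sends every standard monomial sitting under $\epsilon$ to a monomial of $\kk[\epsilon]$, and it is injective on the $\kk$-span of those standard monomials. Applying $s_\epsilon$ to $m_1$ and $m_2$ then produces two monomials $s_\epsilon(m_1),s_\epsilon(m_2)\in\kk[\epsilon]$ with $\deg_{\mathrm{md}}s_\epsilon(m_1)=\deg_{\mathrm{md}}m_1=\deg_{\mathrm{md}}m_2=\deg_{\mathrm{md}}s_\epsilon(m_2)$.

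Next I would identify $\kk[\epsilon]$ with a standard multigraded polynomial ring. This is where purity and balancedness enter: $\epsilon$ is a facet of the pure complex $\Lambda$, so it has exactly $n$ vertices, and balancedness forces these $n$ vertices to carry $n$ distinct labels, i.e.\ $v\mapsto\lb(v)$ is a bijection from the vertex set of $\epsilon$ onto $[n]$. Since $\deg_{\mathrm{md}}z_v^\epsilon=\bfe_{\lb(v)}$, this bijection identifies $\kk[\epsilon]$ with the polynomial ring $\kk[t_1,\dots,t_n]$ under its standard $\NN^n$-grading $\deg_{\mathrm{md}}t_j=\bfe_j$, in which the $\NN^n$-degree of a monomial is literally its exponent vector; so distinct monomials have distinct degrees. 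Hence $s_\epsilon(m_1)=s_\epsilon(m_2)$, and injectivity of $s_\epsilon$ on the span of standard monomials sitting under $\epsilon$ yields $m_1=m_2$.

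I do not expect a genuine obstacle: essentially all the work is already packaged in Lemma~\ref{lem:proj-to-beta}, and the remaining ingredient is the elementary observation that a facet of a pure balanced complex meets all $n$ label classes, so that the induced $\NN^n$-grading on $\kk[\epsilon]$ is the standard multigrading. The one point I would flag carefully is that \emph{both} hypotheses are genuinely used here---purity, to know $\epsilon$ has $n$ vertices, and balancedness, to know those vertices realize $n$ distinct labels.
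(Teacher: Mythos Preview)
Your proof is correct and takes essentially the same approach as the paper: both arguments apply the projection $s_\epsilon$ of Lemma~\ref{lem:proj-to-beta}, observe that $\kk[\epsilon]$ carries the standard $\NN^n$-multigrading (so that a monomial is determined by its degree), and then invoke the injectivity of $s_\epsilon$ on the span of standard monomials sitting under $\epsilon$. The only cosmetic difference is that the paper argues the first formulation directly by applying $s_\epsilon$ to $f$ itself, while you argue the equivalent ``in other words'' reformulation; you are also slightly more explicit than the paper about why purity and balancedness are needed to identify the grading on $\kk[\epsilon]$ with the standard one.
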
 

\begin{proof}
    Let $\epsilon$ be a facet of $\Lambda$. Then $s_\epsilon(f)\in \kk[\epsilon]$ is an $\NN^n$-homogeneous element in a polynomial ring with the standard multigrading, and therefore a monomial. Since by Lemma~\ref{lem:proj-to-beta} the restriction of $s_\epsilon$ to the span of the standard monomials sitting under $\epsilon$ is a linear isomorphism sending standard monomials to monomials, and $s_\epsilon$ is zero on standard monomials not sitting under $\epsilon$, it follows that in the expression for $f$ in terms of standard monomials, only one of them sits under $\epsilon$.
\end{proof}

Write  $\omega_1,\dots,\omega_n\in \kk[\Lambda]$ for the sums over label classes of the generators $z_v$ corresponding with vertices, i.e.,
\[
\omega_j := \sum_{\substack{v\in V_\Lambda \\ \lb(v)=j}} z_v
\]
for $j=1,\dots,n$. Write $\Omega:=\omega_1,\dots,\omega_n$, so that $\kk[\Omega]$ is the $\kk$-subalgebra of $\kk[\Lambda]$ generated by $\omega_1,\dots,\omega_n$. The $\omega_1,\dots,\omega_n$ form a homogeneous system of parameters for $\kk[\Lambda]$ (e.g., \cite[2.5.91]{blum-smith}, which is a common generalization of \cite[Proposition~III.4.3]{stanley1996combinatorics} and \cite[Theorem~6.3]{hodge-algebras}). Note that they are even homogeneous with respect to the $\NN^n$-grading defined above, with $\deg_\mathrm{md}(\omega_j) = \bfe_j$. When we apply this with $\Lambda = \Sd\Delta$, the $\omega_j$'s will specialize to the $\gamma_j$'s.\footnote{Because of this and because the labels of a balancing are often  called colors, in \cite{adams-reiner} the authors refer to $\gamma_1,\dots,\gamma_n\in\kk[\Sd\Delta]$ as the {\em colorful parameters}.}

We give some lemmas that establish a picture of how the ring $\kk[\Lambda]$ works as a $\kk[\Omega]$-module.

\begin{lemma}\label{lem:torsion-free}
    The ring $\kk[\Lambda]$ is torsion-free as a $\kk[\Omega]$-module.
\end{lemma}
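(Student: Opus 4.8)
The plan is to exploit the family of projection homomorphisms $s_\epsilon\colon\kk[\Lambda]\to\kk[\epsilon]$ from Lemma~\ref{lem:proj-to-beta}, indexed by the facets $\epsilon$ of $\Lambda$. Two features make these maps do all the work: each $\kk[\epsilon]$ is a polynomial ring, hence an integral domain; and, because $\Lambda$ is pure and balanced, the maps $s_\epsilon$ interact especially cleanly with the parameter subring $\kk[\Omega]$.

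First I would compute $s_\epsilon$ on the parameters. For a vertex $v$, $s_\epsilon(z_v)=z_v^\epsilon$ if $v\preceq\epsilon$ and $0$ otherwise; and since $\Lambda$ is pure and balanced, each facet $\epsilon$ is incident to exactly one vertex $v_j^\epsilon$ of each label $j\in[n]$. Therefore $s_\epsilon(\omega_j)=s_\epsilon\bigl(\sum_{\lb(v)=j}z_v\bigr)=z_{v_j^\epsilon}^\epsilon$, the $j$-th polynomial generator of $\kk[\epsilon]=\kk[\{z_v^\epsilon\}_{v\preceq\epsilon}]$. Since $\omega_1,\dots,\omega_n$ are algebraically independent over $\kk$ (being a homogeneous system of parameters) and are sent to the algebraically independent generators of $\kk[\epsilon]$, the restriction $s_\epsilon\vert_{\kk[\Omega]}\colon\kk[\Omega]\to\kk[\epsilon]$ is an injective $\kk$-algebra map (in fact an isomorphism). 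In particular $s_\epsilon(p)\neq 0$ for every nonzero $p\in\kk[\Omega]$ and every facet $\epsilon$.

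Now suppose $f\in\kk[\Lambda]$ is torsion, say $pf=0$ with $0\neq p\in\kk[\Omega]$; I claim $f=0$. Applying the ring homomorphism $s_\epsilon$ gives $s_\epsilon(p)\,s_\epsilon(f)=0$ in the integral domain $\kk[\epsilon]$, and since $s_\epsilon(p)\neq 0$ we get $s_\epsilon(f)=0$ for every facet $\epsilon$. If $f\neq 0$, expand it on the basis of standard monomials, choose a standard monomial $m$ in its support, and choose a facet $\epsilon_0$ sitting over $m$ (possible, since every standard monomial sits under some facet). By Lemma~\ref{lem:proj-to-beta}, $s_{\epsilon_0}$ kills all standard monomials not sitting under $\epsilon_0$ and is injective on the span of those that do; writing $f=f'+f''$ with $f'$ the part of $f$ supported on standard monomials sitting under $\epsilon_0$, we have $s_{\epsilon_0}(f'')=0$ and $f'\neq 0$ (it contains $m$), so $s_{\epsilon_0}(f)=s_{\epsilon_0}(f')\neq 0$, a contradiction. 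Hence $f=0$, proving $\kk[\Lambda]$ is torsion-free over $\kk[\Omega]$.

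There is no real obstacle here once Lemma~\ref{lem:proj-to-beta} is available; the proof is short. The one point that needs care is the role of purity together with the balancing: it is exactly what guarantees that each facet meets every label class precisely once, so that $s_\epsilon(\omega_j)$ is a genuine polynomial generator of $\kk[\epsilon]$ rather than possibly $0$ — without this, $s_\epsilon\vert_{\kk[\Omega]}$ could fail to be injective and the argument would collapse. (A preliminary reduction to homogeneous $p$ and $f$ by passing to lowest-degree components is possible but unnecessary.)
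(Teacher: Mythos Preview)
Your proof is correct and takes essentially the same approach as the paper: both use the projections $s_\epsilon$ from Lemma~\ref{lem:proj-to-beta}, the fact that each $\kk[\epsilon]$ is an integral domain, and the observation that $s_\epsilon(\omega_j)$ is a polynomial generator of $\kk[\epsilon]$ because $\epsilon$ is a facet of a pure balanced complex. The only difference is that the paper's written proof checks injectivity of multiplication by a single $\omega_j$ (which is all that is used later, via iterated cancellation), whereas you verify injectivity of $s_\epsilon\vert_{\kk[\Omega]}$ and thus handle an arbitrary nonzero $p\in\kk[\Omega]$ in one stroke; this is a modest strengthening with no extra cost.
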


\begin{proof}
In the case that $\Lambda$ is the order complex of a ranked poset, this was proven in \cite[Theorem~2.1]{garsia}. For the present generalization, we give a different proof based on the map $s_\epsilon$ described in Lemma~\ref{lem:proj-to-beta}.

Let $0\neq f\in \kk[\Lambda]$ and $j\in [n]$. We aim to show that $\omega_j f$ is nonzero. Expand $f$ as a $\kk$-linear combination of standard monomials in $\kk[\Lambda]$, and pick any monomial that appears in this linear combination with nonzero coefficient. Let $\epsilon$ be a facet of $\Lambda$ under which that monomial sits. Then $s_\epsilon(f)$ is nonzero, since by Lemma~\ref{lem:proj-to-beta} each monomial appearing in the expansion is sent to a monomial or zero, and the one sitting under $\epsilon$ does not lie in the kernel of $s_\epsilon$. Meanwhile, because $\epsilon$ has exactly one vertex of every label, it follows that $s_\epsilon(\omega_j)=z_v^\epsilon$ for the unique $v\preceq \epsilon$ with $\lb(v)=j$, and in particular, it is nonzero. Then
\[
s_\epsilon(\omega_j f) = s_\epsilon(\omega_j)s_\epsilon(f)\neq 0
\]
because $\kk[\epsilon]$ is a domain. It follows that $\omega_j f$ is nonzero.
\end{proof}

The following lemma generalizes Proposition~\ref{prop:parameter-monom-every-term}. (We think of it as folklore, but include a full proof for completeness.)

\begin{lemma}\label{lem:everything-of-degree}
    For any natural numbers $a_1,\dots,a_n$, the expansion of the product 
    \[
    \omega_1^{a_1}\dots \omega_n^{a_n}
    \]
    on the basis of standard monomials for $\kk[\Lambda]$ consists precisely of the sum of all standard monomials with $\NN^n$-degree
    \[
    a_1\bfe_1 + \dots + a_n \bfe_n.
    \]
\end{lemma}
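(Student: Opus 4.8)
The plan is to push the asserted identity forward, along the canonical algebra maps $s_\epsilon$ of Lemma~\ref{lem:proj-to-beta}, into the polynomial rings $\kk[\epsilon]$ attached to the facets $\epsilon$ of $\Lambda$, where multigraded components are one-dimensional, and then lift the conclusion back. Write $\bfa := a_1\bfe_1 + \dots + a_n\bfe_n$ and $P := \omega_1^{a_1}\cdots\omega_n^{a_n}\in\kk[\Lambda]$, and expand $P = \sum_m c_m\,m$ on the basis of standard monomials, with $c_m\in\kk$.

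First I would note that every standard monomial $m$ with $c_m\neq 0$ has $\deg_\mathrm{md} m = \bfa$: each $\omega_j$ is $\NN^n$-homogeneous of degree $\bfe_j$, so $P$ is $\NN^n$-homogeneous of degree $\bfa$, and the standard-monomial basis is compatible with the $\NN^n$-grading. In particular the right-hand side of the claimed equality is nonempty (e.g.\ $P\neq 0$ by Lemma~\ref{lem:torsion-free}), and the only remaining content is that \emph{every} standard monomial of multidegree $\bfa$ occurs in $P$ with coefficient exactly $1$.

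To prove this, fix a standard monomial $m_0$ of multidegree $\bfa$ and choose a facet $\epsilon$ sitting over it. Since $\Lambda$ is balanced and pure, $\epsilon$ has exactly one vertex $v_j$ with $\lb(v_j)=j$ for each $j$, so Lemma~\ref{lem:proj-to-beta} gives $s_\epsilon(\omega_j) = z_{v_j}^\epsilon$ (all other summands of $\omega_j$ map to $0$), whence
\[
s_\epsilon(P) = \prod_{j=1}^n \bigl(z_{v_j}^\epsilon\bigr)^{a_j} =: \mu_\epsilon ,
\]
a single monomial of $\kk[\epsilon]$ with coefficient $1$; moreover $\mu_\epsilon$ is the \emph{unique} monomial of $\kk[\epsilon]$ of multidegree $\bfa$, because $\kk[\epsilon]$ is a polynomial ring carrying the standard multigrading. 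On the other hand, $s_\epsilon(m_0)$ is a nonzero monomial of $\kk[\epsilon]$ by Lemma~\ref{lem:proj-to-beta} (as $\epsilon$ sits over $m_0$), and it has multidegree $\bfa$ since $s_\epsilon$ is $\NN^n$-graded; hence $s_\epsilon(m_0)=\mu_\epsilon$. Now apply $s_\epsilon$ to $P = \sum_m c_m m$: the map kills every standard monomial not sitting under $\epsilon$, and among the $m$ with $c_m\neq 0$ — all of multidegree $\bfa$ by the previous paragraph — Lemma~\ref{lem:sits-under-lemma} forces $m_0$ to be the only one sitting under $\epsilon$. Therefore $\mu_\epsilon = s_\epsilon(P) = c_{m_0}\,s_\epsilon(m_0) = c_{m_0}\mu_\epsilon$, so $c_{m_0}=1$. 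As $m_0$ was an arbitrary standard monomial of multidegree $\bfa$, combining with the second paragraph gives $P = \sum_{\deg_\mathrm{md} m = \bfa} m$.

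The argument is mostly bookkeeping once the facet-projections $s_\epsilon$ are in hand; the only place the balanced hypothesis does essential work is the identity $s_\epsilon(\omega_j)=z_{v_j}^\epsilon$, and the main point to be careful about is that this makes $s_\epsilon(P)$ a \emph{single} degree-$\bfa$ monomial, which is what upgrades ``$c_{m_0}\neq 0$'' to ``$c_{m_0}=1$''. (This subsumes Proposition~\ref{prop:parameter-monom-every-term}: for $\Lambda = \Sd\Delta$ the discrete-ASL structure makes straightening trivial, but the present proof never needs that.)
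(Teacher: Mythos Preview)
Your proof is correct and follows essentially the same approach as the paper's: both use the facet projections $s_\epsilon$ of Lemma~\ref{lem:proj-to-beta}, the balancing to obtain $s_\epsilon(\omega_j)=z_{v_j}^\epsilon$, and Lemma~\ref{lem:sits-under-lemma} to isolate a single coefficient. The only cosmetic difference is that the paper fixes a facet first and identifies the unique standard monomial of degree $\bfa$ beneath it, whereas you fix the monomial first and choose a facet above it.
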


\begin{proof}
    In any case, the expansion is a linear combination of standard monomials of the given $\NN^n$-degree, and what is to be argued is that every one of them appears with coefficient $1$. 
    
    Fix a facet $\epsilon$, and apply the map $s_\epsilon$ described in Lemma~\ref{lem:proj-to-beta}. Because of the balancing, $\epsilon$ contains exactly one vertex from each label class; let $v_j$ be its vertex with label $j$ for each $j=1,\dots,n$. Then because $\omega_j$ is the sum of $z_v$ over all vertices $v$ satisfying $\lb(v)=j$, but only one of these vertices is in $\epsilon$ (namely, $v_j$), we have $s_\epsilon(\omega_j)=z_{v_j}^\epsilon$. This holds for each $j$; thus, 
    \[
    s_\epsilon(\omega_1^{a_1}\dots \omega_n^{a_n}) = (z_{v_1}^\epsilon)^{a_1}\dots (z_{v_n}^\epsilon)^{a_n}.
    \]
    There is a unique standard monomial of $\NN^n$-degree $a_1\bfe_1 + \dots + a_n \bfe_n$ sitting under $\epsilon$, by Lemma~\ref{lem:sits-under-lemma}. It follows from the explicit description in Lemma~\ref{lem:proj-to-beta} of the effect of $s_\epsilon$ on the basis of standard monomials that $\omega_1^{a_1}\dots \omega_n^{a_n}$ contains this monomial with coefficient $1$. Since this logic applies for every facet $\epsilon$, and every standard monomial of $\NN^n$-degree $a_1\bfe_1 + \dots + a_n \bfe_n$ sits under some facet, all the standard monomials with that degree must appear in $\omega_1^{a_1}\dots \omega_n^{a_n}$ with coefficient $1$, so the lemma is proven. 
\end{proof}

\begin{lemma}\label{lem:cells-generate}
    The ring $\kk[\Lambda]$ is generated as a $\kk[\Omega]$-module by the elements $z_\alpha$, $\alpha \in \widehat P(\Lambda)$.
\end{lemma}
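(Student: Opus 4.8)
The plan is to show that $M := \kk[\Omega] + \sum_{\alpha\in P(\Lambda)}\kk[\Omega]\,z_\alpha$, the $\kk[\Omega]$-submodule of $\kk[\Lambda]$ generated by $1$ and the cells $z_\alpha$, equals $\kk[\Lambda]$ (the generator $1$ must be included, for degree reasons). Since $\kk[\Lambda]$ is generated as a $\kk$-algebra by $\{z_\alpha : \alpha\in P(\Lambda)\}$ --- relation \ref{item:empty-is-one} of Definition~\ref{def:SR-ring-of-boolean} renders $z_\varnothing = 1$ redundant as a ring generator --- and $M$ is a $\kk$-subspace containing $1$, it is enough to prove that $M$ is closed under multiplication by each generator $z_\beta$; as $M$ is a $\kk[\Omega]$-module, this reduces to the assertion that $z_\alpha z_\beta\in M$ for every pair $\alpha,\beta\in P(\Lambda)$. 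This in turn reduces to the case $\alpha\preceq\beta$: if $\alpha,\beta$ are incomparable with no common upper bound then $z_\alpha z_\beta = 0\in M$, and otherwise relation \ref{item:common-upper-bound} writes $z_\alpha z_\beta = z_{\alpha\wedge\beta}\sum_{\gamma\in\operatorname{lub}(\alpha,\beta)}z_\gamma$, a sum of products of generators with comparable indices $\alpha\wedge\beta\preceq\gamma$.

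The computational input, which I would establish by the same facet-by-facet analysis via the maps $s_\epsilon$ used in the proofs of Lemmas~\ref{lem:torsion-free} and \ref{lem:everything-of-degree}, is twofold. First, if $v$ is a vertex with $v\preceq\beta$, then every summand $z_w z_\beta$ of $\omega_{\lb(v)}z_\beta$ with $w\neq v$ vanishes, because such a $w$ and $\beta$ cannot lie under a common facet without that facet containing two distinct vertices of label $\lb(v)$; hence $\omega_{\lb(v)}z_\beta = z_v z_\beta$. Second, dually, for a face $\sigma$ and a label $j\notin J_\sigma$, expanding $\omega_j z_\sigma$ through the straightening relations yields $\omega_j z_\sigma = \sum_{\delta}z_\delta$, the sum over all $\delta\in P(\Lambda)$ with $\sigma\prec\delta$ and $J_\delta = J_\sigma\cup\{j\}$ (each such $\delta$ arises, exactly once, from its unique label-$j$ vertex).

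With these in hand I would prove $z_\alpha z_\beta\in M$ for $\alpha\preceq\beta$ by induction on $\rk(\alpha)$. If $\rk(\alpha) = 1$ then $\alpha$ is a vertex $v$ and $z_v z_\beta = \omega_{\lb(v)}z_\beta\in\kk[\Omega]z_\beta\subseteq M$. If $\rk(\alpha) = \ell\geq 2$, pick a vertex $v_0\preceq\alpha$ with label $j_0$ and let $\alpha^-\prec\alpha$ be the facet of the simplex $\alpha$ opposite $v_0$, so $\alpha^-\preceq\beta$, $\rk(\alpha^-) = \ell - 1$, and $J_\alpha = J_{\alpha^-}\cup\{j_0\}$. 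The second computation gives $\omega_{j_0}z_{\alpha^-} = \sum_{\delta\in D}z_\delta$, where $D := \{\delta : \alpha^-\prec\delta,\ J_\delta = J_\alpha\}\ni\alpha$; multiplying by $z_\beta$ and solving,
\[
z_\alpha z_\beta = \omega_{j_0}\bigl(z_{\alpha^-}z_\beta\bigr) - \sum_{\delta\in D\setminus\{\alpha\}}z_\delta z_\beta.
\]
The first term is in $M$ since $z_{\alpha^-}z_\beta\in M$ by induction and $M$ is a $\kk[\Omega]$-module. For $\delta\in D\setminus\{\alpha\}$, the closed cell $\beta$ being a simplex with a unique vertex of each label forces $\alpha$ to be the only member of $D$ lying below $\beta$, so $\delta$ and $\beta$ are incomparable; straightening $z_\delta z_\beta$ (or it is $0$) gives $z_{\delta\wedge\beta}\sum_{\eta\in\operatorname{lub}(\delta,\beta)}z_\eta$ with $\delta\wedge\beta = \alpha^-$ --- forced because $\alpha^-$ is a common lower bound of $\delta$ and $\beta$ while $\rk(\delta\wedge\beta)<\rk(\delta)=\ell$. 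Each resulting $z_{\alpha^-}z_\eta$ (note $\alpha^-\preceq\eta$ and $\rk(\alpha^-)=\ell-1$) lies in $M$ by induction, so $z_\alpha z_\beta\in M$, completing the induction.

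The main obstacle is the bookkeeping in the inductive step: identifying the correction set $D\setminus\{\alpha\}$, using the simplicial structure of the closed cell $\beta$ together with balancedness to see that none of those $\delta$ sit below $\beta$, and checking that $\delta\wedge\beta$ collapses exactly to $\alpha^-$ so that the inductive hypothesis applies to the terms $z_{\alpha^-}z_\eta$. The algebra-generation reduction and the two balancedness computations are routine.
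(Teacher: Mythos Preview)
Your proof is correct, and the underlying idea is the same as the paper's: both reduce to establishing $z_\alpha z_\beta\in M$ for $\alpha\preceq\beta$, and both ultimately rest on the identity $z_\alpha z_\beta=\bigl(\prod_{j\in J_\alpha}\omega_j\bigr)z_\beta$. The paper proves this identity in one stroke, observing (via Lemma~\ref{lem:everything-of-degree}) that $\prod_{j\in J_\alpha}\omega_j$ is the sum of all $z_\gamma$ with $J_\gamma=J_\alpha$, and that every such $\gamma\neq\alpha$ satisfies $z_\gamma z_\beta=0$ by balancedness; you instead peel off one label at a time and induct on $\rk(\alpha)$.

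Your handling of the correction terms is more elaborate than needed. For any $\delta\in D\setminus\{\alpha\}$, the product $z_\delta z_\beta$ is in fact always zero: if $\delta$ and $\beta$ had a common upper bound $\eta$, then balancedness would force the label-$j_0$ vertex of $\delta$ to coincide with the unique label-$j_0$ vertex of $\eta$, which is $v_0$ (since $v_0\preceq\beta\preceq\eta$); thus $\delta$ and $\alpha$ would be faces of the simplex $\eta$ with identical vertex sets, giving $\delta=\alpha$. So the case you treat via straightening and the identification $\delta\wedge\beta=\alpha^-$ is vacuous, and your induction collapses to $z_\alpha z_\beta=\omega_{j_0}(z_{\alpha^-}z_\beta)$, which iterates immediately to the paper's formula.
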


\begin{proof}
    This is also in \cite[Theorem~2.1]{garsia} in the case that $\Lambda$ is the chain complex of a ranked poset; the proof given here is essentially the same idea, transposed to the current setting. For any $\alpha\preceq \beta$ with $\alpha,\beta \in \widehat P(\Lambda)$, we claim that
    \begin{equation}\label{eq:alg-to-module}
    z_\alpha z_\beta = \left(\prod_{j\in J_\alpha}\omega_j\right)z_\beta.
    \end{equation}
    (The product on the right side of \eqref{eq:alg-to-module} is empty if $\alpha = \varnothing$.) Indeed, let $\epsilon$ be any facet that $\beta$ belongs to. Then $\epsilon$ has one vertex of every label, so the lower interval $[\varnothing,\epsilon]$ is poset-isomorphic to the boolean lattice of subsets of the label set; therefore, it contains a unique face with any given label set. In particular, since $\alpha \preceq \beta$, $\alpha$ is the only element of $[\varnothing,\epsilon]$ with label set $J_\alpha$. It follows that $z_\alpha$ is the only standard monomial of $\NN^n$-degree $\sum_{j\in J_\alpha} \bfe_j$ sitting under $\epsilon$. Meanwhile, by Lemma~\ref{lem:everything-of-degree}, $\prod_{j\in J_\alpha} \omega_j$ is the sum of every standard monomial of this degree, so in particular, it is the sum of $z_\alpha$ and some other monomials of this degree that, by Lemma~\ref{lem:sits-under-lemma}, do not sit under $\epsilon$. Since this logic applies for every $\epsilon$ to which $\beta$ belongs, it follows that
    \[
    \prod_{j\in J_\alpha}\omega_j = z_\alpha + \sum m,
    \]
    where the $m$'s in the sum are each standard monomials that do not sit under any facet to which $\beta$ belongs. It follows that $m z_\beta= 0$ for each $m$, and \eqref{eq:alg-to-module} holds after multiplying through by $z_\beta$.

    It follows from \eqref{eq:alg-to-module} by induction on the number of $z_\alpha$'s in a standard monomial, that any standard monomial belongs to the $\kk[\Omega]$-module generated by the $z_\alpha$'s. Since $\kk[\Lambda]$ is $\kk$-spanned by standard monomials, this shows it is contained in the $\kk[\Omega]$-span of the $z_\alpha$'s. We need to include $z_\varnothing$ to reach the monomial $1$.
\end{proof}

\begin{remark}
    It follows from Lemma~\ref{lem:cells-generate} (by way of Lemmas~\ref{lem:hironaka-decomposition} and \ref{lem:minimal-module-generating-set}) that, in the Cohen--Macaulay case, there exists a $\kk[\Omega]$-module basis for $\kk[\Lambda]$ consisting of generators $z_\alpha$ corresponding to cells $\alpha\in \widehat P(\Lambda)$ in the CW complex $\Lambda$. Such a basis is referred to in \cite{blum-smith} as a {\em cell basis}. By Lemma~\ref{lem:minimal-module-generating-set}, cell bases are precisely those subsets of $\{z_\alpha:\alpha \in \widehat P(\Lambda)\}$ whose images in the finite-dimensional $\kk$-algebra $\kk[\Lambda]/\Omega \kk[\Lambda]$ form $\kk$-vector space bases for it. It follows immediately that the collection of cell bases of a balanced Cohen-Macaulay boolean complex $\Lambda$ forms a linear matroid. 
\end{remark}

With this preparation, we can now start to lay out the core of the beautiful combinatorial picture that Garsia uncovered in \cite[Section~3]{garsia}, generalized to pure, balanced boolean complexes. For any subset $S\subset [n]$ of the collection of labels, one can form from $\Lambda$ the {\em label-selected subcomplex} $\Lambda_S$ consisting of those faces of $\Lambda$ whose label set is contained in $S$. Then $\Lambda_S$ is itself a pure, balanced boolean complex (with $S$ as its collection of labels)---see Figure~\ref{fig:subspaces-of-facet-space} in Section~\ref{sec:construct-a-basis} below for an example. Then $\Lambda$ is filtered by the subcomplexes $\Lambda_S$ over the poset of subsets $S\subset [n]$ (ordered by inclusion). 

Suppose, going forward, that $\Lambda$ has facets $\epsilon_1,\dots,\epsilon_m$. Note that $J_{\epsilon_1}= \dots = J_{\epsilon_m} = [n]$, so that $\deg_\mathrm{md} z_{\epsilon_1} = \dots = \deg_\mathrm{md} z_{\epsilon_m} = \bfe_1+ \dots + \bfe_n$, and in fact there are no other standard monomials with this degree in $\kk[\Lambda]$, so that 
\[
\kk[\Lambda]_{\bfe_1+\dots+\bfe_n}=\kk z_{\epsilon_1} \oplus \dots \oplus \kk z_{\epsilon_m},
\]
i.e., $z_{\epsilon_1},\dots,z_{\epsilon_m}$ is a {\em basis} for the homogeneous component of $\kk[\Lambda]$ of degree $\bfe_1+\dots+\bfe_n$.

\begin{definition}\label{def:facet-vector}
    For any $\alpha \in P(\Lambda)$, the {\em facet vector of $\alpha$ in $\Lambda$}, written $\bfv_\alpha^\Lambda$, is the $0,1$-vector with entries indexed by the facets $\epsilon_i$, with a $1$ in the $\epsilon_i$-entry if $\alpha$ belongs to $\epsilon_i$, and a $0$ there otherwise. For a set $\alpha_1,\dots,\alpha_\ell$ of elements of $P(\Lambda)$, their {\em incidence matrix in $\Lambda$} is the $0,1$-matrix with rows indexed by the $\alpha_i$'s and columns indexed by the $\epsilon_j$'s, with the $\alpha_i,\epsilon_j$-entry being $1$ if $\alpha_i$ belongs to $\epsilon_j$ and $0$ otherwise. (So the incidence matrix has the facet vectors of the $\alpha_i$'s as rows.) The ``in $\Lambda$" in these definitions, allowing the complex to vary, is because we will use them with the label-selected subcomplexes $\Lambda_S$, but this can be dropped when the complex is clear from context. 
\end{definition}

\begin{lemma}\label{lem:what-is-the-facet-vector}
    For any $\alpha \in P(\Lambda)$, the facet vector of $\alpha$ consists of the coordinates of the element 
    \[
    \left(\prod_{j\in [n]\setminus J_\alpha} \omega_j\right)z_\alpha \in \kk[\Lambda]_{\bfe_1+\dots+\bfe_n}
    \]
    with respect to the basis $z_{\epsilon_1},\dots,z_{\epsilon_m}$.
\end{lemma}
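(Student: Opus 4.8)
The plan is to extract, for each facet $\epsilon_i$, the coordinate of the element $\left(\prod_{j\in[n]\setminus J_\alpha}\omega_j\right)z_\alpha$ along $z_{\epsilon_i}$ by pushing that element forward along the canonical map $s_{\epsilon_i}\colon\kk[\Lambda]\to\kk[\epsilon_i]$ of Lemma~\ref{lem:proj-to-beta}. First I would check the element is $\NN^n$-homogeneous of degree $\bfe_1+\dots+\bfe_n$: since $\deg_\mathrm{md}\omega_j=\bfe_j$ and $\deg_\mathrm{md}z_\alpha=\sum_{j\in J_\alpha}\bfe_j$, and $([n]\setminus J_\alpha)\sqcup J_\alpha=[n]$, the product has multidegree $\bfe_1+\dots+\bfe_n$. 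Hence it lies in $\kk[\Lambda]_{\bfe_1+\dots+\bfe_n}=\kk z_{\epsilon_1}\oplus\dots\oplus\kk z_{\epsilon_m}$, so it has a well-defined coordinate vector. Now fix $\epsilon_i$. Because $\epsilon_i$ is maximal, the only standard monomial sitting under $\epsilon_i$ of multidegree $\bfe_1+\dots+\bfe_n$ is $z_{\epsilon_i}$ itself (Lemma~\ref{lem:sits-under-lemma}, using $\deg_\mathrm{md}z_{\epsilon_i}=\bfe_1+\dots+\bfe_n$), and $s_{\epsilon_i}$ restricts to a linear isomorphism on the span of standard monomials sitting under $\epsilon_i$ while killing all other standard monomials (Lemma~\ref{lem:proj-to-beta}); therefore, for any $f=\sum_k c_k z_{\epsilon_k}$ in $\kk[\Lambda]_{\bfe_1+\dots+\bfe_n}$, the scalar $c_i$ equals the coefficient of the monomial $s_{\epsilon_i}(z_{\epsilon_i})=\prod_{v\preceq\epsilon_i}z_v^{\epsilon_i}$ in $s_{\epsilon_i}(f)$.

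Applying this with $f=\left(\prod_{j\in[n]\setminus J_\alpha}\omega_j\right)z_\alpha$ and using that $s_{\epsilon_i}$ is an algebra map, I would compute $s_{\epsilon_i}(f)=\left(\prod_{j\in[n]\setminus J_\alpha}s_{\epsilon_i}(\omega_j)\right)s_{\epsilon_i}(z_\alpha)$ and split on whether $\alpha\preceq\epsilon_i$. If $\alpha\npreceq\epsilon_i$, then $s_{\epsilon_i}(z_\alpha)=0$, so $s_{\epsilon_i}(f)=0$ and the $\epsilon_i$-coordinate of $f$ is $0$; this matches the $\epsilon_i$-entry of $\bfv_\alpha^\Lambda$, which is $0$ precisely because $\alpha$ does not belong to $\epsilon_i$. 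If $\alpha\preceq\epsilon_i$, let $v_j$ denote the unique vertex of $\epsilon_i$ carrying label $j$ (unique because $\epsilon_i$ is a facet of a balanced complex); then $s_{\epsilon_i}(\omega_j)=z_{v_j}^{\epsilon_i}$ for each $j$, while $s_{\epsilon_i}(z_\alpha)=\prod_{v\preceq\alpha}z_v^{\epsilon_i}=\prod_{j\in J_\alpha}z_{v_j}^{\epsilon_i}$, since the set of labels of vertices below $\alpha$ is exactly $J_\alpha$. Multiplying the two factors gives $\prod_{j\in[n]\setminus J_\alpha}z_{v_j}^{\epsilon_i}\cdot\prod_{j\in J_\alpha}z_{v_j}^{\epsilon_i}=\prod_{j\in[n]}z_{v_j}^{\epsilon_i}=\prod_{v\preceq\epsilon_i}z_v^{\epsilon_i}=s_{\epsilon_i}(z_{\epsilon_i})$, so the coefficient, hence the $\epsilon_i$-coordinate of $f$, is $1$; and this matches the $\epsilon_i$-entry of $\bfv_\alpha^\Lambda$, which is $1$ because $\alpha$ belongs to $\epsilon_i$. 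Letting $\epsilon_i$ range over all facets, the coordinate vector of $f$ coincides with $\bfv_\alpha^\Lambda$, which is the claim.

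I do not expect a serious obstacle here: the argument is a direct computation once the maps $s_{\epsilon_i}$ are available. The only point requiring care is the bookkeeping in the second case, namely that $\prod_{j\in[n]\setminus J_\alpha}s_{\epsilon_i}(\omega_j)$ supplies exactly the variables $z_{v_j}^{\epsilon_i}$ missing from $s_{\epsilon_i}(z_\alpha)$, so that together they reconstitute the full product $\prod_{v\preceq\epsilon_i}z_v^{\epsilon_i}$. This is exactly where the hypothesis that $\Lambda$ is pure and balanced enters (one vertex of each label per facet), just as in the proofs of Lemmas~\ref{lem:everything-of-degree} and \ref{lem:cells-generate}.
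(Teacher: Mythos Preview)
Your proof is correct. It takes a slightly different route from the paper's: the paper first invokes Lemma~\ref{lem:everything-of-degree} to expand $\prod_{j\in[n]\setminus J_\alpha}\omega_j$ as the sum of all $z_\beta$ with $J_\beta=[n]\setminus J_\alpha$, then argues that each product $z_\beta z_\alpha$ contributes the $z_{\epsilon_j}$'s for facets containing both $\alpha$ and $\beta$, and finally uses Lemma~\ref{lem:sits-under-lemma} to see that these sets of facets partition $\{\epsilon_1,\dots,\epsilon_m\}$. You instead bypass this expansion entirely and read off each coordinate by applying $s_{\epsilon_i}$ directly to the product, using only that $s_{\epsilon_i}$ is an algebra map. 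Since Lemma~\ref{lem:everything-of-degree} is itself proven via the $s_\epsilon$'s, your argument is in effect a more direct version of the same idea; it trades the combinatorial partition picture for a cleaner per-facet computation, and avoids needing Lemma~\ref{lem:everything-of-degree} as an intermediate step.
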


\begin{proof}
    By Lemma~\ref{lem:everything-of-degree}, the product $\prod_{j\in [n]\setminus J_\alpha} z_{\omega_j}$ consists of the sum of all the standard monomials of $\NN^n$-degree $\sum_{j\in [n]\setminus J_\alpha} \bfe_j$, which in turn are exactly those $z_\beta$'s, $\beta\in P(\Lambda)$, such that $J_\beta = [n]\setminus J_\alpha$.  For each $z_\beta$, the product $z_\beta z_\alpha$ is the sum of the $z_{\epsilon_j}$'s for the facets $\epsilon_j$ to which both $\alpha$ and $\beta$ belong. Because the sets of facets sitting over each of these $z_\beta$'s form a partition of $\{\epsilon_1,\dots,\epsilon_m\}$ by Lemma~\ref{lem:sits-under-lemma}, we conclude that the product in the lemma consists of the sum of exactly those $z_{\epsilon_j}$'s for $\epsilon_j$ to which $\alpha$ belongs. The statement of the lemma is saying exactly this.
\end{proof}

From Lemma~\ref{lem:what-is-the-facet-vector} it follows that, given a proposed basis $z_{\alpha_1},\dots,z_{\alpha_\ell}$ for $\kk[\Lambda]$ over $\kk[\Omega]$, a necessary condition for it to be indeed a basis is for its incidence matrix be square and nonsingular over $\kk$, as follows. The $\ell$ products appearing in Lemma~\ref{lem:what-is-the-facet-vector} if one takes $\alpha = \alpha_1,\dots,\alpha_\ell$, have $\kk$-span equal  precisely to the component of the $\kk[\Omega]$-module generated by $z_{\alpha_1},\dots,z_{\alpha_\ell}$ in degree $\bfe_1+\dots+\bfe_n$. These $\ell$ products need to be $\kk$-linearly independent for $z_{\alpha_1},\dots,z_{\alpha_\ell}$ to be $\kk[\Omega]$-linearly independent, and this component needs to be equal to $\kk[\Lambda]_{\bfe_1+\dots+\bfe_n}$ for them to generate $\kk[\Lambda]$ as $\kk[\Omega]$-module. We now give a criterion due to Garsia that is both necessary and sufficient.

\begin{prop}[essentially Theorem~3.1 in \cite{garsia}]\label{prop:garsia-3.1}
    Given a proposed basis $B:=z_{\beta_1},\dots,z_{\beta_\ell}$ for $\kk[\Lambda]$ as $\kk[\Omega]$-module, and a subset $S\subset [n]$, let $B_S$ be the subset of $B$ consisting of those $z_{\beta_i}$'s whose label sets $J_{\beta_i}$ are contained in $S$. Then $B$ is indeed a basis for $\kk[\Lambda]$ as $\kk[\Omega]$-module if and only if, for every subset $S\subset [n]$, the incidence matrix of $B_S$ in the label-selected subcomplex $\Lambda_S$ is square and nonsingular over $\kk$.
\end{prop}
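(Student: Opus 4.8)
The plan is to study the graded $\kk[\Omega]$-module map $\phi\colon F\to\kk[\Lambda]$ sending $e_i\mapsto z_{\beta_i}$, where $F:=\bigoplus_{i=1}^\ell\kk[\Omega](-\mathbf 1_{J_{\beta_i}})$ with $\mathbf 1_J:=\sum_{j\in J}\bfe_j$ and $\deg_\mathrm{md}e_i=\mathbf 1_{J_{\beta_i}}$; the proposed set $B$ is a $\kk[\Omega]$-module basis exactly when $\phi$ is an isomorphism. Everything in sight is $\NN^n$-graded, so $\phi$ is an isomorphism iff $\phi_\bfa\colon F_\bfa\to\kk[\Lambda]_\bfa$ is one for every multidegree $\bfa\in\NN^n$, the point being that the incidence-matrix conditions are precisely about the components with $\bfa=\mathbf 1_S$. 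Because $\kk[\Omega]$ is a polynomial ring on $\omega_1,\dots,\omega_n$ with $\deg_\mathrm{md}\omega_j=\bfe_j$, its multidegree-$\bfc$ component is one-dimensional for $\bfc\in\NN^n$ and zero otherwise; hence $F_{\mathbf 1_S}$ has $\kk$-basis $\{(\prod_{j\in S\setminus J_{\beta_i}}\omega_j)\,e_i: J_{\beta_i}\subseteq S\}$, i.e.\ one basis vector for each $z_{\beta_i}\in B_S$. On the target side, the standard monomials of $\kk[\Lambda]$ of multidegree $\mathbf 1_S$ are precisely the generators $z_\delta$ with $J_\delta=S$, which are exactly the facets of the label-selected subcomplex $\Lambda_S$; so $\kk[\Lambda]_{\mathbf 1_S}=\bigoplus_{J_\delta=S}\kk z_\delta$.

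The key computation is that, in these two bases, $\phi_{\mathbf 1_S}$ is represented by the incidence matrix of $B_S$ in $\Lambda_S$. This is the content of Lemma~\ref{lem:what-is-the-facet-vector}, carried out inside $\Lambda_S$ rather than $\Lambda$: expanding $(\prod_{j\in S\setminus J_{\beta_i}}\omega_j)\,z_{\beta_i}$ via Lemma~\ref{lem:everything-of-degree} and the straightening relations shows it equals $\sum_\delta z_\delta$, the sum over facets $\delta$ of $\Lambda_S$ to which $\beta_i$ belongs, so its coordinate vector is the facet vector of $\beta_i$ in $\Lambda_S$. It follows at once that $\phi_{\mathbf 1_S}$ is an isomorphism of $\kk$-vector spaces iff the incidence matrix of $B_S$ in $\Lambda_S$ is square and nonsingular. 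The forward implication is then immediate: if $B$ is a basis, then $\phi$, hence each $\phi_{\mathbf 1_S}$, is an isomorphism, so every incidence matrix is square and nonsingular.

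For the converse, assume all these matrices are square and nonsingular. For \emph{generation}: nonsingularity forces the rows of the incidence matrix of $B_S$ to span, so $\operatorname{im}\phi\supseteq\kk[\Lambda]_{\mathbf 1_S}$ for every $S\subseteq[n]$; letting $S$ vary, $\operatorname{im}\phi$ contains every generator $z_\delta$, $\delta\in P(\Lambda)$ (these being exactly the standard monomials with $0$--$1$ multidegree), hence contains their $\kk[\Omega]$-span, which is all of $\kk[\Lambda]$ by Lemma~\ref{lem:cells-generate}. For \emph{injectivity}: given a homogeneous relation $\sum_i p_iz_{\beta_i}=0$, write $p_i=c_i\,\omega^{\bfa-\mathbf 1_{J_{\beta_i}}}$ (possible, and with $c_i=0$ unless $J_{\beta_i}\subseteq S:=\operatorname{supp}\bfa$, by the one-dimensionality above), set $\bfb:=\bfa-\mathbf 1_S\in\NN^n$, and rewrite the relation as $\omega^\bfb\cdot g=0$ where $g:=\sum_{z_{\beta_i}\in B_S}c_i(\prod_{j\in S\setminus J_{\beta_i}}\omega_j)z_{\beta_i}\in\kk[\Lambda]_{\mathbf 1_S}$. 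Since $\kk[\Lambda]$ is torsion-free over the domain $\kk[\Omega]$ (Lemma~\ref{lem:torsion-free}) and $\omega^\bfb\neq0$, we get $g=0$; but by the key computation the coordinate vector of $g$ is $\sum c_i\cdot(\text{facet vector of }\beta_i\text{ in }\Lambda_S)$, a linear combination of the rows of a nonsingular matrix, forcing every $c_i=0$. So the relation was trivial, $\phi$ is injective, and $B$ is a basis.

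The main obstacle is the key computation: one must set up the bookkeeping between $\Lambda$ and its label-selected subcomplex $\Lambda_S$ carefully --- checking that $\Lambda_S$ is again pure and balanced with label set $S$, that its facets are precisely the faces of $\Lambda$ with label set exactly $S$, and that $\kk[\Lambda]_{\mathbf 1_S}$ may be identified with the corresponding component of $\kk[\Lambda_S]$ --- so that Lemma~\ref{lem:what-is-the-facet-vector} applies at that level. The rest (the two implications, the reduction to homogeneous relations, the use of torsion-freeness and of Lemma~\ref{lem:cells-generate}) is routine once this is in hand.
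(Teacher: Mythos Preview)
Your proof is correct and follows essentially the same approach as the paper's: both identify $\phi_{\mathbf 1_S}$ with the incidence matrix of $B_S$ in $\Lambda_S$ via Lemma~\ref{lem:what-is-the-facet-vector}, use Lemma~\ref{lem:cells-generate} for generation, and use torsion-freeness (Lemma~\ref{lem:torsion-free}) to reduce an arbitrary homogeneous relation to one in degree $\mathbf 1_S$. Your packaging as a single graded $\kk[\Omega]$-module map $\phi\colon F\to\kk[\Lambda]$ and your one-shot factoring $\omega^\bfb\cdot g=0$ (rather than the paper's iterative cancellation of one $\omega_j$ at a time) are minor streamlinings, not a different route.
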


\begin{proof}
    Because $z_{\beta_i}$ has degree $\sum_{j\in J_{\beta_i}} \bfe_j$, the submodule $\kk[\Omega] z_{\beta_i}\subset \kk[\Lambda]$ is zero in degree $\sum_{j\in S} \bfe_j$ unless $J_{\beta_i}\subset S$. Therefore, the only elements of $B$ that can contribute to generating this component of $\kk[\Lambda]$ over $\kk[\Omega]$ are the ones in $B_S$. We also have $\kk[\Lambda]_\mathbf{d}=\kk[\Lambda_S]_\mathbf{d}$ for any degree $\mathbf{d}\in \sum_{j\in S}\NN \bfe_j$. It follows that, for all $S\subset [n]$, the same argument as in the paragraph before the lemma, applied to the homogeneous component $\kk[\Lambda]_{\sum_{j\in S} \bfe_j}$ and the subcomplex $\Lambda_S$, shows that the condition that the incidence matrix of $B_S$ in $\Lambda_S$ be square and nonsingular, is necessary for $B$ to be a $\kk[\Omega]$-basis.

    Sufficiency is as follows. Assuming that the incidence matrix of $B_S$ is square and nonsingular in $\Lambda_S$ for each $S$, we show that $B$ is $\kk[\Omega]$-linearly independent and generates $\kk[\Lambda]$ over $\kk[\Omega]$.

    For $\kk[\Omega]$-module generation, consider any $z_\alpha$, $\alpha\in P(\Lambda)$, and set $S:=J_\alpha$. Then the square nonsingularity of the incidence matrix of $B_S$, together with Lemma~\ref{lem:what-is-the-facet-vector} with $\Lambda_S$ in the place of $\Lambda$, imply that $\kk[\Lambda]_{\sum_{j\in S}\bfe_j}$ lies in the $\kk[\Omega]$-span of $B_S\subset B$. In particular, $z_\alpha$ lies in this span. Since this holds for all $z_\alpha$, Lemma~\ref{lem:cells-generate} then implies that $B$ generates $\kk[\Lambda]$ as $\kk[\Omega]$-module.

    For linear independence, suppose there is a nontrivial relation $\sum_{i=1}^\ell f_i(\omega_1,\dots,\omega_n) z_{\beta_i} = 0$ for some choice of $n$-variate polynomials $f_1,\dots,f_\ell$ over $\kk$. We may take this relation to be $\NN^n$-graded without loss of generality, say of degree $a_1\bfe_1 + \dots + a_n\bfe_n$. Since $\deg_\mathrm{md} \omega_j = \bfe_j$ for each $j=1,\dots,n$, and the $\bfe_j$ are linearly independent in $\NN^n$, the space of homogeneous elements of $\kk[\Omega]$ of a given degree is $1$-dimensional. Thus each $f_i$ can be taken to be a monomial in the $\omega_j$'s. Because $\deg_\mathrm{md} z_{\beta_i}=\sum_{j\in J_{\beta_i}}\bfe_j$, it contributes at most one to each of the coefficients $a_1,\dots,a_n$. Thus, if there is any $a_j\geq 2$, it must be that every $f_i$ is divisible by $\omega_j$. Then we can cancel $\omega_j$ from the relation by Lemma~\ref{lem:torsion-free}. Proceeding inductively, we can arrive at a nontrivial relation in which $a_j\leq 1$ for all $j=1,\dots,n$. But taking $S$ to be the set of $j$'s for which $a_j=1$ in this relation, we obtain a nontrivial relation between the elements of $B_S$ occurring in the component $\kk[\Lambda]_{\sum_{j\in S} \bfe_j} = \kk[\Lambda_S]_{\sum_{j\in S}\bfe_j}$. This is ruled out by the square nonsingularity of the incidence matrix of $B_S$ in $\Lambda_S$, completing the proof.
\end{proof}

The next step is to inject all of the subspaces $\kk[\Lambda_S]_{\sum_{j\in S}\bfe_j}= \bigoplus_{\alpha:J_\alpha = S}\kk z_\alpha$ into the single subspace $\kk[\Lambda]_{\bfe_1+\dots+\bfe_n}=\kk z_{\epsilon_1}\oplus \dots \oplus \kk z_{\epsilon_m}$. Following Garsia's notation, for $S\subset [n]$ define
\begin{equation}\label{eq:def-of-M_S}
M_S := \left(\prod_{j\in [n]\setminus S} \omega_j\right)\left(\bigoplus_{\alpha:J_\alpha = S}\kk z_\alpha \right) \subset \kk z_{\epsilon_1}\oplus \dots \oplus \kk z_{\epsilon_m}.
\end{equation}
(Note that $M_{[n]} = \kk z_{\epsilon_1}\oplus \dots \oplus \kk z_{\epsilon_m}$ itself.) Lemma~\ref{lem:torsion-free} (stating that $\kk[\Lambda]$ is torsion-free over $\kk[\Omega]$) means that $M_S$ is isomorphic to $\bigoplus_{\alpha:J_\alpha = S}\kk z_\alpha$ as a $\kk$-vector space. We can also deduce that $T\subset S$ implies $M_T\subset M_S$, because in this case we have
\[
\left(\prod_{j\in S\setminus T} \omega_j\right)\left(\bigoplus_{\alpha:J_\alpha = T}\kk z_\alpha\right)\subset \bigoplus_{\alpha:J_\alpha = S} \kk z_\alpha,
\]
and therefore
\[
M_T  = \left(\prod_{j\in [n]\setminus S} \omega_j\right)\left(\prod_{j\in S\setminus T} \omega_j\right)\left(\bigoplus_{\alpha:J_\alpha = T}\kk z_\alpha \right) \subset \left(\prod_{j\in [n]\setminus S} \omega_j\right)\left(\bigoplus_{\alpha:J_\alpha = S}\kk z_\alpha \right)=M_S.
\]
We thus have a filtration of the finite-dimensional vector space $M_{[n]} = \kk z_{\epsilon_1}\oplus \dots \oplus \kk z_{\epsilon_m}$ by subspaces $M_S$, indexed in an inclusion-respecting way by the subsets $S\subset [n]$. An illustration is found in Figure~\ref{fig:subspace-filtration}.

\begin{figure}
\newcommand{\halfcolorcircleOpacity}[6]{%
  \begin{scope}
    \path (#1) coordinate (center);
    \fill[#2!#5] (center) -- ++(90:0.3) arc(90:270:0.3) -- cycle;
    \fill[#3!#6] (center) -- ++(-90:0.3) arc(-90:90:0.3) -- cycle;
    \draw[thick, blue!20] (center) circle [radius=0.3];
    \node at (center) {#4};
  \end{scope}
}

\begin{center}
\begin{tikzpicture}[
  every node/.style={font=\small},
  magentanode/.style={circle, draw=black, fill=magenta!20, thick, minimum size=6mm},
  bluenode/.style={circle, draw=black, fill=blue!20, thick, minimum size=6mm},
  orangenode/.style={circle, draw=black, fill=orange!20, thick, minimum size=6mm},
  whitecircle/.style={circle, draw=black, fill=white, thick, minimum size=6mm},
  tricolornode/.style={circle, draw=black, thick, minimum size=6mm, fill=none}
]

\def\levelsep{1.8}

\node[whitecircle, label=below:{\scriptsize 111}] (empty) at (0,0) {$\emptyset$};

\node[magentanode, label=left:{\scriptsize 100}] (s) at (-3.3,\levelsep) {$s$};
\node[magentanode, label=right:{\scriptsize 001}] (t) at (-2.5,\levelsep) {$t$};
\node[bluenode, label=right:{\scriptsize 111}]     (u) at (0,\levelsep) {$u$};
\node[orangenode, label=right:{\scriptsize 111}]   (v) at (3,\levelsep) {$v$};

\foreach \x in {s,t,u,v} {
  \draw[thick] (empty) -- (\x);
}

\node[label=left:{\scriptsize 100}] at (-3.3, 2*\levelsep) (alpha) {$\alpha$};
\node[label=right:{\scriptsize 011}] at (-2.5, 2*\levelsep) (beta) {$\beta$};
\node[label=left:{\scriptsize 100}] at (-0.4, 2*\levelsep) (gamma) {$\gamma$};
\node[label=right:{\scriptsize 011}] at (0.4, 2*\levelsep) (delta) {$\delta$};
\node[label=left:{\scriptsize 110}] at (2.5, 2*\levelsep) (epsilon) {$\varepsilon$};
\node[label=right:{\scriptsize 001}] at (3.3, 2*\levelsep) (zeta) {$\zeta$};

\draw[thick ] (s) -- (alpha);
\draw[thick ] (u) -- (alpha);

\draw[thick ] (t) -- (beta);
\draw[thick ] (u) -- (beta);

\draw[thick ] (s) -- (gamma);
\draw[thick ] (v) -- (gamma);

\draw[thick ] (t) -- (delta);
\draw[thick ] (v) -- (delta);

\draw[thick ] (u) -- (epsilon);
\draw[thick ] (v) -- (epsilon);

\draw[thick ] (u) -- (zeta);
\draw[thick ] (v) -- (zeta);

\node[tricolornode] (P) at (-1,3*\levelsep) {};
\node[tricolornode] (Q) at (0,3*\levelsep) {};
\node[tricolornode] (R) at (1,3*\levelsep) {};


\node[label=above:{\scriptsize 100}] at (P) {\textbf{\small \(P\)}};
\node[label=above:{\scriptsize 010}] at (Q) {\textbf{\small \(Q\)}};
\node[label=above:{\scriptsize 001}] at (R) {\textbf{\small \(R\)}};

\draw[thick] (P) -- (alpha);
\draw[thick] (P) -- (epsilon);
\draw[thick] (P) -- (gamma);

\draw[thick] (Q) -- (beta);
\draw[thick] (Q) -- (epsilon);
\draw[thick] (Q) -- (delta);

\draw[thick] (R) -- (beta);
\draw[thick] (R) -- (zeta);
\draw[thick] (R) -- (delta);

\begin{scope}
  \foreach \pos/\leftcolor/\rightcolor/\name in {
    {alpha}/magenta/blue/{$\alpha$},
    {beta}/magenta/blue/{$\beta$},
    {gamma}/magenta/orange/{$\gamma$},
    {delta}/magenta/orange/{$\delta$},
    {epsilon}/blue/orange/{$\varepsilon$},
    {zeta}/blue/orange/{$\zeta$}
  } {
    \path (\pos) coordinate (center);
    \draw[thick, black] (center) circle (0.3);
    \fill[\leftcolor!20] (center) ++(90:0.3) arc(90:270:0.3) -- cycle;
    \fill[\rightcolor!20] (center) ++(270:0.3) arc(270:450:0.3) -- cycle;
    \draw[thick, black] (center) circle (0.3);
    \node at (center) {\name};
  }
\end{scope}

\foreach \name/\x in {P/-1, Q/0, R/1} {
  \coordinate (coord) at (\x,3*\levelsep);
  \begin{scope}
    \fill[magenta!20] (coord) -- ++(90:0.3) arc(90:210:0.3) -- cycle;
    \fill[blue!20]    (coord) -- ++(210:0.3) arc(210:330:0.3) -- cycle;
    \fill[orange!20]  (coord) -- ++(330:0.3) arc(330:450:0.3) -- cycle;
    \node at (coord) {\textbf{\small \(\name\)}};
    \draw[thick, black] (coord) circle [radius=0.3];
  \end{scope}
  \coordinate (\name) at (coord);
}

\end{tikzpicture}
\end{center}

\begin{center}
\begin{tikzpicture}[
  node distance=0.6cm and 2cm,
  every node/.style={align=center, font=\small},
  arrow/.style={->, thick},
  magentaarrow/.style={arrow, draw=magenta!30},
  bluearrow/.style={arrow, draw=blue!30},
  orangearrow/.style={arrow, draw=orange!30}
]

\node (L1) at (0,0) {
  $\langle (1,1,1) \rangle$\\
  $\simeq$\\
  $\kk z_{\emptyset}$
};

\node (L2a) [above left=of L1] {
  $\langle (1,0,0), (0,1,1) \rangle$\\
  $\simeq$\\
  $\kk z_s \oplus \kk z_t$
};
\node (L2b) [above=of L1] {
  $\langle (1,1,1) \rangle$\\
  $\simeq$\\
  $\kk z_u$
};
\node (L2c) [above right=of L1] {
  $\langle (1,1,1) \rangle$\\
  $\simeq$\\
  $\kk z_v$
};

\node (L3a) [above=of L2a] {
  $\langle (1,0,0), (0,1,1) \rangle$\\
  $\simeq$\\
  $\kk z_\alpha \oplus \kk z_\beta$
};
\node (L3b) [above=of L2b] {
  $\langle (1,0,0), (0,1,1) \rangle$\\
  $\simeq$\\
  $\kk z_\gamma \oplus \kk z_\delta$
};
\node (L3c) [above=of L2c] {
  $\langle (1,1,0), (0,0,1) \rangle$\\
  $\simeq$\\
  $\kk z_\epsilon \oplus \kk z_\zeta$
};

\node (L4) [above=of L3b] {
  $\kk^3$\\
  $\simeq$\\
  $\kk z_P \oplus \kk z_Q \oplus \kk z_R$
};

\draw[magentaarrow] (L1) -- node[left=3pt, near start] {\scriptsize\textcolor{magenta}{$\omega_1$}} (L2a);
\draw[bluearrow]    (L1) -- node[right=5pt, near start] {\scriptsize\textcolor{blue}{$\omega_2$}} (L2b);
\draw[orangearrow]  (L1) -- node[right=5pt, near end] {\scriptsize\textcolor{orange}{$\omega_3$}} (L2c);

\draw[bluearrow]    (L2a) -- node[left=3pt, near end] {\scriptsize\textcolor{blue}{$\omega_2$}} (L3a);
\draw[orangearrow]  (L2a) -- node[left=8pt, near start] {\scriptsize\textcolor{orange}{$\omega_3$}} (L3b);

\draw[magentaarrow] (L2b) -- node[left=8pt, near start] {\scriptsize\textcolor{magenta}{$\omega_1$}} (L3a);
\draw[orangearrow]  (L2b) -- node[right=3pt, near end] {\scriptsize\textcolor{orange}{$\omega_3$}} (L3c);

\draw[magentaarrow] (L2c) -- node[right=8pt, near start] {\scriptsize\textcolor{magenta}{$\omega_1$}} (L3b);
\draw[bluearrow]  (L2c) -- node[right=6pt, near end] {\scriptsize\textcolor{blue}{$\omega_2$}} (L3c);

\draw[orangearrow]  (L3a) -- node[left=5pt, near end] {\scriptsize\textcolor{orange}{$\omega_3$}} (L4);
\draw[bluearrow]    (L3b) -- node[right=5pt, near start] {\scriptsize\textcolor{blue}{$\omega_2$}} (L4);
\draw[magentaarrow] (L3c) -- node[right=7pt, near start] {\scriptsize\textcolor{magenta}{$\omega_2$}} (L4);

\end{tikzpicture}
\end{center}

\begin{center}
\begin{tikzpicture}[
  node distance=0.6cm and 2.7cm,
  every node/.style={align=center, font=\small, draw=none}
]

\node (L1) at (0,0) {
  $(1,1,1)$
};

\node (L2a) [above left=of L1] {
  $(1,0,0)$
};
\node (L2b) [above=of L1] {
  $\emptyset$
};
\node (L2c) [above right=of L1] {
  $\emptyset$
};

\node (L3a) [above=of L2a] {
  $\emptyset$
};
\node (L3b) [above=of L2b] {
  $\emptyset$
};
\node (L3c) [above=of L2c] {
  $(1,1,0)$
};

\node (L4) [above=of L3b] {
  $\emptyset$
};

\end{tikzpicture}
\end{center}
\caption{Top---the face poset of a Cohen--Macaulay balanced boolean complex $\Lambda$, with colors indicating the labels (magenta=$1$, violet=$2$, and orange=$3$), so $\omega_1=z_s+z_t$, $\omega_2 = z_u$, and $\omega_3=z_v$. By coincidence, $n=3=m$ in this example. Each face is also labeled by its facet vector.  Middle---the components of the ring $\kk[\Lambda]$ with $\NN^n$-degree of the form $\sum_{j\in S}\bfe_j$ for each $S\subset [n]$, along with the corresponding subspaces $\iota^{-1}(M_S)$, where $\iota$ is the  identification of the space $\kk^m$ of row vectors with the facet space $\kk z_P \oplus \kk z_Q \oplus \kk z_R$ that sends the standard basis to $z_P,z_Q,z_R$. The arrows (color-coded by label) show how multiplication by the $\omega_j$'s sends the components into each other. Bottom---for each $S\subset [n]$, a basis for a complement to $\iota^{-1}(\sum_{T\subsetneq S} M_T)$ in  $\iota^{-1}(M_S)$. The fact that they amalgamate to a basis for $\kk^m$ illustrates \eqref{eq:garsia-LA-CM}.}
\label{fig:subspace-filtration}
\end{figure}

Implicit in the approach to testing Cohen--Macaulayness laid out in \cite[Section~3]{garsia} is that Cohen--Macaulayness is equivalent to a purely linear-algebraic condition on the way that the subspaces of this filtration interact with each other. The following theorem is the objective of this section. To state it, we note that because by Lemma~\ref{lem:torsion-free}, multiplication by $\prod_{j\in [n]\setminus S} \omega_j$ is a bijection from $\bigoplus_{\alpha:J_\alpha = S} \kk z_\alpha = \kk[\Lambda]_{\sum_{j\in S} \bfe_j}$ to $M_S$, there is a well-defined inverse map, which we denote by 
\begin{equation}\label{eq:def-of-inverse-omega}
\left(\prod_{j\in [n]\setminus S} \omega_j\right)^{-1}: M_S \rightarrow \kk[\Lambda]_{\sum_{j\in S} \bfe_j}.
\end{equation}

\begin{theorem}[Implicit in Theorems~3.1--3.3 in \cite{garsia}]\label{thm:garsia-LA-CM}
    For each $S\subset [n]$, choose a vector space complement $L_S$ to $\sum_{T\subsetneq S} M_T$ in $M_S$. Then $\Lambda$ is Cohen--Macaulay over the field $\kk$ if and only if
    \begin{equation}\label{eq:garsia-LA-CM}
    \kk z_{\epsilon_1}\oplus \dots \oplus \kk z_{\epsilon_m} = \bigoplus_{S\subset [n]} L_S.
    \end{equation}
    Furthermore, in all cases, if for each $S\subset [n]$ we take a $\kk$-basis $B(L_S)$ for $L_S$, then a minimal $\NN^n$-homogeneous $\kk[\Omega]$-module generating set for $\kk[\Lambda]$ is obtained from the union $B$ of the sets
    \begin{equation}\label{eq:min-gen-set}
    \left(\prod_{j\in [n]\setminus S} \omega_j\right)^{-1} B(L_S) \subset \kk[\Lambda]_{\sum_{j\in S}\bfe_j} 
    \end{equation}
    over $S\subset[n]$, and in the Cohen--Macaulay case, this minimal generating set is a basis.
\end{theorem}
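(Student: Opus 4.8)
The plan is to route everything through the quotient $\kk[\Lambda]/\Omega\kk[\Lambda]$, using Lemma~\ref{lem:minimal-module-generating-set} (minimal $\NN^n$-homogeneous $\kk[\Omega]$-generating sets of $\kk[\Lambda]$ are exactly lifts of homogeneous $\kk$-bases of this quotient) and Lemma~\ref{lem:hironaka-decomposition} (Cohen--Macaulayness is freeness over $\kk[\Omega]$, and then any minimal generating set lifting a basis of the quotient is itself a basis). The first step is to pin down the graded structure of $\kk[\Lambda]/\Omega\kk[\Lambda]$: I claim it is concentrated in the ``squarefree'' multidegrees $\sum_{j\in S}\bfe_j$, $S\subseteq[n]$. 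Indeed, by the identity $z_\alpha z_\beta = \left(\prod_{j\in J_\alpha}\omega_j\right)z_\beta$ for $\alpha\preceq\beta$ established inside the proof of Lemma~\ref{lem:cells-generate} (see \eqref{eq:alg-to-module}), any standard monomial that is a product $z_{\alpha_1}z_{\alpha_2}\cdots z_{\alpha_k}$ of $k\geq 2$ generators (with $\alpha_1\prec\alpha_2\prec\cdots$) equals $\left(\prod_{j\in J_{\alpha_1}}\omega_j\right)z_{\alpha_2}\cdots z_{\alpha_k}\in\Omega\kk[\Lambda]$ (note $J_{\alpha_1}\neq\varnothing$), so only the generators $z_\alpha$, of multidegree $\sum_{j\in J_\alpha}\bfe_j$, and the scalar $1$ survive to span the quotient; in particular $\kk[\Lambda]_{\sum_{j\in S}\bfe_j}=\bigoplus_{\alpha:J_\alpha=S}\kk z_\alpha=\kk[\Lambda_S]_{\sum_{j\in S}\bfe_j}$.

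Next, for a fixed $S\subseteq[n]$ I would identify the degree-$\sum_{j\in S}\bfe_j$ component of $\kk[\Lambda]/\Omega\kk[\Lambda]$ with $M_S/\sum_{T\subsetneq S}M_T$. By Lemma~\ref{lem:torsion-free}, multiplication by $\prod_{j\in[n]\setminus S}\omega_j$ is a linear isomorphism $\kk[\Lambda]_{\sum_{j\in S}\bfe_j}\xrightarrow{\sim}M_S$; I then check it carries $(\Omega\kk[\Lambda])_{\sum_{j\in S}\bfe_j}=\sum_{j\in S}\omega_j\kk[\Lambda]_{\sum_{\ell\in S\setminus\{j\}}\bfe_\ell}$ onto $\sum_{j\in S}M_{S\setminus\{j\}}$, which equals $\sum_{T\subsetneq S}M_T$ since every proper subset of $S$ lies inside some $S\setminus\{j\}$ and the $M$'s are monotone ($T\subseteq S\Rightarrow M_T\subseteq M_S$). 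Hence the chosen complement $L_S$ to $\sum_{T\subsetneq S}M_T$ in $M_S$ pulls back along this isomorphism to a complement of $(\Omega\kk[\Lambda])_{\sum_{j\in S}\bfe_j}$ in $\kk[\Lambda]_{\sum_{j\in S}\bfe_j}$, with basis $\left(\prod_{j\in[n]\setminus S}\omega_j\right)^{-1}B(L_S)$. Running over all $S$ and using the squarefree-concentration from the first paragraph, the union $B$ maps bijectively to a homogeneous $\kk$-basis of $\kk[\Lambda]/\Omega\kk[\Lambda]$; Lemma~\ref{lem:minimal-module-generating-set} then gives the ``in all cases'' assertion that $B$ is a minimal $\NN^n$-homogeneous $\kk[\Omega]$-generating set.

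For the equivalence, Lemma~\ref{lem:hironaka-decomposition} makes Cohen--Macaulayness of $\kk[\Lambda]$ the same as freeness over $\kk[\Omega]$, which — since $B$ is a minimal generating set — is the same as $B$ being a $\kk[\Omega]$-basis. If $\kk[\Lambda]$ is Cohen--Macaulay, $B$ is a basis, so reading off the multidegree-$(\bfe_1+\dots+\bfe_n)$ component of $\kk[\Lambda]=\bigoplus_{b\in B}b\kk[\Omega]$ and using that $\kk[\Omega]$ is one-dimensional in each squarefree multidegree — so for $b=\left(\prod_{j\in[n]\setminus S}\omega_j\right)^{-1}\ell$ one has $(b\kk[\Omega])_{\bfe_1+\dots+\bfe_n}=\kk\cdot b\prod_{j\in[n]\setminus S}\omega_j=\kk\ell$ — yields $\kk z_{\epsilon_1}\oplus\dots\oplus\kk z_{\epsilon_m}=\kk[\Lambda]_{\bfe_1+\dots+\bfe_n}=\bigoplus_S L_S$, i.e.\ \eqref{eq:garsia-LA-CM}. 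Conversely, assume \eqref{eq:garsia-LA-CM} and suppose $\sum_b f_b(\omega_1,\dots,\omega_n)b=0$ is a nontrivial homogeneous relation. Whenever its multidegree has a coordinate $\geq 2$, every $f_b$ is divisible by the corresponding $\omega_j$ (each $b$ having squarefree multidegree), so by Lemma~\ref{lem:torsion-free} that $\omega_j$ cancels; iterating reduces to squarefree multidegree $\sum_{j\in S_0}\bfe_j$, where each surviving $f_b$ (for $b$ of degree $\sum_{j\in S}\bfe_j$, $S\subseteq S_0$) is forced to be a scalar $c_{S,\ell}$ times $\prod_{j\in S_0\setminus S}\omega_j$. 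Multiplying the relation by $\prod_{j\in[n]\setminus S_0}\omega_j$ (again injective, Lemma~\ref{lem:torsion-free}) telescopes each term $c_{S,\ell}\left(\prod_{j\in S_0\setminus S}\omega_j\right)\left(\prod_{j\in[n]\setminus S}\omega_j\right)^{-1}\ell$ down to $c_{S,\ell}\ell$, since $([n]\setminus S_0)\sqcup(S_0\setminus S)=[n]\setminus S$; so the relation becomes a $\kk$-linear dependence among $\bigcup_S B(L_S)$, which \eqref{eq:garsia-LA-CM} says is a basis of $\kk z_{\epsilon_1}\oplus\dots\oplus\kk z_{\epsilon_m}$, forcing all $c_{S,\ell}=0$ — a contradiction. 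Hence $B$ is a basis and $\kk[\Lambda]$ is Cohen--Macaulay, which also records that in the Cohen--Macaulay case $B$ is a basis.

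I expect the main obstacle to be the bookkeeping in the middle step: getting exactly right how $(\Omega\kk[\Lambda])_{\sum_{j\in S}\bfe_j}$ corresponds to $\sum_{T\subsetneq S}M_T$ under multiplication by $\prod_{j\in[n]\setminus S}\omega_j$, which is where squarefree-concentration, torsion-freeness, and the monotonicity of the $M_S$ must be combined; once that identification is in place the rest is linear algebra. One could alternatively try to derive the equivalence from Proposition~\ref{prop:garsia-3.1} together with Lemma~\ref{lem:cell-basis-exists}, reducing to the case of a cell basis of $z_\alpha$'s, but the direct route through the quotient seems cleaner and delivers the explicit generating set \eqref{eq:min-gen-set} for free.
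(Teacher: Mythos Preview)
Your proposal is correct and follows essentially the same route as the paper's proof: both first establish that $B$ is a minimal $\kk[\Omega]$-generating set by showing that the squarefree-degree components of $\kk[\Lambda]/\Omega\kk[\Lambda]$ are carried by multiplication with $\prod_{j\in[n]\setminus S}\omega_j$ onto $M_S/\sum_{T\subsetneq S}M_T$ (the paper's \eqref{eq:ideal-in-homog-component} is exactly your identification), and then argue the equivalence by reducing any $\kk[\Omega]$-relation among $B$ to squarefree multidegree via Lemma~\ref{lem:torsion-free} and pushing it into the top component. Your converse direction spells out the telescoping computation explicitly, whereas the paper defers to the final paragraph of the proof of Proposition~\ref{prop:garsia-3.1}, but the content is the same.
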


We give the proof after discussing Figure~\ref{fig:subspace-filtration}, which illustrates the  picture of $\kk[\Lambda]$ underlying the theorem. The top image is the Hasse diagram of the face poset of a balanced Cohen--Macaulay complex $\Lambda$, with each face's label set indicated by colors. Next to each face, its facet vector appears. (The geometric realization of $\Lambda$ is homeomorphic to a disk. We have omitted an image of the complex itself; it can be found in \cite[Figure~2.16]{blum-smith}. This complex results from taking the quotient of the barycentric subdivision of the boundary of a tetrahedron by a dihedral group of order 8.)

The middle image shows the components \[
\kk[\Lambda]_{\sum_{j\in S}\bfe_j} = \bigoplus_{\alpha: J_\alpha = S} z_\alpha
\]
and their corresponding images $M_S$ in the facet space $\kk[\Lambda]_{\bfe_1+\dots + \bfe_n} = \kk z_{\epsilon_1} \oplus \dots \oplus \kk z_{\epsilon_m}$ (after identifying the latter with the space $\kk^m$ of row vectors via the map $\iota$ that sends the standard basis for $\kk^m$ to $z_{\epsilon_1},\dots,z_{\epsilon_m}$). The arrows indicate the way that multiplication by the $\omega_j$'s injects these components into each other. As an illustration, consider the component $\kk z_s \oplus \kk z_t$, with label set $\{1\}$. One must multiply by $\omega_2\omega_3 = z_uz_v$ to get $z_s$ and $z_t$ into the facet component $\kk z_P \oplus \kk z_Q \oplus \kk z_R$. We have
\[
\omega_2\omega_3 z_s = z_P\text{ and }\omega_2\omega_3 z_t = z_Q + z_R,
\]
which  corresponds (via Lemma~\ref{lem:what-is-the-facet-vector}) to the fact that $s$'s facet vector is $(1,0,0)$ and $t$'s facet vector is $(0,1,1)$. Thus, 
\[
M_{\{1\}} = \kk z_P + \kk (z_Q+z_R),
\]
which is identified via $\iota^{-1}$ with the subspace $\langle (1,0,0), (0,1,1)\rangle$ in the space $\kk^m$ of row vectors.

The bottom image in Figure~\ref{fig:subspace-filtration} shows the bases $B(L_S)$ discussed in Theorem~\ref{thm:garsia-LA-CM} for the complements $L_S$ to $\sum_{T\subsetneq S} M_T$ in $M_S$, written as row vectors via $\iota^{-1}$. To illustrate, consider the $S=\{1\}$ component again. We computed above that $M_{\{1\}} = \kk z_P + \kk (z_Q+z_R) \cong \langle (1,0,0), (0,1,1)\rangle$. Meanwhile, $\sum_{T\subsetneq \{1\}} M_T = M_\varnothing = \kk(\omega_1\omega_2\omega_3\cdot 1) = \kk(z_P + z_Q + z_R) \cong \langle (1,1,1)\rangle$. A complement to the latter in the former is spanned by $z_P = \iota(1,0,0)$, and this is illustrated in the image at the location corresponding to the label set $\{1\}$. (One could also have taken for a complement the span of $z_Q+z_R$, or of any linear combination of $z_P,z_Q+z_R$ other than the sum.) Meanwhile, at the location corresponding to the label set $\{2\}$ one finds $\varnothing$ because $M_{\{2\}} = \kk \omega_1\omega_3z_u = \kk(z_P+z_Q+z_R)$ is already exhausted by $\sum_{T\subsetneq \{2\}} M_T = M_\varnothing$. Because the complex is Cohen--Macaulay, the key equality \eqref{eq:garsia-LA-CM} of Theorem~\ref{thm:garsia-LA-CM} is illustrated by the fact that amalgamating the bases $B(L_S)$ that appear in the bottom figure yields a basis for $\kk^m$.

\begin{proof}[Proof of Theorem~\ref{thm:garsia-LA-CM}]
    We argue the second statement (about the union $B$ of the sets in \eqref{eq:min-gen-set}) first, and then use this to show that the condition \eqref{eq:garsia-LA-CM} is equivalent to Cohen--Macaulayness.

    For a fixed $S\subset [n]$, consider the homogeneous component of degree $\sum_{j\in S}\bfe_j$ in the quotient ring $\kk[\Lambda]/\Omega\kk[\Lambda]$. This is the vector space quotient of the component $\kk[\Lambda]_{\sum_{j\in S}\bfe_j}$ by its intersection with the ideal generated by the $\omega_j$'s. The latter ideal intersection can be computed by summing the images, in $\kk[\Lambda]_{\sum_{j\in S}\bfe_j}$, of every strictly ``lower degree" component $\kk[\Lambda]_{\sum_{j\in T}\bfe_j}$, for $T\subsetneq S$, under multiplication by the $\omega_j$'s that put it in $\kk[\Lambda]_{\sum_{j\in S}\bfe_j}$. Thus, we have
    \begin{equation}\label{eq:ideal-in-homog-component}
    \Omega \kk[\Lambda] \cap \kk[\Lambda]_{\sum_{j\in S} \bfe_j} = \sum_{T\subsetneq S} \left(\prod_{j\in S\setminus T} \omega_j\right) \kk[\Lambda]_{\sum_{j\in T}\bfe_j}.
    \end{equation}
    Therefore, a vector space basis for the $\sum_{j\in S}\bfe_j$-component in the quotient $\kk[\Lambda]/\Omega \kk[\Lambda]$ can be computed by taking the image of a basis of a complement to \eqref{eq:ideal-in-homog-component} in $\kk[\Lambda]_{\sum_{j\in S}\bfe_j}$.
    
    We now argue that for each $S$, \eqref{eq:min-gen-set} is exactly such a basis of a complement to \eqref{eq:ideal-in-homog-component} in $\kk[\Lambda]_{\sum_{j\in S}\bfe_j}$. Indeed, because multiplication by $\prod_{j\in [n]\setminus S}\omega_j$ is an injection (Lemma~\ref{lem:torsion-free}), it sends $\kk[\Lambda]_{\sum_{j\in S}\bfe_j}$ isomorphically to $M_S$, and \eqref{eq:ideal-in-homog-component} isomorphically to 
    \[
    \sum_{T\subsetneq S} \left(\prod_{j\in [n]\setminus S}\omega_j\right)\left(\prod_{j\in S\setminus T} \omega_j\right) \kk[\Lambda]_{\sum_{j\in T}\bfe_j} = \sum_{T\subsetneq S} M_T.
    \]
    Therefore, the inverse map $(\prod_{j\in [n]\setminus S}\omega_j)^{-1}$ sends $L_S$ to a complement of \eqref{eq:ideal-in-homog-component} in $\kk[\Lambda]_{\sum_{j\in S}\bfe_j}$, so it sends $B(L_S)$ to a basis for such a complement.

    By Lemma~\ref{lem:cells-generate}, the ring $\kk[\Lambda]/\Omega \kk[\Lambda]$ is $\kk$-spanned by the elements $z_\alpha$, $\alpha\in P(\Lambda)$, which have $\NN^n$-degrees of the form $\sum_{j\in S}\bfe_j$ for $S\subset [n]$. It follows that this quotient ring is zero in all components of degree not of this form, i.e.,
    \[
    \kk[\Lambda]/\Omega\kk[\Lambda] = \bigoplus_{S\subseteq [n]} (\kk[\Lambda]/\Omega\kk[\Lambda])_{\sum_{j\in S}\bfe_j}.
    \] 
    From this and the previous paragraph, it is immediate that the image in $\kk[\Lambda]/\Omega\kk[\Lambda]$ of the union $B$ of the sets in \eqref{eq:min-gen-set} forms a $\kk$-basis for this quotient, and it follows from Lemma~\ref{lem:minimal-module-generating-set} that this union $B$ is a minimal generating set for $\kk[\Lambda]$ as a $\kk[\Omega]$-module, as claimed. If $\Lambda$ is Cohen--Macaulay, then $B$ is itself a $\kk[\Omega]$-module basis, by Lemma~\ref{lem:hironaka-decomposition}. This completes the proof of the statement about \eqref{eq:min-gen-set} in the theorem.

    We now argue that Cohen--Macaulayness of $\Lambda$ is equivalent to \eqref{eq:garsia-LA-CM}. First, again by Lemma~\ref{lem:hironaka-decomposition} and Lemma~\ref{lem:minimal-module-generating-set}, Cohen--Macaulayness of $\Lambda$ is equivalent to the linear independence over $\kk[\Omega]$ of the union $B$ of the sets in \eqref{eq:min-gen-set}. Now, such linear independence immediately implies \eqref{eq:garsia-LA-CM}, because the $L_S$'s are precisely the spans of the images of the sets in \eqref{eq:min-gen-set} under multiplication by certain elements of $\kk[\Omega]$; thus if the sum in \eqref{eq:garsia-LA-CM} is not direct, then there is a linear relation over $\kk[\Omega]$ between some of the elements of $B$.
    
    Conversely, if there is a nontrivial linear relation over $\kk[\Omega]$ anywhere in $\kk[\Lambda]$ between the elements of $B$, which we can take to be $\NN^n$-homogeneous, then it can be witnessed by a linear relation occurring in the $\kk[\Lambda]_{\bfe_1+\dots+\bfe_n}$ component, as follows. First, it implies a nontrivial linear relation in some component of the form $\kk[\Lambda]_{\sum_{j\in S}\bfe_j}$, by the same logic as in the final paragraph of the proof of Proposition~\ref{prop:garsia-3.1}. Then, this relation can be multiplied through by $\prod_{j\in [n]\setminus S} \omega_j$ to put it in $\kk z_{\epsilon_1} + \dots + \kk z_{\epsilon_m}$, where it remains nontrivial by a final call to Lemma~\ref{lem:torsion-free}. This completes the proof.
\end{proof}

We pull out as a corollary one of the intermediate steps in this proof:

\begin{cor}\label{cor:B-is-a-basis-downstairs}
Whether $\Lambda$ is Cohen--Macaulay or not, using the notation of the statement of Theorem \ref{thm:garsia-LA-CM}, the image of $B$ in the quotient $ \kk[\Lambda]/\Omega\kk[\Lambda]$ is a $\kk$-vector space basis for this quotient. \qed
\end{cor}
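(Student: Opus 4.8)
The plan is to observe that this statement is established essentially verbatim within the proof of Theorem~\ref{thm:garsia-LA-CM}, in the paragraphs preceding the point where Cohen--Macaulayness is invoked, so the task is really to isolate that sub-argument and note that it uses no Cohen--Macaulay hypothesis. First I would recall, via Lemma~\ref{lem:cells-generate}, that $\kk[\Lambda]$ is generated as a $\kk[\Omega]$-module by the $z_\alpha$, $\alpha\in P(\Lambda)$; tensoring over $\kk[\Omega]$ with $\kk=\kk[\Omega]/\Omega\kk[\Omega]$ (right-exactness of $\otimes$), the images of the $z_\alpha$ span $\kk[\Lambda]/\Omega\kk[\Lambda]$ over $\kk$. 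Since $\deg_\mathrm{md}z_\alpha=\sum_{j\in J_\alpha}\bfe_j$, every multidegree occurring in the quotient has the form $\sum_{j\in S}\bfe_j$ for some $S\subseteq[n]$, so
\[
\kk[\Lambda]/\Omega\kk[\Lambda] = \bigoplus_{S\subseteq[n]}\bigl(\kk[\Lambda]/\Omega\kk[\Lambda]\bigr)_{\sum_{j\in S}\bfe_j}.
\]

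Next, for each fixed $S$, I would identify the degree-$\sum_{j\in S}\bfe_j$ part of the quotient. Because $\deg_\mathrm{md}\omega_j=\bfe_j$ and the $\bfe_j$ are linearly independent in $\NN^n$, the intersection of the ideal $\Omega\kk[\Lambda]$ with the component $\kk[\Lambda]_{\sum_{j\in S}\bfe_j}$ is exactly the sum of the images of the strictly lower components under the relevant $\omega$-products, i.e.\ \eqref{eq:ideal-in-homog-component}:
\[
\Omega\kk[\Lambda]\cap \kk[\Lambda]_{\sum_{j\in S}\bfe_j}=\sum_{T\subsetneq S}\Bigl(\prod_{j\in S\setminus T}\omega_j\Bigr)\kk[\Lambda]_{\sum_{j\in T}\bfe_j}.
\]
Multiplication by $\prod_{j\in[n]\setminus S}\omega_j$, injective by Lemma~\ref{lem:torsion-free}, then carries $\kk[\Lambda]_{\sum_{j\in S}\bfe_j}$ isomorphically onto $M_S$ and the right-hand side above isomorphically onto $\sum_{T\subsetneq S}M_T$. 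Hence its inverse $\bigl(\prod_{j\in[n]\setminus S}\omega_j\bigr)^{-1}$ sends the chosen complement $L_S$ of $\sum_{T\subsetneq S}M_T$ in $M_S$ onto a complement of the intersection above in $\kk[\Lambda]_{\sum_{j\in S}\bfe_j}$, so it sends the basis $B(L_S)$ onto a $\kk$-basis of such a complement; passing to the quotient, its image is a $\kk$-basis of $\bigl(\kk[\Lambda]/\Omega\kk[\Lambda]\bigr)_{\sum_{j\in S}\bfe_j}$.

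Finally I would assemble: $B$ is the union over $S\subseteq[n]$ of the sets $\bigl(\prod_{j\in[n]\setminus S}\omega_j\bigr)^{-1}B(L_S)$, so its image in $\kk[\Lambda]/\Omega\kk[\Lambda]$ meets each graded piece $\bigl(\kk[\Lambda]/\Omega\kk[\Lambda]\bigr)_{\sum_{j\in S}\bfe_j}$ in a basis of that piece, and these pieces exhaust the quotient by the displayed decomposition; hence the image of $B$ is a $\kk$-basis of $\kk[\Lambda]/\Omega\kk[\Lambda]$. Cohen--Macaulayness plays no role here --- it enters only later, to promote $B$ itself (upstairs in $\kk[\Lambda]$) from a minimal $\kk[\Omega]$-generating set to an actual $\kk[\Omega]$-basis via Lemma~\ref{lem:hironaka-decomposition}. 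The only mildly delicate point I expect to have to spell out is the equality \eqref{eq:ideal-in-homog-component}: that an element of $\Omega\kk[\Lambda]$ homogeneous of multidegree $\sum_{j\in S}\bfe_j$ is forced, by the fact that each $\omega_j$ occupies the distinct basis direction $\bfe_j$, to be a sum of $\omega_j$-multiples of homogeneous elements supported on strictly smaller label sets --- but this was already carried out inside the proof of Theorem~\ref{thm:garsia-LA-CM}, so here it can simply be cited.
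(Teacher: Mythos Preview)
Your proposal is correct and is exactly the approach the paper takes: the corollary is stated with a \qed and the text explicitly says it is ``one of the intermediate steps'' in the proof of Theorem~\ref{thm:garsia-LA-CM}, namely the paragraphs establishing \eqref{eq:ideal-in-homog-component}, the isomorphism via $\prod_{j\in[n]\setminus S}\omega_j$ onto $M_S$, and the resulting decomposition of $\kk[\Lambda]/\Omega\kk[\Lambda]$---all before Cohen--Macaulayness is invoked. You have faithfully isolated that sub-argument.
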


For later use, we also draw out an implication:

\begin{observation}\label{obs:M_S-doesn't-meet-the-other-L_T's}
    From the definition of $L_S$ in the statement of Theorem~\ref{thm:garsia-LA-CM} and induction on the cardinality of $S\subset[n]$, it is immediate that 
    \[
    M_S = \sum_{T\subseteq S} L_T.
    \]
    Therefore, by \eqref{eq:garsia-LA-CM}, if  $\Lambda$ is Cohen--Macaulay then we must have
    \[
    M_S = \bigoplus_{T\subseteq S} L_T
    \]
    and 
    \[
    M_S \cap \left(\bigoplus_{T\nsubseteq S} L_T\right)=\{0\}
    \]
    for any $S\subset [n]$.
\end{observation}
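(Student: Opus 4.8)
The plan is to first establish the identity $M_S = \sum_{T\subseteq S} L_T$ for \emph{every} $S\subseteq[n]$ (regardless of Cohen--Macaulayness), by induction on $|S|$, and then read off the two displayed consequences by invoking \eqref{eq:garsia-LA-CM}.

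For the induction, the base case $S=\varnothing$ is immediate: $\sum_{T\subsetneq\varnothing} M_T$ is the empty sum $\{0\}$, so the complement $L_\varnothing$ to it inside $M_\varnothing$ is all of $M_\varnothing$, giving $M_\varnothing = L_\varnothing = \sum_{T\subseteq\varnothing} L_T$. For the inductive step I would fix $S$ and assume the identity for all $T\subsetneq S$ (all of which have $|T|<|S|$). Using the inclusion $M_T\subseteq M_S$ for $T\subseteq S$ established in the discussion preceding Theorem~\ref{thm:garsia-LA-CM}, together with the inductive hypothesis, I would rewrite $\sum_{T\subsetneq S} M_T = \sum_{T\subsetneq S}\sum_{U\subseteq T} L_U = \sum_{U\subsetneq S} L_U$; the reindexing here is the only thing requiring a moment's care, and it holds because $U\subseteq T\subsetneq S$ forces $U\subsetneq S$, while conversely every $U\subsetneq S$ already appears (take $T=U$). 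Since by definition $L_S$ is a vector space complement to $\sum_{T\subsetneq S} M_T$ in $M_S$, this yields $M_S = L_S + \sum_{T\subsetneq S} M_T = L_S + \sum_{U\subsetneq S} L_U = \sum_{U\subseteq S} L_U$, closing the induction.

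With this in hand, I would bring in the Cohen--Macaulay hypothesis: by \eqref{eq:garsia-LA-CM} the full sum $\bigoplus_{S'\subseteq[n]} L_{S'} = \kk z_{\epsilon_1}\oplus\dots\oplus\kk z_{\epsilon_m}$ is direct, hence so is every sub-sum. Applying this to $\sum_{T\subseteq S} L_T = M_S$ gives $M_S = \bigoplus_{T\subseteq S} L_T$. For the last claim, I would note that $\{T: T\subseteq S\}$ and $\{T: T\nsubseteq S\}$ partition the power set of $[n]$, so a vector $v\in M_S\cap\bigl(\bigoplus_{T\nsubseteq S} L_T\bigr)$, written once as an element of $\bigoplus_{T\subseteq S} L_T$ and once as an element of $\bigoplus_{T\nsubseteq S} L_T$, produces upon subtraction a relation among the $L_{S'}$ across all $S'\subseteq[n]$; directness forces every component, and hence $v$, to vanish.

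I do not anticipate a real obstacle: the whole argument is elementary linear algebra over $\kk$, resting only on the already-proved inclusion $M_T\subseteq M_S$ for $T\subseteq S$, the defining complement property of $L_S$, and the passage of directness to sub-sums. The one place to be attentive is the index manipulation $\sum_{T\subsetneq S}\sum_{U\subseteq T} L_U = \sum_{U\subsetneq S} L_U$ in the inductive step, and the observation that the partition of the power set into the sets contained in $S$ and the sets not contained in $S$ is exactly what makes the intersection statement follow formally from directness.
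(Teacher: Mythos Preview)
Your proposal is correct and follows exactly the approach the paper indicates: induction on $|S|$ using the complement property of $L_S$ and the inclusion $M_T\subseteq M_S$ to get $M_S=\sum_{T\subseteq S}L_T$, followed by reading off directness and the trivial intersection from \eqref{eq:garsia-LA-CM}. The paper leaves all of this implicit (calling it ``immediate''), so your write-up simply supplies the details the paper omits.
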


\begin{remark}
    While Theorem~\ref{thm:garsia-LA-CM} is essentially proven in \cite{garsia} (in the situation that $\Lambda$ is the order complex of a ranked poset), the presentation there leaves it in the background, while emphasizing another criterion of Cohen--Macaulayness that we have chosen to leave in the background. One can define the {\em fine $f$-vector} of $\Lambda$---see \cite{stanley1979balanced}---a function $2^{[n]}\rightarrow \NN$ given by
    \[
    f_\Lambda(S) := \#\{\alpha \in \widehat P(\Lambda): J_\alpha = S\}
    \]
    for $S\subseteq [n]$, and then the {\em fine $h$-vector}, related to it by an inclusion-exclusion formula:
    \[
    h_\Lambda(S) := \sum_{T\subseteq S} (-1)^{\# S - \# T}f_\Lambda(T).
    \]
    (These are referred to in \cite{garsia} as $\alpha(S)$ and $\beta(S)$ respectively.) The fine $h$-vector predicts, for each $S$, the number of elements of $\NN^n$-degree $\sum_{j\in S}\bfe_j$ in an $\NN^n$-homogeneous $\kk[\Omega]$-basis of $\kk[\Lambda]$, if it exists. Theorem~3.2  of \cite{garsia} compares a proposed basis against these predicted numbers: if it has the predicted number of elements in each $\NN^n$-degree, and its incidence matrix in $\Lambda$ is nonsingular over $\kk$, then it is a basis (see \cite[Theorem~3.2]{garsia} and \cite[Theorem~5.2]{garsia-stanton}, and the arguments generalize to the present setting). Meanwhile, Theorem~3.3 of \cite{garsia} tests Cohen--Macaulayness by doing row-reduction on the facet vectors of all the faces of $\Lambda$ until a candidate basis is found, and then seeing if it has the predicted numbers of elements of each $\NN^n$-degree. (The order in which the row reduction is carried out is important; see Section~\ref{sec:construct-a-basis} below.)
\end{remark}

\section{The counterexample}\label{sec:counterex}

In this section we prove Theorem~\ref{thm:no-equivariant-iso}. Recall that it concerns the $d$-simplex $\Delta :=\Delta_d$, for a natural number $d\geq 2$, and the group $G := \Aut(\Delta)$ of automorphisms of $\Delta$ (as a simplicial complex). Note that $G \cong \mathfrak{S}_n$, the full symmetric group on $n$ letters, where $n:=d+1$, because any permutation of the $n$ vertices of $\Delta$ extends uniquely to an automorphism of $\Delta$. We have the barycentric subdivision $\Sd \Delta$, and we claim that if $\operatorname{char} \kk = 2$, there is no $G$-equivariant isomorphism as modules over the parameter ring $\kk[\Theta]$. We remind the reader that the $\kk[\Theta]$-module structure on $\kk[\Sd\Delta]$ is defined, per Setup~\ref{set:theta-module}, by identifying $\kk[\Theta]\subset\kk[\Delta]$ with $\kk[\Gamma]\subset \kk[\Sd\Delta]$ along the $\kk$-algebra isomorphism 
\begin{align*}
\Psi:\kk[\Gamma]&\rightarrow \kk[\Theta]\\
p(\gamma_1,\dots,\gamma_n)&\mapsto p(\theta_1,\dots,\theta_n),
\end{align*}
where $p$ is an arbitrary $n$-variate polynomial over $\kk$. We use this freely in what follows.

The main idea of the proof is to hypothesize an equivariant isomorphism, which must then also induce an isomorphism on the $\mathfrak{A}_n$-invariant subrings (as modules over the parameter ring $\kk[\Theta]$), and to show that this leads to a contradiction by directly examining module bases for the $\mathfrak{A}_n$-invariants in the two rings. The details are as follows. We first articulate some key lemmas; all are straightforward, well-known, or both.

Without loss of generality, let $V:=\{0,\dots,d\}$ be the vertex set of $\Delta =\Delta_d$. Then $\kk[\Delta]$ is the polynomial ring $\kk[x_0,\dots,x_d]$, and for $j = 1,\dots, n$, the parameter $\theta_j$ is the elementary symmetric polynomial of degree $j$ in the indeterminates $x_0,\dots,x_d$. 

\begin{lemma}\label{lem:Sn-invariants-Delta}
    The parameter subring $\kk[\Theta]$ coincides with the invariant ring $\kk[\Delta]^{\mathfrak{S}_n}$.
\end{lemma}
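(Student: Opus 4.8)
The plan is to recognize Lemma~\ref{lem:Sn-invariants-Delta} as the classical Fundamental Theorem on Symmetric Polynomials. As already recorded just above the lemma statement, for $\Delta = \Delta_d$ the ring $\kk[\Delta]$ is the polynomial ring $\kk[x_0,\dots,x_d]$ in the $n=d+1$ vertex-variables, the group $G=\Aut(\Delta_d)\cong\mathfrak{S}_n$ acts by permuting these variables, and $\theta_j$ is the $j$-th elementary symmetric polynomial $e_j(x_0,\dots,x_d)$. Hence $\kk[\Theta]=\kk[e_1,\dots,e_n]$, and the claim is exactly $\kk[e_1,\dots,e_n]=\kk[x_0,\dots,x_d]^{\mathfrak{S}_n}$. (Note this is characteristic-free, so the hypothesis $\Char\kk = 2$ in effect throughout Section~\ref{sec:counterex} plays no role here.)

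The inclusion $\kk[\Theta]\subseteq\kk[\Delta]^{\mathfrak{S}_n}$ is immediate, since each $e_j$ is fixed by every permutation of $x_0,\dots,x_d$. For the reverse inclusion, I would run Gauss's lexicographic argument \cite{gauss} --- the same argument alluded to in the introduction, of which the Garsia transfer of Section~\ref{sec:garsia} is a generalization. Given a nonzero symmetric $f\in\kk[x_0,\dots,x_d]$, let $c\,x_0^{a_0}\cdots x_d^{a_d}$ be its lexicographically leading term; symmetry of $f$ forces the exponent vector to be weakly decreasing, $a_0\geq a_1\geq\dots\geq a_d\geq 0$. Then
\[
g := c\,e_1^{a_0-a_1}e_2^{a_1-a_2}\cdots e_{n-1}^{a_{d-1}-a_d}e_n^{a_d}
\]
is a symmetric polynomial with the same leading term $c\,x_0^{a_0}\cdots x_d^{a_d}$, so $f-g$ is symmetric with strictly smaller leading monomial and no larger total degree. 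Since on the (finite) set of monomials of any fixed total degree the lexicographic order is a well-order, iterating this step terminates, and unwinding the iteration exhibits $f$ as a polynomial in $e_1,\dots,e_n$, i.e. $f\in\kk[\Theta]$. (Alternatively one may just cite a standard treatment of the Fundamental Theorem on Symmetric Polynomials.)

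There is no real obstacle here; the statement is classical. The only points needing care are routine bookkeeping: that the tuple $(a_0-a_1,\dots,a_{d-1}-a_d,a_d)$ really is a legitimate vector of nonnegative exponents --- which is precisely the monotonicity forced by symmetry of the leading monomial --- and that termination of the descent relies on well-foundedness of the lexicographic order restricted to monomials of bounded total degree, not on all monomials at once.
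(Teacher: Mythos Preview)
Your proposal is correct and takes essentially the same approach as the paper: the paper's proof is simply the one-line citation ``This is the fundamental theorem on symmetric polynomials (FTSP),'' and you have spelled out Gauss's lexicographic proof of that theorem. There is nothing to add.
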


\begin{proof}
    This is the fundamental theorem on symmetric polynomials (FTSP).
\end{proof}

Meanwhile, the generators $y_\alpha$ of $\kk[\Sd\Delta]$ are indexed by nonempty subsets $\alpha \subset V$, and the parameters $\gamma_j$ are sums of these generators across $j$-subsets:
\[
\gamma_j = \sum_{\alpha\in \binom{V}{j}} y_\alpha.
\]
\begin{lemma}\label{lem:Sn-invariants-SdDelta}
Again, the parameter ring $\kk[\Gamma]$ coincides with the invariant ring $\kk[\Sd\Delta]^{\mathfrak{S}_n}$.
\end{lemma}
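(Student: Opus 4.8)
The plan is to deduce the statement from Lemma~\ref{lem:Sn-invariants-Delta} (the simplex case) via the Garsia transfer together with a Hilbert series count, rather than to argue directly with standard monomials. The easy half is immediate: a permutation $\sigma\in\mathfrak{S}_n$ acting on $V$ permutes the $j$-element subsets $\alpha\subset V$ among themselves, so it fixes each $\gamma_j=\sum_{|\alpha|=j}y_\alpha$ and hence fixes all of $\kk[\Gamma]$; thus $\kk[\Gamma]\subseteq\kk[\Sd\Delta]^{\mathfrak{S}_n}$. All the content is in the reverse inclusion, and—unlike in Lemma~\ref{lem:Sn-invariants-Delta}, where it is the FTSP—there is no classical theorem to quote here, so I would bootstrap.

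Work with the $\NN$-grading of Setup~\ref{set:N-grading}. First, $\kk[\Theta]$ and $\kk[\Gamma]$ are polynomial rings on $n$ algebraically independent homogeneous generators of degrees $1,2,\dots,n$ (Section~\ref{sec:parameter-subring}), and $\Psi\colon\kk[\Gamma]\to\kk[\Theta]$ is a graded ring isomorphism, so $\Hilb(\kk[\Gamma])=\Hilb(\kk[\Theta])=\prod_{j=1}^n(1-t^j)^{-1}$. Second, the Garsia transfer $\Garsia\colon\kk[\Sd\Delta]\to\kk[\Delta]$ is a graded (Observation~\ref{obs:garsia-graded}) $\kk$-linear isomorphism and is $\mathfrak{S}_n$-equivariant (Observation~\ref{obs:garsia-equivariant}); since its inverse is then equivariant as well, $\Garsia$ restricts to a graded $\kk$-linear isomorphism
\[
\kk[\Sd\Delta]^{\mathfrak{S}_n}\;\xrightarrow{\ \sim\ }\;\kk[\Delta]^{\mathfrak{S}_n}=\kk[\Theta],
\]
the last equality being Lemma~\ref{lem:Sn-invariants-Delta}. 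Therefore $\Hilb(\kk[\Sd\Delta]^{\mathfrak{S}_n})=\Hilb(\kk[\Theta])=\Hilb(\kk[\Gamma])$.

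To finish, observe that $\kk[\Gamma]\subseteq\kk[\Sd\Delta]^{\mathfrak{S}_n}$ is an inclusion of $\NN$-graded $\kk$-vector spaces with finite-dimensional graded pieces (because $\kk[\Sd\Delta]$ is a finitely generated graded $\kk$-algebra) whose dimensions agree in every degree, by the previous paragraph; an inclusion of graded vector spaces that is surjective in each finite-dimensional degree is an equality, so $\kk[\Gamma]=\kk[\Sd\Delta]^{\mathfrak{S}_n}$. I do not expect a genuine obstacle: the only things needing care are that $\theta_1,\dots,\theta_n$ (equivalently $\gamma_1,\dots,\gamma_n$) are algebraically independent with $\deg\theta_j=j$, which is recorded in Section~\ref{sec:parameter-subring}, and that restricting an equivariant linear bijection to invariant subspaces stays a bijection, which holds because the inverse map is equivariant (in particular no coprimality or semisimplicity hypothesis on $\kk$ is used, so the lemma is valid in characteristic $2$, as needed in Section~\ref{sec:counterex}). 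The real ``difficulty'' is merely strategic: routing through the Garsia map is clean, whereas trying to characterize the $\mathfrak{S}_n$-invariant standard monomials in $\kk[\Sd\Delta]$ directly would be considerably messier.
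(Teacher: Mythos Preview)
Your argument is correct: the Garsia transfer is a graded $\mathfrak{S}_n$-equivariant $\kk$-linear isomorphism, so its restriction to invariants matches the Hilbert series of $\kk[\Sd\Delta]^{\mathfrak{S}_n}$ with that of $\kk[\Delta]^{\mathfrak{S}_n}=\kk[\Theta]$, which in turn equals that of $\kk[\Gamma]$; the inclusion $\kk[\Gamma]\subseteq\kk[\Sd\Delta]^{\mathfrak{S}_n}$ then forces equality degree by degree. No hypothesis on the characteristic is used, as you note.

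However, the paper takes a genuinely different and more direct route. It observes that $\mathfrak{S}_n$ acts transitively on chains of subsets of $V$ with a prescribed sequence of cardinalities, so the $\mathfrak{S}_n$-orbit sum of any standard monomial is exactly the sum of \emph{all} standard monomials of its shape; by Proposition~\ref{prop:parameter-monom-every-term}, that sum is a monomial in the $\gamma_j$'s. This is a one-line combinatorial argument entirely internal to $\kk[\Sd\Delta]$, and it does not rely on the FTSP (Lemma~\ref{lem:Sn-invariants-Delta}) at all. Your closing remark that the direct approach ``would be considerably messier'' is therefore off the mark: the paper's proof is shorter than yours and, as the subsequent remark in the paper explains, is actually logically \emph{prior} to the FTSP---one can recover the FTSP from this lemma via the Garsia transfer, reversing the direction of your bootstrap. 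What your approach buys is that it avoids needing to know Proposition~\ref{prop:parameter-monom-every-term} or the transitivity of $\mathfrak{S}_n$ on flags; what the paper's approach buys is independence from the FTSP and a cleaner conceptual picture of why invariants in $\kk[\Sd\Delta]$ are so simple.
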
 

\begin{proof}
It is clear that $\kk[\Gamma]\subseteq \kk[\Sd\Delta]^{\mathfrak{S}_n}$. For the reverse, any element of $\kk[\Sd\Delta]^{\mathfrak{S}_n}$ is a linear combination of $\mathfrak{S}_n$-orbit sums of standard monomials $y_{\alpha_1}^{c_1}y_{\alpha_2}^{c_2}\dots y_{\alpha_r}^{c_r}$ with $\alpha_1\subsetneq \alpha_2\subsetneq \dots\subsetneq \alpha_r$ a chain in the poset of subsets of $\{0,\dots,d\}$. Because $\mathfrak{S}_n$ acts transitively on the chains of this poset, such an orbit sum consists of all monomials of the given shape $c_1(1^{|\alpha_1|})+ \dots + c_r(1^{|\alpha_r|})$. Thus it lies in $\kk[\Gamma]$ by Proposition~\ref{prop:parameter-monom-every-term}.
\end{proof} 

\begin{remark}
This result is already implicit in \cite{garsia-stanton}. When $\Char \kk = 0$, it is a special case of \cite[Theorem~7.4]{garsia-stanton}. The proof sketched here is written out carefully in  \cite[Proposition~2.5.72]{blum-smith}. It is identical in spirit to the classical Gauss proof of the FTSP (found in \cite[Paragraphs 3--5]{gauss}), and the computations implied by the proof are shorter, with a single calculation replacing an induction.  Indeed, the classical FTSP can be proven by starting with the result for $\kk[\Sd\Delta]$, and applying induction on the shape of monomials, precisely as in the proof of  Theorem~\ref{thm:transfer-bases}---this is carried out explicitly in \cite[Theorem~2.5.74]{blum-smith}, but it can be viewed from a certain point of view as nothing other than what the Gauss proof was already doing (see \cite[Remark~2.5.75]{blum-smith} and   \cite{blum2017fundamental}).
\end{remark}

We also need some information valid in characteristic $2$ about the $\mathfrak{A}_n$-invariants in $\kk[\Delta]$ and $\kk[\Sd\Delta]$.

\begin{lemma}\label{lem:basis-Delta-An}
    The ring $\kk[\Delta]^{\mathfrak{A}_{n}}$ is a free $\kk[\Theta]$-module of rank two, with basis $1, D$, where
\[
D := \sum_{g\in \mathfrak{A}_n} gm,
\]
the $\mathfrak{A}_n$-orbit sum of the monomial $m\in \kk[\Delta]$ defined by
\[
m := x_1x_2^2\dots x_d^d.
\]
\end{lemma}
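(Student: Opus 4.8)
The plan is to work in $\kk[\Delta] = \kk[x_0,\dots,x_d]$ with its standard grading and the action of $\mathfrak{S}_n$ permuting the $x_i$'s, exploiting the well-known structure of $\kk[\Delta]$ as a free module over $\kk[\Theta] = \kk[\Delta]^{\mathfrak{S}_n}$ of rank $n! = |\mathfrak{S}_n|$. The key classical input is that, in \emph{any} characteristic, the Vandermonde-type quotient works as follows: write $\mathfrak{S}_n = \mathfrak{A}_n \sqcup \tau\mathfrak{A}_n$ for a fixed transposition $\tau$, and decompose $\kk[\Delta]$ into eigenspaces (or rather the analogue in characteristic $2$). First I would establish that $\kk[\Delta]^{\mathfrak{A}_n}$ is a free $\kk[\Theta]$-module of rank $2$; this follows because $\kk[\Delta]$ is free of rank $n!$ over $\kk[\Theta]$, and $\kk[\Delta]^{\mathfrak{A}_n}$ is a direct summand of it as a $\kk[\Theta]$-module (the averaging / relative trace argument works since $[\mathfrak{S}_n:\mathfrak{A}_n] = 2$ is a unit in $\kk$, even though $|\mathfrak{S}_n|$ itself is not), and $\kk[\Delta]$ is free of rank $2$ over $\kk[\Delta]^{\mathfrak{A}_n}$ by the same reasoning applied to the degree-$2$ extension $\mathfrak{A}_n \subset \mathfrak{S}_n$. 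So $\operatorname{rk}_{\kk[\Theta]} \kk[\Delta]^{\mathfrak{A}_n} = n!/(n!/2) = 2$, as claimed.

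Next I would exhibit $1$ and $D$ as a basis. That $1, D \in \kk[\Delta]^{\mathfrak{A}_n}$ is clear: $D$ is by construction an $\mathfrak{A}_n$-orbit sum. For $\kk[\Theta]$-linear independence, it suffices to show $D \notin \kk[\Theta] = \kk[\Delta]^{\mathfrak{S}_n}$, which I would verify by applying the transposition $\tau = (1\ 2)$: one computes that $\tau \cdot D - D \neq 0$ by tracking a single monomial. The monomial $m = x_1 x_2^2 \cdots x_d^d$ has pairwise distinct exponents $0, 1, 2, \dots, d$ on $x_0, x_1, \dots, x_d$, so its $\mathfrak{S}_n$-stabilizer is trivial; thus $\sum_{g \in \mathfrak{S}_n} gm$ has $n!$ distinct monomials, while $D = \sum_{g \in \mathfrak{A}_n} gm$ has exactly $n!/2$ of them, and $\tau D$ has the complementary $n!/2$ — these two sets are disjoint, so $D \neq \tau D$ and hence $D \notin \kk[\Delta]^{\mathfrak{S}_n}$. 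Since $1 \in \kk[\Theta]$ but $D \notin \kk[\Theta]$, and $\kk[\Theta]$ is a domain, $\{1, D\}$ is $\kk[\Theta]$-linearly independent (a relation $p \cdot 1 + q \cdot D = 0$ with $q \neq 0$ would force $D = -p/q \in \operatorname{Frac}(\kk[\Theta])$, and then $D$ would be integral over and in fact lie in $\kk[\Theta]$ by normality of the polynomial ring $\kk[\Theta]$ and the fact that $D$ is a polynomial — this needs a small argument).

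Finally, since $\{1, D\}$ is a $\kk[\Theta]$-linearly independent subset of the rank-$2$ free module $\kk[\Delta]^{\mathfrak{A}_n}$, it spans a free submodule of rank $2$; to conclude it is a \emph{basis} I would compare graded ranks, i.e., check that the submodule $\kk[\Theta]\cdot 1 \oplus \kk[\Theta]\cdot D$ already has the correct Hilbert series, equivalently that $1$ and $D$ reduce to a $\kk$-basis of $\kk[\Delta]^{\mathfrak{A}_n}/\Theta\kk[\Delta]^{\mathfrak{A}_n}$; by Lemma~\ref{lem:hironaka-decomposition} this is what must be checked, and it amounts to the statement that the coinvariant-type algebra $\kk[\Delta]^{\mathfrak{A}_n} \otimes_{\kk[\Theta]} \kk$ is $2$-dimensional with basis the images of $1$ and $D$. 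The cleanest route is: $\kk[\Delta]\otimes_{\kk[\Theta]}\kk$ is the coinvariant algebra of $\mathfrak{S}_n$, which (in any characteristic) carries the regular representation of $\mathfrak{S}_n$ — actually here I should be careful, as this regular-representation statement can fail in modular characteristic, so instead I would argue directly that $D \bmod \Theta\kk[\Delta]$ is nonzero (it is $\mathfrak{A}_n$-fixed but not $\mathfrak{S}_n$-fixed, while $\kk[\Delta]^{\mathfrak{S}_n}$ maps onto the whole $\mathfrak{S}_n$-fixed part, so any $\mathfrak{A}_n$-fixed class outside the $\mathfrak{S}_n$-fixed line must be independent from $1$), and then a rank count over $\kk[\Theta]$ forces these two classes to span.

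\medskip

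I expect the main obstacle to be the last step — nailing down that $\{1, D\}$ \emph{generates} and not merely that it is independent — because the usual shortcut via "the coinvariant algebra is the regular representation" is a characteristic-zero (or non-modular) statement and here we are in characteristic $2$ with $2 \mid n!$. The fix is to avoid that shortcut and instead argue purely via: (i) $\kk[\Delta]^{\mathfrak{A}_n}$ is free of rank $2$ over $\kk[\Theta]$ (established using only that the \emph{index} $2 = [\mathfrak{S}_n:\mathfrak{A}_n]$ is invertible, which it is), and (ii) a rank-$2$ free module containing a rank-$2$ free submodule with the same top-degree behavior, or with matching Hilbert series, must equal it — and to get the Hilbert series match, compute $\deg D = 0 + 1 + \cdots + d = \binom{n}{2}$ and compare with the known Hilbert series of $\kk[\Delta]^{\mathfrak{A}_n}$, which is $\tfrac{1}{2}\big(\text{(Hilb of }\kk[\Delta])(1 + \text{sign-twisted version})\big)$; the lowest-degree non-constant generator sits exactly in degree $\binom{n}{2}$, matching $D$. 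The other steps are routine orbit-counting and standard commutative algebra.
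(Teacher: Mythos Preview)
Your argument has a genuine gap: it repeatedly assumes that the index $[\mathfrak{S}_n:\mathfrak{A}_n]=2$ is a unit in $\kk$, and in the final paragraph you explicitly rely on this (``established using only that the index $2=[\mathfrak{S}_n:\mathfrak{A}_n]$ is invertible, which it is''). But this lemma lives in Section~\ref{sec:counterex}, whose standing hypothesis is $\operatorname{char}\kk=2$; the sentence introducing Lemmas~\ref{lem:basis-Delta-An} and~\ref{lem:basis-SdDelta-An} says they are ``information valid in characteristic $2$''. So $2$ is \emph{not} a unit, and the averaging/Reynolds step you invoke collapses precisely where the lemma is needed. (There is also a subsidiary confusion: to split $\kk[\Delta]^{\mathfrak{A}_n}$ off from $\kk[\Delta]$ you would need $|\mathfrak{A}_n|$, not the index, to be invertible; and the claim ``$\kk[\Delta]$ is free of rank $2$ over $\kk[\Delta]^{\mathfrak{A}_n}$'' should read rank $n!/2$---you have swapped the two extensions.)

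You do flag characteristic-$2$ trouble in your last paragraph, but only for the ``coinvariant algebra is the regular representation'' shortcut; the repair you propose still takes for granted that $\kk[\Delta]^{\mathfrak{A}_n}$ is free of rank $2$ over $\kk[\Theta]$, which is exactly what has not been established in characteristic $2$. The paper does not argue this point itself but cites \cite[Lemma~5.4.1]{biesel}. A correct argument in characteristic~$2$ must avoid averaging over any group of even order; one route is to prove generation directly, showing every $f\in\kk[\Delta]^{\mathfrak{A}_n}$ lies in $\kk[\Theta]+\kk[\Theta]\cdot D$, without relying on the Vandermonde $D-\tau D$ (which in characteristic~$2$ equals the symmetric element $D+\tau D$ and so cannot serve as a splitter).
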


\begin{proof}
    This is well-known, but it is written down carefully for example in \cite[Lemma~5.4.1]{biesel}.
\end{proof}

\begin{lemma}\label{lem:basis-SdDelta-An}
Similarly, $\kk[\Sd\Delta]^{\mathfrak{A}_n}$ is a free $\kk[\Theta]$-module of rank two with basis $1,\widehat D$, where $\widehat D$ is the $\mathfrak{A}_n$-orbit sum of the $\Garsia$-preimage $\widehat m$ of $m$, namely
\[
\widehat m := y_{\{d\}} y_{\{d-1,d\}} \dots y_{[d]}.
\]
\end{lemma}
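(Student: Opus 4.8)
The plan is to prove two things: that $\{1,\widehat D\}$ generates $\kk[\Sd\Delta]^{\mathfrak{A}_n}$ as a module over $\kk[\Gamma]$ (identified with $\kk[\Theta]$ via $\Psi$), and that a Hilbert series comparison forces this generating set to be a basis. Throughout I will use that $\mathfrak{A}_n$ has index $2$ in $\mathfrak{S}_n$, that $\kk[\Sd\Delta]^{\mathfrak{S}_n}=\kk[\Gamma]$ (Lemma~\ref{lem:Sn-invariants-SdDelta}), and that the $\mathfrak{S}_n$-orbit sum of a standard monomial is the $\gamma$-monomial of the corresponding shape (Proposition~\ref{prop:parameter-monom-every-term}).

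First I would record the bridge to the $\Delta$-side. Expanding $\widehat m$ as the standard monomial supported on the flag $\{d\}\subset\{d-1,d\}\subset\dots\subset\{1,\dots,d\}$ and computing exponents shows $\Garsia(\widehat m)=m$; since $\Garsia$ is $\kk$-linear and $\mathfrak{S}_n$-equivariant (Observation~\ref{obs:garsia-equivariant}) this gives $\Garsia(\widehat D)=D$, and in particular $\widehat D$ is homogeneous of degree $1+2+\dots+d=\binom{n}{2}$. Because $\Garsia$ is moreover a graded $\kk$-linear isomorphism (Observation~\ref{obs:garsia-graded}), it restricts to a graded $\kk$-vector space isomorphism $\kk[\Sd\Delta]^{\mathfrak{A}_n}\xrightarrow{\sim}\kk[\Delta]^{\mathfrak{A}_n}$, so by Lemma~\ref{lem:basis-Delta-An} both rings have Hilbert series $(1+t^{\binom{n}{2}})/\prod_{j=1}^{n}(1-t^j)$.

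The main step is generation. Since $\mathfrak{A}_n$ permutes the standard-monomial basis of $\kk[\Sd\Delta]$, the ring $\kk[\Sd\Delta]^{\mathfrak{A}_n}$ is $\kk$-spanned by $\mathfrak{A}_n$-orbit sums of standard monomials, so it suffices to place each such orbit sum in $\kk[\Gamma]\cdot 1+\kk[\Gamma]\cdot\widehat D$. For a standard monomial $M$ supported on a chain $C\colon\alpha_1\subsetneq\dots\subsetneq\alpha_r$ in the subset poset of $\{0,\dots,d\}$, the $\mathfrak{S}_n$-stabilizer of $M$ is the Young subgroup $\prod\mathfrak{S}_{\mathrm{block}}$ built from the blocks $\alpha_1,\alpha_2\setminus\alpha_1,\dots,\alpha_r\setminus\alpha_{r-1}$ together with $\{0,\dots,d\}\setminus\alpha_r$. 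If this Young subgroup is not contained in $\mathfrak{A}_n$, the $\mathfrak{A}_n$- and $\mathfrak{S}_n$-orbits of $M$ coincide, so the orbit sum is an $\mathfrak{S}_n$-orbit sum and hence lies in $\kk[\Gamma]$. If it is contained in $\mathfrak{A}_n$, every block has size at most one, so $|\alpha_i|=i$ and $r\in\{n-1,n\}$, i.e.\ $C$ is a saturated flag from a singleton whose top face misses at most one vertex. Using $\gamma_n=y_{\{0,\dots,d\}}$ to strip off a possible top face, and the identity $\gamma_j\widehat m=y_{\{d-j+1,\dots,d\}}\widehat m$ (valid because $\{d-j+1,\dots,d\}$ is the unique size-$j$ face comparable to every face of $\widehat m$'s flag), one checks that $M$ is $\mathfrak{S}_n$-conjugate to a single standard monomial of the form $\gamma^{\bfa}\widehat m$ having trivial $\mathfrak{S}_n$-stabilizer. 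Since $\gamma^{\bfa}$ is $\mathfrak{A}_n$-invariant, $\gamma^{\bfa}\widehat D$ is exactly the $\mathfrak{A}_n$-orbit sum of $\gamma^{\bfa}\widehat m$; hence the orbit sum of $M$ is either $\gamma^{\bfa}\widehat D$ or its $\mathfrak{S}_n$-complement, and that complement lies in $\kk[\Gamma]$ by the first case. In every case the orbit sum of $M$ lies in $\kk[\Gamma]\cdot 1+\kk[\Gamma]\cdot\widehat D$, so $\{1,\widehat D\}$ generates. (Characteristic $2$ is not used here beyond $[\mathfrak{S}_n:\mathfrak{A}_n]=2$.)

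Finally, the graded $\kk[\Gamma]$-module surjection $\kk[\Gamma]\oplus\kk[\Gamma](-\tbinom{n}{2})\to\kk[\Sd\Delta]^{\mathfrak{A}_n}$ sending the two free generators to $1$ and $\widehat D$ has domain and codomain of equal Hilbert series by the previous two paragraphs, hence vanishing kernel, hence is an isomorphism; so $\kk[\Sd\Delta]^{\mathfrak{A}_n}$ is free of rank two with basis $\{1,\widehat D\}$, a $\kk[\Theta]$-basis via $\Psi$. The step I expect to be the real work is the orbit-sum bookkeeping in the main step — precisely identifying the chains $C$ whose $\mathfrak{S}_n$-orbits split under $\mathfrak{A}_n$, and checking that stripping the top face via $\gamma_n$ and absorbing multiplicities via the $\gamma_j\widehat m$ identities realizes all of them as $\mathfrak{S}_n$-translates of monomials $\gamma^{\bfa}\widehat m$. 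Alternatively one can skip the Hilbert series count and finish by verifying directly that $\{1,\widehat D\}$ is $\kk[\Gamma]$-linearly independent: a shape-homogeneous relation would force some $\gamma^{\bfa}\widehat D\in\kk[\Gamma]$, which is impossible because $\gamma^{\bfa}\widehat D$ is a proper nonempty subsum of the standard-monomial expansion of a $\gamma$-monomial, whereas every shape-homogeneous element of $\kk[\Gamma]$ is a scalar multiple of a single $\gamma$-monomial.
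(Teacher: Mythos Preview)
Your proof is correct and takes a genuinely different route from the paper's. The paper only gives a one-sentence sketch, citing a shelling of the quotient $(\Sd\Delta)/\mathfrak{A}_n$ of the Coxeter complex (Reiner) together with a result of Garsia--Stanton; this is a geometric/combinatorial approach that imports nontrivial external machinery. Your argument is instead completely elementary and self-contained within the paper: you classify which $\mathfrak{S}_n$-orbits of standard monomials split under $\mathfrak{A}_n$ (precisely those supported on saturated flags), reduce every split orbit to an $\mathfrak{S}_n$-translate of some $\gamma^{\bfa}\widehat m$ via the identities $\gamma_j\widehat m = y_{\{d-j+1,\dots,d\}}\widehat m$, and then close with a Hilbert-series comparison against Lemma~\ref{lem:basis-Delta-An} using that $\Garsia$ is a graded equivariant $\kk$-isomorphism. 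This has the virtue of making the characteristic-independence transparent and of using only tools already developed in the paper.

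One phrasing slip: you write that the ``$\mathfrak{S}_n$-complement'' of $\gamma^{\bfa}\widehat D$ ``lies in $\kk[\Gamma]$ by the first case.'' It does not lie in $\kk[\Gamma]$; rather, it equals $(\text{$\mathfrak{S}_n$-orbit sum}) - \gamma^{\bfa}\widehat D$, and since the $\mathfrak{S}_n$-orbit sum is a $\gamma$-monomial (Proposition~\ref{prop:parameter-monom-every-term}), the complement lies in $\kk[\Gamma]\cdot 1 + \kk[\Gamma]\cdot\widehat D$. This is all you need, so the argument survives unchanged, but the sentence should be corrected. Your alternative linear-independence check (that $\gamma^{\bfb}\widehat D$ is a sum of $n!/2$ distinct standard monomials while any shape-homogeneous element of $\kk[\Gamma]$ is a scalar multiple of the full sum of $n!$) is clean and valid in every characteristic.
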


\begin{proof}[Proof sketch]
This can be seen for example by shelling the quotient by $\mathfrak{A}_n$ of the Coxeter complex of $G$, as in \cite[Theorem~4.3.5]{reiner} (with $W=G$ and $E'=\mathfrak{A}_n$), and applying \cite[Theorem~6.2]{garsia-stanton} (which is stated over a field of characteristic zero, but that hypothesis is not required in the proof of this claim).\end{proof}

Finally, we will use the following elementary calculation, for which we replace the ground field $\kk$ with $\ZZ$; the definitions of $\theta_1,\dots,\theta_d$ are modified accordingly.

\begin{lemma}\label{lem:cross-term}
For \( d \geq 2 \), in the decomposition of \( \theta_1 \cdots \theta_d \) into sums of monomials over $\ZZ$, the monomial 
\[ x_0x_1x_2\prod_{i=2}^{d-1}(x_0\cdots x_i) 
\]
appears with coefficient \( 3 \). (Note: in the $d=2$ case, the product is empty.)
\end{lemma}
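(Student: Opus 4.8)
The plan is to read off the coefficient directly from the expansion of $\theta_1\cdots\theta_d = e_1\cdots e_d$, where $e_j$ denotes the $j$-th elementary symmetric polynomial in $x_0,\dots,x_d$. Writing $e_j = \sum_{|S|=j}\prod_{i\in S}x_i$ with $S$ ranging over the $j$-element subsets of $\{0,\dots,d\}$, the product $\theta_1\cdots\theta_d$ becomes $\sum_{(S_1,\dots,S_d)}\prod_{j=1}^d\prod_{i\in S_j}x_i$, the outer sum over all tuples with $|S_j|=j$. Hence the coefficient of a monomial $x_0^{a_0}\cdots x_d^{a_d}$ equals the number of such tuples $(S_1,\dots,S_d)$ for which each vertex $v$ lies in exactly $a_v$ of the sets. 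So the argument splits into (i) computing the exponent vector $\mathbf{a}$ of the target monomial, and (ii) counting the tuples that realize it.

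Step (i) is a routine tally. In $M = x_0x_1x_2\prod_{i=2}^{d-1}(x_0\cdots x_i)$, the variable $x_v$ appears in $x_0x_1x_2$ iff $v\le 2$, and in $x_0\cdots x_i$ iff $v\le i$; counting the $i\in\{2,\dots,d-1\}$ with $v\le i$, one finds, for $d\ge 3$, that $a_0=a_1=a_2 = 1+(d-2) = d-1$, that $a_v = d-v$ for $3\le v\le d-1$, and that $a_d=0$. The degenerate case $d=2$, where $M=x_0x_1x_2$ and $a_0=a_1=a_2=1$, will be treated at the end.

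Step (ii) is where I expect the only genuine work, and it proceeds by forcing the sets one at a time. Assume $d\ge 3$. Since $a_d=0$, no $S_j$ contains vertex $d$, so every $S_j\subseteq\{0,\dots,d-1\}$, and then $|S_d|=d$ forces $S_d=\{0,\dots,d-1\}$. Next, a downward induction on $w=d-1,d-2,\dots,3$ shows $S_w=\{0,\dots,w-1\}$: by the induction hypothesis the sets $S_{w+1},\dots,S_{d-1}$ have the form $\{0,\dots,m-1\}$, and together with $S_d=\{0,\dots,d-1\}$ these are exactly $d-w$ sets, all of which contain $w$; since $a_w=d-w$, this accounts for all occurrences of $w$, so $w\notin S_j$ for every $j\le w$, and combined with the inclusions $S_j\subseteq\{0,\dots,w\}$ already secured at the previous stage this yields $S_j\subseteq\{0,\dots,w-1\}$ for $j\le w$, whence $S_w=\{0,\dots,w-1\}$. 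After this induction $S_3,\dots,S_d$ are completely determined, and only $S_1,S_2\subseteq\{0,1,2\}$, with $|S_1|=1$ and $|S_2|=2$, remain. Each of $0,1,2$ already appears in $S_3,\dots,S_d$ — that is $d-2$ sets — so to reach $a_v=d-1$ it must appear in exactly one of $S_1,S_2$; as $|S_1|+|S_2|=3=|\{0,1,2\}|$, this forces $\{0,1,2\}=S_1\sqcup S_2$, giving exactly $\binom{3}{1}=3$ admissible tuples, one for each choice of the singleton $S_1$. Finally, for $d=2$ the count is precisely this last step with no preliminary forcing: $(S_1,S_2)$ with $|S_1|=1$, $|S_2|=2$, each vertex used once, so $\{0,1,2\}=S_1\sqcup S_2$ and again $3$ tuples. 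In all cases the coefficient is $3$.

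The main obstacle is organizational rather than conceptual: one must make the downward induction airtight, checking at each stage that the occurrences of $w$ among the already-fixed sets total exactly $a_w$ (so $w$ is excluded from all remaining sets and no freedom survives until $S_1,S_2$), and that the inclusions needed at stage $w$ are exactly what stage $w+1$ delivered. Once that bookkeeping is pinned down, the count collapses to $\binom{3}{1}$ and nothing more is required.
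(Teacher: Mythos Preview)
Your proof is correct and takes a genuinely different route from the paper's. The paper argues by induction on $d$, passing to the ring $\Lambda$ of symmetric functions in infinitely many variables so as not to have to track the number of variables: in the inductive step it observes that in $e_{d+1}\prod_{i=1}^d e_i$ the only way the target monomial can arise is from the term $x_0\cdots x_d$ of $e_{d+1}$ (since the target involves only $x_0,\dots,x_d$ and $e_{d+1}$ is a sum over $(d+1)$-element subsets) times the smaller target monomial in $\prod_{i=1}^d e_i$, which has coefficient $3$ by induction. Your argument instead stays inside $\ZZ[x_0,\dots,x_d]$ and performs a single direct count: you interpret the coefficient as the number of tuples $(S_1,\dots,S_d)$ with $|S_j|=j$ realizing the exponent vector, and then force $S_d,S_{d-1},\dots,S_3$ one by one by a downward induction, leaving exactly the $\binom{3}{1}$ choices for $(S_1,S_2)$. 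The two arguments share the same core mechanism---peeling off the top factor because the largest variable appearing has a tight multiplicity constraint---but yours packages the whole forcing at once for fixed $d$ and avoids the detour through $\Lambda$, at the cost of a slightly heavier bookkeeping burden in the downward induction. Either approach is fine; yours is arguably more self-contained.
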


\begin{proof}
    We proceed by induction on \( d \). It is convenient for the sake of this induction to work in the ring $\Lambda$ of symmetric functions (i.e., the $\ZZ$-algebra of bounded-degree power series in countably many indeterminates $x_0,x_1,\dots$ that are invariant under all permutations of the indeterminates), so that we do not have to concern ourselves with the number of variables, only the number of factors. It is well-known that $\Lambda$ is a polynomial ring generated by the elementary symmetric functions $e_1 = x_0+x_1+\dots$, $e_2=x_0x_1 + x_0x_2 + x_1x_2 + \dots$, etc.\footnote{This statement is really just the FTSP. For background on $\Lambda$, see \cite{macdonald} or \cite{stanley}.} Via the $\ZZ$-algebra homomorphism that sends $\Lambda=\ZZ[e_1,e_2,\dots]$ to $\ZZ[\Delta]^{\mathfrak{S}_n}$ by mapping $e_i\mapsto \theta_i$ for $i=1,\dots,n$ and $e_i\mapsto 0$ if $i>n$, proving the claim for $e_1\dots e_d$ in $\Lambda$ will yield the same result for $\theta_1\dots\theta_d$ in $\ZZ[\Delta]$.

    The base case \( d = 2 \) can be seen by direct computation:
    \begin{align*}
    e_1e_2
    &=(x_0+x_1+\dots)(x_0x_1 + x_0x_2 + \dots )\\
    &= (x_0^2x_1 + \dots) + 3(x_0x_1x_2 + \dots).
    \end{align*}
    
    Now, suppose the result is true for some integer \( d \). We prove that it remains true for \( d+1 \): in the product $\prod_{i=1}^{d+1}e_i = e_{d+1}\prod_{i=1}^de_i$, the monomial \( x_0x_1x_2\prod_{i=2}^{d}(x_0\cdots x_i) \) can only be obtained from a product of the monomial \( x_0\cdots x_{d} \) of \( e_{d+1} \), which occurs with coefficient 1, with the monomial \( x_0x_1x_2\prod_{i=2}^{d-1}(x_0\cdots x_i) \) of \( \prod_{i=1}^{d} e_i \), which occurs with coefficient $3$ by induction. So it too occurs with coefficient 3.
\end{proof}

Now we can prove the main result of the section.

\begin{proof}[Proof of Theorem~\ref{thm:no-equivariant-iso}]

By way of contradiction, suppose that $\kk$ has characteristic two, and let
\[
\Phi : \kk[\Sd\Delta]\rightarrow \kk[\Delta]
\]
be a $G=\mathfrak{S}_n$-equivariant isomorphism of $\kk[\Theta]$-modules. Equivariance implies that $\Phi$ induces an isomorphism
\[
\phi :  \kk[\Sd\Delta]^{\mathfrak{A}_n} \rightarrow \kk[\Delta]^{\mathfrak{A}_n}
\]
of $\kk[\Theta]$-modules. Furthermore, because $\mathfrak{A}_n\subset G$ is normal, the actions of $G$ restrict to actions on these subrings, which factor through $G/\mathfrak{A}_n\cong C_2$. The $G$-equivariance of $\Phi$ implies that the restricted map $\phi$ is $C_2$-equivariant. We will show that for $n \geq 3$, no $C_2$-equivariant $\kk[\Theta]$-module isomorphism $\kk[\Sd\Delta]^{\mathfrak{A}_n}\rightarrow\kk[\Delta]^{\mathfrak{A}_n} $ exists; this will be the desired contradiction. Let $\tau$ be the nontrivial element in $G/\mathfrak{A}_n \cong C_2$.

In view of Lemmas~\ref{lem:basis-Delta-An} and \ref{lem:basis-SdDelta-An}, we write the images of $1,\widehat D\in \kk[\Sd\Delta]^{\mathfrak{A}_n}$ under $\phi$ on the $\kk[\Theta]$-basis $1, D\in \kk[\Delta]^{\mathfrak{A}_n}$:
\begin{align*}
\phi(1) &= s + t D\\
\phi(\widehat D) &= u + vD,
\end{align*}
where $s,t,u,v\in \kk[\Theta]$. By the fact that $\phi$ is equivariant, we can immediately conclude $t=0$ (because the action of $\tau$ is trivial on $1\in \kk[\Delta]^{\mathfrak{A}_n}$, but is not trivial on $D$). Then, because $\kk[\Gamma]$ is the $\kk[\Theta]$-span of $1\in \kk[\Sd\Delta]^{\mathfrak{A}_n}$, $\phi(\kk[\Gamma])$ has the form $\kk[\Theta]s$, the ideal generated by $s$ in $\kk[\Theta]$. Because $\phi$ must restrict to a $\kk[\Theta]$-module isomorphism from  $\kk[\Sd\Delta]^{\mathfrak{S}_{n}} = \kk[\Gamma]$ to $\kk[\Delta]^{\mathfrak{S}_{n}} = \kk[\Theta]$, and in particular this restriction is surjective, we conclude $s$ must generate the unit ideal in $\kk[\Theta]$, and thus $s$ is an element of $\kk^\times$. (The equalities in the last sentence are in view of Lemmas~\ref{lem:Sn-invariants-Delta} and \ref{lem:Sn-invariants-SdDelta}.)

Again by the fact that $\phi$ is a $G/\mathfrak{A}_n \cong C_2$-equivariant $\kk[\Theta]$-module map, we have
\[
\phi(\tau \cdot \widehat D) = \tau\cdot\phi(\widehat D)=u + v(\tau\cdot D)\in\kk[\Delta].
\]
Then 
\begin{equation}\label{eq:first-phi}
\phi(\widehat D + \tau \cdot \widehat D) = (u+v D) + (u + v(\tau \cdot  D)) = v(D + \tau \cdot D),
\end{equation}
recalling that the characteristic of $\kk$ is $2$.

Because $\widehat D$ is the $\mathfrak{A}_n$-orbit sum of the monomial $\widehat m$ defined above, and $\tau$ is the nontrivial coset of $\mathfrak{A}_n$ in $G$, we see that $\widehat D + \tau \widehat D$ is the $G = \mathfrak{S}_{n}$-orbit sum of the same monomial. By Proposition~\ref{prop:parameter-monom-every-term} or by direct computation, this is equal to $\gamma_1\gamma_2\dots\gamma_d$. Thus, 
\[
\Psi(\widehat D + \tau \cdot \widehat D) = \theta_1\theta_2\cdots \theta_d.
\]
Therefore,
\begin{equation}\label{eq:second-phi}
\phi(\widehat D + \tau\cdot \widehat D) = \Psi(\widehat D+\tau \cdot \widehat D)\phi(1) = \theta_1\theta_2\cdots \theta_ds.
\end{equation}
Combining \eqref{eq:first-phi} and \eqref{eq:second-phi}, we find that
\begin{equation}\label{eq:gamma-is-a-factor}
\theta_1\theta_2\cdots \theta_ds = v(D + \tau \cdot D).
\end{equation}

We will now derive the promised contradiction. Since $s\in \kk^\times$, equation \eqref{eq:gamma-is-a-factor} asserts precisely that $D + \tau \cdot D$ is a factor of $\theta_1\theta_2\dots \theta_{d}$ in the polynomial ring $\kk[\Theta]$. They are of the same degree, so this means they differ by a scalar factor. In fact, the terms of $D+\tau\cdot D$ are a proper subset of the terms of $\theta_1\theta_2\dots\theta_d$: expanding everything into monomials, $D + \tau \cdot D$ consists precisely of the terms of the product $\theta_1\theta_2\dots \theta_d$ that stack up, i.e., those of shape $(d,\dots,2,1)$, by Proposition~\ref{prop:param-monom-x-ring}. So to contradict \eqref{eq:gamma-is-a-factor}, one only has to check that $\theta_1\theta_2\dots\theta_d$ has at least one cross-term (i.e., a term of shape strictly dominated by $(d,\dots,2,1)$) that is nonzero in $\kk$, i.e., has an odd coefficient. One such cross-term is furnished by Lemma~\ref{lem:cross-term}. This completes the proof.
\end{proof}

\section{The positive result}\label{sec:positive}

In this section, we prove Theorem~\ref{thm:yes-CM-in-coprime}, stating that, in spite of the negative result in Section~\ref{sec:counterex}, there is guaranteed to exist a $G$-equivariant $\kk[\Theta]$-module isomorphism $\Phi:\kk[\Sd\Delta]\rightarrow \kk[\Delta]$ in the best-case scenario where $\Delta$ is Cohen--Macaulay and $\Char \kk$ does not divide the order of $G$, and furthermore, it can be constructed explicitly. As mentioned in the introduction, the Cohen--Macaulay assumption already renders it automatic that $\kk[\Sd\Delta]$ and $\kk[\Delta]$ are isomorphic as $\kk[\Theta]$-modules, being free of the same rank; the work is to show that an isomorphism can be taken to be $\Aut(\Delta)$-equivariant.

The existence statement is proven two ways: it follows from the explicit construction, which is based on the ideas developed in Section~\ref{sec:garsia}, but we also include a nonconstructive proof that hews closely to ideas in \cite{adams-reiner} and was developed in conversation with V. Reiner. We give the nonconstructive proof in Section~\ref{sec:nonconstructive}, and the constructive proof in Section~\ref{sec:construction-mod-basis}, modulo one step. That step is to find a shape-homogeneous basis for $\kk[\Sd\Delta]$ as $\kk[\Gamma]$-module. This is carried out in Section~\ref{sec:construct-a-basis}, itself in two ways. The first is an essentially routine method using Gr\"obner bases, while the second is a linear-algebraic method due to Garsia \cite{garsia}, based on the ideas in Section~\ref{sec:garsia-LA-CM}.

\subsection{Nonconstructive existence proof}\label{sec:nonconstructive}

In this section, we prove the existence part of Theorem~\ref{thm:yes-CM-in-coprime} in a nonconstructive way based on ideas in \cite{adams-reiner}. This proof was developed in conversation with V. Reiner. Throughout this section, $G$ is a finite group, $\kk$ is a field of characteristic not dividing the order of $G$, $R$ is an $\NN$-graded, connected ($R_0=\kk$), finitely generated $\kk$-algebra, $G$ acts on $R$ by graded $\kk$-algebra automorphisms, and $\Theta=\theta_1,\dots,\theta_n$ is a homogeneous system of parameters for $R$ consisting of $G$-invariant elements. Homogeneity implies the quotient $R/\Theta R$ is $\NN$-graded. Also, the assumption that $R$ is connected implies that all the $\theta_j$'s have positive degree, so the positively-graded ideal in the polynomial subring $\kk[\Theta]$ is exactly $\Theta \kk[\Theta]$.

The {\em Grothendieck ring} $\mathbf{R}_\kk(G)$ of $G$ over $\kk$ is the quotient of the free $\ZZ$-module generated by the isomorphism classes $[V]$ of objects $V$ in the category $\Rep_\kk(G)$ of finite-dimensional representations  of $G$ over $\kk$, by the submodule generated by relators
\[
[V']-[V] +[V'']
\]
for each short exact sequence
\[
0 \rightarrow V' \rightarrow V \rightarrow V'' \rightarrow 0
\]
in $\Rep_\kk(G)$, and equipped with a multiplication induced from the tensor product:
\[
[V][W] = [V\otimes_\kk W];
\]
see \cite[Section~16B]{curtis-reiner} for a careful development. As an abelian group, $\mathbf{R}_\kk(G)$ is the free $\ZZ$-module generated by the isomorphism classes of the irreducible $\kk G$-modules  \cite[Proposition~16.6]{curtis-reiner}; this follows from the Jordan-H\"older theorem.

Furthermore, because the characteristic of $\kk$ does not divide the order of $G$, Maschke's theorem holds, so every short exact sequence in $\Rep_\kk(G)$ splits. Therefore, the isomorphism class of $V$ in the exact sequence $0\rightarrow V'\rightarrow V\rightarrow V''\rightarrow 0$ is determined by the isomorphism classes of $V'$ and $V''$. By induction on the length of a composition series in $\Rep_\kk(G)$, we have:

\begin{observation}\label{obs:equality-in-Groth-ring-implies-isomorphism}
    If the characteristic of $\kk$ does not divide the order of $G$, representations are in the same class in $\mathbf{R}_\kk(G)$ (if and) only if they are isomorphic in $\Rep_\kk(G)$.
\end{observation}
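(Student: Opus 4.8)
The statement to prove is Observation~\ref{obs:equality-in-Groth-ring-implies-isomorphism}: in characteristic coprime to $|G|$, two representations lying in the same class in $\mathbf{R}_\kk(G)$ are isomorphic in $\Rep_\kk(G)$.

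\medskip

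The plan is to use the fact, recalled just above in the excerpt, that as an abelian group $\mathbf{R}_\kk(G)$ is free on the isomorphism classes $[S_1],\dots,[S_k]$ of the irreducible $\kk G$-modules. First I would observe that, by Maschke's theorem (available precisely because $\Char\kk \nmid |G|$), every finite-dimensional $\kk G$-module $V$ decomposes as a direct sum of irreducibles, $V \cong \bigoplus_{i=1}^k S_i^{\oplus m_i(V)}$, and that by the Krull--Schmidt theorem (or directly, counting multiplicities via $\operatorname{Hom}_{\kk G}(S_i, V)$ when $\kk$ is a splitting field, or via the more careful argument otherwise) the multiplicities $m_i(V)$ are invariants of the isomorphism class of $V$. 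Next I would show that the class of $V$ in $\mathbf{R}_\kk(G)$ is exactly $\sum_{i=1}^k m_i(V)[S_i]$: this follows by induction on the composition length of $V$, splitting off one irreducible summand at a time and using the defining relation $[V] = [V'] + [V'']$ for the resulting split short exact sequence. Then, since the $[S_i]$ form a $\ZZ$-basis of $\mathbf{R}_\kk(G)$, the equality $[V] = [W]$ in $\mathbf{R}_\kk(G)$ forces $m_i(V) = m_i(W)$ for all $i$, and hence $V \cong W$ by uniqueness of the isotypic decomposition. (The parenthetical ``if and'' in the statement is the trivial direction: isomorphic representations fit into the short exact sequence $0 \to 0 \to V \to W \to 0$, so have the same class.)

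\medskip

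The only point requiring a little care — and the step I expect to be the main (mild) obstacle — is the well-definedness and isomorphism-invariance of the multiplicities $m_i(V)$ when $\kk$ is not assumed to be a splitting field for $G$. In that generality one cannot simply set $m_i(V) = \dim_\kk \operatorname{Hom}_{\kk G}(S_i,V)$, since $\operatorname{End}_{\kk G}(S_i)$ may be a noncommutative division ring rather than $\kk$; instead $m_i(V)$ is the length of $\operatorname{Hom}_{\kk G}(S_i,V)$ as a module over that division ring, and the decomposition $V \cong \bigoplus S_i^{\oplus m_i(V)}$ together with uniqueness of these multiplicities is the semisimple case of Krull--Schmidt (equivalently Wedderburn's structure theorem applied to the semisimple ring $\kk G$). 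This is entirely standard; the references \cite{curtis-reiner} already invoked in the surrounding text (e.g. around \cite[Proposition~16.6]{curtis-reiner}, and the Jordan--Hölder / Wedderburn material) cover it, so in the write-up I would simply cite it rather than reprove it. With that in hand the argument is a two-line deduction from the freeness of $\mathbf{R}_\kk(G)$ on the $[S_i]$, exactly as the excerpt's phrase ``By induction on the length of a composition series'' anticipates.
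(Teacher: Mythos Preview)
Your proof is correct and follows essentially the same approach as the paper: the paper's justification is the brief remark preceding the observation, invoking Maschke's theorem (so every short exact sequence splits) together with induction on composition length and the freeness of $\mathbf{R}_\kk(G)$ on the irreducible classes. Your write-up simply makes explicit the multiplicity-matching step and the Krull--Schmidt/Jordan--H\"older input that the paper leaves implicit.
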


One has (e.g., \cite[Section~1]{mitchell1985finite}, \cite[Section~1.1]{broer2011extending}, \cite[Section~2]{adams-reiner}) a refinement of the Hilbert series of $R$ (or, more generally, of any $\NN$-graded representation of $G$ over $\kk$) called the {\em equivariant Hilbert series}, taking values in a power series ring over the Grothendieck ring of $G$: 
\[
\Hilb^\mathrm{eq}(R,t):= \sum_{d\in \NN} [R_d]t^d \in \mathbf{R}_\kk(G)[[t]],
\]
where $R_d$ is the $d$th homogeneous component of $R$, viewed as a representation of $G$ over $\kk$. One can check that if $S$ is a second $\NN$-graded $\kk$-algebra with a $G$-action (or more generally an $\NN$-graded $G$-representation over $\kk$), then
\begin{equation}\label{eq:hilbert-series-multiply}
\Hilb^\mathrm{eq}(R\otimes_\kk S,t) = \Hilb^\mathrm{eq}(R,t)\Hilb^\mathrm{eq}(S,t).
\end{equation}
The calculation is essentially identical to the one that proves the analogous identity for ordinary Hilbert series.

The following lemma was drawn to our attention by V. Reiner, who characterized it as probably folklore. It is closely related to  \cite[Proposition~2.1(ii)]{broer2011extending}. The special case where $R$ is a polynomial ring and $\kk[\Theta]$ is the invariant ring of a reflection group is \cite[Theorem~7.4.3]{smith-poly-invars}. The action of $G$ on $R$ naturally descends to the quotient $R/\Theta R$ because the $\theta_j$'s are $G$-invariant. The tensor product $(R/\Theta R)\otimes_\kk \kk[\Theta]$ has the structure of an $\NN$-graded $\kk$-vector space because both tensor factors are $\NN$-graded. Furthermore, it is a $G$-representation and $\kk[\Theta]$-module, with the $G$-action coming from the first tensor factor, and the $\kk[\Theta]$-action from the second. 

\begin{lemma}\label{lem:vic-lemma}
In coprime characteristic, there is a $G$-equivariant surjection of $\NN$-graded $\kk[\Theta]$-modules
\[
\Phi:(R/\Theta R)\otimes_\kk \kk[\Theta] \rightarrow R.
\]
If, furthermore, $R$ is Cohen--Macaulay, then  $\Phi$ is an isomorphism.
\end{lemma}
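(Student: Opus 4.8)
The plan is to construct the map $\Phi$ by averaging a non-equivariant lift over $G$, and then show surjectivity via the graded Nakayama lemma, with Cohen--Macaulayness upgrading this to an isomorphism by a rank count.

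First I would build $\Phi$. Choose a $G$-equivariant $\kk$-linear section $s: R/\Theta R \to R$ of the quotient map $R \twoheadrightarrow R/\Theta R$; this exists because $\Char\kk$ is coprime to $|G|$, so by Maschke's theorem the surjection of $\kk G$-modules splits (one can also just average any linear section over $G$, using that $\Theta$ is $G$-fixed so the quotient map is equivariant). Because the quotient map is graded and the $\theta_j$ are homogeneous, $s$ can be taken graded. Now define $\Phi : (R/\Theta R)\otimes_\kk \kk[\Theta] \to R$ on simple tensors by $\Phi(\bar r \otimes p) = p\cdot s(\bar r)$ and extend $\kk$-linearly. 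This is manifestly a map of $\NN$-graded $\kk[\Theta]$-modules (the $\kk[\Theta]$-action on the source is on the second factor, and $\Phi$ respects it); it is $G$-equivariant because $s$ is equivariant and the $\theta_j$, hence all of $\kk[\Theta]$, are $G$-fixed, so $\sigma\cdot(p\, s(\bar r)) = p\,(\sigma\cdot s(\bar r)) = p\, s(\sigma\cdot \bar r)$; and it is graded because $s$ is.

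Next, surjectivity. The image of $\Phi$ is the $\kk[\Theta]$-submodule of $R$ generated by $s(R/\Theta R)$, which contains a set of representatives for every class in $R/\Theta R$. By the graded Nakayama lemma (with respect to the positively-graded ideal $\Theta\kk[\Theta] \subset \kk[\Theta]$, which is where we use that $R$ is connected so all $\theta_j$ have positive degree), any homogeneous subset of $R$ whose image spans $R/\Theta R$ over $\kk$ generates $R$ as a $\kk[\Theta]$-module; here $s(R/\Theta R)$ does surject onto $R/\Theta R$ by construction, so $\mathrm{im}\,\Phi = R$. This argument does not use Cohen--Macaulayness and gives the first assertion.

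Finally, the isomorphism under the Cohen--Macaulay hypothesis. If $R$ is Cohen--Macaulay, then by Lemma~\ref{lem:hironaka-decomposition} it is a free $\kk[\Theta]$-module; since $\Theta$ is a system of parameters, its rank equals $r := \dim_\kk(R/\Theta R)$. On the other hand, the source $(R/\Theta R)\otimes_\kk \kk[\Theta]$ is a free $\kk[\Theta]$-module of rank $\dim_\kk(R/\Theta R) = r$ as well (tensoring a finite-dimensional $\kk$-vector space with $\kk[\Theta]$). So $\Phi$ is a surjection between free $\kk[\Theta]$-modules of the same finite rank $r$; by Vasconcelos' theorem (\cite[Proposition~1.2]{vasconcelos1969finitely}, cited already in the proof of Lemma~\ref{lem:hironaka-decomposition}), such a surjection of finitely generated modules is an isomorphism. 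The main obstacle is really just the first step---pinning down that the equivariant graded section exists and that averaging interacts correctly with the grading and the $\kk[\Theta]$-structure---after which surjectivity and the rank count are routine.
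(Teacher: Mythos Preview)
Your proof is correct and is essentially identical to the paper's: your section $s$ is exactly the paper's map $\phi$ (the inverse of the restriction of $R\twoheadrightarrow R/\Theta R$ to a graded $G$-stable complement of $\Theta R$), and both arguments then conclude with graded Nakayama for surjectivity and Vasconcelos' theorem for the Cohen--Macaulay upgrade.
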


\begin{proof}
Let $U:=R/\Theta R$. Because the characteristic of $\kk$ is prime to the order of $G$, the group ring $\kk G$ is semisimple. Because $\theta_1,\dots,\theta_n\in R$ are $G$-invariant, the ideal $\Theta R\subset R$ is $G$-stable. So, by the semisimplicity of $\kk G$, $\Theta R\subset R$ has a $G$-stable complement $U'\subset R$. Since the ideal $\Theta R$ is graded,  $U'$ can be taken to be graded. The restriction of the canonical surjection $\pi:R\rightarrow R/\Theta R=U$ to $U'$ is then an isomorphism of $\NN$-graded $G$-representations (because $U'$ is complementary to $\pi$'s kernel); let $\phi:U\rightarrow U'$ be the inverse isomorphism. Consider the $\kk$-linear map
\[
\Phi: U \otimes_\kk \kk[\Theta] \rightarrow R
\]
induced by the $\kk$-bilinear map
\[
U \times \kk[\Theta]  \ni (u,f) \mapsto \phi(u)f \in R.
\]
We claim that $\Phi$ is the promised surjective, $G$-equivariant morphism of $\NN$-graded $\kk[\Theta]$-modules. Indeed, $G$-equivariance is immediate because if $\sigma\in G$, $u\in U$, $f\in \kk[\Theta]$, then 
\[
\sigma\cdot (u\otimes f)= (\sigma\cdot u)\otimes f \mapsto \phi(\sigma \cdot u)f = (\sigma\cdot\phi(u))f = \sigma\cdot(\phi(u)f),
\]
with the first equality by definition of the $G$-action on $U\otimes_\kk \kk[\Theta]$, the second because $\phi$ is $G$-equivariant, and the third because $\sigma$ acts by algebra automorphisms and $f$ is $G$-invariant. Similarly, $\Phi$ is a $\kk[\Theta]$-module homomorphism because if $f'\in \kk[\Theta]$, then
\[
f'\cdot (u\otimes f) = u \otimes (f'f) \mapsto \phi(u)(f'f) = f'(\phi(u)f).
\]
The $\NN$-gradedness of $\Phi$ is similarly automatic from the  definition of the $\NN$-grading on $U\otimes_\kk\kk[\Theta]$. Meanwhile, surjectivity of $\Phi$ follows from the graded Nakayama lemma.

Now, suppose that $R$ is Cohen--Macaulay, and let $\Phi:U\otimes_\kk \kk[\Theta]\rightarrow R$ be a $G$-equivariant surjection of $\NN$-graded $\kk[\Theta]$-modules. Then $\Phi$ is actually an isomorphism by Vasconcelos' theorem \cite[Proposition~1.2]{vasconcelos1969finitely}, by the same argument as in the implication \ref{item:free-over-any-parameter-ring}$\Rightarrow$\ref{item:any-basis-lifts} in Lemma~\ref{lem:hironaka-decomposition}.
\end{proof}

With this preparation in place, we can give a proof of the existence part of Theorem~\ref{thm:yes-CM-in-coprime}. The idea is this. Under the coprime and Cohen--Macaulay hypotheses, Observation~\ref{obs:equality-in-Groth-ring-implies-isomorphism} and Lemma~\ref{lem:vic-lemma} imply that the $\NN$-graded $\kk G$-module structure of $\kk[\Delta]$ (without considering the $\kk[\Theta]$-module structure!) determines the $\NN$-graded $\kk G$-module structure of $\kk[\Delta]/\Theta \kk[\Delta]$, but meanwhile, the $\NN$-graded $\kk G$-module structure of $\kk[\Delta]/\Theta \kk[\Delta]$ determines even the $\NN$-graded $\kk G[\Theta]$-module structure of $\kk[\Delta]$. A similar statement applies to $\kk[\Sd\Delta]$ and $\kk[\Sd\Delta]/\Gamma \kk[\Sd\Delta]$. Since $\kk[\Delta]$ and $\kk[\Sd\Delta]$ are isomorphic as $\NN$-graded $\kk G$-modules (with the isomorphism given by the Garsia transfer), it follows that they must even be isomorphic as $\kk G[\Theta]$-modules. Here are the details. 

\begin{proof}[Proof of existence in Theorem~\ref{thm:yes-CM-in-coprime}]
    Because $\Delta$ is Cohen--Macaulay over $\kk$, both rings $\kk[\Delta]$ and $\kk[\Sd\Delta]$ are Cohen--Macaulay rings. So, taking $U:=\kk[\Delta]/\Theta\kk[\Delta]$ and $U^{\Sd}:=\kk[\Sd\Delta]/\Gamma\kk[\Sd\Delta]$, Lemma~\ref{lem:vic-lemma} combines with \eqref{eq:hilbert-series-multiply} to tell us that 
    \begin{equation}\label{eq:hilb-prod-for-delta}
    \Hilb^\mathrm{eq}(\kk[\Delta],t) = \Hilb^\mathrm{eq}(U,t)\Hilb^\mathrm{eq}(\kk[\Theta],t)
    \end{equation}
    and
    \begin{equation}\label{eq:hilb-prod-for-sd}
    \Hilb^\mathrm{eq}(\kk[\Sd\Delta],t) = \Hilb^\mathrm{eq}(U^{\Sd},t)\Hilb^\mathrm{eq}(\kk[\Gamma],t).
    \end{equation}
    Meanwhile, \begin{equation}\label{eq:gamma-theta-same-hilb}
    \Hilb^\mathrm{eq}(\kk[\Gamma],t) = \Hilb^\mathrm{eq}(\kk[\Theta],t)
    \end{equation}
    because $\kk[\Theta]$ and $\kk[\Gamma]$ are isomorphic as graded $\kk$-algebras and both carry trivial $G$-action. Also, \begin{equation}\label{eq:delta-sd-same-hilb}
    \Hilb^\mathrm{eq}(\kk[\Delta],t) = \Hilb^\mathrm{eq}(\kk[\Sd\Delta],t)
    \end{equation}
    because $\Garsia:\kk[\Sd\Delta]\rightarrow \kk[\Delta]$ is a graded, $G$-equivariant linear isomorphism and so induces isomorphisms as $G$-representations between $\kk[\Sd\Delta]_d$ and $\kk[\Delta]_d$ for all $d\in \NN$. Finally, because $\kk[\Theta]$ is connected (i.e., the zero-degree component is $\kk$), by \cite[Proposition~2.1(iv)]{broer2011extending} we know that $\Hilb^\mathrm{eq}(\kk[\Theta],t)$ is a unit in the ring $\mathbf{R}_\kk(G)[[t]]$. 
    From this together with \eqref{eq:hilb-prod-for-delta}, \eqref{eq:hilb-prod-for-sd}, \eqref{eq:gamma-theta-same-hilb}, and \eqref{eq:delta-sd-same-hilb}, we deduce that
    \[
    \Hilb^\mathrm{eq}(U,t) = \Hilb^\mathrm{eq}(U^{\Sd},t).
    \]
    In other words, for each $d\in \NN$, $[U_d] = [U^{\Sd}_d]$ in $\mathbf{R}_\kk(G)$. Because $\kk$'s characteristic does not divide the order of $G$, it follows that $U_d \cong U^{\Sd}_d$ as $G$-representations for each $d$, and so $U\cong U^{\Sd}$ as graded representations of $G$. Therefore, again by Lemma~\ref{lem:vic-lemma} (and in view of the fact that $\kk[\Theta]$ and $\kk[\Gamma]$ are isomorphic as $\kk[\Theta]$-modules), we have
    \[
    \kk[\Sd\Delta] \cong U^{\Sd}\otimes_\kk \kk[\Gamma] \cong U \otimes_\kk \kk[\Theta] \cong \kk[\Delta]
    \]
    as graded $G$-representations and $\kk[\Theta]$-modules. This completes the proof.
\end{proof}

\begin{remark}
    In this proof, equations~\eqref{eq:hilb-prod-for-delta} and \eqref{eq:hilb-prod-for-sd} are deduced from Lemma~\ref{lem:vic-lemma}, but they could alternatively have been deduced from \cite[Theorem~2.1(iv)]{broer2011extending}, which does not require the coprime characteristic hypothesis. Thus the equality $\Hilb^\mathrm{eq}(U,t) =\Hilb^\mathrm{eq}(U^{\Sd}, t)$ does not require this hypothesis. Indeed, this is used in \cite[Corollary~6.7]{adams-reiner}. The important uses of coprimality in the proof were the inference from $\Hilb^\mathrm{eq}(U,t) =\Hilb^\mathrm{eq}(U^{\Sd}, t)$ that $U$ and $U^{\Sd}$ are actually isomorphic as $\NN$-graded $G$-representations, and the {\em second} application of Lemma~\ref{lem:vic-lemma}, lifting the latter isomorphism up to an $\NN$-graded $\kk G[\Theta]$-isomorphism of $\kk[\Delta]$ and $\kk[\Sd\Delta]$.
\end{remark}

\subsection{Explicit construction, modulo construction of a basis}\label{sec:construction-mod-basis}

In this section, under the Cohen--Macaulay and coprime hypotheses, we construct an explicit $G$-equivariant $\kk[\Theta]$-module isomorphism between $\kk[\Sd\Delta]$ and $\kk[\Delta]$, given as input a shape-homogeneous $\kk[\Theta]$-module basis for $\kk[\Sd\Delta]$. Constructions of such a basis are given in Section~\ref{sec:construct-a-basis}. 

The results of this section make heavy use of the theory of shape-grading, shape-filtering, and the Garsia transfer developed in Section~\ref{sec:garsia}. To articulate them, we make two additional (hopefully natural) definitions, and prove a lemma about one of them:

\begin{definition}\label{def:shape-filtered-map}
    A $\kk$-linear map $\varphi:\kk[\Sd\Delta]\rightarrow \kk[\Delta]$ is {\em shape-filtered} if for $f\in \kk[\Sd\Delta]_\lambda$ homogeneous of shape $\lambda$, one has 
    \[
    \varphi(f) \in \bigoplus_{\mu\trianglelefteq\lambda} \kk[\Delta]_\mu.
    \]
\end{definition}

\begin{remark}
Definition~\ref{def:shape-filtered-map} could equally well have said that $\varphi$ maps $\bigoplus_{\mu\trianglelefteq\lambda}\kk[\Sd\Delta]_\mu$ to $\bigoplus_{\mu\trianglelefteq\lambda}\kk[\Delta]_\mu$.
\end{remark}

\begin{remark}
    Because the dominance relation is only between partitions of the same natural number, a shape-filtered map is automatically $\NN$-graded. That said, the theory developed here would work equally well if the dominance partial order were replaced with any order on partitions that refines it and such that lower intervals are finite (for example, the degree-lexicographic order; this is the way the theory is developed in \cite{blum-smith}). The corresponding definition of a shape-filtered map would be more relaxed.
\end{remark}

\begin{lemma}\label{lem:inverse-of-filtered-is-filtered}
    The inverse of a $\kk$-linear shape-filtered isomorphism $\varphi:\kk[\Sd\Delta]\rightarrow \kk[\Delta]$ is also shape-filtered.
\end{lemma}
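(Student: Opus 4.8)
The plan is to show that a shape-filtered linear isomorphism $\varphi:\kk[\Sd\Delta]\rightarrow\kk[\Delta]$ has the property that, for each shape $\lambda\in\Part_n$, it restricts to an isomorphism $\bigoplus_{\mu\trianglelefteq\lambda}\kk[\Sd\Delta]_\mu\rightarrow\bigoplus_{\mu\trianglelefteq\lambda}\kk[\Delta]_\mu$, and then to observe that the inverse therefore respects the same filtration, which is exactly the assertion that $\varphi^{-1}$ is shape-filtered.

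First I would fix a natural number $d$ and work entirely within the (finite) degree-$d$ components $\kk[\Sd\Delta]_{(d)}$ and $\kk[\Delta]_{(d)}$ (abusing notation: the span of all standard monomials of degree $d$), since a shape-filtered map is automatically $\NN$-graded and the dominance order only compares partitions of the same integer. These are finite-dimensional $\kk$-vector spaces, and $\varphi$ restricts to a linear isomorphism between them. For each partition $\lambda$ of $d$, set $F^{\Sd}_\lambda := \bigoplus_{\mu\trianglelefteq\lambda}\kk[\Sd\Delta]_\mu$ and $F_\lambda := \bigoplus_{\mu\trianglelefteq\lambda}\kk[\Delta]_\mu$. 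The hypothesis that $\varphi$ is shape-filtered says precisely $\varphi(F^{\Sd}_\lambda)\subseteq F_\lambda$ for every $\lambda$ (it suffices to check this on shape-homogeneous generators, which is the content of the Definition, and then extend additively since $F^{\Sd}_\lambda$ is spanned by shape-homogeneous elements of shapes $\mu\trianglelefteq\lambda$).

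Next I would upgrade the inclusion $\varphi(F^{\Sd}_\lambda)\subseteq F_\lambda$ to an equality. Because $\mathrm{Gr}(\kk[\Sd\Delta])\cong\kk[\Sd\Delta]$ as $\kk$-vector spaces (the shape grading of $\kk[\Sd\Delta]$ is a genuine grading), we have $\dim_\kk F^{\Sd}_\lambda = \sum_{\mu\trianglelefteq\lambda}\dim_\kk\kk[\Sd\Delta]_\mu$, and similarly $\dim_\kk F_\lambda=\sum_{\mu\trianglelefteq\lambda}\dim_\kk\kk[\Delta]_\mu$ (the shape decomposition of $\kk[\Delta]$ is a direct sum of $\kk$-vector spaces by Definition~\ref{def:x-shape}). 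Now $\Garsia:\kk[\Sd\Delta]\rightarrow\kk[\Delta]$ is a $\kk$-linear isomorphism that preserves shape (Observation~\ref{obs:garsia-preserves-shape}), so $\dim_\kk\kk[\Sd\Delta]_\mu=\dim_\kk\kk[\Delta]_\mu$ for every $\mu$; hence $\dim_\kk F^{\Sd}_\lambda=\dim_\kk F_\lambda$. Since $\varphi$ is injective, $\varphi(F^{\Sd}_\lambda)$ has the same dimension as $F^{\Sd}_\lambda$, hence the same dimension as $F_\lambda$, and combined with $\varphi(F^{\Sd}_\lambda)\subseteq F_\lambda$ we conclude $\varphi(F^{\Sd}_\lambda)=F_\lambda$.

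Finally, applying $\varphi^{-1}$ to the equality $\varphi(F^{\Sd}_\lambda)=F_\lambda$ gives $\varphi^{-1}(F_\lambda)=F^{\Sd}_\lambda$ for every $\lambda$. In particular, if $g\in\kk[\Delta]_\lambda$ is homogeneous of shape $\lambda$, then $g\in F_\lambda$, so $\varphi^{-1}(g)\in F^{\Sd}_\lambda=\bigoplus_{\mu\trianglelefteq\lambda}\kk[\Sd\Delta]_\mu$, which is exactly the condition in Definition~\ref{def:shape-filtered-map} for $\varphi^{-1}$ to be shape-filtered. The main (and essentially only) obstacle is the dimension bookkeeping in the middle step — one must be careful that the shape decompositions on both sides are honest $\kk$-vector space direct sums and that $\Garsia$ supplies the matching of dimensions shape-by-shape; once that is in hand, the argument is the standard fact that a filtration-preserving linear isomorphism between spaces with filtrations of equal (finite) dimension in each filtration level automatically has a filtration-preserving inverse.
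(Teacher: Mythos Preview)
Your proof is correct and takes essentially the same approach as the paper: both use the Garsia transfer to match $\dim_\kk\kk[\Sd\Delta]_\mu=\dim_\kk\kk[\Delta]_\mu$, conclude that the injective filtered map $\varphi$ restricts to an isomorphism on each filtration piece $\bigoplus_{\mu\trianglelefteq\lambda}$, and then observe that the inverse therefore respects the same filtration. Your explicit restriction to a fixed degree $d$ is a harmless extra step (the paper leaves this implicit since dominance only compares partitions of the same integer anyway).
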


\begin{proof}
    This is a routine counting argument. For any $\lambda\in\Part_n$, the finite-dimensional $\kk$-vector spaces $\kk[\Delta]_\lambda$ and $\kk[\Sd\Delta]_\lambda$ are ($\kk$-linearly) isomorphic via $\Garsia$. Thus, $\bigoplus_{\mu \trianglelefteq\lambda} \kk[\Sd\Delta]_\mu$ and $\bigoplus_{\mu \trianglelefteq\lambda} \kk[\Delta]_\mu$ have the same (finite) $\kk$-dimension. The restriction of $\varphi$ to $\bigoplus_{\mu \trianglelefteq\lambda} \kk[\Sd\Delta]_\mu$ is injective because $\varphi$ is a $\kk$-isomorphism, and it maps into $\bigoplus_{\mu \trianglelefteq\lambda} \kk[\Delta]_\mu$ because $\varphi$ is shape-filtered. Thus it induces a $\kk$-linear isomorphism of $\bigoplus_{\mu \trianglelefteq\lambda} \kk[\Sd\Delta]_\mu$ with $\bigoplus_{\mu \trianglelefteq\lambda} \kk[\Delta]_\mu$. Therefore, its inverse maps $\bigoplus_{\mu \trianglelefteq\lambda} \kk[\Delta]_\mu$ into (in fact, bijectively onto) $\bigoplus_{\mu \trianglelefteq\lambda} \kk[\Sd\Delta]_\mu$.
\end{proof}

\begin{definition}\label{def:equivariant-in-top-shape}
    Let $G\subset\Aut(\Delta)$ be a group of automorphisms. A shape-filtered  $\kk$-linear map $\varphi:\kk[\Sd\Delta]\rightarrow \kk[\Delta]$ is {\em $G$-equivariant in the top shape} if for any $f\in \kk[\Sd\Delta]_\lambda$ homogeneous of shape $\lambda$, and  any $\sigma \in G$, one has
    \[
    \sigma \cdot \varphi(f) - \varphi(\sigma\cdot f) \in \bigoplus_{\mu \triangleleft\lambda} \kk[\Delta]_\mu.
    \]
    (Note the strict dominance in the direct sum.)  
\end{definition}

\begin{convention}
    To emphasize that a map is equivariant, and not only equivariant in the top shape, we will refer to it below as {\em fully} or {\em completely} equivariant.
\end{convention}

\begin{remark} Definition~\ref{def:equivariant-in-top-shape} could equally well have said that $\sigma\cdot \varphi(f)$ and $\varphi(\sigma\cdot f)$ have equal projections to $\kk[\Delta]_\lambda$.
\end{remark}

\begin{example}
    An example illustrating both definitions is the Garsia transfer $\Garsia$, as it is even shape-graded (i.e., $\Garsia(\kk[\Sd\Delta]_\lambda) \subset \kk[\Delta]_\lambda$), and fully $G$-equivariant. (Indeed, a shape-graded map that is equivariant in the top shape is automatically fully equivariant.) A more substantive example (i.e., shape-filtered but not shape graded, and equivariant in the top shape but not fully equivariant) is given by the map $\Phi$ defined below in equation \eqref{eq:non-equivar-iso}; that it is an example is proven in Proposition~\ref{prop:basis-to-iso}.
\end{example}

With these definitions, the steps of the construction of an explicit $\kk G[\Theta]$-module isomorphism are:
\begin{enumerate}
    \item Show that, from a shape-homogeneous $\kk[\Theta]$-module basis for $\kk[\Sd\Delta]$ one can construct a (not necessarily equivariant) $\kk[\Theta]$-module isomorphism $\Phi:\kk[\Sd\Delta]\rightarrow \kk[\Delta]$ that is shape-filtered and $G$-equivariant in the top shape.\label{step:basis-to-iso}
    \item Show that, if the characteristic of $\kk$ does not divide the order of $G$, then $\Phi$ as in Step~\ref{step:basis-to-iso} can be deformed into an equivariant isomorphism $\tilde \Phi$ by averaging over $G$.\label{step:iso-to-equivariant}
\end{enumerate}
Step~\ref{step:basis-to-iso} involves the Cohen--Macaulay assumption (in order for the $\kk[\Theta]$-module basis to exist) but not the coprime characteristic assumption. On the other hand, Step~\ref{step:iso-to-equivariant} requires the coprime characteristic assumption but does not involve the Cohen--Macaulay assumption. 

To carry out Step~\ref{step:basis-to-iso}, suppose $b_1,\dots,b_r\in \kk[\Sd\Delta]$ constitute a shape-homogeneous $\kk[\Theta]$-basis of $\kk[\Sd\Delta]$. (Thus, we are requiring that $\Delta$ be Cohen--Macaulay; but we do not yet assume that $\kk$ has characteristic coprime to $|G|$.) Then, by Theorem~\ref{thm:transfer-bases}, $\Garsia(b_1),\dots,\Garsia(b_r)\in \kk[\Delta]$ form a (shape-homogeneous) $\kk[\Theta]$-basis of $\kk[\Delta]$. Immediately we can write down a (non-equivariant) isomorphism: define 
\[
\Phi: \kk[\Sd\Delta] \rightarrow \kk[\Delta]
\]
by mapping 
\begin{equation}\label{eq:non-equivar-iso}
b_j \mapsto \Garsia(b_j)
\end{equation}
for $j=1,\dots,r$ and $\kk[\Theta]$-linearly extending. We now prove that the $\Phi$ so constructed is shape-filtered, and equivariant in the top shape. This will be deduced from the following preparatory lemma.

\begin{lemma}\label{lem:match-garsia-in-top-shape}
    The $\Phi$ constructed above in \eqref{eq:non-equivar-iso} is shape-filtered, and additionally, it agrees with the Garsia transfer in the top shape, i.e., for $f\in \kk[\Sd\Delta]_\lambda$ homogeneous of shape $\lambda$, we have
    \[
    \Phi(f) - \Garsia(f) \in \bigoplus_{\mu \triangleleft\lambda} \kk[\Delta]_\mu.
    \]
\end{lemma}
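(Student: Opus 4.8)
The plan is to write an arbitrary shape-homogeneous $f$ in terms of the basis $b_1,\dots,b_r$, track what happens under $\Phi$, and compare with $\Garsia(f)$ using the fact (Lemma~\ref{lem:homomorphism-in-top-shape}, Observation~\ref{obs:any-number-of-factors}) that $\Garsia$ is a ``homomorphism in the top shape''. Since $b_1,\dots,b_r$ is a $\kk[\Theta]$-basis, we can write $f = \sum_{j=1}^r p_j(\theta_1,\dots,\theta_n)\,b_j$ for unique $p_j\in\kk[\Theta]$; applying $\Psi^{-1}$ (or directly, thinking of $\kk[\Theta]$ and $\kk[\Gamma]$ as identified) we get $\Garsia^{-1}(?)$-style bookkeeping, but the cleanest route is: since $b_j$ is shape-homogeneous, say of shape $\nu_j$, and $\kk[\Sd\Delta]$ is \emph{graded} by shape (Proposition~\ref{prop:parameter-monom-every-term} tells us a $\gamma$-monomial $\gamma_1^{a_1}\cdots\gamma_n^{a_n}$ is shape-homogeneous of shape $a_1(1^1)+\dots+a_n(1^n)$), we may assume every $\theta$-monomial appearing in $p_j b_j$, transported back to $\kk[\Sd\Delta]$, has a fixed shape. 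More precisely: the element $\sum_j \Psi^{-1}(p_j)\,b_j \in\kk[\Sd\Delta]$ equals $\Garsia^{-1}(?)$—no, rather, we should start from $f\in\kk[\Sd\Delta]_\lambda$ itself and expand $f = \sum_j q_j(\gamma_1,\dots,\gamma_n)\,b_j$ in $\kk[\Sd\Delta]$, where by shape-homogeneity of both sides we may take each term $q_j(\gamma)b_j$ to be shape-homogeneous of shape $\lambda$.

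\textbf{Key steps.} First, expand $f\in\kk[\Sd\Delta]_\lambda$ as $f=\sum_{j=1}^r q_j(\gamma_1,\dots,\gamma_n)b_j$ with each summand (and indeed each monomial-in-$\gamma$ times $b_j$) shape-homogeneous of shape $\lambda$; this is possible because $\kk[\Sd\Delta]$ is $\Part_n$-graded, $\kk[\Gamma]$ is a graded subring, and the $b_j$ are shape-homogeneous. Second, apply $\Phi$: by construction $\Phi$ is $\kk[\Theta]$-linear and $\Phi(b_j)=\Garsia(b_j)$, and under the identification $\Psi:\kk[\Gamma]\to\kk[\Theta]$ the coefficient $q_j(\gamma)$ becomes $q_j(\theta)$, so $\Phi(f)=\sum_j q_j(\theta_1,\dots,\theta_n)\,\Garsia(b_j)$. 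Third, apply $\Garsia$ to $f$ directly and compare: by Lemma~\ref{lem:homomorphism-in-top-shape} and Observation~\ref{obs:any-number-of-factors} (together with $\Garsia(\gamma_i)=\theta_i$, Example~\ref{ex:parameters-under-garsia}), for each $j$ we have
\[
q_j(\theta_1,\dots,\theta_n)\,\Garsia(b_j) - \Garsia\bigl(q_j(\gamma_1,\dots,\gamma_n)\,b_j\bigr)\in\bigoplus_{\mu\triangleleft\lambda}\kk[\Delta]_\mu,
\]
since $q_j(\gamma)b_j$ has shape $\lambda$. Summing over $j$, and using $\kk$-linearity of $\Garsia$ to get $\sum_j\Garsia(q_j(\gamma)b_j)=\Garsia(f)$, yields $\Phi(f)-\Garsia(f)\in\bigoplus_{\mu\triangleleft\lambda}\kk[\Delta]_\mu$, which is the second assertion. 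Fourth, shape-filteredness is an immediate consequence: $\Garsia$ is shape-graded (Observation~\ref{obs:garsia-preserves-shape}), so $\Garsia(f)\in\kk[\Delta]_\lambda$, and adding an element of $\bigoplus_{\mu\triangleleft\lambda}\kk[\Delta]_\mu$ keeps us inside $\bigoplus_{\mu\trianglelefteq\lambda}\kk[\Delta]_\mu$; hence $\Phi(f)\in\bigoplus_{\mu\trianglelefteq\lambda}\kk[\Delta]_\mu$.

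\textbf{Main obstacle.} The only point requiring care is the first step—justifying that the expansion $f=\sum_j q_j(\gamma)b_j$ can be chosen so that each piece $q_j(\gamma)b_j$ is shape-homogeneous of shape $\lambda$. This follows from the uniqueness of the expansion over the basis $b_1,\dots,b_r$ combined with the $\Part_n$-grading: projecting the equation $f=\sum_j q_j(\gamma)b_j$ onto the shape-$\lambda$ component of $\kk[\Sd\Delta]$ and using that the shape-$\lambda$ part of $q_j(\gamma)$ times $b_j$ lands in a well-defined shape, one replaces each $q_j$ by its relevant homogeneous part; since $f$ already lies in $\kk[\Sd\Delta]_\lambda$, nothing is lost. (Equivalently: $\kk[\Sd\Delta]$ being free and shape-graded over the shape-graded ring $\kk[\Gamma]$, with shape-homogeneous basis, the graded component $\kk[\Sd\Delta]_\lambda$ is spanned by the shape-$\lambda$ components of $\gamma$-monomials times $b_j$.) Everything else is a direct application of the already-established top-shape homomorphism property of $\Garsia$, so this is essentially bookkeeping once that first reduction is in place.
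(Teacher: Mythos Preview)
Your proposal is correct and follows essentially the same approach as the paper: expand $f=\sum_j q_j(\gamma)b_j$ with each summand shape-homogeneous of shape $\lambda$, then compare $\Phi(f)=\sum_j q_j(\theta)\Garsia(b_j)$ with $\Garsia(f)=\sum_j \Garsia(q_j(\gamma)b_j)$ via Lemma~\ref{lem:homomorphism-in-top-shape} and Observation~\ref{obs:any-number-of-factors}. The only cosmetic difference is that the paper deduces shape-filteredness first, by invoking Proposition~\ref{prop:x-filtered} directly, whereas you deduce it afterward from the top-shape agreement together with the shape-gradedness of $\Garsia$; both routes are equally short.
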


\begin{proof}
    Because $b_1,\dots,b_r$ form a shape-homogeneous $\kk[\Theta]\cong \kk[\Gamma]$-module basis for the $\Part_n$-graded ring $\kk[\Sd\Delta]$, there exist $n$-variate polynomials $p_1,\dots,p_r$ such that
    \[
    f = \sum_{j=1}^r p_j(\gamma_1,\dots,\gamma_n)b_j,
    \]
    where each polynomial expression $p_j(\gamma_1,\dots,\gamma_n)$, viewed as an element of $\kk[\Sd\Delta]$, is shape-homogeneous such that
    \[
    \shape(p_j(\gamma_1,\dots,\gamma_n)) + \shape(b_j) = \lambda.
    \]
    That $\Phi$ is shape-filtered now follows immediately from Proposition~\ref{prop:x-filtered} by induction. That it agrees with the Garsia transfer in the top shape follows by comparing
    \[
    \Garsia(f) = \sum_{j=1}^r \Garsia(p_j(\gamma_1,\dots,\gamma_n)b_j)
    \]
    with
    \begin{align*}
    \Phi(f) &= \sum_1^r p_j(\theta_1,\dots,\theta_n) \Garsia(b_j)\\
    &= \sum_1^r p_j(\Garsia(\gamma_1),\dots,\Garsia(\gamma_n))\Garsia(b_j)
    \end{align*}
    using Lemma~\ref{lem:homomorphism-in-top-shape} and Observation~\ref{obs:any-number-of-factors}.
\end{proof}

We can now complete Step~\ref{step:basis-to-iso}.

\begin{prop}\label{prop:basis-to-iso}
    Suppose the boolean complex $\Delta$ is Cohen--Macaulay over $\kk$, and let $b_1,\dots,b_r$ be a shape-homogeneous basis for $\kk[\Sd\Delta]$ as $\kk[\Theta]$-module. Let $G\subset\Aut(\Delta)$ be a group of automorphisms. Then the isomorphism $\Phi$ constructed above in \eqref{eq:non-equivar-iso} is shape-filtered and  $G$-equivariant in the top shape.
\end{prop}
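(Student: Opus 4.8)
The shape-filtered part is already in hand: it is exactly the first assertion of Lemma~\ref{lem:match-garsia-in-top-shape}. So the plan is to deduce $G$-equivariance in the top shape from the \emph{second} assertion of that lemma, namely that $\Phi$ agrees with the Garsia transfer $\Garsia$ in the top shape, together with two facts already established: that $\Garsia$ is $G$-equivariant (Observation~\ref{obs:garsia-equivariant}), and that automorphisms preserve shape, both in $\kk[\Sd\Delta]$ (equation~\eqref{eq:autos-preserve-shape}) and in $\kk[\Delta]$ (the displayed identity following Definition~\ref{def:x-shape}). The slogan is: $\Phi$ is a top-shape approximation to $\Garsia$, and $\Garsia$ is exactly equivariant, so $\Phi$ is equivariant up to strictly lower shape.

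First I would fix $f\in \kk[\Sd\Delta]_\lambda$ homogeneous of shape $\lambda$ and $\sigma\in G$, and observe that $\sigma\cdot f$ is again homogeneous of shape $\lambda$ since $\sigma$ preserves shape. Then Lemma~\ref{lem:match-garsia-in-top-shape} applied to $f$ and to $\sigma\cdot f$ gives
\[
\Phi(f)-\Garsia(f)\in \bigoplus_{\mu\triangleleft\lambda}\kk[\Delta]_\mu
\qquad\text{and}\qquad
\Phi(\sigma\cdot f)-\Garsia(\sigma\cdot f)\in \bigoplus_{\mu\triangleleft\lambda}\kk[\Delta]_\mu.
\]
Applying $\sigma$ to the first membership, and using that the $G$-action on $\kk[\Delta]$ preserves shape (so it maps $\bigoplus_{\mu\triangleleft\lambda}\kk[\Delta]_\mu$ into itself), yields $\sigma\cdot\Phi(f)-\sigma\cdot\Garsia(f)\in \bigoplus_{\mu\triangleleft\lambda}\kk[\Delta]_\mu$. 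Meanwhile $\sigma\cdot\Garsia(f)=\Garsia(\sigma\cdot f)$ by Observation~\ref{obs:garsia-equivariant}. Adding the appropriate combination of these three relations,
\[
\sigma\cdot\Phi(f)-\Phi(\sigma\cdot f)
=\bigl(\sigma\cdot\Phi(f)-\sigma\cdot\Garsia(f)\bigr)+\bigl(\Garsia(\sigma\cdot f)-\Phi(\sigma\cdot f)\bigr),
\]
and the right-hand side lies in $\bigoplus_{\mu\triangleleft\lambda}\kk[\Delta]_\mu$, which is precisely $G$-equivariance in the top shape (Definition~\ref{def:equivariant-in-top-shape}).

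I do not anticipate a real obstacle here; the content is entirely bookkeeping once Lemma~\ref{lem:match-garsia-in-top-shape} is available. The one small point to be careful about is that one must invoke preservation of shape by the $G$-action on \emph{both} rings---on $\kk[\Sd\Delta]$ to know $\sigma\cdot f\in\kk[\Sd\Delta]_\lambda$ so that the lemma applies to it, and on $\kk[\Delta]$ to push the error term $\bigoplus_{\mu\triangleleft\lambda}\kk[\Delta]_\mu$ through $\sigma$. Both are recorded earlier in the paper, so the proof is short.
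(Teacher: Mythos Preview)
Your proof is correct and essentially identical to the paper's: both reduce the shape-filtered claim to Lemma~\ref{lem:match-garsia-in-top-shape} and then use a telescoping decomposition of $\sigma\cdot\Phi(f)-\Phi(\sigma\cdot f)$ through $\Garsia$, invoking the $G$-equivariance of $\Garsia$ and the shape-preservation of the $G$-action on both rings. The paper writes this as a ``three-epsilon'' sum with a vanishing middle term $\sigma\cdot\Garsia(f)-\Garsia(\sigma\cdot f)$, whereas you substitute the equivariance directly to get two terms, but the argument is the same.
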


\begin{proof}
    That $\Phi$ is shape-filtered was already proven in Lemma~\ref{lem:match-garsia-in-top-shape}. To prove equivariance in the top shape, we use a ``three-epsilon" argument. We have
    \[
    \sigma\cdot \Phi(f) - \Phi(\sigma\cdot f) = \left(\sigma \cdot \Phi(f) - \sigma \cdot \Garsia(f)\right) + \left(\sigma \cdot \Garsia(f) - \Garsia(\sigma \cdot f) \right) + \left(\Garsia(\sigma \cdot f) - \Phi(\sigma \cdot f)\right).
    \]
    The middle summand on the right side of the equality is zero because the Garsia transfer is (fully) $G$-equivariant (Observation~\ref{obs:garsia-equivariant}). Because $\sigma$ is shape-preserving by \eqref{eq:autos-preserve-shape}, we have from Lemma~\ref{lem:match-garsia-in-top-shape} that the first and last summands on the right are both contained in $\bigoplus_{\mu \triangleleft\lambda} \kk[\Delta]_\mu$. The latter is an abelian group, so we can conclude.
\end{proof}

We turn to Step~\ref{step:iso-to-equivariant}. We suspend the Cohen--Macaulay hypothesis on $\Delta$, but must now instate the coprime characteristic hypothesis. The takeaway is that any $\kk[\Theta]$-module isomorphism between $\kk[\Sd\Delta]$ and $\kk[\Delta]$ (whether or not they are free over $\kk[\Theta]$) that is shape-filtered and equivariant in the top shape remains an isomorphism (and becomes fully equivariant) after averaging over $G$.

\begin{prop}\label{prop:iso-to-equivariant}
    Suppose $\Delta$ is a boolean complex, not necessarily Cohen--Macaulay, but such that there exists a $\kk[\Theta]$-module isomorphism
    \[
    \Xi:\kk[\Sd\Delta] \rightarrow \kk[\Delta]
    \]
    that is shape-filtered and $G$-equivariant in the top shape. Let $G\subset \Aut(\Delta)$ be a group of automorphisms, and assume that the order of $G$ is not divisible by the characteristic of $\kk$.  Then the map
    \[
    \Xi^\star:\kk[\Sd\Delta] \rightarrow \kk[\Delta]
    \]
    defined on $f\in \kk[\Sd\Delta]$ by
    \[
    \Xi^\star(f) := \frac{1}{|G|}\sum_{\sigma\in G} \sigma \cdot [\Xi(\sigma^{-1}\cdot f)]
    \]
    is a $G$-equivariant $\kk[\Theta]$-module isomorphism.
\end{prop}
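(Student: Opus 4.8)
The plan is to dispatch the formal properties of $\Xi^\star$ first, reduce the statement to bijectivity, and then obtain bijectivity by exhibiting $\Xi^\star$ as $\Xi$ composed with a unipotent operator. First, $\Xi^\star$ is visibly $\kk$-linear, and it is $G$-equivariant by the usual reindexing of the averaging sum: for $\tau\in G$,
\[
\Xi^\star(\tau\cdot f)=\frac{1}{|G|}\sum_{\sigma\in G}\sigma\cdot[\Xi(\sigma^{-1}\tau\cdot f)]=\frac{1}{|G|}\sum_{\rho\in G}(\tau\rho)\cdot[\Xi(\rho^{-1}\cdot f)]=\tau\cdot\Xi^\star(f),
\]
where $\rho=\tau^{-1}\sigma$. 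It is a $\kk[\Theta]$-module map because $G$ fixes $\kk[\Theta]\subset\kk[\Delta]$ and $\kk[\Gamma]\subset\kk[\Sd\Delta]$ pointwise (the $\theta_j$ and $\gamma_j$ are $G$-invariant), so multiplication by a scalar from $\kk[\Theta]$ (acting on $\kk[\Sd\Delta]$ through $\Psi^{-1}$, per Setup~\ref{set:theta-module}) commutes with each $\sigma\cdot(-)$ and $\sigma^{-1}\cdot(-)$ and may be pulled past $\Xi$ using the $\kk[\Theta]$-linearity of $\Xi$; pulling it out of the sum gives $\Xi^\star(pf)=p\,\Xi^\star(f)$. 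Since the inverse of a bijective $\kk[\Theta]$-linear map is again $\kk[\Theta]$-linear, it remains only to show $\Xi^\star$ is bijective.

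Second, I would record that $\Xi^\star$ is shape-filtered and that it agrees with $\Xi$ in the top shape. Both are immediate from \eqref{eq:autos-preserve-shape} together with the hypotheses on $\Xi$. If $f\in\kk[\Sd\Delta]_\lambda$, then each $\sigma^{-1}\cdot f$ is again homogeneous of shape $\lambda$, so $\Xi(\sigma^{-1}\cdot f)\in\bigoplus_{\mu\trianglelefteq\lambda}\kk[\Delta]_\mu$ (as $\Xi$ is shape-filtered) and $\sigma\cdot[\Xi(\sigma^{-1}\cdot f)]$ lies in the same subspace (automorphisms preserve shape); averaging, $\Xi^\star(f)\in\bigoplus_{\mu\trianglelefteq\lambda}\kk[\Delta]_\mu$. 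Moreover, applying Definition~\ref{def:equivariant-in-top-shape} to $g=\sigma^{-1}\cdot f$ gives $\sigma\cdot\Xi(\sigma^{-1}\cdot f)-\Xi(f)\in\bigoplus_{\mu\triangleleft\lambda}\kk[\Delta]_\mu$ for every $\sigma$, and averaging this identity — where the coefficient $1/|G|$ is a legitimate element of $\kk$ by the coprimality hypothesis — yields
\[
\Xi^\star(f)-\Xi(f)\in\bigoplus_{\mu\triangleleft\lambda}\kk[\Delta]_\mu,
\]
i.e. $\Xi^\star$ and $\Xi$ induce the same map on $\kk[\Delta]_\lambda$.

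Third, for bijectivity I would set $N:=\Xi^{-1}\circ(\Xi^\star-\Xi):\kk[\Sd\Delta]\to\kk[\Sd\Delta]$ (legitimate since $\Xi$ is in particular a $\kk$-linear isomorphism), so that $\Xi^\star=\Xi\circ(\operatorname{id}+N)$. By the previous paragraph $\Xi^\star-\Xi$ sends a shape-$\lambda$ homogeneous element into $\bigoplus_{\mu\triangleleft\lambda}\kk[\Delta]_\mu$, and by Lemma~\ref{lem:inverse-of-filtered-is-filtered} the inverse $\Xi^{-1}$ is shape-filtered, hence sends $\bigoplus_{\mu\triangleleft\lambda}\kk[\Delta]_\mu$ into $\bigoplus_{\mu\triangleleft\lambda}\kk[\Sd\Delta]_\mu$; thus $N$ carries each $\kk[\Sd\Delta]_\lambda$ strictly downward with respect to dominance. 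As a composite of shape-filtered maps, $N$ is shape-filtered, so in particular $\NN$-graded; in a fixed $\NN$-degree $d$ the relevant index set is the finite poset of partitions of $d$, and iterating shows $N^k$ maps $\kk[\Sd\Delta]_\lambda$ into the span of those $\kk[\Sd\Delta]_\mu$ with $\mu$ at the bottom of a strictly descending chain of length $k+1$ below $\lambda$; once $k$ exceeds the length of the longest such chain this image is $0$. Hence $N$ is locally nilpotent, $\operatorname{id}+N$ is invertible with inverse $\sum_{i\ge0}(-N)^i$ (a finite sum in each degree), and therefore $\Xi^\star=\Xi\circ(\operatorname{id}+N)$ is a bijection. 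Combined with the first paragraph, $\Xi^\star$ is a $G$-equivariant $\kk[\Theta]$-module isomorphism.

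The only step with any content is the last one, and even there the work is purely organizational: one must be slightly careful that ``$N$ lowers shape'' really does force nilpotence, since dominance on partitions of a fixed $d$ is merely a partial order; this is handled by the finiteness — in particular the bounded chain length — of that poset, after the $\NN$-gradedness of $N$ reduces matters to one degree at a time. The rest — $\kk$- and $\kk[\Theta]$-linearity, $G$-equivariance, shape-filteredness, and agreement with $\Xi$ in the top shape — is a direct unwinding of the definitions, the $G$-invariance of the parameters, and the ability to divide by $|G|$.
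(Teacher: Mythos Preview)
Your proof is correct. The formal properties (equivariance, $\kk[\Theta]$-linearity, shape-filteredness, agreement with $\Xi$ in the top shape) are handled exactly as in the paper; the divergence is in how bijectivity is obtained. The paper argues that a shape-filtered map is $\NN$-graded, so injectivity and surjectivity are equivalent by equality of Hilbert series, and then proves surjectivity by a minimal-counterexample argument on dominance order: if $\lambda$ is dominance-minimal with $\kk[\Delta]_\lambda$ not in the image, take $f'=\Xi(f)\in\kk[\Delta]_\lambda$ outside the image and use \eqref{eq:average-agrees-in-top-shape} to write $f'=\Xi^\star(f)-(\text{something of strictly smaller shape})$, contradicting minimality. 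Your route instead factors $\Xi^\star=\Xi\circ(\mathrm{id}+N)$ with $N=\Xi^{-1}\circ(\Xi^\star-\Xi)$ strictly shape-lowering, hence nilpotent in each $\NN$-degree, so $\mathrm{id}+N$ is unipotent and invertible. Both arguments rest on the same two ingredients---the ``agreement in top shape'' relation and Lemma~\ref{lem:inverse-of-filtered-is-filtered}---and the finiteness of the dominance poset in each degree; yours packages the induction as nilpotence of a single operator, which is slightly slicker and yields an explicit inverse $\bigl(\sum_{i\ge0}(-N)^i\bigr)\circ\Xi^{-1}$, while the paper's version avoids naming $N$ at the cost of the extra Hilbert-series step linking injectivity and surjectivity.
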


\begin{proof}
    By construction, $\Xi^\star$ is $G$-equivariant and a $\kk[\Theta]$-module map. The point is to show that it is an isomorphism. Note that because $\Xi$ is shape-filtered and the action of $G$ is shape-preserving on both $\kk[\Sd\Delta]$ and $\kk[\Delta]$, each summand $\sigma \cdot [\Xi(\sigma^{-1}\cdot -)]$ of $\Xi^\star$ is shape-filtered. Using that $\bigoplus_{\mu \trianglelefteq\lambda} \kk[\Delta]_\mu$ is a $\kk$-vector space, it follows that $\Xi^\star$ is shape-filtered.

    Let $f\in \kk[\Sd\Delta]_\lambda$ be homogeneous of shape $\lambda$, and let $\sigma\in G$ be arbitrary. Applying the facts that $\sigma,\sigma^{-1}$ preserve shape and $\Xi$ is equivariant in the top shape, we find that 
    \[
    \sigma\cdot [\Xi(\sigma^{-1}\cdot f)] - \Xi(f) = \sigma\cdot [\Xi(\sigma^{-1}\cdot f)] - \Xi(\sigma\cdot\sigma^{-1}\cdot f)\in \bigoplus_{\mu \triangleleft\lambda} \kk[\Delta]_\mu.
    \]
    Averaging the left-side expression over $G$ and using that $\bigoplus_{\mu \triangleleft\lambda} \kk[\Delta]_\mu$ is a $\kk$-vector space, we find that 
    \begin{equation}\label{eq:average-agrees-in-top-shape}
    \Xi^\star(f) - \Xi(f)\in \bigoplus_{\mu \triangleleft\lambda} \kk[\Delta]_\mu.
    \end{equation}
    Immediately, the same statement holds if $f$ is not shape-homogeneous but merely contained in $\bigoplus_{\mu \trianglelefteq\lambda} \kk[\Sd\Delta]_\mu,$ by splitting $f$ into shape-homogeneous components and applying \eqref{eq:average-agrees-in-top-shape} to each component.

    As remarked after Definition~\ref{def:shape-filtered-map}, a shape-filtered map is $\NN$-graded. Therefore, since $\kk[\Sd\Delta]$ and $\kk[\Delta]$ have the same $\NN$-graded Hilbert series, injectivity and surjectivity of $\Xi^\star$ imply each other. Thus it suffices to prove either one. We prove surjectivity.

    For a contradiction, suppose $\Xi^\star$ is not surjective, and let $\lambda\in \Part_n$ be dominance-minimal such that $\kk[\Delta]_\lambda$ is not contained in the image of $\Xi^\star$. Find $f'\in \kk[\Delta]_\lambda$ which does not lie in this image. By Lemma~\ref{lem:inverse-of-filtered-is-filtered}, we know
    \[
    f:=\Xi^{-1}(f')\in \bigoplus_{\mu \trianglelefteq\lambda} \kk[\Sd\Delta]_\mu.
    \]
    By \eqref{eq:average-agrees-in-top-shape} and the sentence after it, we have 
    \[
    \Xi^\star(f) - f' \in \bigoplus_{\mu \triangleleft\lambda} \kk[\Delta]_\mu.
    \]
    But meanwhile, $\Xi^\star$ is surjective onto $\bigoplus_{\mu \triangleleft\lambda} \kk[\Delta]_\mu,$ by the minimality of $\lambda$. This is a contradiction because $\Xi^\star(f)$ is contained in the image of $\Xi^\star$, but $f'$ was supposed not to be. This completes the proof.
\end{proof}

\begin{proof}[Proof of Theorem~\ref{thm:yes-CM-in-coprime} modulo construction of a basis]
    We combine Steps~\ref{step:basis-to-iso} and \ref{step:iso-to-equivariant}. Proposition \ref{prop:basis-to-iso} shows that given a shape-homogeneous $\kk[\Theta]$-module basis for $\kk[\Sd\Delta]$, the $\kk[\Theta]$-module map given in \eqref{eq:non-equivar-iso} is a shape-filtered  isomorphism $G$-equivariant in the top shape, and then Proposition~\ref{prop:iso-to-equivariant} averages this isomorphism across the group to obtain a fully $G$-equivariant $\kk[\Theta]$-module isomorphism.
\end{proof}

\subsection{Construction of a basis}\label{sec:construct-a-basis}

In this section, we complete the proof of Theorem~\ref{thm:yes-CM-in-coprime} by showing how to compute a shape-homogeneous basis for $\kk[\Sd\Delta]$ as $\kk[\Theta]\cong \kk[\Gamma]$-module (under the hypothesis that the former is Cohen--Macaulay). We give two independent methods for doing this. 

The first is a routine application of Gr\"obner bases and we describe it only in brief outline. We include it because it makes Theorem~\ref{thm:yes-CM-in-coprime} fully constructive via well-known tools. 

The second, Algorithm~\ref{alg:basis} below, is the primary goal of this section. It is a purely linear-algebraic method that avoids any Gr\"obner basis calculations,  essentially due to Adriano Garsia in \cite{garsia}, the same paper that introduced the Garsia transfer. We extend the proof to the setting in which $\Sd\Delta$ is replaced with an arbitrary pure, balanced boolean complex, clarifying an ambiguity in \cite{garsia} in the process.

Garsia's algorithm also implicitly contains an algorithm to represent a given element of $\kk[\Sd\Delta]$ as a $\kk[\Theta]\cong \kk[\Gamma]$-linear combination of the elements of the bases they provide. This is drawn out at the end of the section, fulfilling the promise made in Example~\ref{ex:proof-illlustration}.

\paragraph{Gr\"obner basis method.}\label{sec:grobner-basis}
View $\kk$ as the $\kk[\Gamma]$-module $\kk[\Gamma]/\Gamma\kk[\Gamma]$. Because $\Gamma$ is a homogeneous system of parameters for $\kk[\Sd\Delta]$, the graded quotient ring $\kk[\Sd\Delta]/\Gamma\kk[\Sd\Delta] = \kk[\Sd\Delta]\otimes_{\kk[\Gamma]} \kk$ has finite $\kk$-dimension, and any homogeneous $\kk$-spanning set for it lifts to a homogeneous $\kk[\Gamma]$-module generating set for $\kk[\Sd\Delta]$, by the graded Nakayama lemma. As we are assuming $\kk[\Sd\Delta]$ is Cohen--Macaulay, it is a free $\kk[\Theta]\cong \kk[\Gamma]$-module; then $\kk[\Sd\Delta]\otimes_{\kk[\Gamma]} \kk$ is a $\kk$-vector space of the same rank. Thus, any homogeneous $\kk$-basis for $\kk[\Sd\Delta]/(\Gamma)$ lifts to a $\kk[\Theta]\cong \kk[\Gamma]$-basis of $\kk[\Sd\Delta]$. Our work is thus reduced to providing a shape-homogeneous $\kk$-basis for $\kk[\Sd\Delta]/(\Gamma)$. (Note that because the $\gamma_i$ are shape-homogeneous, this latter quotient inherits a grading by shape.) The following is a standard procedure for computing a $\kk$-basis for a finitely generated $\kk$-algebra for which we have an explicit presentation.

The ring $\kk[\Sd\Delta]/(\Gamma)$ is presented as follows: we have generators $y_\alpha$, $\alpha\in P(\Delta)$, and relations of two types:
\begin{enumerate}
    \item $y_\alpha y_\beta$ for any pair $\alpha,\beta$ of incomparable elements of $P(\Delta)$, \label{rel:incomparable} and
    \item $\gamma_j := \sum_{\rk(\alpha)=j} y_\alpha$ for $j=1,\dots, n$ (where, as usual, ranks are calculated in $\widehat{P}(\Delta)$).\label{rel:gammas}
\end{enumerate}
These relations generate an ideal $I^\Delta_n$ in the parent polynomial ring $\kk[\{y_\alpha\}_{\alpha \in P(\Delta)}]$ of $\kk[\Sd\Delta]$ (the notation roughly follows \cite{adams2023further}). One now chooses any monomial order on this polynomial ring, and computes a Gr\"obner basis of $I^\Delta_n$, which  determines the initial ideal $\operatorname{in}(I^\Delta_n)$.  Then, by  Gr\"obner basis theory,  the complement of $\operatorname{in}(I^\Delta_n)$ in the set of monomials on the $y_\alpha$'s yields a $\kk$-basis for $\kk[\Sd\Delta]/(\Gamma)$. Because this basis consists of monomials in the $y_\alpha$'s, it is automatically shape-homogeneous. \qed

\paragraph{Garsia's linear-algebraic method.}\label{sec:garsia-lin-alg}

The procedure to be described here avoids Gr\"obner bases, using only linear algebra, and is reasonable to compute by hand in small examples. It is essentially found in \cite[Theorem~3.3]{garsia}, where it is presented as a {\em test} of Cohen--Macaulayness, although it actually computes a basis in the Cohen--Macaulay case. Based on the work in Section~\ref{sec:garsia-LA-CM}, we give it in a bit more generality than the original setting of \cite{garsia} (and than our application to $\kk[\Sd\Delta]$ requires). The complex $\Sd \Delta$ is not only a boolean complex but a simplicial complex, and in fact the order complex of a ranked poset---the latter is the original context of \cite{garsia}. Since $\Delta$ is Cohen--Macaulay (in the situation of Theorem~\ref{thm:yes-CM-in-coprime}), it is pure, and it follows that $\Sd \Delta$ is also pure, see Section~\ref{sec:garsia-LA-CM}. The procedure can be run on any pure, balanced boolean complex (whether simplicial or not, let alone the order complex of a poset), assesses Cohen--Macaulayness, and delivers a basis for the Stanley--Reisner ring in the Cohen--Macaulay case.\footnote{In this generality, it was described, with correctness only conjectured, in \cite[Section~2.8]{blum-smith}, whose author did not at the time realize that it had in essence already been described in \cite{garsia}.}

By the remark following Lemma~\ref{lem:cells-generate}, if $\kk[\Lambda]$ has a basis over $\kk[\Omega]$ then it can be taken to consist of $z_\alpha$'s, and the goal is to determine algorithmically whether such a set of $z_\alpha$'s exists, and find it when it does. Here is the procedure. Recall Definition~\ref{def:facet-vector}, the {\em facet vector} $\bfv_\alpha^\Lambda$ of a face $\alpha$ in the complex $\Lambda$.

\begin{algorithm}\label{alg:basis}
    Input: a pure, balanced boolean complex $\Lambda$ with facets $\epsilon_1,\dots,\epsilon_m$.
    \begin{enumerate}
        \item Initialize $B=\varnothing$; this is a container for the elements of the candidate basis.\label{step:initialize-B}
        \item Initialize $V=\{0\}\subset \kk^m$; this keeps track of the contribution of the $\kk[\Omega]$-span of $B$ to the $\kk\epsilon_1\oplus\dots\oplus \kk\epsilon_m$-component of $\kk[\Lambda]$, as in Theorem~\ref{thm:garsia-LA-CM}. At every stage of the algorithm, the subspace $V$ will have the facet vectors $\{\bfv_\beta^\Lambda: z_\beta\in B\}$ as a basis.\label{step:initialize-V}
        \item Order the faces $\alpha$ of $\widehat P(\Lambda)$, including the empty face $\varnothing$, in the following way. Partition them into blocks $\{\alpha:J_\alpha = S\}$ according to their label set $S\subset[n]$; totally order the collection of blocks in any way that refines the containment order on the corresponding label sets $S$ (so the block containing the facets comes last); and then within each block $\{\alpha:J_\alpha = S\}$, impose any total order whatsoever on the faces in that block.\label{step:total-order}
        \item  Inductively process the faces of $\Lambda$, as follows:
        Consider the face $\alpha\in \widehat P(\Lambda)$ minimal with respect to the order defined in Step~\ref{step:total-order} among those that have not yet been processed, and compute its facet vector $\bfv_\alpha^\Lambda\in \kk^m$. If there is no $\alpha\in \widehat P(\Lambda)$ that has not yet been processed, then go to Step~\ref{step:output}.\label{step:process-next-alpha}
        \item Check membership of $\bfv_\alpha^\Lambda$ in $V$:
            \begin{enumerate}
                \item If $\bfv_\alpha^\Lambda\notin V$, then set $B:=B\cup \{z_\alpha\}$ and $V:= V \oplus \kk \bfv_\alpha^\Lambda$, and go back to Step~\ref{step:process-next-alpha}.\label{step:if-v-notin-V}
                \item If $\bfv_\alpha^\Lambda \in V$, then compute its representation on the basis $\{\bfv_\beta^\Lambda:z_\beta\in B\}$ for $V$.\label{step:if-v-in-V}
                \begin{enumerate}
                    \item If there is any $\bfv_\beta^\Lambda$ with nonzero coefficient in the representation of $\bfv_\alpha^\Lambda$ on the basis $\{\bfv_\beta^\Lambda:z_\beta\in B\}$ for $V$ such that $J_\beta \not\subseteq J_\alpha$, then terminate the algorithm and output ``$\Lambda$ is not Cohen--Macaulay".\label{step:if-color-condition-fails}
                    \item If every $\bfv_\beta^\Lambda$ appearing with nonzero coefficient in the representation of $\bfv_\alpha^\Lambda$ on the basis $\{\bfv_\beta^\Lambda:z_\beta\in B\}$ satisfies $J_\beta\subseteq J_\alpha$, then discard $z_\alpha$ and go back to Step~\ref{step:process-next-alpha}.\label{step:if-color-condition-succeeds}
                \end{enumerate}
            \end{enumerate}
        \item \label{step:output} Terminate the algorithm and output ``$\Lambda$ is Cohen--Macaulay" along with the basis $B$ for $\kk[\Lambda]$ over $\kk[\Omega]$.
    \end{enumerate}
\end{algorithm}

\begin{remark}
    If the algorithm reaches Step~\ref{step:output}, then $\{\bfv_\beta^\Lambda : z_\beta \in B\}$ must be a basis for $\kk^m$ when it does. This will be proven over the course of the proof of correctness. Although the algorithm is formulated in a way that processes every $\alpha\in P(\Lambda)$, one can get away with going straight to Step~\ref{step:output} once $V=\kk^m$ and the only unprocessed faces of $P(\Lambda)$ are facets, because when $V=\kk^m$ the condition in Step~\ref{step:if-v-in-V} holds automatically, while for a facet $\epsilon_i$, the condition in Step~\ref{step:if-color-condition-succeeds}  holds automatically. So any facets remaining at that stage will be discarded.
\end{remark}

Before giving the proof of correctness, we first give a pair of complete examples, illustrating the Cohen--Macaulay and non-Cohen--Macaulay cases. 

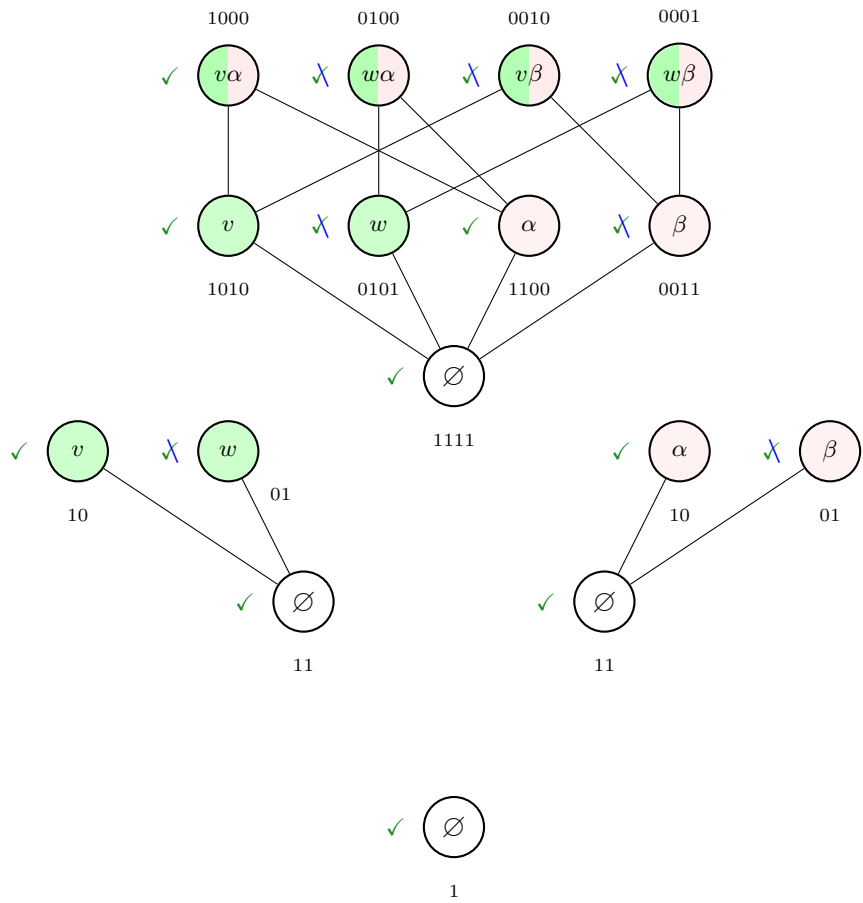
\begin{figure}
\begin{center}

\begin{tikzpicture}[
  scale=1, every node/.style={font=\small},
  greennode/.style={circle, draw=black, fill=green!20, thick, minimum size=8mm},
  pinknode/.style={circle, draw=black, fill=pink!20, thick, minimum size=8mm},
  emptynode/.style={circle, draw=black, fill=white, thick, minimum size=8mm},
  labelnode/.style={font=\scriptsize, align=center},
  halfcircle/.style={
    circle,
    draw=black,
    thick,
    minimum size=8mm,
    path picture={
      \path[clip] (path picture bounding box.center) circle(4mm);
      \fill[green!30] ([xshift=-4mm,yshift=-4mm]path picture bounding box.center)
          rectangle ([xshift=0mm,yshift=4mm]path picture bounding box.center);
      \fill[pink!30] ([xshift=0mm,yshift=-4mm]path picture bounding box.center)
          rectangle ([xshift=4mm,yshift=4mm]path picture bounding box.center);
    }
  }
]

\node[emptynode] (emptyBig) at (0,0) {$\emptyset$};
\node[left=3pt of emptyBig] {\greencheck};
\node[labelnode, below=1.5ex of emptyBig] {1111};

\node[greennode] (vBig) at (-3,2) {$v$};
\node[left=3pt of vBig] {\greencheck};
\node[labelnode, below=1.5ex of vBig] {1010};

\node[greennode] (wBig) at (-1,2) {$w$};
\node[left=3pt of wBig] {\notcheckmark};
\node[labelnode, below=1.5ex of wBig] {0101};

\node[pinknode] (alphaBig) at (1,2) {$\alpha$};
\node[left=3pt of alphaBig] {\greencheck};
\node[labelnode, below=1.5ex of alphaBig] {1100};

\node[pinknode] (betaBig) at (3,2) {$\beta$};
\node[left=3pt of betaBig] {\notcheckmark};
\node[labelnode, below=1.5ex of betaBig] {0011};

\node[halfcircle] (vaBig) at (-3,4) {$v\alpha$};
\node[left=3pt of vaBig] {\greencheck};
\node[labelnode, above=1ex of vaBig] {1000};

\node[halfcircle] (waBig) at (-1,4) {$w\alpha$};
\node[left=3pt of waBig] {\notcheckmark};
\node[labelnode, above=1ex of waBig] {0100};

\node[halfcircle] (vbBig) at (1,4) {$v\beta$};
\node[left=3pt of vbBig] {\notcheckmark};
\node[labelnode, above=1ex of vbBig] {0010};

\node[halfcircle] (wbBig) at (3,4) {$w\beta$};
\node[left=3pt of wbBig] {\notcheckmark};
\node[labelnode, above=1ex of wbBig] {0001};

\draw (emptyBig) -- (vBig);
\draw (emptyBig) -- (wBig);
\draw (emptyBig) -- (alphaBig);
\draw (emptyBig) -- (betaBig);
\draw (vBig) -- (vaBig);
\draw (vBig) -- (vbBig);
\draw (wBig) -- (waBig);
\draw (wBig) -- (wbBig);
\draw (alphaBig) -- (vaBig);
\draw (alphaBig) -- (waBig);
\draw (betaBig) -- (vbBig);
\draw (betaBig) -- (wbBig);

\begin{scope}[shift={(0,-3)}]

\begin{scope}[shift={(-2.0,0)}] 
\node[emptynode] (emptyVW) at (0,0) {$\emptyset$};
\node[left=3pt of emptyVW] {\greencheck};
\node[labelnode, below=1.5ex of emptyVW] {11};

\node[greennode] (vSmall) at (-3,2) {$v$};
\node[left=3pt of vSmall] {\greencheck};
\node[labelnode, below=1.5ex of vSmall] {10};

\node[greennode] (wSmall) at (-1,2) {$w$};
\node[left=3pt of wSmall] {\notcheckmark};
\node[labelnode, below right=2pt and 4pt of wSmall] {01}; 
\draw (emptyVW) -- (vSmall);
\draw (emptyVW) -- (wSmall);
\end{scope}

\begin{scope}[shift={(2.0,0)}] 
\node[emptynode] (emptyAB) at (0,0) {$\emptyset$};
\node[left=3pt of emptyAB] {\greencheck};
\node[labelnode, below=1.5ex of emptyAB] {11};

\node[pinknode] (alphaSmall) at (1,2) {$\alpha$};
\node[left=3pt of alphaSmall] {\greencheck};
\node[labelnode, below=1.5ex of alphaSmall] {10};

\node[pinknode] (betaSmall) at (3,2) {$\beta$};
\node[left=3pt of betaSmall] {\notcheckmark};
\node[labelnode, below=1.5ex of betaSmall] {01};

\draw (emptyAB) -- (alphaSmall);
\draw (emptyAB) -- (betaSmall);
\end{scope}
\end{scope}

\node[emptynode] (emptySolo) at (0,-6) {$\emptyset$};
\node[left=3pt of emptySolo] {\greencheck};
\node[labelnode, below=1.5ex of emptySolo] {1};

\end{tikzpicture}

\end{center}
\caption{The face poset of the barycentric subdivision $\Sd\Delta$ of the Boolean complex $\Delta$ depicted in Figure~\ref{fig:running-ex}, showing the balancing and the (face posets of the) label-selected subcomplexes. The balancing is indicated by colors: green indicates label $1$, i.e., vertices of $\Sd\Delta$ coming from vertices in $\Delta$, while pink indicates label $2$, i.e., vertices of $\Sd\Delta$ coming from edges in $\Delta$. The check marks indicate the operation of Algorithm~\ref{alg:basis}. Green check marks indicate faces $\alpha$ that reach Step~\ref{step:if-v-notin-V}, so that $z_\alpha$ gets included in the proposed basis $B$. The green check marks with blue slashes through them indicate faces $\alpha$ that reach Step~\ref{step:if-color-condition-succeeds}, so that the corresponding $z_\alpha$'s get discarded. Note that, per Proposition~\ref{prop:garsia-3.1} and the proof of correctness of Algorithm~\ref{alg:basis}, the basis obtained by the algorithm also restricts to bases for each of the label-selected subcomplexes.}
\label{fig:subspaces-of-facet-space}
\end{figure}

\begin{example}\label{ex:running-ex-basis}
    We take $\Lambda=\Sd\Delta$, with $\Delta$ as in Figure~\ref{fig:running-ex}. The face poset of $\Sd\Delta$ appears in Figure~\ref{fig:subspaces-of-facet-space}, with the nodes labeled according to their facet vectors. The nodes are also color-coded according to their label sets, with green being label 1 (the vertices of $\Sd\Delta$ representing the barycenters of faces of $\Delta$ that are rank $1$ in the face poset of $\Delta$ itself, i.e., vertices of $\Delta$), and pink being label 2 (the vertices of $\Sd\Delta$ coming from edges in $\Delta$). The presentation of $\kk[\Sd\Delta]$ we worked with in Section~\ref{sec:garsia} has $y_v,y_w,y_\alpha,y_\beta$ as generators, one for each barycenter of a face of $\Delta$; but here we work with the presentation as boolean complex. It has corresponding generators $z_v=y_v,\dots$ etc., but also additional generators $z_{v\alpha}=y_vy_\alpha,\dots$ corresponding to the non-vertex faces of $\Sd\Delta$.

    After initializing $B$ and $V$ (Steps~\ref{step:initialize-B} and \ref{step:initialize-V}), we order (Step~\ref{step:total-order}) the label sets in any inclusion-respecting way---we choose $\varnothing < \{1\} < \{2\} < \{1,2\}$---and then the faces within each label set in any way at all. We choose $v<w$,  $\alpha<\beta$, and the order on the facets in which they appear in Figure~\ref{fig:subspaces-of-facet-space}, so the total order is \[
    \varnothing<v<w<\alpha<\beta<v\alpha<w\alpha<v\beta<w\beta.
    \]
    We move on to Step~\ref{step:process-next-alpha}. The first face to process is $\varnothing$. We have $\bfv_\varnothing^{\Sd\Delta}=(1,1,1,1)$. This is not in $V$, so according to Step~\ref{step:if-v-notin-V}, we reset $B:=\{z_\varnothing\}=\{1\}$ and $V:=\kk(1,1,1,1)$, and go back to Step~\ref{step:process-next-alpha}. The next unprocessed facet is $v$, with facet vector $(1,0,1,0)$, and again it is not in $V$, so we reset $B:=\{1,z_v\}$ and $V:=\kk (1,1,1,1) \oplus \kk(1,0,1,0)$ and go back to Step~\ref{step:process-next-alpha}.

    The next unprocessed facet is $w$. This time, we have $\bfv_w^{\Sd\Delta}\in V$, because
    \[
    \bfv_w^{\Sd\Delta} = (0,1,0,1) = \bfv_\varnothing^{\Sd\Delta} - \bfv_v^{\Sd\Delta}
    \]
    as in Step~\ref{step:if-v-in-V}. (Note that this representation works over any field, so the present computation is unaffected by the choice of $\kk$.) We check the condition that distinguishes Step~\ref{step:if-color-condition-fails} from Step~\ref{step:if-color-condition-succeeds}: $J_\varnothing = \varnothing \subset \{1\} = J_w$, and $J_v = \{1\}\subset J_w$ as well, so Step~\ref{step:if-color-condition-succeeds} gets implemented: we discard $z_w$ and go back to Step~\ref{step:process-next-alpha}.

    Continuing in the same way for the block with label set $\{2\}$, we end up adding $z_\alpha$ to the basis, and discarding $z_\beta$, at which point we have $B=\{1,z_v,z_\alpha\}$ and $V=\kk(1,1,1,1) \oplus \kk(1,0,1,0) \oplus \kk(1,1,0,0)$. Again, the arithmetic works in any field.

    We finally reach the facets. The facet vector of $v\alpha$ is $(1,0,0,0)$, which does not lie in $V$, thus $B$ becomes $\{1,z_v,z_\alpha,z_{v\alpha}\}$, and $V$, being the span of four linearly independent vectors, becomes $\kk^4$. As in the remark following the algorithm description, the remaining facets will be discarded. We output $\{1,z_v,z_\alpha,z_{v\alpha}\}$ as a $\kk[\Gamma]$-basis for $\kk[\Sd\Delta]$.

    Together with the proof of correctness below, this example fulfills the promise made in Example~\ref{ex:proof-illlustration} to warrant the claim that $1=z_\varnothing,y_v=z_v, y_\alpha=z_\alpha, y_vy_\alpha=z_{v\alpha}$ constitutes a basis for $\kk[\Sd\Delta]$ over $\kk[\Gamma]$ in this case.
\end{example}

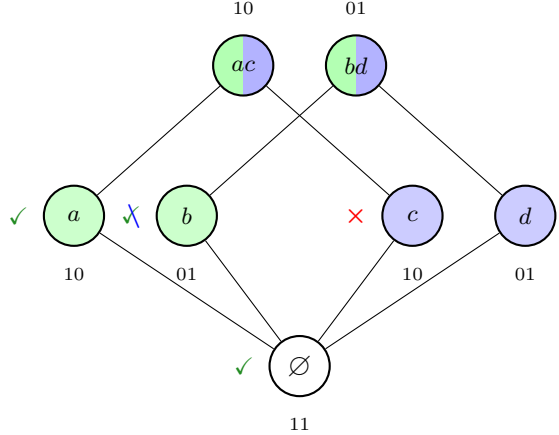
\begin{figure}
\centering

\begin{tikzpicture}[
  scale=1, every node/.style={font=\small},
  greennode/.style={circle, draw=black, fill=green!20, thick, minimum size=8mm},
  bluenode/.style={circle, draw=black, fill=blue!20, thick, minimum size=8mm},
  halfcircle/.style={
    circle,
    draw=black,
    thick,
    minimum size=8mm,
    path picture={
      \path[clip] (path picture bounding box.center) circle(4mm);
      \fill[green!30] ([xshift=-4mm,yshift=-4mm]path picture bounding box.center)
          rectangle ([xshift=0mm,yshift=4mm]path picture bounding box.center);
      \fill[blue!30] ([xshift=0mm,yshift=-4mm]path picture bounding box.center)
          rectangle ([xshift=4mm,yshift=4mm]path picture bounding box.center);
    }
  },
  emptynode/.style={circle, draw=black, fill=white, thick, minimum size=8mm},
  labelnode/.style={font=\scriptsize, align=center}
]

\node[emptynode] (emptyset) at (0,-3) {$\emptyset$};
\node[left=2pt of emptyset] {\greencheck};
\node[labelnode, below=1ex of emptyset] {11};

\node[greennode] (a2) at (-3,-1) {$a$};
\node[greennode] (b2) at (-1.5,-1) {$b$};
\node[bluenode] (c2) at (1.5,-1) {$c$};
\node[bluenode] (d2) at (3,-1) {$d$};

\node[halfcircle] (ac) at (-0.75,1) {$ac$};
\node[halfcircle] (bd) at (0.75,1) {$bd$};

\draw (emptyset) -- (a2);
\draw (emptyset) -- (b2);
\draw (emptyset) -- (c2);
\draw (emptyset) -- (d2);

\draw (a2) -- (ac);
\draw (c2) -- (ac);
\draw (b2) -- (bd);
\draw (d2) -- (bd);

\node[left=2pt of a2] {\greencheck};
\node[labelnode, below=1ex of a2] {10};

\node[left=2pt of c2] {\redcross};
\node[labelnode, below=1ex of c2] {10};

\node[labelnode, above=1ex of ac] {10};

\node[left=2pt of b2] {\notcheckmark};
\node[labelnode, below=1ex of b2] {01};

\node[labelnode, below=1ex of d2] {01};

\node[labelnode, above=1ex of bd] {01};

\end{tikzpicture}

\caption{The face poset of the non-Cohen--Macaulay pure, balanced complex $\Lambda$ consisting of vertices $a,b,c,d$ and disjoint line segments $ac$ and $bd$, and the operation of Algorithm~\ref{alg:basis} on it. The balancing is indicated by colors, with label $1$ vertices in the green circles and label $2$ vertices in the indigo circles. The operation of the algorithm is indicated by check marks and $\times$ signs. The green check marks indicate faces $\alpha$ that reach Step~\ref{step:if-v-notin-V}, so that $z_\alpha$ gets included in the proposed basis $B$. The green check mark with blue slash through it indicates a face $\alpha$ that reaches Step~\ref{step:if-color-condition-succeeds}. The red $\times$ sign indicates a face that reaches Step~\ref{step:if-color-condition-fails}, terminating the algorithm and outputting the failure of Cohen--Macaulayness.}
\label{fig:non-CM-example}
\end{figure}

\begin{example}
    A pair of disjoint edges is an example of a small pure, balanced, but non-Cohen--Macaulay  boolean complex. (This is not Cohen--Macaulay over any field; a more elaborate example would be required to see the difference between fields of different characteristics.) So take $\Lambda$ to be the boolean complex with vertices $a,b,c,d$ and edges $ac,bd$. A balancing is given by assigning label $1$ to vertices $a$ and $b$, and $2$ to vertices $c$ and $d$. The face poset is illustrated in Figure~\ref{fig:non-CM-example}, along with the results of Algorithm~\ref{alg:basis}.
    
    We go somewhat more breezily through the process than in Example~\ref{ex:running-ex-basis}. We initialize $B$ empty, $V=\{0\}$, and choose a total order on the faces of $\Lambda$  compatible with the order $\varnothing<\{1\}<\{2\}<\{1,2\}$ on the label sets. Such an order is given by
    \[
    \varnothing<a<b<c<d<ac<bd.
    \]
There are $m=2$ facets, so the ambient vector space of the facet vectors is $\kk^2$. We begin to process the faces. As usual, $z_\varnothing = 1$ goes in $B$, and the span of its facet vector $\bfv_\varnothing^\Lambda=(1,1)$ is added to $V$. Next, the span of the facet vector $\bfv_a^\Lambda = (1,0)$ is added to $V$ and $z_a$ is added to $B$, at which point we have $V=\kk^2$, the entire ambient space. Now $\bfv_b^\Lambda=(0,1)$ lies in $V$, and its representation $\bfv_b^\Lambda = \bfv_\varnothing^\Lambda - \bfv_a^\Lambda$ only involves the facet vectors of faces $\emptyset, a$ with label sets $J_\varnothing, J_a$ contained in $b$'s label set $J_b=\{1\}$, so $z_b$ is discarded and we move on to face $c$. 

Here, the algorithm reaches Step~\ref{step:if-color-condition-fails}. The facet vector $\bfv_c^\Lambda=(1,0)$ belongs to $V$; indeed, it is the same as $\bfv_a^\Lambda$. So we have the representation $\bfv_c^\Lambda = \bfv_a^\Lambda$ on the basis $\{\bfv_\varnothing^\Lambda,\bfv_a^\Lambda\}$ for $V$. But $a$'s label set $J_a=\{1\}$ is not contained in $c$'s label set $J_c = \{2\}$. Thus the algorithm terminates and outputs ``$\Lambda$ is not Cohen--Macaulay", as it should.
\end{example}

We now prove correctness for the algorithm.

\begin{proof}[Proof of correctness of Algorithm~\ref{alg:basis}]
    We need to show that if the algorithm ever reaches Step~\ref{step:if-color-condition-fails}, then $\Lambda$ is not Cohen--Macaulay, while if the algorithm terminates without reaching Step~\ref{step:if-color-condition-fails}, then $\Lambda$ is Cohen--Macaulay and at the end, $B$ is a $\kk[\Omega]$-module basis of $\kk[\Lambda]$.
    
    Before looking at the two cases, we enumerate some facts that apply to both:
\begin{enumerate}
\item By Lemma~\ref{lem:what-is-the-facet-vector}, the ambient space $\kk^m$ of the facet vectors can be identified with the component
\[
\kk[\Lambda]_{\bfe_1+\dots+\bfe_n} = \kk z_{\epsilon_1} \oplus \dots \oplus \kk z_{\epsilon_m}
\]
of $\kk[\Lambda]$ of $\NN^n$-degree $\bfe_1+\dots+\bfe_n$, via the map, call it $\iota$, that sends the standard basis for $\kk^m$ to the basis $z_{\epsilon_1},\dots,z_{\epsilon_m}$ for this space. Again by Lemma~\ref{lem:what-is-the-facet-vector}, this identification $\iota$ maps
\[
\bfv_\alpha^\Lambda \mapsto \left(\prod_{j\in [n]\setminus J_\alpha} \omega_j\right)z_\alpha
\]
for any $\alpha\in \widehat P(\Lambda)$.\label{fact:identify-top-component-with-kn}  Utilizing Garsia's notation \eqref{eq:def-of-M_S}, for any $S\subset [n]$ we have
\[
M_S = \iota\left( \bigoplus_{\alpha : J_\alpha = S} \kk \bfv_\alpha^\Lambda\right).
\]

\item Because the order in which faces are processed in the algorithm, fixed in Step~\ref{step:total-order}, is consistent with containment order on the label sets of the faces, it follows that when a given face $\alpha$ is being processed, every face with label set contained strictly in $S:=J_\alpha$ has already been processed. In particular, for any $T\subsetneq S$ and any $\gamma\in \widehat P(\Lambda)$ with $J_\gamma = T$, it must be that $\bfv_\gamma^\Lambda \in V$ (either because the span of $\bfv_\gamma^\Lambda$ was added to $V$ when $z_\gamma$ was added to $B$, per Step~\ref{step:if-v-notin-V}, or because $\bfv_\gamma^\Lambda$ was already in $V$ when we started to process $\gamma$, per Step~\ref{step:if-v-in-V}; in either case this all happened prior to processing $\alpha$). With $\iota$ the identification in fact~\ref{fact:identify-top-component-with-kn} and recalling Garsia's notation \eqref{eq:def-of-M_S} from Section~\ref{sec:garsia-LA-CM}, it follows that
\begin{equation}\label{eq:lower-Ts}
\sum_{T\subsetneq S} M_T \subseteq \iota(V)
\end{equation}
by allowing $\gamma$ to range over all faces preceding $\alpha$ in the order from Step~\ref{step:total-order}, which in particular includes all faces such that $J_\gamma =T\subsetneq S$. This holds whether or not $\bfv_\alpha^\Lambda \in V$ (i.e., whether we go to Step~\ref{step:if-v-notin-V} or Step~\ref{step:if-v-in-V}).\label{fact:already-processed}

\item For any $\alpha$, we have $\iota(\bfv_\alpha^\Lambda)\in M_S$, with $S:=J_\alpha$, by fact~\ref{fact:identify-top-component-with-kn}. We argue that at every stage of the algorithm, for an $\alpha$ with $z_\alpha\in B$, $\iota(\bfv_\alpha^\Lambda)$ cannot also be in $\sum_{T\subsetneq S} M_T$.. 

If $z_\alpha \in B$, then the algorithm reached Step~\ref{step:if-v-notin-V} when $\alpha$ was processed (since this is the only step where $z_\alpha$ might get added to $B$). Therefore, the condition in  Step~\ref{step:if-v-notin-V}, i.e., that $\bfv_\alpha^\Lambda \notin V$, was satisfied when $\alpha$ was processed. 

It follows that $\iota(\bfv_\alpha^\Lambda)\notin \sum_{T\subsetneq S}M_T$. This is seen by applying $\iota$ to $\bfv_\alpha^\Lambda \notin V$ and combining with \eqref{eq:lower-Ts}. It follows that $\iota(\bfv_\alpha^\Lambda)$ is extendable to a basis for a vector space complement to $\sum_{T\subsetneq S} M_T$ in $M_S$. In the language of Theorem~\ref{thm:garsia-LA-CM}, we can choose $B(L_S)$ to have $\iota(\bfv_\alpha^\Lambda)$ as a member. \label{fact:beta-is-in-L_S}

\item At the completion of the processing of any face $\alpha$, unless Step~\ref{step:if-color-condition-fails} was reached and the algorithm was terminated, there is a unique expression of $\bfv_\alpha^\Lambda$ as a $\kk$-linear combination of the basis $\{\bfv_\beta^\Lambda: z_\beta\in B\}$ for $V$, satisfying the condition in Step~\ref{step:if-color-condition-succeeds}. This is trivial if $\alpha$ already satisfied this condition before being processed (so that the processing of $\alpha$ ended up in Step~\ref{step:if-color-condition-succeeds}), but it is also true if $\alpha$ satisfied the condition in Step~\ref{step:if-v-notin-V}, because in this case processing $\alpha$ involved adding $z_\alpha$ to $B$, so that $\{\bfv_\beta^\Lambda: z_\beta\in B\}$ now contains $\bfv_\alpha^\Lambda$; the desired expression for $\bfv_\alpha^\Lambda$ as a linear combination of elements of $\{\bfv_\beta^\Lambda: z_\beta\in B\}$ then has the form $\bfv_\alpha^\Lambda = \bfv_\alpha^\Lambda$.\label{fact:color-condition-satisfied}

\end{enumerate}
    
With this preparation, we first consider the case where at some point over the course of the algorithm, Step~\ref{step:if-color-condition-fails} is reached. In this situation we have a face $\alpha$ such that all prior faces (in the order defined in Step~\ref{step:total-order}) have been processed, and we have a representation
\[
\bfv_\alpha^\Lambda = \sum_{\beta:z_\beta\in B} c_\beta \bfv_\beta^\Lambda
\]
with each $c_\beta\in \kk$, and at least one $\beta$ for which $J_\beta \nsubseteq J_\alpha$ satisfying $c_\beta \neq 0$. Sorting the terms on the right according to whether $J_\beta$ is contained in $J_\alpha$, this can be written
\begin{equation}\label{eq:M_S-meets-the-L_T's}
\bfv_\alpha^\Lambda - \sum_{\substack{\beta:z_\beta\in B \\ J_\beta\subseteq J_\alpha}}c_\beta \bfv_\beta^\Lambda = \sum_{\substack{\beta:z_\beta\in B \\ J_\beta\nsubseteq J_\alpha}}c_\beta \bfv_\beta^\Lambda,
\end{equation}
where the linear combination on the right is nontrivial, and the $\bfv_\beta^\Lambda$'s that appear in it are linearly independent; thus both sides are nonzero.

Apply $\iota$ to both sides of \eqref{eq:M_S-meets-the-L_T's}. The left side then lies in $M_S$, while by fact~\ref{fact:beta-is-in-L_S}, the $L_T$'s of Theorem~\ref{thm:garsia-LA-CM} can be chosen so that the right side lies in $\bigoplus_{T\nsubseteq S} L_T$. Thus,
\[
M_S \cap \left(\bigoplus_{T\nsubseteq S} L_T\right) \neq \{0\}.
\]
By Observation~\ref{obs:M_S-doesn't-meet-the-other-L_T's}, this means that $\Lambda$ cannot be Cohen--Macaulay.

Now suppose that instead, the algorithm reaches Step~\ref{step:output}, i.e., processes every face of $\widehat P(\Lambda)$ without ever reaching Step~\ref{step:if-color-condition-fails}. Then it follows from fact~\ref{fact:color-condition-satisfied} that every $\bfv_\alpha^\Lambda$ is uniquely expressible in the form
\begin{equation}\label{eq:express-any-alpha}
\bfv_\alpha^\Lambda = \sum_{\substack{\beta:z_\beta\in B \\ J_\beta\subseteq J_\alpha}} c_\beta \bfv_\beta^\Lambda,
\end{equation}
with each $c_\beta\in \kk$. 

Fix any label set $S\subset [n]$, and consider the $\alpha$'s in $\widehat P(\Lambda)$ satisfying $J_\alpha = S$, which are precisely the facets of the label-selected subcomplex $\Lambda_S$. Then the condition $J_\beta\subseteq J_\alpha$ in the sum on the right side of \eqref{eq:express-any-alpha} is precisely the condition that $\beta$ belong to the same label-selected subcomplex $\Lambda_S$, so the corresponding $z_\beta$'s are precisely those in the label-selected part $B_S$ of the proposed basis. (See Section~\ref{sec:garsia-LA-CM}, especially Proposition~\ref{prop:garsia-3.1}, for the notation.) Let
\[
\iota_S: \kk^{\{\alpha:J_\alpha = S\}} \rightarrow \bigoplus_{\alpha:J_\alpha = S}\kk z_\alpha = \kk[\Lambda_S]_{\sum_{j\in S}\bfe_j}
\]
be the analogue of the natural identification $\iota$ from fact~\ref{fact:identify-top-component-with-kn} for the label-selected complex $\Lambda_S$, and note that by Lemma~\ref{lem:what-is-the-facet-vector} it maps
\[
\bfv_\delta^{\Lambda_S}\mapsto \left(\prod_{j\in S\setminus J_\delta}\omega_j\right)z_\delta
\]
for any $\delta \in \widehat P(\Lambda_S)$. Then
\[
\bfv_\delta^{\Lambda_S} = \iota_S^{-1}\circ \left(\prod_{j\in [n]\setminus S}\omega_j\right)^{-1}\circ \iota (\bfv_\delta^\Lambda)
\]
for $\delta \in \widehat P(\Lambda_S)$, where the middle map in the composition on the right side is the one defined in \eqref{eq:def-of-inverse-omega}; note that it is well-defined here because $\iota(\bfv_\delta^\Lambda)$ lies in $M_S$, since $\delta$ belongs to the $S$-label selected part of $\Lambda$. In particular, applying the map
\begin{equation}\label{eq:move-to-subcomplex-map}
\iota_S^{-1}\circ \left(\prod_{j\in [n]\setminus S}\omega_j\right)^{-1}\circ \iota
\end{equation}
to \eqref{eq:express-any-alpha}, we get an expression
\begin{equation}\label{eq:express-alpha-rank-selected}
\bfv_\alpha^{\Lambda_S} = \sum_{\beta:z_\beta\in B_S} c_\beta \bfv_\beta^{\Lambda_S},
\end{equation}
of each $\bfv_\alpha^{\Lambda_S}$ (for $\alpha$ with $J_\alpha = S$) as a linear combination of the facet vectors in $\Lambda_S$ of the label-selected proposed basis $B_S$, and the injectivity of the map \eqref{eq:move-to-subcomplex-map} means that this linear combination is unique. Because $\{\bfv_\alpha^{\Lambda_S}: J_\alpha = S\}$ is the standard basis for the space $\kk^{\#\{\alpha:J_\alpha = S\}}$ of facet vectors of the label-selected subcomplex $\Lambda_S$, the existence and uniqueness of the linear combination \eqref{eq:express-alpha-rank-selected} imply that $\{\bfv_\beta^{\Lambda_S}: z_\beta \in B_S\}$ is also a basis. In other words,  the incidence matrix of $B_S$ in $\Lambda_S$ is square and nonsingular. All of this holds for every $S\subset [n]$, so $B$ is a basis for $\kk[\Lambda]$ over $\kk[\Omega]$ by Proposition~\ref{prop:garsia-3.1} (and thus also, $\Lambda$ is Cohen--Macaulay).
\end{proof}

\begin{remark}
    We take the opportunity to clear up an ambiguity in \cite{garsia}. It is important to the proof of correctness of Algorithm~\ref{alg:basis} that the order fixed in Step~\ref{step:total-order} be compatible with the containment order on the label sets; this was used to establish fact~\ref{fact:already-processed}, which gave us the important equation \eqref{eq:lower-Ts}. In \cite[p.~242]{garsia}, a specific order is fixed, which is described as lexicographic order on the label set blocks, and lexicographic order on chains within each block.\footnote{Recall that in the setting of \cite{garsia}, $\Lambda$ is the order complex of a ranked poset, so the faces $\alpha\in \widehat P(\Lambda)$ are chains in this poset, and labels are ranks; Garsia fixes once and for all a total order on the underlying poset that refines the poset order, and the lexicographic order on the chains with a given label/rank set is with respect to this total order.} There is a natural interpretation for ``lexicographic order on subsets" that would fail to respect containment order; on subsets of $[2]$, it would look like $\varnothing < \{1\}<\{1,2\}<\{2\}$. (Indeed, this seems to be the interpretation suggested by the discussion on \cite[pp.~238--9]{garsia}, as the word $12$ precedes the word $2$ lexicographically.) However, Garsia {\em must} have intended the reader to interpret ``lexicographic order on subsets" to mean an order that refines containment order, for example the ``length-lexicographic" order that on subsets of $[3]$ looks like
    \[
    \varnothing<\{1\}<\{2\}<\{3\}<\{1,2\}<\{1,3\}<\{2,3\}<\{1,2,3\}.
    \]
    If the order were not compatible with containment order, it would not be possible to infer the third displayed equation at the top of p.~244 in the proof of \cite[Theorem~3.3]{garsia}  from the second displayed equation.
\end{remark}

\begin{remark}
    Garsia's algorithm reduces the computation of a module basis for the $n$-dimensional ring $\kk[\Lambda]$ over the parameter subring $\kk[\Omega]$ to linear algebra in a single, finite-dimensional vector space $\kk^m \cong \kk \epsilon_1\oplus\dots\oplus \kk \epsilon_m$; underlying the algorithm's functioning is the filtration of this vector space by the subspaces $M_S$. A distinct but reminiscent approach to a related problem was taken in the thesis of Nicolas Borie, under the direction of Nicolas Thi\'ery (\cite[Chapitre~3]{borie}, see also \cite{borie-thiery}). The goal there was to compute a basis for a ring of permutation invariants over the subring of symmetric polynomials, just as in \cite{garsia-stanton}. The method used an evaluation homomorphism to reduce the problem to computations inside a finite-dimensional vector space carrying a filtration. The finite-dimensional vector space in question was the image of this  homomorphism, not a component of the ring of interest itself, and the filtration was over $\NN$ (coming from the degree grading of the invariant ring), rather than over the subsets of $[n]$. But the similarities are suggestive.
\end{remark}

\paragraph{Computing a representation on the basis.}\label{sec:computing-representations}

The proof of correctness  of the method for computing a $\kk[\Omega]$-basis for $\kk[\Lambda]$ described in Algorithm~\ref{alg:basis} (including all the involved lemmas) also implicitly contains a procedure that, given an arbitrary element of $\kk[\Lambda]$, computes a representation of it as a $\kk[\Omega]$-linear combination of the elements of a basis $B$ output by the algorithm. In outline, this procedure is as follows. It is sufficient to express standard monomials in $\kk[\Lambda]$. Induction on the first displayed equation in the proof of Lemma~\ref{lem:cells-generate} allows one to represent any standard monomial as a monomial in $\kk[\Omega]$ times a single $z_\alpha$, reducing the problem to expressing the $z_\alpha$'s in terms of $B$. Then, the $\kk[\Omega]$-module generation part of the proof of Proposition~\ref{prop:garsia-3.1} shows how to express any $z_\alpha$ in terms of $B$: it amounts to inverting the incidence matrix of $B_{J_\alpha}$ in $\Lambda_{J_\alpha}$. Rather than state a theorem, we illustrate by showing how to compute the representation of $y_w^2y_\beta\in \kk[\Sd\Delta]$ used in Example~\ref{ex:proof-illlustration} on the basis computed in Example~\ref{ex:running-ex-basis}. 

Referring to the face poset for $\Sd\Delta$ depicted in Figure~\ref{fig:subspaces-of-facet-space}, we have the expression
\[
y_w^2y_\beta = z_wz_{w\beta}
\]
for $y_w^2y_\beta$ as a standard monomial in the ASL generators $z_\delta$, $\delta\in \widehat P(\Sd\Delta)$ for $\kk[\Sd\Delta]$. Then by the first displayed equation in the proof of Lemma~\ref{lem:cells-generate}, we get
\begin{equation}\label{eq:put-in-omega}
y_w^2y_\beta = \omega_1 z_{w\beta},
\end{equation}
where $\omega_1 = z_v + z_w = y_v + y_w = \gamma_1$, so the problem is reduced to obtaining an expression for $z_{w\beta}$. The corresponding cell $w\beta\in \widehat P(\Sd\Delta)$ has label set $J_{w\beta}=\{1,2\}$, the entire label set, and its facet vector $\bfv_{w\beta}^{\Sd\Delta}$ is $(0,0,0,1)$. Consulting the facet vectors corresponding to the basis $\{1=z_\varnothing,z_v,z_\alpha,z_{v\alpha}\}$ computed in Example~\ref{ex:running-ex-basis}, we get the representation
\[
(0,0,0,1) = (1,1,1,1) - (1,0,1,0) - (1,1,0,0) + (1,0,0,0),
\]
or
\[
\bfv_{w\beta}^{\Sd\Delta} = \bfv_\varnothing^{\Sd\Delta} - \bfv_v^{\Sd\Delta} - \bfv_\alpha^{\Sd\Delta} + \bfv_{v\alpha}^{\Sd\Delta}.
\]
By Lemma~\ref{lem:what-is-the-facet-vector}, this expression tells us how to represent $z_{w\beta}$ as a $\kk[\Omega]$-linear combination of the basis:
\begin{equation}\label{eq:cell-representation}
z_{w\beta} = \omega_1\omega_2 - \omega_2 z_v - \omega_1 z_\alpha + z_{v\alpha}.
\end{equation}
We used the facet vectors in $\Sd\Delta$ because the label set of our target cell $w\beta$ is the entire label set $\{1,2\}$, but this whole computation with facet vectors would be done in the label-selected subcomplex $\Sd\Delta_S$ for a cell with given label set $S$.

Substituting \eqref{eq:cell-representation} into \eqref{eq:put-in-omega}, we get
\[
y_w^2y_\beta = \omega_1^2\omega_2 - \omega_1\omega_2 z_v - \omega_1^2 z_\alpha + \omega_1 z_{v\alpha},
\]
and translating the right side back into the familiar language of $\gamma$'s and $y$'s via $\gamma_j = \omega_j$ ($j=1,2$) and $z_{v\alpha}=y_vy_\alpha$, we recover the expression given in Example~\ref{ex:proof-illlustration}.

\section*{Acknowledgements}

This paper is dedicated to the memory of Adriano Garsia, whose work \cite{garsia, baclawski1981combinatorial, garsia-stanton} on combinatorial methods in Cohen--Macaulay rings was the source of our approach. The authors wish to thank Victor Reiner, Ashleigh Adams, Alexandra Pevzner, Laura Escobar, and Christopher Manon for useful conversations. We owe a particular debt of gratitude to Reiner for generously corresponding about many questions, including the discussion that led to Section~\ref{sec:nonconstructive}. We also thank Reiner and Ayah Almousa for encouraging us to work on the question addressed here. The authors are grateful to SLMath for their hospitality during research visits in July 2024 and July 2025, supported by NSF DMS-1928930. BBS was partially supported by Soledad Villar's NSF CAREER award, NSF CAREER 
2339682. SM is an associate at NITheCS (National Institute for Theoretical and Computational Sciences) in South Africa and would like to thank the institute for its ongoing support of her research.

\bibliographystyle{alpha}
\bibliography{biblio}

\end{document}